\documentclass[10pt]{amsart}
\usepackage{graphicx} 
\usepackage{epic,eepic}
\usepackage[mathscr]{eucal} 
\setcounter{tocdepth}{1}

\numberwithin{equation}{section}
\usepackage{amsthm}
\theoremstyle{plain}
\newtheorem{theorem}{Theorem}[section]
\newtheorem{lemma}[theorem]{Lemma}
\newtheorem{proposition}[theorem]{Proposition}
\newtheorem{corollary}[theorem]{Corollary}

\theoremstyle{definition}
\newtheorem{definition}[theorem]{Definition}
\newtheorem{example}[theorem]{Example}
\newtheorem{remark}[theorem]{Remark}


\title[Periodicities of T and Y-systems I: Type $B_r$]
{Periodicities of T and Y-systems,\\
dilogarithm identities,
and cluster algebras I:\\ Type $B_r$
}

\author[R.\  Inoue]{Rei Inoue}
\address{ R.\  Inoue:
Faculty of Pharmaceutical Sciences, Suzuka University of Medical Science,
Suzuka, 513-8670, Japan}

\author[O.\ Iyama]{Osamu Iyama}
\address{ O.\ Iyama:
 Graduate School of Mathematics, Nagoya University,
Nagoya, 464-8604, Japan}

\author[B.\ Keller]{Bernhard Keller}
\address{B.\ Keller:
Universit\'e Paris Diderot -- Paris 7,
UFR de Math\'ematiques,
Institut de Math\'ematiques de Jussieu, UMR 7586 du CNRS,
Case 7012,
2, place Jussieu,
75251 Paris Cedex 05,
France}

\author[A.\ Kuniba]{\\Atsuo Kuniba}
\address{ A.\ Kuniba:
Institute of Physics,
University of Tokyo,
Tokyo, 153-8902, Japan}

\author[T.\ Nakanishi]{Tomoki Nakanishi}
\address{ T.\ Nakanishi:
 Graduate School of Mathematics, Nagoya University,
Nagoya, 464-8604, Japan}



\begin{document}

\footnote[0]{2010 {\em Mathematics Subject Classification}.
Primary 13F60; Secondary 17B37.}

\begin{abstract}
We prove the
 periodicities
of the restricted T and Y-systems associated with
the quantum affine algebra of type $B_r$
at any level.
We also prove the dilogarithm identities
for the Y-systems of type $B_r$ at
any level.
Our proof is based on
the tropical Y-systems
and the categorification of
the cluster algebra associated with 
any skew-symmetric matrix by Plamondon.
Using this new method,
we also give an alternative and simplified proof
of the 
periodicities
of the  T and Y-systems associated with
pairs of simply laced Dynkin diagrams.
\end{abstract}

\maketitle

\tableofcontents

\section{Main results}

\subsection{Background}
The T and Y-systems are systems of algebraic 
relations originally associated with {\em quantum affine algebras}
\cite{KNS2,Nkj,Her1},
or more generally, with the quantum affinizations
of a wide class of quantum Kac-Moody algebras \cite{Her2,KNS3}.

On the other hand,
these T and Y-systems also appear naturally in
{\em cluster algebras} \cite{FZ2,FZ3}.
This identification has provided several fruitful results.
The periodicities of Y-systems were proved by \cite{FZ2}
for any simply laced type at level 2 (in our terminology).
Here we mean by `simply laced'
the Y-systems associated with the
{\em quantum affine algebras of simply laced type}.
The periodicities of Y-systems
were further proved by \cite{Ke1,Ke2}
for any simply laced type at any level,
by the combination  with the {\em cluster category} method.
Using the method of \cite{Ke1,Ke2},
the periodicities of T-systems were also proved by \cite{IIKNS}
for any simply laced type at any level.
Closely related to the Y-systems,
the dilogarithm identities were proved by \cite{C}
for any simply laced type at level 2 based on
the result of \cite{FZ2},
and  further proved by \cite{Nkn}
for any simply laced type and any level.
So far, however, all these systematic treatments were limited to
the simply laced case only,
since the above methods are not straightforwardly applicable
to the nonsimply laced case.

In this paper and the subsequent one \cite{IIKKN},
we prove
the periodicities of T and Y-systems,
and also the dilogarithm identities,
in the {\em nonsimply laced} case
using the cluster algebra/cluster category method with suitable
modifications to the method used in the simply laced case.
We remark that the nonsimply laced systems here
are different from {\em another} class of nonsimply laced systems
arising from cluster algebras and studied in \cite{FZ2,FZ3}.

As is often the case in the nonsimply laced setting,
each type requires some nonuniform, `customized'  treatment.
So, in this paper, we concentrate on 
type $B_r$ and highlight the
underlying common method.
Then, separately in \cite{IIKKN},
types $C_r$, $F_4$, and $G_2$ will be
treated  with emphasis on the special features of each case.

\subsection{Restricted T and Y-systems of type $B_r$}
\label{subsect:restrictedT}

Let $B_r$ be the Dynkin diagram of type $B$
with rank $r$,
and $I=\{1,\dots, r\}$ be the enumeration
of the vertices of $B_r$ as below.
\begin{align*}
\begin{picture}(90,25)(0,-15)
%
%
\put(0,0){
\put(0,0){\circle{6}}
\put(20,0){\circle{6}}
\put(80,0){\circle{6}}
\put(100,0){\circle{6}}
\put(45,0){\circle*{1}}
\put(50,0){\circle*{1}}
\put(55,0){\circle*{1}}
\drawline(3,0)(17,0)
\drawline(23,0)(37,0)
\drawline(63,0)(77,0)
\drawline(82,-2)(98,-2)
\drawline(82,2)(98,2)
\drawline(87,6)(93,0)
\drawline(87,-6)(93,0)
\put(-2,-15){\small $1$}
\put(18,-15){\small $2$}
\put(70,-15){\small $ r-1$}
\put(98,-15){\small $r$}
}
\end{picture}
\end{align*}
Let $h=2r$ and $h^{\vee}=2r-1$ be 
the Coxeter number and the dual Coxeter number of $B_r$,
respectively.
We set  numbers $t_a$ ($a\in I$) by
\begin{align}
\label{eq:t1}
t_a=
\begin{cases}
1 & a=1,\dots,r-1,\\
2 & a=r.
\end{cases}
\end{align}

For a given integer $\ell \geq 2$,
we introduce a set of triplets $(a,m,u)$,
\begin{align}
\mathcal{I}_{\ell}=\mathcal{I}_{\ell}(B_r):=
\{(a,m,u)\mid
a\in I ; m=1,\dots,t_a\ell-1;
u\in \frac{1}{2}\mathbb{Z}
\}.
\end{align}

\begin{definition}[\cite{KNS2}]
\label{defn:RT}
Fix an integer $\ell \geq 2$.
The {\it level $\ell$ restricted T-system $\mathbb{T}_{\ell}(B_r)$
of type $B_r$
(with the unit boundary condition)}
is the following system of relations
\eqref{eq:TB1}
for
a family of variables $T_{\ell}=\{T^{(a)}_m(u)
\mid (a,m,u)\in \mathcal{I}_{\ell}
\}$,
where 
$T^{(0)}_m (u)=T^{(a)}_0 (u)=1$,
and furthermore,  $T^{(a)}_{t_a\ell}(u)=1$
(the {\em unit boundary condition\/}) if they occur
in the right hand sides in the relations:

(Here and throughout the paper,
$2m$ (resp.\ $2m+1$) in the left hand sides, for example,
represents elements  $ 2,4,\dots$
(resp.\ $1,3,\dots$).)

\begin{align}
\label{eq:TB1}
\begin{split}
T^{(a)}_m(u-1)T^{(a)}_m(u+1)
&=
T^{(a)}_{m-1}(u)T^{(a)}_{m+1}(u)
+T^{(a-1)}_{m}(u)T^{(a+1)}_{m}(u)\\
&\hskip120pt
 (1\leq a\leq r-2),\\
T^{(r-1)}_m(u-1)T^{(r-1)}_m(u+1)
&=
T^{(r-1)}_{m-1}(u)T^{(r-1)}_{m+1}(u)
+
T^{(r-2)}_{m}(u)T^{(r)}_{2m}(u),\\
T^{(r)}_{2m}\left(u-\textstyle\frac{1}{2}\right)
T^{(r)}_{2m}\left(u+\textstyle\frac{1}{2}\right)
&=
T^{(r)}_{2m-1}(u)T^{(r)}_{2m+1}(u)\\
&\qquad
+
T^{(r-1)}_{m}\left(u-\textstyle\frac{1}{2}\right)
T^{(r-1)}_{m}\left(u+\textstyle\frac{1}{2}\right),
\\
T^{(r)}_{2m+1}\left(u-\textstyle\frac{1}{2}\right)
T^{(r)}_{2m+1}\left(u+\textstyle\frac{1}{2}\right)
&=
T^{(r)}_{2m}(u)T^{(r)}_{2m+2}(u)
+
T^{(r-1)}_{m}(u)T^{(r-1)}_{m+1}(u).
\end{split}
\end{align}
\end{definition}

\begin{definition}[\cite{KN}]
\label{defn:RY}
Fix an integer $\ell \geq 2$.
The {\it level $\ell$ restricted Y-system $\mathbb{Y}_{\ell}(B_r)$
of type $B_r$}
is the following system of relations
\eqref{eq:YB1} for a family of variables $Y_{\ell}=\{Y^{(a)}_m(u)
\mid
(a,m,u)\in \mathcal{I}_{\ell}
\}$,
where 
$Y^{(0)}_m (u)=Y^{(a)}_0 (u)^{-1}
=Y^{(a)}_{t_a\ell}(u)^{-1}=0$
 if they occur
in the right hand sides in the relations:
\begin{align}
\label{eq:YB1}
\begin{split}
Y^{(a)}_m(u-1)Y^{(a)}_m(u+1)
&=
\frac{
(1+Y^{(a-1)}_{m}(u))(1+Y^{(a+1)}_{m}(u))
}
{
(1+Y^{(a)}_{m-1}(u)^{-1})(1+Y^{(a)}_{m+1}(u)^{-1})
}
\\
&\hskip30pt (1\leq a\leq r-2),\\
Y^{(r-1)}_m(u-1)Y^{(r-1)}_m(u+1)
&=
\frac{
\begin{array}{l}
\textstyle
(1+Y^{(r-2)}_{m}(u))
(1+Y^{(r)}_{2m-1}(u))
(1+Y^{(r)}_{2m+1}(u))
\\
\textstyle
\quad\times
(1+Y^{(r)}_{2m}\left(u-\frac{1}{2}\right))
(1+Y^{(r)}_{2m}\left(u+\frac{1}{2}\right))
\end{array}
}
{
(1+Y^{(r-1)}_{m-1}(u)^{-1})(1+Y^{(r-1)}_{m+1}(u)^{-1})
},
\\
Y^{(r)}_{2m}\left(u-\textstyle\frac{1}{2}\right)
Y^{(r)}_{2m}\left(u+\textstyle\frac{1}{2}\right)
&=
\frac{1+Y^{(r-1)}_{m}(u)}
{
(1+Y^{(r)}_{2m-1}(u)^{-1})(1+Y^{(r)}_{2m+1}(u)^{-1})
},\\
Y^{(r)}_{2m+1}\left(u-\textstyle\frac{1}{2}\right)
Y^{(r)}_{2m+1}\left(u+\textstyle\frac{1}{2}\right)
&=
\frac{1}{(1+Y^{(r)}_{2m}(u)^{-1})(1+Y^{(r)}_{2m+2}(u)^{-1})}.
\end{split}
\end{align}
\end{definition}

Let us write \eqref{eq:TB1} in a unified manner
\begin{align}
\label{eq:Tu}
T^{(a)}_{m}\left(u-\textstyle\frac{1}{t_a}\right)
T^{(a)}_{m}\left(u+\textstyle\frac{1}{t_a}\right)
&=
T^{(a)}_{m-1}(u)T^{(a)}_{m+1}(u)
+
\prod_{(b,k,v)\in \mathcal{I}_{\ell}}
T^{(b)}_{k}(v)^{G(b,k,v;a,m,u)}.
\end{align}
Define the transposition
$^{t}G(b,k,v;a,m,u)=G(a,m,u;b,k,v)$.
Then, we have
\begin{align}
\label{eq:Yu}
Y^{(a)}_{m}\left(u-\textstyle\frac{1}{t_a}\right)
Y^{(a)}_{m}\left(u+\textstyle\frac{1}{t_a}\right)
&=
\frac{
\displaystyle
\prod_{(b,k,v)\in \mathcal{I}_{\ell}}
(1+Y^{(b)}_{k}(v))^{{}^t\!G(b,k,v;a,m,u)}
}
{
(1+Y^{(a)}_{m-1}(u)^{-1})(1+Y^{(a)}_{m+1}(u)^{-1})
}.
\end{align}

See \cite{IIKNS,KNS3} and references therein on
the background of these systems.

\subsection{Periodicities}


\begin{definition}
\label{defn:RT2}
Let  $\EuScript{T}_{\ell}(B_r)$
be the commutative ring over $\mathbb{Z}$ with identity
element, with generators
$T^{(a)}_m(u)^{\pm 1}$
 ($(a,m,u)\in \mathcal{I}_{\ell}$)
and relations $\mathbb{T}_{\ell}(B_r)$
together with $T^{(a)}_m(u)T^{(a)}_m(u)^{-1}=1$.
Let $\EuScript{T}^{\circ}_{\ell}(B_r)$ be
 the subring of $\EuScript{T}_{\ell}(B_r)$
generated by 
$T^{(a)}_m(u)$ 
 ($(a,m,u)\in \mathcal{I}_{\ell}$).
\end{definition}

A {\em semifield\/} $(\mathbb{P},\oplus)$ is an
abelian multiplicative group $\mathbb{P}$ endowed with a binary
operation of addition $\oplus$ which is commutative,
associative, and distributive with respect to the
multiplication in $\mathbb{P}$ \cite{FZ3,HW}. 

\begin{definition}
\label{def:YB}
Let $\EuScript{Y}_{\ell}(B_r)$
be the semifield with generators
$Y^{(a)}_m(u)$
 $((a,m,u)\in \mathcal{I}_{\ell})$
and relations $\mathbb{Y}_{\ell}(B_r)$.
Let $\EuScript{Y}^{\circ}_{\ell}(B_r)$
be the multiplicative subgroup
of $\EuScript{Y}_{\ell}(B_r)$
generated by
$Y^{(a)}_m(u)$, $1+Y^{(a)}_m(u)$
 ($(a,m,u)\in \mathcal{I}_{\ell}$).
(Here we use the symbol $+$ instead of $\oplus$ 
for simplicity.)
\end{definition}

The first main result of the paper is the periodicities
of the T and Y-systems.

\begin{theorem}[Conjectured by \cite{IIKNS}]
\label{thm:Tperiod}
The following relations hold in
$\EuScript{T}^{\circ}_{\ell}(B_r)$.
\par
(i) Half periodicity: $T^{(a)}_m(u+h^{\vee}+\ell)
=T^{(a)}_{t_a \ell -m}(u)$.
\par
(ii) Full periodicity: $T^{(a)}_m(u+2(h^{\vee}+\ell))
=T^{(a)}_m(u)$.
\end{theorem}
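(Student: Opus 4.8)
The plan is to deduce the periodicity of the T-system from the periodicity of the corresponding Y-system, which in turn is obtained from the tropical Y-system via the cluster algebra / cluster category machinery of Keller and Plamondon. First I would realize the restricted Y-system $\mathbb{Y}_\ell(B_r)$ as (part of) the $y$-dynamics of a sequence of mutations on a quiver with potential attached to the pair of Dynkin data $(B_r,A_{\ell-1})$ in the guise of a skew-symmetric exchange matrix; the doubling of the time step $u\mapsto u\pm\frac12$ at the node $a=r$ is accommodated by doubling the relevant index and working with a quiver on the index set $\{(a,m)\}$ of size roughly $r(\ell-1)$, chosen so that the forward mutation sequence $\mu_+$ (odd slice) followed by $\mu_-$ (even slice) reproduces one unit shift $u\mapsto u+1$ in the Y-variables. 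The key input is then a \emph{tropical} version of this dynamics: running the same mutation sequence on the tropical semifield generated by the initial $y$-variables, one tracks only the exponent vectors ($c$-vectors / $g$-vectors), which evolve by a piecewise-linear map.

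The main steps, in order, would be: (1) identify the quiver/matrix and the mutation sequence, and check that the Y-system relations \eqref{eq:YB1} are exactly the $y$-variable mutation relations, using the transpose-symmetry encoded in \eqref{eq:Yu}; (2) prove the \emph{tropical periodicity}: show that the piecewise-linear dynamics of the $c$-vectors is periodic with the stated (half-)period $h^\vee+\ell$, and moreover that the sign-coherence of $c$-vectors holds so that each $c$-vector is either positive or negative throughout — this is where one shows the tropical $y$-variables return to their initial values (up to the expected permutation/involution) after $N=h^\vee+\ell$ steps, and changes overall sign in a controlled way at the half-period; (3) invoke the categorification (Plamondon's generalization of the cluster category to arbitrary skew-symmetric matrices, together with Keller's argument) to upgrade tropical periodicity to genuine periodicity in $\EuScript{Y}^\circ_\ell(B_r)$: the point is that a sequence of mutations that is periodic at the tropical level corresponds to an autoequivalence of the (generalized) cluster category, hence is periodic on the actual cluster algebra, hence on the $y$-variables; (4) finally, translate back to the T-system: using the standard relation between $F$-polynomials / $\tau$-functions and cluster variables (the $T$-variables are, up to a normalization by the tropical data, the cluster variables $x$-side, while the $Y$-variables are the coefficient side), the periodicity of the $y$-variables plus the periodicity of the tropical $c$-vectors forces the periodicity of the $x$-variables, and a short separation-of-variables argument as in \cite{IIKNS} identifies these with the $T^{(a)}_m(u)$ and yields (i), with (ii) following by composing the half-period involution with itself.

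I expect the main obstacle to be Step (2) — pinning down the exact combinatorics of the tropical $Y$-system for type $B_r$ and proving its periodicity with the precise period $h^\vee+\ell=2r-1+\ell$ and the precise involution $m\mapsto t_a\ell-m$. Unlike the simply laced case, the mutation sequence is not symmetric under the obvious $\mathbb{Z}/2$, the ranks $t_a$ differ between the short node $r$ and the long nodes, and the folding/unfolding that relates $B_r$ to a simply laced datum is not available as a cluster-algebra operation; so the tropical dynamics must be analyzed essentially by hand, presumably by exhibiting an explicit piecewise-linear solution (a "tropical seed" trajectory) and checking it closes up — this is the technical heart of the paper. Once tropical periodicity and sign-coherence are in place, Steps (3) and (4) are close to formal, following the template of \cite{Ke1,Ke2,IIKNS} with Plamondon's category replacing the cluster category; in particular the deduction of Theorem~\ref{thm:Tperiod}(ii) from (i) is immediate since $(h^\vee+\ell)$-shift composed with itself is the $2(h^\vee+\ell)$-shift and $m\mapsto t_a\ell-m$ is an involution.
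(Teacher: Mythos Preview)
Your proposal is essentially the paper's approach: realize the T- and Y-systems as the $x$- and $y$-dynamics of a specific quiver mutation sequence, prove the periodicity of the \emph{tropical} $y$-variables directly, and then invoke Plamondon's categorification to lift tropical periodicity to periodicity of the full seed $(Q,x,y)$, from which both Theorems~\ref{thm:Tperiod} and~\ref{thm:Yperiod} follow at once via the isomorphisms of Theorems~\ref{thm:Tiso} and~\ref{thm:Yiso}. Two small sharpenings: (a) the lift in Step~(3) is packaged as a single general theorem (Theorem~\ref{thm:XXX}) stating that periodicity of principal coefficients implies periodicity of the entire seed, so you get $x$-periodicity directly rather than going through $y$ first as in your Step~(4); (b) the technical heart you anticipate in Step~(2) is handled not by brute-force tracking of a single trajectory but by a \emph{factorization property}: in the backward direction the tropical system decouples into copies of the level~$2$ system (analyzed via the root system of $A_{2r-1}$), and in the forward direction into type~$A$ pieces, which makes the half-periodicity $h^\vee+\ell$ and the involution $m\mapsto t_a\ell-m$ transparent.
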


\begin{theorem}[Conjectured by \cite{KNS2}]
\label{thm:Yperiod}
The following relations hold in
$\EuScript{Y}^{\circ}_{\ell}(B_r)$.
\par
(i) Half periodicity: $Y^{(a)}_m(u+h^{\vee}+\ell)
=Y^{(a)}_{t_a \ell -m}(u)$.
\par
(ii) Full periodicity: $Y^{(a)}_m(u+2(h^{\vee}+\ell))
=Y^{(a)}_m(u)$.
\end{theorem}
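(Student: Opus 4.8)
The plan is to exhibit the level-$\ell$ restricted Y-system of type $B_r$ as the $y$-variable (coefficient) dynamics of a cluster algebra with a \emph{skew-symmetric} exchange matrix, and then to extract the periodicity from that of the associated tropical Y-system, which is in turn governed by Plamondon's cluster category. First I would construct a finite quiver $Q=Q_\ell(B_r)$ whose vertex set is a fundamental domain for $\mathcal{I}_\ell(B_r)$ — essentially the $(a,m)$-array $1\le a\le r$, $1\le m\le t_a\ell-1$ together with a two-step window in the spectral parameter $u$ — and exhibit two composite mutations $\mu_\circ,\mu_\bullet$ at the two parity classes of vertices, together with a permutation $\nu$ of the vertices, such that the composite $\bar\nu=\nu\,\mu_\bullet\,\mu_\circ$ implements one ``time step'' of the Y-system (a unit shift in $u$ at the vertices with $a<r$, a half-unit shift at $a=r$). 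Matching the $y$-seed mutation rule against \eqref{eq:YB1} — conveniently packaged in the unified form \eqref{eq:Yu} — is then a finite if delicate verification; the point is that the non-simply-laced data of $B_r$ (the numbers $t_a$, the half-integer parameters, the doubled relations at $a=r$) are carried entirely by the \emph{shape} of $Q$ and of the mutation sequence, so that $Q$ remains an ordinary quiver. Simultaneously the T-variables of \eqref{eq:TB1} are recovered as the cluster ($x$-)variables of the same cluster algebra, which is how Theorem~\ref{thm:Tperiod} travels along in parallel.

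\emph{Reduction to tropicality.} Working in the universal semifield, the separation formulas of \cite{FZ3} express every $Y$-variable generated along the iteration of $\bar\nu$ in terms of (i) the corresponding \emph{tropical} $Y$-variables, equivalently the $c$-vectors, and (ii) the $F$-polynomials, which depend only on the sequence of exchange matrices, equivalently on the quiver mutations, and are tied to the $g$-vectors. Hence it is enough to prove that $\bar\nu^{\,N}$, with $N$ the number of composite steps accounting for a shift of $h^\vee+\ell$ in $u$, returns $Q$ to itself and returns all $c$- and $g$-vectors to themselves up to the reflection $m\mapsto t_a\ell-m$ of the vertices. Granting this, the entire $Y$-pattern and all cluster variables are periodic with the asserted half-period; specializing the universal semifield to $\EuScript{Y}_\ell(B_r)$ gives the half-periodicity in Theorem~\ref{thm:Yperiod}, the full periodicity follows by applying it twice since $m\mapsto t_a\ell-m$ is an involution, and the cluster-variable statement gives Theorem~\ref{thm:Tperiod}. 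Everything is thereby reduced to the periodicity of the tropical Y-system, i.e. to pure mutation combinatorics.

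\emph{Tropical periodicity via categorification.} To the quiver $Q$, equipped with its canonical potential, Plamondon attaches a $2$-Calabi--Yau triangulated category $\mathcal{C}_Q$ with cluster-tilting objects, valid for an arbitrary skew-symmetric $Q$ — in particular for ours, which is neither acyclic nor of surface type — and in which sign-coherence of $c$-vectors and the $g$-vector parametrization of cluster-tilting objects hold. Under this categorification $\bar\nu$ lifts to an autoequivalence $\Phi$ of $\mathcal{C}_Q$, a composition of mutation functors with the permutation, and the tropical $Y$-seed after $k$ steps is read off from $\Phi^{k}$ applied to the shifted simple objects; the sought periodicity is then exactly the statement that $\Phi^{N}$ is isomorphic to the permutation functor realizing $m\mapsto t_a\ell-m$, and $\Phi^{2N}\cong\mathrm{id}$. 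Following the strategy of \cite{Ke1,Ke2} for the simply-laced pairs, I would identify $\Phi$ explicitly as a ``rotation'' built from the Auslander--Reiten translation and the shift of $\mathcal{C}_Q$ and compute its order, the value $2(h^\vee+\ell)$ being dictated by the Coxeter combinatorics of $B_r$ together with the $A$-type factor coming from the level.

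\emph{Main obstacle.} The crux is the last step: identifying the autoequivalence $\Phi$ for this specific non-simply-laced quiver and proving it has the stated finite order. For the simply-laced pairs $\mathcal{C}_Q$ is the well-understood cluster category of a product of two Dynkin quivers and the order computation is known; here $Q$ is a folded/modified square product that fits no such template, so one must genuinely use Plamondon's general machinery — and, most likely, an auxiliary unfolding of the tropical $B_r$ Y-system into a simply-laced, T-system-type recursion — to carry the computation through and to see the dual Coxeter number $h^\vee=2r-1$, rather than $h=2r$, emerge. A secondary, more mechanical hurdle is the first step: arranging $Q$ and the mutation/permutation sequence so that the half-integer time steps at the node $a=r$ are reproduced exactly.
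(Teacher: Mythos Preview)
Your overall architecture is right and matches the paper: build the quiver $Q_\ell(B_r)$, realize the Y-system as coefficient dynamics along a composite mutation sequence, and reduce full periodicity to periodicity of the tropical Y-system (principal coefficients). The paper carries out exactly this reduction; its Theorem~\ref{thm:XXX} is the general statement that, for any skew-symmetric $B$, periodicity of principal coefficients forces periodicity of the whole seed, and it is proved via Plamondon's category just as you indicate.

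The genuine gap is your third step, ``tropical periodicity via categorification.'' You propose to lift the composite mutation to an autoequivalence $\Phi$ of $\mathcal{C}_Q$, identify it as a rotation built from the AR translation and the shift, and compute its order. This is precisely the strategy that the paper says \emph{does not go through} here: the quiver $Q_\ell(B_r)$ is not a product of Dynkin quivers, it is not acyclic, and there is no known description of its cluster category that would let you recognize $\Phi$ or read off its order; the Coxeter/AR argument of \cite{Ke1,Ke2} relies essentially on the product structure in the simply-laced case and has no analogue for this quiver. (There is also a technical slip: in Plamondon's framework mutation replaces the cluster-tilting object inside a fixed category, it is not an autoequivalence; one would have to pass through Keller--Yang derived equivalences, and composing those back to an autoequivalence requires exactly the periodicity you are trying to prove.)

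The paper's key innovation is to bypass this obstacle by proving the tropical periodicity \emph{combinatorially}, not categorically. This is the ``factorization property'' (Propositions~\ref{prop:lev2} and~\ref{prop:levh}): in the region $-h^\vee\le u<0$ the tropical coefficients decouple row-by-row into copies of the level-$2$ system, which is in turn controlled by a piecewise-linear Coxeter-type element $\sigma=\sigma_r\sigma_-\sigma_r\sigma_+$ acting on almost-positive roots of $A_{2r-1}$; in the region $0\le u<\ell$ they decouple column-by-column into type-$A$ tropical Y-systems of ranks $\ell-1$ and $2\ell-1$. Each piece has a known half-period ($h^\vee$ and $\ell$ respectively), and gluing gives Theorem~\ref{thm:tYperiod}. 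Only \emph{after} this combinatorial periodicity is established does categorification enter, and solely in the role of Theorem~\ref{thm:XXX} to transport tropical periodicity to the universal semifield. So your plan has the roles of combinatorics and categorification reversed at the decisive point: the hard step is elementary (if intricate) combinatorics of tropical mutations, and the categorical input is the general transfer principle, not an order computation for an autoequivalence.
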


\subsection{Dilogarithm identities}
Let $L(x)$ be the {\em Rogers dilogarithm function\/}
\cite{L,Ki2,Zag}
\begin{align}
\label{eq:L0}
L(x)=-\frac{1}{2}\int_{0}^x 
\left\{ \frac{\log(1-y)}{y}+
\frac{\log y}{1-y}
\right\} dy
\quad (0\leq x\leq 1).
\end{align}
It is well known that
\begin{gather}
\label{eq:L1}
L(0)=0,
\quad L(1)=\frac{\pi^2}{6},\\
\label{eq:L2}
\quad L(x) + L(1-x)=\frac{\pi^2}{6}
\quad (0\leq x\leq 1).
\end{gather}

We introduce the {\em constant version\/} of the Y-system.

\begin{definition}
\label{defn:RYc}
Fix an integer $\ell \geq 2$.
The {\it level $\ell$ restricted constant Y-system
 $\mathbb{Y}^{\mathrm{c}}_{\ell}(B_r)$
of type $B_r$}
is the following system of relations
\eqref{eq:YBc} for a family of variables $Y^{\mathrm{c}}_{\ell}=\{Y^{(a)}_m
\mid
a\in I; m=1,\dots,t_a\ell-1 \}$,
where 
$Y^{(0)}_m =Y^{(a)}_0{}^{-1}=
Y^{(a)}_{t_a\ell}{}^{-1}=0$
 if they occur
in the right hand sides in the relations:
\begin{align}
\label{eq:YBc}
\begin{split}
(Y^{(a)}_m)^2
&=
\frac{
(1+Y^{(a-1)}_{m})(1+Y^{(a+1)}_{m})
}
{
(1+Y^{(a)}_{m-1}{}^{-1})(1+Y^{(a)}_{m+1}{}^{-1})
}\quad (1\leq a\leq r-2),\\
(Y^{(r-1)}_m)^2
&=
\frac{
\begin{array}{l}
\textstyle
(1+Y^{(r-2)}_{m})
(1+Y^{(r)}_{2m-1})(1+Y^{(r)}_{2m})^2
(1+Y^{(r)}_{2m+1})
\end{array}
}
{
(1+Y^{(r-1)}_{m-1}{}^{-1})(1+Y^{(r-1)}_{m+1}{}^{-1})
},\\
(Y^{(r)}_{2m})^2
&=
\frac{1+Y^{(r-1)}_{m}}
{
(1+Y^{(r)}_{2m-1}{}^{-1})(1+Y^{(r)}_{2m+1}{}^{-1})
},\\
(Y^{(r)}_{2m+1})^2
&=
\frac{1}{(1+Y^{(r)}_{2m}{}^{-1})(1+Y^{(r)}_{2m+2}{}^{-1})}.
\end{split}
\end{align}
\end{definition}

\begin{proposition}
There exists a unique positive real solution
of  $\mathbb{Y}^{\mathrm{c}}_{\ell}(B_r)$.
\end{proposition}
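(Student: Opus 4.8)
The plan is to deduce the statement from the analogous fact for the associated \emph{constant T-system}, for which all terms are positive and a contraction argument applies directly.

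First I would introduce the level $\ell$ restricted constant T-system $\mathbb{T}^{\mathrm c}_\ell(B_r)$, obtained from \eqref{eq:TB1} by deleting all $u$-dependence (so that $T^{(a)}_m(u\mp\tfrac1{t_a})$ is replaced by $T^{(a)}_m$, and e.g.\ $T^{(r-1)}_m(u-\tfrac12)T^{(r-1)}_m(u+\tfrac12)$ by $(T^{(r-1)}_m)^2$), for variables $T^{(a)}_m$ ($a\in I$, $1\le m\le t_a\ell-1$) with the conventions $T^{(0)}_m=T^{(a)}_0=T^{(a)}_{t_a\ell}=1$. The standard correspondence between T- and Y-systems then specializes to a bijection between positive real solutions of $\mathbb{T}^{\mathrm c}_\ell(B_r)$ and of $\mathbb{Y}^{\mathrm c}_\ell(B_r)$: a positive solution of $\mathbb{T}^{\mathrm c}_\ell(B_r)$ yields the positive solution $Y^{(a)}_m:=(T^{(a)}_m)^2/(T^{(a)}_{m-1}T^{(a)}_{m+1})-1$ of $\mathbb{Y}^{\mathrm c}_\ell(B_r)$ (positive because the additive monomial in \eqref{eq:TB1} is a positive monomial in the $T^{(b)}_k$); conversely, from a positive solution of $\mathbb{Y}^{\mathrm c}_\ell(B_r)$ one recovers $T$ by solving, for each $a$ separately, the tridiagonal Dirichlet problem $2\log T^{(a)}_m-\log T^{(a)}_{m-1}-\log T^{(a)}_{m+1}=\log(1+Y^{(a)}_m)$, which is uniquely solvable since $\mathrm{tridiag}(-1,2,-1)$ is invertible. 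Checking that these assignments are mutually inverse and send solutions to solutions is routine except for the bookkeeping of the non-uniform factors at the node $r$; granting it, the Proposition reduces to: $\mathbb{T}^{\mathrm c}_\ell(B_r)$ has a unique positive real solution.

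For the latter, put $\tau^{(a)}_m=\log T^{(a)}_m$ and $N=\sum_{a\in I}(t_a\ell-1)$. The constant T-system becomes a fixed point equation $\tau=\Theta(\tau)$ on $\mathbb{R}^N$, where each component has the form $\Theta_i(\tau)=\tfrac12\log\!\big(e^{A_i(\tau)}+e^{B_i(\tau)}\big)$, with $A_i$ (resp.\ $B_i$) the linear form with nonnegative integer coefficients recording the ``vertical'' neighbours $\tau^{(a)}_{m\pm1}$ (resp.\ the neighbours in the additive monomial, with multiplicities), the contributions of boundary entries being absorbed into a vanishing constant. Because $\partial\Theta_i/\partial\tau_j=\tfrac12(\alpha_i\,\partial_jA_i+\beta_i\,\partial_jB_i)$ with $\alpha_i,\beta_i>0$, $\alpha_i+\beta_i=1$ and $\partial_jA_i,\partial_jB_i\ge0$, the operator norm of $D\Theta(\tau)$ in the weighted norm $\|v\|_w=\max_i|v_i|/w_i$ (with $w>0$) is, for \emph{every} $\tau$, at most
\[
\tfrac12\,\max_i\ \frac{1}{w_i}\,\max\Big(\textstyle\sum_j(\partial_jA_i)\,w_j,\ \sum_j(\partial_jB_i)\,w_j\Big).
\]
I would then choose $w^{(a)}_m=\sin\!\big(\tfrac{\pi a}{2r}\big)\sin\!\big(\tfrac{\pi m}{t_a\ell}\big)$. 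Using $\sin\tfrac{\pi(m-1)}{L}+\sin\tfrac{\pi(m+1)}{L}=2\cos\tfrac{\pi}{L}\sin\tfrac{\pi m}{L}$ and the elementary identities $\sin\tfrac{2m\pi}{2\ell}=\sin\tfrac{m\pi}{\ell}$, $\sin\tfrac{\pi(m/2)}{\ell}=\sin\tfrac{\pi m}{2\ell}$, each inner sum above is a fixed multiple of $w^{(a)}_m$: the vertical sums equal $2\cos(\pi/(t_a\ell))\,w^{(a)}_m$, and the monomial sums are bounded by $2\cos(\pi/(2r))\,w^{(a)}_m$ — the latter being precisely where the fact that $2r$ is the Coxeter number of $B_r$ enters, the $C_a:=\sin(\pi a/(2r))$ forming the relevant ``Perron'' profile and the node-$r$ relations $C_{r-2}+C_r\le 2\cos(\tfrac{\pi}{2r})C_{r-1}$, $C_{r-1}\le\cos(\tfrac{\pi}{2r})C_r$ holding with equality. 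Since $\cos(\pi/(t_a\ell))\le\cos(\pi/(2\ell))<1$ and $\cos(\pi/(2r))<1$, $\Theta$ is a $c$-contraction with $c=\cos(\pi/(2\max(r,\ell)))<1$, and Banach's fixed point theorem gives the unique fixed point of $\Theta$ on $\mathbb{R}^N$, hence the unique positive solution of $\mathbb{T}^{\mathrm c}_\ell(B_r)$; the Proposition follows via the correspondence above.

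I expect the main obstacle to be what is typical of the non-simply-laced setting: faithfully propagating the non-uniform data at the node $r$ — the half-integer shifts, the column of doubled length $t_r\ell=2\ell$, and the squared factor $(T^{(r-1)}_m)^2$ — through both the T/Y dictionary and the weighted estimate above; once this is handled at $a=r-1,r$ the argument is otherwise type-independent. (Existence alone could also be obtained by exhibiting and verifying the explicit trigonometric solution of $\mathbb{T}^{\mathrm c}_\ell(B_r)$, but the contraction argument yields existence and uniqueness at once.)
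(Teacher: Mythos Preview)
Your route is genuinely different from the paper's. The paper substitutes $f^{(a)}_m=Y^{(a)}_m/(1+Y^{(a)}_m)$, rewrites $\mathbb{Y}^{\mathrm c}_\ell(B_r)$ as $f^{(a)}_m=\prod_{(b,k)}(1-f^{(b)}_k)^{K^{mk}_{ab}}$ with an explicit symmetric matrix $K$, checks that every principal minor of $K$ is positive (hence $K$ positive definite), and then invokes a theorem of Nahm--Keegan which guarantees a unique solution in $(0,1)^N$ for such systems. Your approach instead passes through the constant T-system and a weighted Banach contraction. The contraction part is correct and elegant: the sine weights do give exact equalities at the node $r$ via $\sin(\pi(r-1)/(2r))=\cos(\pi/(2r))$ and $\sin(2\pi m/(2\ell))=\sin(\pi m/\ell)$, and your contraction constant $\cos(\pi/(2\max(r,\ell)))$ is right. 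This buys a self-contained fixed-point argument with no external theorem; the paper's version is shorter but outsources the analytic content to \cite{NK}.

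However, the step you ``grant'' is not just bookkeeping. That a positive T-solution yields a positive Y-solution via $Y^{(a)}_m=(T^{(a)}_m)^2/(T^{(a)}_{m-1}T^{(a)}_{m+1})-1$ is indeed a straightforward substitution. But the converse---that the $T$ recovered from a Y-solution via your tridiagonal Dirichlet problem actually satisfies $\mathbb{T}^{\mathrm c}_\ell(B_r)$---is precisely what you need for \emph{uniqueness} of the Y-solution, and it is not automatic. Concretely, if you set $R^{(a)}_m:=Y^{(a)}_m\,T^{(a)}_{m-1}T^{(a)}_{m+1}/M^{(a)}_m$ with $M^{(a)}_m$ the monomial on the right of the T-relation, the Y-system gives a multiplicative recursion of the shape $(R^{(a)}_m)^2=R^{(a)}_{m-1}R^{(a)}_{m+1}$ in the simply laced columns, so $\log R^{(a)}_m$ is affine in $m$; but pinning $R\equiv 1$ still requires a boundary argument \emph{and} the correct treatment of the coupled $a=r-1,r$ columns where the recursion changes form. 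This is doable, but it is a genuine computation rather than routine bookkeeping. Without it, your argument establishes existence (apply T$\to$Y to the unique T-fixed-point) but leaves uniqueness of the Y-solution open.
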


\begin{proof}
Set $f^{(a)}_m=Y^{(a)}_m/{(1+Y^{(a)}_m)}$.
Then, \eqref{eq:YBc} is equivalent to the system
of equations \cite[Eq.~(B.28)]{KNS2}
\begin{align}
f^{(a)}_m = \prod_{(b,k)}(1-f^{(b)}_k)^{K^{mk}_{ab}},
\quad
K^{mk}_{ab}=(\alpha_a|\alpha_b)
(\min(t_bm,t_ak)-\frac{mk}{\ell}),
\end{align}
where $(\alpha_a|\alpha_b)$ is the invariant bilinear form
for the simple Lie algebra of type $B_r$
with normalization $(\alpha_a|\alpha_a)=2$ for a long root
$\alpha_a$.
By elementary transformations,
one can show that every principal minor of the matrix $K$
is positive. Therefore, $K$ is positive definite.
Also, it is clear that $K$ is symmetric.
Then, the theorem by \cite[Section 1]{NK} is applicable.
\end{proof}

The second main result of the paper is the dilogarithm
identities conjectured
by Kirillov \cite[Eq.~(7)]{Ki1}, properly corrected by
Kuniba \cite[Eqs.~(A.1a), (A.1c)]{Ku},

\begin{theorem}[Dilogarithm identities]
\label{thm:DI}
Suppose that a family of positive real numbers
$\{Y^{(a)}_m \mid a\in I; m=1,\dots,t_a\ell-1\}$
satisfies \eqref{eq:YBc}.
Then, we have the identity
\begin{align}
\label{eq:DI}
\frac{6}{\pi^2}\sum_{a\in I}
\sum_{m=1}^{t_a\ell-1}
L\left(\frac{Y^{(a)}_m}{1+Y^{(a)}_m}\right)
=
\frac{\ell \dim \mathfrak{g}}{h^{\vee}+\ell} - r,
\end{align}
where $\mathfrak{g}$ is
 the simple Lie algebra of type $B_r$.
\end{theorem}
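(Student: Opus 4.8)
The plan is to deduce \eqref{eq:DI} from the periodicity of the $Y$-system (Theorem~\ref{thm:Yperiod}) through the general dilogarithm identity attached to periods of cluster patterns, thereby transporting Nakanishi's simply laced argument \cite{Nkn} to type $B_r$. I would start from the cluster-algebraic model of $\mathbb{Y}_\ell(B_r)$ constructed in the earlier sections: an exchange matrix (obtained from the type-$B_r$ data, unfolded to a skew-symmetric one where required), a sequence of mutations $\mu_{i_{N-1}}\cdots\mu_{i_0}$, and a permutation $\nu$ of the vertices, such that the $Y$-variables $Y^{(a)}_m(u)$, for $(a,m,u)$ in one fundamental domain of the periodicity, are the coefficient ($y$-)variables produced along the mutation sequence, and Theorem~\ref{thm:Yperiod} says that this sequence together with $\nu$ is a period of the $y$-seed. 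A positive real solution $\{Y^{(a)}_m\}$ of $\mathbb{Y}^{\mathrm c}_\ell(B_r)$---unique by the Proposition above---is precisely a $u$-independent positive real solution of $\mathbb{Y}_\ell(B_r)$: specializing \eqref{eq:YB1} to $u$-independent variables gives back exactly \eqref{eq:YBc}, the squares and the factor $(1+Y^{(r)}_{2m})^2$ appearing for the obvious reasons. Under such a solution the quantity $y_{i_t}[t]^{\varepsilon_t}$ occurring in the dilogarithm identity below (with $\varepsilon_t\in\{\pm1\}$ the tropical sign of the corresponding $c$-vector) equals $(Y^{(a_t)}_{m_t})^{\pm1}$ for a pair $(a_t,m_t)$ and a sign that are read off from the tropical $Y$-system of type $B_r$ analyzed earlier---this being the role of the word ``tropical'' in the paper's title.

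I would then invoke the dilogarithm identity for periodic cluster patterns: the number
\[
\frac{6}{\pi^2}\sum_{t=0}^{N-1}\varepsilon_t\, L\!\left(\frac{y_{i_t}[t]^{\varepsilon_t}}{1+y_{i_t}[t]^{\varepsilon_t}}\right)
\]
is independent of the chosen positive real solution of the $Y$-system, its value being a combinatorial quantity determined by the $c$-vectors of the period---equivalently, by the tropical $Y$-system. Evaluating it at the constant solution and rewriting every term through $L(x)+L(1-x)=\tfrac{\pi^2}{6}$, the period sum turns into a $\mathbb{Z}$-linear combination of the quantities $L\bigl(\tfrac{Y^{(a)}_m}{1+Y^{(a)}_m}\bigr)$ together with a rational multiple of $\tfrac{\pi^2}{6}$; the coefficient attached to a given $(a,m)$ is a signed count over the occurrences of $(a,m)$ among the mutated vertices, and the constant is governed by the number of negative tropical signs along the period.

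The crux---and the main obstacle---is the final matching with \eqref{eq:DI}. One must show that all these coefficients coincide with the common period-multiplicity, which is delicate precisely because the lattice of arguments $u$ of $Y^{(r)}_m(u)$ is twice as fine as that of $Y^{(a)}_m(u)$ for $a<r$, so the bookkeeping genuinely uses the combinatorial structure of the $B_r$ $Y$-system; and, after dividing by this multiplicity, that the remaining constant equals Kirillov's number $\tfrac{\ell\dim\mathfrak g}{h^\vee+\ell}-r$. The latter amounts to an explicit count of the negative tropical signs of the type-$B_r$ tropical $Y$-system over a period, combined with a dimension identity for $\mathfrak g$ (of the kind expressing $\dim\mathfrak g$ through the Coxeter and dual Coxeter numbers). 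This step is the non-simply-laced counterpart of the computation in \cite{Nkn}; everything preceding it is a formal transcription of the general cluster-algebraic dilogarithm machinery through the model of $\mathbb{Y}_\ell(B_r)$ fixed in the earlier sections.
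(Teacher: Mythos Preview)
Your proposal is correct and follows essentially the same approach as the paper: the paper first proves the functional identity (Theorem~\ref{thm:DI2}) via the wedge vanishing and the count $N_-$ of negative tropical monomials (Proposition~\ref{prop:wedge} and Theorem~\ref{thm:levhd}), and then specializes to the constant solution to obtain Theorem~\ref{thm:DI}. Your worry about non-uniform multiplicities for $a=r$ versus $a<r$ does not materialize---the parity decomposition of $\mathcal{I}_\ell$ ensures each pair $(a,m)$ occurs the same number of times, $4(h^\vee+\ell)$, in the period sum, so the division is immediate.
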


The rational number of the first term in the right hand side
of \eqref{eq:DI} is the central charge of the {\em Wess-Zumino-Witten
 conformal
field theory\/} of type $B_r$ with level $\ell$.
The rational number in the right hand side of \eqref{eq:DI}
 itself
is also the central charge of the {\em parafermion conformal
field theory\/} of type $B_r$ with level $\ell$.
See \cite{KNS1,Ki2,Nah,Zag} for more about background
of \eqref{eq:DI}.

Due to the well-known
formula $\dim\mathfrak{g}=r(h+1)$, the right hand side of 
\eqref{eq:DI} is
 equal to the number
\begin{align}
\label{eq:c1}
\frac{ r(\ell h - h^{\vee})}{h^{\vee}+\ell}.
\end{align}

In fact, we prove a functional generalization of
Theorem \ref{thm:DI}, following the ideas of \cite{GT,FS,C,Nkn}.

\begin{theorem}[Functional dilogarithm identities]
\label{thm:DI2}
Suppose that 
a family of positive real numbers
 $\{Y^{(a)}_m (u)\mid (a,m,u)\in \mathcal{I}_{\ell} \}$
satisfies $\mathbb{Y}_{\ell}(B_r)$.
Then, we have the identities
\begin{align}\label{eq:DI2}
\frac{6}{\pi^2}
\sum_{
\genfrac{}{}{0pt}{1}
{
(a,m,u)\in \mathcal{I}_{\ell}
}
{
0\leq u < 2(h^{\vee}+\ell)
}
}
L\left(
\frac{Y^{(a)}_m(u)}{1+Y^{(a)}_m(u)}
\right)
&=
4r(\ell h-h^{\vee})=4r(2r\ell-2r+1),\\
\label{eq:DI3}
\frac{6}{\pi^2}
\sum_{
\genfrac{}{}{0pt}{1}
{
(a,m,u)\in \mathcal{I}_{\ell}
}
{
0\leq u < 2(h^{\vee}+\ell)
}
}
L\left(
\frac{1}{1+Y^{(a)}_m(u)}
\right)
&
=4\ell(r\ell+\ell-1).
\end{align}
\end{theorem}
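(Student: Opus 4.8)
The plan is to deduce the functional dilogarithm identities \eqref{eq:DI2}--\eqref{eq:DI3} from the periodicity of the Y-system (Theorem~\ref{thm:Yperiod}) together with the cluster-algebraic interpretation of the Y-variables, following the strategy of \cite{Nkn} as adapted to the tropical Y-system framework used throughout this paper. First, one realizes the Y-system $\mathbb{Y}_{\ell}(B_r)$ as a sequence of mutations of Y-seeds in a cluster algebra with coefficients: the variables $Y^{(a)}_m(u)$ with $u$ in a suitable half-period are arranged so that the relations \eqref{eq:Yu} become Y-seed mutations, and by Theorem~\ref{thm:Yperiod}(ii) the total mutation sequence over $0\le u<2(h^\vee+\ell)$ returns to the initial Y-seed (up to a permutation of indices coming from the half-periodicity). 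The key input is that along this periodic sequence of mutations the tropical sign of each mutated Y-variable is determined by the tropical Y-system, so one knows, at each step, whether the exchanged coefficient tuple is "positive" or "negative" in the sense of the sign-coherence of $c$-vectors.

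Next I would invoke the general identity for dilogarithm sums along periodic Y-seed mutation sequences. Concretely, for each mutation step $\mu_k$ transforming a Y-seed, the Rogers dilogarithm satisfies a local five-term-type relation, and summing these local relations around the full period produces a telescoping identity whose net value is governed by the change of the so-called $c$-matrices and the associated "$g$-vector" or $F$-polynomial bookkeeping. This is exactly the mechanism of \cite{Nkn,Nakanishi-Dilog}: the sum $\sum L(Y/(1+Y))$ over all mutated variables equals $\tfrac{\pi^2}{6}$ times (number of mutations with negative tropical sign) plus a correction, and the constancy of the sum under the constant specialization (Theorem~\ref{thm:DI}) then pins the functional version \eqref{eq:DI2} once we know the relevant combinatorial count. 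For \eqref{eq:DI3} one uses the reflection formula \eqref{eq:L2}: $L\!\left(\tfrac{1}{1+Y}\right) = \tfrac{\pi^2}{6} - L\!\left(\tfrac{Y}{1+Y}\right)$, so \eqref{eq:DI3} is equivalent to \eqref{eq:DI2} once the total number of triples $(a,m,u)$ with $0\le u<2(h^\vee+\ell)$ is computed; this count is $\sum_{a\in I}(t_a\ell-1)\cdot 2(h^\vee+\ell)$, and one checks by direct arithmetic that $\tfrac{6}{\pi^2}\cdot\tfrac{\pi^2}{6}\cdot\bigl(\text{count}\bigr) - 4r(\ell h - h^\vee) = 4\ell(r\ell+\ell-1)$, using $h=2r$, $h^\vee=2r-1$, and $\sum_{a\in I} t_a = r+1$.

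For the constant identity \eqref{eq:DI} of Theorem~\ref{thm:DI}, I would argue separately (and first), since it feeds into the normalization of the functional version. Given the unique positive solution of $\mathbb{Y}^{\mathrm c}_\ell(B_r)$, one substitutes $f^{(a)}_m = Y^{(a)}_m/(1+Y^{(a)}_m)$ and uses the thermodynamic-Bethe-ansatz-style manipulation: multiply each defining relation by $\log(1+Y^{(a)}_m)$ (or by $\log Y^{(a)}_m$), sum over all $(a,m)$, and integrate the resulting differential identity for $L$, invoking \eqref{eq:L1}--\eqref{eq:L2} to evaluate boundary contributions where $Y^{(a)}_0, Y^{(a)}_{t_a\ell}$ hit $0$ or $\infty$. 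The quadratic left-hand sides $(Y^{(a)}_m)^2$ are what produce the factor matching the central charge; the positive-definiteness of the matrix $K$ (established in the Proposition above) guarantees the manipulation is legitimate and the sum converges. The value $\tfrac{\ell\dim\mathfrak g}{h^\vee+\ell} - r$ then emerges from the rank-one null-space structure and the identity $\dim\mathfrak g = r(h+1)$, giving \eqref{eq:c1}.

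The main obstacle I anticipate is the precise matching between the nonsimply-laced Y-system $\mathbb{Y}_\ell(B_r)$ and a bona fide Y-seed mutation sequence in a \emph{skew-symmetric} cluster algebra: the $B_r$ system lives on a skew-symmetrizable exchange matrix, and the trick used in this paper is to pass through a larger skew-symmetric (simply-laced "unfolded") quiver on which Plamondon's categorification applies. One must verify that the dilogarithm sum over the $B_r$ indices equals, after this unfolding, a dilogarithm sum over the folded orbit of the enlarged quiver, so that the half-integer shifts $u\mapsto u\pm\tfrac12$ at the short node $a=r$ and the doubled range $m=1,\dots,2\ell-1$ there are correctly accounted for. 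Getting the bookkeeping of tropical signs right under this folding — i.e. showing that the count of negative mutations in the unfolded sequence restricts correctly and yields exactly $4r(\ell h-h^\vee)$ — is the delicate step; everything else is either a direct consequence of Theorem~\ref{thm:Yperiod} and the general dilogarithm-along-mutations formula, or elementary arithmetic with $h$, $h^\vee$, and the $t_a$.
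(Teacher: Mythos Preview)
Your outline has the right high-level architecture --- periodicity of the Y-system, tropical sign-counting, and the reflection formula \eqref{eq:L2} for the equivalence of \eqref{eq:DI2} and \eqref{eq:DI3} --- and these are indeed the ingredients the paper uses. However, there are two genuine misconceptions that would derail your argument.

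First, your ``main obstacle'' is a phantom. The exchange matrix $B_\ell(B_r)$ used in Section~2 is already \emph{skew-symmetric}: it corresponds to the explicit quiver $Q_\ell(B_r)$ of Figure~\ref{fig:quiverB}, not to a skew-symmetrizable matrix of Cartan type $B_r$. The label ``$B_r$'' refers to the underlying quantum affine algebra, not to the shape of the exchange matrix. No unfolding is required; the half-integer shifts at $a=r$ and the doubled range $m=1,\dots,2\ell-1$ are built directly into the quiver and the composite mutation sequence \eqref{eq:mutseq}. So the sign-counting you need is exactly Theorem~\ref{thm:levhd}, already proved from the factorization property of the tropical Y-system.

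Second, your logical flow is inverted. You propose to prove the constant identity (Theorem~\ref{thm:DI}) first via TBA manipulations and use it to ``pin'' the functional version. The paper does the opposite: the functional identity is proved directly, and the constant version follows by specialization (as remarked after Theorem~\ref{thm:DI2}). More importantly, the functional identity is \emph{not} normalized by evaluating at a constant solution. The actual mechanism, following \cite{FS,C,Nkn}, is that the vanishing of
\[
\sum_{(a,m,u)\in S'_+} y^{(a)}_m(u)\wedge\bigl(1+y^{(a)}_m(u)\bigr)=0
\quad\text{in }\textstyle\bigwedge^2\mathbb{Q}_{\mathrm{sf}}(y)
\]
(Proposition~\ref{prop:wedge}(i)) forces the dilogarithm sum to be constant across \emph{all} semifield homomorphisms $\varphi:\mathbb{Q}_{\mathrm{sf}}(y)\to\mathbb{R}_+$, and the value is then read off from a degenerate limit, which is where the negative-monomial count $N_-=2r(2r\ell-2r+1)$ enters. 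Your proposal gestures at ``$F$-polynomial bookkeeping'' and ``telescoping,'' but the substantive work --- the explicit cancellation of all cross-terms after substituting \eqref{eq:F2}--\eqref{eq:F3} into the wedge sum, using the periodicity of $F$-polynomials and the Y-system itself --- is the entire content of Section~\ref{sec:dilog} and is not something you can absorb into a black-box citation.
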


The two identities
\eqref{eq:DI2} and \eqref{eq:DI3}  are equivalent
to each other due to \eqref{eq:L2},
since the sum of the right hand sides
is equal to $4(h^{\vee}+\ell)(r\ell+\ell-r)$,
which is the total number of $(a,m,u)\in I_{\ell}$
with $0\leq u < 2(h^{\vee}+\ell)$.

It is clear that Theorem \ref{thm:DI}
follows form Theorem \ref{thm:DI2}
by considering a {\em constant solution} $
Y^{(a)}_m=Y^{(a)}_m(u)$
of $\mathbb{Y}_{\ell}(B_r)$ in
the variable $u$.

\subsection{Outline of method and contents}

Let us briefly explain the idea of our proof
of the main results, Theorems \ref{thm:Tperiod},
\ref{thm:Yperiod}, and \ref{thm:DI2}.

To start up, we identify the T and Y-systems,
$\mathbb{T}_{\ell}(B_r)$ and $\mathbb{Y}_{\ell}(B_r)$
in Definitions \ref{defn:RT} and \ref{defn:RY}
as systems of relations for cluster variables and coefficients of
a cluster algebra, respectively.
This procedure is mostly parallel to the simply
laced case \cite{FZ2,FZ3,Ke1,DiK,HL,IIKNS,KNS3},
but necessarily more complicated.
For example,
unlike the simply laced ones,
the composite mutation which generates
the translation of the spectral parameter $u$ is
neither of {\em bipartite type} nor obviously
related  to
the {\em Coxeter element} of a certain Weyl group.
This is not a serious problem, though.
A real problem is that the arising
quiver $Q_{\ell}(B_r)$ for the cluster algebra,
which is seen in Figure \ref{fig:quiverB} in Section 2.3,
is not a familiar one in the representation theory of quivers;
In particular, we have no known or obvious periodicity result.
This is the main obstacle to a straightforward application
of the method in  \cite{Ke1,IIKNS,Ke2},
where the periodicities of the T and Y-systems
in the simply laced case
were derived from the periodicity in the
corresponding cluster category.

The key to bypass this obstacle is to consider
the {\em tropical Y-system}.
The tropical Y-system is the tropicalization of the
Y-system,
or more generally,
the exchange relations
of {\em the coefficients in the tropical semifield}
(called {\em principal coefficients} in \cite{FZ3})
 of a given cluster algebra.
In fact, it was already used 
by Fomin-Zelevinsky \cite{FZ2}
as a main tool in the proof of the
periodicity of the Y-systems in the simply laced  case at level 2.
In addition,  we make two crucial observations.

{\bf Observation 1.} {\em The periodicities
of cluster variables and coefficients follow from
the periodicity of principal coefficients.}

See Theorem \ref{thm:XXX} for a precise statement.
This claim was essentially conjectured by Fomin-Zelevinsky
\cite[Conjecture 4.7]{FZ3}. 
We prove the claim for the cluster algebra associated
with any {\em skew-symmetric matrix},
or equivalently, with any {\em quiver}.
To prove it, we use the recent result
by Plamondon \cite{Pl1,Pl2} on the  categorification
of the cluster algebra associated with an {\em arbitrary} quiver.
It is so far the most general formulation of
the categorification by  {\em 2-Calabi-Yau categories}
recently developed by various authors in particular cases (see \cite{A}
and the references therein).
Since each principal coefficient tuple carries the complete information
of the corresponding object in the category
 through {\em index},
the periodicity of principal coefficients
implies the same periodicity of objects in the category.
Therefore,  it also implies the same
periodicities of cluster variables and coefficients
by categorification.

{\bf Observation 2.} {\em
The tropical Y-system for $\mathbb{Y}_{\ell}(B_r)$
has a remarkable `factorization
property'
 so that its periodicity can be directly verified.}

Such a factorization  property of the tropical Y-system
 was first noticed by \cite{Nkn} in the proof of the
dilogarithm identities in the simply laced case.
It roughly means that the tropical Y-system at a higher level
splits into the level 2 pieces and the type $A$ pieces.
Moreover, each piece can be described in terms
of the piecewise-linear analogue of the simple reflections
of a certain Weyl group introduced by \cite{FZ2}.
Therefore, the periodicity is tractable.

Combining these two observations, we  obtain
the desired periodicities in  Theorems \ref{thm:Tperiod} and
\ref{thm:Yperiod}.

The tropical Y-system plays a central role
not only in the periodicity but also
in the dilogarithm identity.
The following observation was made in \cite{Nkn} in the simply laced case.

{\bf Observation 3.} {\em
The dilogarithm identity reduces to
the positivity/negativity property of the
tropical Y-system for $\mathbb{Y}_{\ell}(B_r)$.}

Shortly speaking, {\em ``the tropical Y-system knows everything''}
is our slogan.

The organization of the paper is as follows.
In Section 2, we introduce a quiver
$Q_{\ell}(B_r)$ and identify the  T and Y-systems
with systems of relations for cluster variables
and coefficients of the cluster algebra
associated with $Q_{\ell}(B_r)$
(Theorems \ref{thm:Tiso} and \ref{thm:Yiso}).
In Section 3, we study the tropical Y-system at level 2
and derive the periodicity
and the positivity/negativity property
 (Proposition \ref{prop:lev2}).
In Section 4, we study the tropical Y-system at higher levels
and show the factorization property.
As a result, we obtain
the periodicity  (Theorems \ref{thm:tYperiod})
and the positivity/negativity property
(Theorem \ref{thm:levhd}).
In Section 5, based on the result by Plamondon,
we present a general theorem stating Observation 1 above
 (Theorem \ref{thm:XXX})
for the cluster algebra associated with any skew-symmetric matrix.
As corollaries, we obtain the periodicities
of T and Y-systems in Theorem \ref{thm:Tperiod} and \ref{thm:Yperiod}.
In Section 6, using Theorem \ref{thm:levhd},
we prove the dilogarithm identity in Theorem \ref{thm:DI2}.
In Section 7, as a feedback of the newly introduced method,
we give an alternative and simplified proof
of the periodicities of the
 T and Y-systems associated with pairs of simply laced
Dynkin diagrams,
which were formerly proved by \cite{Ke1,IIKNS,Ke2}.

{\em Acknowledgement.} We  thank Pierre-Guy Plamondon
for making his result in \cite{Pl1,Pl2} available to us
prior to the publication.

\section{Cluster algebraic formulation}

The systems $\mathbb{T}_{\ell}(B_r)$
and $\mathbb{Y}_{\ell}(B_r)$
are naturally identified with
systems of relations for
cluster variables 
and coefficients, respectively, of a certain
cluster algebra.
They are mostly parallel to the simply
laced case \cite{FZ2,FZ3,Ke1,DiK,HL,IIKNS,KNS3},
but necessarily more complicated.

\subsection{Groceries for cluster algebras}

\label{subsec:groc}

Here we collect basic definitions for cluster algebras
to fix the convention and notation,
mainly following  \cite{FZ3}.
For further necessary definitions and information
for cluster algebras, see \cite{FZ3}.

Let $I$ be a finite index set throughout this subsection.

{\bf (i) Semifield.}
A {\em semifield\/} $(\mathbb{P},\oplus, \cdot)$
is an abelian multiplicative group
endowed with a binary
operation of addition $\oplus$ which is commutative,
associative, and distributive with respect to the
multiplication in $\mathbb{P}$.
The following three examples are relevant in this paper.

(a) {\em  Trivial semifield.}
The {\em trivial semifield\/}
$\mathbf{1}=\{\mathrm{1}\}$
consists of the multiplicative identity element $1$
with $1\oplus 1 = 1$.

\par
(b) {\em Universal semifield.}
For an $I$-tuple of variables $y=(y_i)_{i\in I}$,
the {\em universal semifield\/}
$\mathbb{Q}_{\mathrm{sf}}(y)$
consists of  all the rational functions
of the form
$P(y)/Q(y)$  (subtraction-free  rational expressions),
where $P(y)$ and $Q(y)$ are the nonzero polynomials
 in $y_i$'s with {\em nonnegative\/} integer coefficients.
The multiplication and the addition
are given by the usual ones of rational functions.

\par
(c) {\em Tropical semifield.}
For an $I$-tuple of variables $y=(y_i)_{i\in I}$,
the {\em tropical semifield\/}
$\mathrm{Trop}(y)$
is the abelian multiplicative group freely generated by
the variables  $y_i$'s endowed with the addition $\oplus$
\begin{align}
\label{eq:trop}
\prod_i y_i^{a_i}\oplus
\prod_i y_i^{b_i}
=
\prod_i y_i^{\min(a_i,b_i)}.
\end{align}

{\bf (ii)  Mutations of matrix and quiver.}
An integer matrix
$B=(B_{ij})_{i,j\in I}$  is {\em skew-symmetrizable\/}
if there is a diagonal matrix $D=\mathrm{diag}
(d_i)_{i\in I}$ with $d_i\in \mathbb{N}$
such that $DB$ is skew-symmetric.
For a skew-symmetrizable matrix $B$ and
$k\in I$, another matrix $B'=\mu_k(B)$,
called the {\em mutation of $B$ at $k$\/}, is defined by
\begin{align}
\label{eq:Bmut}
B'_{ij}=
\begin{cases}
-B_{ij}& \mbox{$i=k$ or $j=k$},\\
B_{ij}+\frac{1}{2}
(|B_{ik}|B_{kj} + B_{ik}|B_{kj}|)
&\mbox{otherwise}.
\end{cases}
\end{align}
The matrix $\mu_k(B)$ is also skew-symmetrizable.

It is standard to represent
a {\em skew-symmetric} (integer) matrix $B=(B_{ij})_{i,j\in I}$
by a {\em quiver $Q$
without loops or 2-cycles}.
The set of the vertices of $Q$ is given by $I$,
and we put $B_{ij}$ arrows from $i$ to $j$ 
if $B_{ij}>0$.
The mutation $Q'=\mu_k(Q)$ of quiver $Q$ is given by the following
rule:
For each pair of an incoming arrow $i\rightarrow k$
and an outgoing arrow $k\rightarrow j$ in $Q$,
add a new arrow $i\rightarrow j$.
Then, remove a maximal set of pairwise disjoint 2-cycles.
Finally, reverse all arrows incident with $k$.

{\bf (iii)  Exchange relation of coefficient tuple.}
Let $\mathbb{P}$ be a given semifield.
For an $I$-tuple $y=(y_i)_{i\in I}$, $y_i\in \mathbb{P}$
and $k\in I$, another $I$-tuple $y'$ is defined 
by the {\em exchange relation}
\begin{align}
\label{eq:coef}
y'_i =
\begin{cases}
\displaystyle
{y_k}{}^{-1}&i=k,\\
\displaystyle
y_i \left(\frac{y_k}{1\oplus {y_k}}\right)^{B_{ki}}&
i\neq k,\ B_{ki}\geq 0,\\
y_i (1\oplus y_k)^{-B_{ki}}&
i\neq k,\ B_{ki}\leq 0.\\
\end{cases}
\end{align}

{\bf (iv)   Exchange relation of cluster.}
Let $\mathbb{QP}$ be the  quotient field of the group ring 
$\mathbb{Z}\mathbb{P}$ of $\mathbb{P}$,
 and let $\mathbb{QP}(z)$ be the rational function field of
algebraically independent variables, say, $z=(z_i)_{i\in I}$
over $\mathbb{QP}$.
For an $I$-tuple $x=(x_i)_{i\in I}$ which
is a free generating set of $\mathbb{QP}(z)$
and $k\in I$, another $I$-tuple $x'$ is defined 
by the {\em exchange relation}
\begin{align}
\label{eq:clust}
x'_i =
\begin{cases}
{x_i}&i\neq k,\\
\displaystyle
\frac{y_k
\prod_{j: B_{jk}>0} x_j^{B_{jk}}
+
\prod_{j: B_{jk}<0} x_j^{-B_{jk}}
}{(1\oplus y_k)x_k}
&
i= k.\\
\end{cases}
\end{align}

{\bf (v)  Seed mutation.} For the above triplet $(B,x,y)$
in (ii)--(iv), which is called a {\em seed},  the mutation
$\mu_k(B,x,y)=(B',x',y')$  at $k$ is defined 
 by combining \eqref{eq:Bmut}, \eqref{eq:coef},
and \eqref{eq:clust}.

{\bf (vi)  Cluster algebra}. Fix a semifield $\mathbb{P}$
and a seed ({\em initial seed\/}) $(B,x,y)$, where
$x=(x_i)_{i\in I}$  are algebraically independent variables
over $\mathbb{Q}\mathbb{P}$.
Starting from $(B,x,y)$, iterate mutations and collect all the
seeds $(B',x',y')$.
We call  $y'$  and $y'_i$ a {\em coefficient tuple} and
a {\em coefficient}, respectively.
We call  $x'$  and $x'_i\in \mathbb{Q}\mathbb{P}(x)$, a {\em cluster} and
a {\em cluster variable}, respectively.
The {\em cluster algebra $\mathcal{A}(B,x,y)$ with
coefficients in $\mathbb{P}$} is the
$\mathbb{Z}\mathbb{P}$-subalgebra of the
rational function field $\mathbb{Q}\mathbb{P}(x)$
generated by all the cluster variables.

{\bf (vii)  Cluster pattern.}
Let $I=\{1,\dots,n\}$, and let $\mathbb{T}_n$ be the
$n$-regular tree whose edges are labeled by the numbers
$1,\dots,n$. A {\em cluster pattern} is the assignment
of a seed $(B(t),x(t),y(t))$ for each vertex $t\in \mathbb{T}_n$
such that the seeds assigned to the endpoints of any edge
$t \frac{k}{\phantom{xxx}} t'$ are obtained from each other by the seed
mutation at $k$. Take $t_0\in \mathbb{T}_n$ arbitrarily, and
consider the cluster algebra $\mathcal{A}(B(t_0),x(t_0),y(t_0))$.
Then, $x(t)$ and $y(t)$ ($t\in \mathbb{T}_n$) are 
a cluster and a coefficient tuple of $\mathcal{A}(B(t_0),x(t_0),y(t_0))$.

{\bf (viii) $F$-polynomial.} 
The cluster algebra $\mathcal{A}(B,x,y)$ with coefficients in
the tropical semifield $\mathrm{Trop}(y)$ is called the
cluster algebra with {\em principal coefficients}.
There, each cluster variable $x'_i$ is an element
in $\mathbb{Z}[x^{\pm 1},y]$.
The $F$-polynomial $F'_i(y)\in \mathbb{Z}[y]$ (for $x'_i$)
is defined as the specialization of $x'_i$ with $x_i=1$ ($i\in I$).

\subsection{Parity decompositions of T and Y-systems}

For a triplet $(a,m,u)\in \mathcal{I}_{\ell}$,
we set the `parity conditions' $\mathbf{P}_{+}$ and
$\mathbf{P}_{-}$ by
\begin{align}
\label{eq:Pcond}
\mathbf{P}_{+}:& \ \mbox{
$2u$ is even
if
$a\neq r$; $m+2u$ is even if $a=r$},\\
\mathbf{P}_{-}:& \ \mbox{
$2u$ is odd if
$a\neq r$; $m+2u$ is odd if $a=r$}.
\end{align}
We write, for example, $(a,m,u):\mathbf{P}_{+}$ if $(a,m,u)$ satisfies
$\mathbf{P}_{+}$.
We have $\mathcal{I}_{\ell}=
\mathcal{I}_{\ell+}\sqcup \mathcal{I}_{\ell-}$,
where $\mathcal{I}_{\ell\varepsilon}$ 
is the set of all $(a,m,u):\mathbf{P}_{\varepsilon}$.

Define $\EuScript{T}^{\circ}_{\ell}(B_r)_{\varepsilon}$
($\varepsilon=\pm$)
to be the subring of $\EuScript{T}^{\circ}_{\ell}(B_r)$
generated by
 $T^{(a)}_m(u)$
$((a,m,u)\in \mathcal{I}_{\ell\varepsilon})$.
Then, we have
$\EuScript{T}^{\circ}_{\ell}(B_r)_+
\simeq
\EuScript{T}^{\circ}_{\ell}(B_r)_-
$
by $T^{(a)}_m(u)\mapsto T^{(a)}_m(u+\frac{1}{2})$ and
\begin{align}
\EuScript{T}^{\circ}_{\ell}(B_r)
\simeq
\EuScript{T}^{\circ}_{\ell}(B_r)_+
\otimes_{\mathbb{Z}}
\EuScript{T}^{\circ}_{\ell}(B_r)_-.
\end{align}

For a triplet $(a,m,u)\in \mathcal{I}_{\ell}$ ,
we set another `parity conditions' $\mathbf{P}'_{+}$ and
$\mathbf{P}'_{-}$ by
\begin{align}
\label{eq:Pcond2}
\mathbf{P}'_{+}:&\ \mbox{
$2u$ is even if
$a\neq r$;
 $m+2u$ is odd if $a=r$},\\
\mathbf{P}'_{-}:&\ \mbox{
$2u$ is odd if
$a\neq r$;
 $m+2u$ is even if $a=r$}.
\end{align}
We have $\mathcal{I}_{\ell}=
\mathcal{I}'_{\ell+}\sqcup \mathcal{I}'_{\ell-}$,
where $\mathcal{I}'_{\ell\varepsilon}$ 
is the set of all $(a,m,u):\mathbf{P}'_{\varepsilon}$.
We also have
\begin{align}
(a,m,u):\mathbf{P}'_+
\ 
\Longleftrightarrow
\
\textstyle
(a,m,u\pm \frac{1}{t_a}):\mathbf{P}_+.
\end{align}

Define $\EuScript{Y}^{\circ}_{\ell}(B_r)_{\varepsilon}$
($\varepsilon=\pm$)
to be the subgroup of $\EuScript{Y}^{\circ}_{\ell}(B_r)$
generated by
$Y^{(a)}_m(u)$, $1+Y^{(a)}_m(u)$
$((a,m,u)\in \mathcal{I}'_{\ell\varepsilon})$.
Then, we have
$\EuScript{Y}^{\circ}_{\ell}(B_r)_+
\simeq
\EuScript{Y}^{\circ}_{\ell}(B_r)_-
$
by $Y^{(a)}_m(u)\mapsto Y^{(a)}_m(u+\frac{1}{2})$,
$1+Y^{(a)}_m(u)\mapsto 1+Y^{(a)}_m(u+\frac{1}{2})$,
 and
\begin{align}
\EuScript{Y}^{\circ}_{\ell}(B_r)
\simeq
\EuScript{Y}^{\circ}_{\ell}(B_r)_+
\times
\EuScript{Y}^{\circ}_{\ell}(B_r)_-.
\end{align}

\subsection{Quiver $Q_{\ell}(B_r)$}

With type $B_r$ and $\ell\geq 2$ we associate
 the quiver $Q_{\ell}(B_r)$
by Figure
\ref{fig:quiverB},
where, in addition,
we assign the empty or filled circle $\circ$/$\bullet$ and
the sign +/$-$ to each vertex.
Let $B_{\ell}(B_r)$ be the skew-symmetric matrix
corresponding to $Q_{\ell}(B_r)$ as defined
in Section \ref{subsec:groc} (ii).
(Unfortunately, there is an obvious confliction
between two standard notations using symbol $B$.
We hope it does not cause serious confusion
to the reader.)

Let us choose  the index set $\mathbf{I}$
of the vertices of $Q_{\ell}(B_r)$
so that $\mathbf{i}=(i,i')\in \mathbf{I}$ represents
the vertex 
at the $i'$th row (from the bottom)
of the $i$th column (from the left).
Thus, $i=1,\dots,2r-1$, and $i'=1,\dots,\ell-1$ if $i\neq r$
and $i'=1,\dots,2\ell-1$ if $i=r$.
We use a natural notation $\mathbf{I}^{\circ}$
(resp.\ $\mathbf{I}^{\circ}_+$)
for the set of the vertices $\mathbf{i}$ with
property $\circ$ (resp.\ $\circ$ and +), and so on.
We have $\mathbf{I}=\mathbf{I}^{\circ}\sqcup \mathbf{I}^{\bullet}
=\mathbf{I}^{\circ}_+\sqcup
\mathbf{I}^{\circ}_-\sqcup
\mathbf{I}^{\bullet}_+\sqcup
\mathbf{I}^{\bullet}_-$.

We define composite mutations,
\begin{align}
\label{eq:mupm2}
\mu^{\circ}_+=\prod_{\mathbf{i}\in\mathbf{I}^{\circ}_+}
\mu_{\mathbf{i}},
\quad
\mu^{\circ}_-=\prod_{\mathbf{i}\in\mathbf{I}^{\circ}_-}
\mu_{\mathbf{i}},
\quad
\mu^{\bullet}_+=\prod_{\mathbf{i}\in\mathbf{I}^{\bullet}_+}
\mu_{\mathbf{i}},
\quad
\mu^{\bullet}_-=\prod_{\mathbf{i}\in\mathbf{I}^{\bullet}_-}
\mu_{\mathbf{i}}.
\end{align}
Note that they do not depend on the order of the product.

\begin{figure}[t]
\begin{picture}(318,110)(-80,-30)
\put(-30,0){\circle{5}}
\put(0,0){\circle{5}}
\put(30,0){\circle{5}}
\put(60,0){\circle{5}}
\put(90,0){\circle*{5}}
\put(120,0){\circle{5}}
\put(150,0){\circle{5}}
\put(180,0){\circle{5}}
\put(210,0){\circle{5}}
\put(90,15){\circle*{5}}
\put(-3,0){\vector(-1,0){24}}
\put(3,0){\vector(1,0){24}}
\put(57,0){\vector(-1,0){24}}
\put(87,0){\vector(-1,0){24}}
\put(93,0){\vector(1,0){24}}
\put(147,0){\vector(-1,0){24}}
\put(153,0){\vector(1,0){24}}
\put(207,0){\vector(-1,0){24}}
\put(0,30)
{
\put(-30,0){\circle{5}}
\put(0,0){\circle{5}}
\put(30,0){\circle{5}}
\put(60,0){\circle{5}}
\put(90,0){\circle*{5}}
\put(120,0){\circle{5}}
\put(150,0){\circle{5}}
\put(180,0){\circle{5}}
\put(210,0){\circle{5}}
\put(90,15){\circle*{5}}
\put(-27,0){\vector(1,0){24}}
\put(27,0){\vector(-1,0){24}}
\put(33,0){\vector(1,0){24}}
\put(87,0){\vector(-1,0){24}}
\put(93,0){\vector(1,0){24}}
\put(123,0){\vector(1,0){24}}
\put(177,0){\vector(-1,0){24}}
\put(183,0){\vector(1,0){24}}
}
\put(0,60)
{
\put(-30,0){\circle{5}}
\put(0,0){\circle{5}}
\put(30,0){\circle{5}}
\put(60,0){\circle{5}}
\put(90,0){\circle*{5}}
\put(120,0){\circle{5}}
\put(150,0){\circle{5}}
\put(180,0){\circle{5}}
\put(210,0){\circle{5}}
\put(90,15){\circle*{5}}
\put(-3,0){\vector(-1,0){24}}
\put(3,0){\vector(1,0){24}}
\put(57,0){\vector(-1,0){24}}
\put(87,0){\vector(-1,0){24}}
\put(93,0){\vector(1,0){24}}
\put(147,0){\vector(-1,0){24}}
\put(153,0){\vector(1,0){24}}
\put(207,0){\vector(-1,0){24}}
}
\put(90,-15){\circle*{5}}
%
\put(90,-12){\vector(0,1){9}}
\put(90,12){\vector(0,-1){9}}
\put(90,18){\vector(0,1){9}}
\put(90,42){\vector(0,-1){9}}
\put(90,48){\vector(0,1){9}}
\put(90,72){\vector(0,-1){9}}
\put(63,-2){\vector(2,-1){24}}
\put(63,2){\vector(2,1){24}}
\put(63,58){\vector(2,-1){24}}
\put(63,62){\vector(2,1){24}}
\put(117,28){\vector(-2,-1){24}}
\put(117,32){\vector(-2,1){24}}
\put(-30,3){\vector(0,1){24}}
\put(0,27){\vector(0,-1){24}}
\put(30,3){\vector(0,1){24}}
\put(60,27){\vector(0,-1){24}}
\put(120,3){\vector(0,1){24}}
\put(150,27){\vector(0,-1){24}}
\put(180,3){\vector(0,1){24}}
\put(210,27){\vector(0,-1){24}}
\put(-30,57){\vector(0,-1){24}}
\put(0,33){\vector(0,1){24}}
\put(30,57){\vector(0,-1){24}}
\put(60,33){\vector(0,1){24}}
\put(120,57){\vector(0,-1){24}}
\put(150,33){\vector(0,1){24}}
\put(180,57){\vector(0,-1){24}}
\put(210,33){\vector(0,1){24}}
\put(-50,-2){$\cdots$}
\put(-50,28){$\cdots$}
\put(-50,58){$\cdots$}
\put(220,-2){$\cdots$}
\put(220,28){$\cdots$}
\put(220,58){$\cdots$}
\put(0,1)
{
\put(-28,2){\small $ +$}
\put(-28,32){\small $-$}
\put(-28,62){\small $+$}
\put(2,2){\small $-$}
\put(2,32){\small $+$}
\put(2,62){\small $-$}
\put(32,2){\small $+$}
\put(32,32){\small $-$}
\put(32,62){\small $+$}
\put(62,5){\small $-$}
\put(62,32){\small $+$}
\put(62,65){\small $-$}
\put(92,-13){\small $+$}
\put(92,2){\small $-$}
\put(92,20){\small $+$}
\put(92,32){\small $-$}
\put(92,47){\small $+$}
\put(92,62){\small $-$}
\put(92,77){\small $+$}
\put(122,2){\small $+$}
\put(122,32){\small $-$}
\put(122,62){\small $+$}
\put(152,2){\small $-$}
\put(152,32){\small $+$}
\put(152,62){\small $-$}
\put(182,2){\small $+$}
\put(182,32){\small $-$}
\put(182,62){\small $+$}
\put(212,2){\small $-$}
\put(212,32){\small $+$}
\put(212,62){\small $-$}
}
\put(-60,-15){$\underbrace{\hskip120pt}_{r-1}$}
\put(120,-15){$\underbrace{\hskip120pt}_{r-1}$}
\put(-85,28){${\scriptstyle\ell -1}\left\{ \makebox(0,37){}\right.$}
\end{picture}
%
%
\begin{picture}(318,150)(-80,-30)
\put(-30,0){\circle{5}}
\put(0,0){\circle{5}}
\put(30,0){\circle{5}}
\put(60,0){\circle{5}}
\put(90,0){\circle*{5}}
\put(120,0){\circle{5}}
\put(150,0){\circle{5}}
\put(180,0){\circle{5}}
\put(210,0){\circle{5}}
\put(90,15){\circle*{5}}
\put(-3,0){\vector(-1,0){24}}
\put(3,0){\vector(1,0){24}}
\put(57,0){\vector(-1,0){24}}
\put(87,0){\vector(-1,0){24}}
\put(93,0){\vector(1,0){24}}
\put(147,0){\vector(-1,0){24}}
\put(153,0){\vector(1,0){24}}
\put(207,0){\vector(-1,0){24}}
\put(0,30)
{
\put(-30,0){\circle{5}}
\put(0,0){\circle{5}}
\put(30,0){\circle{5}}
\put(60,0){\circle{5}}
\put(90,0){\circle*{5}}
\put(120,0){\circle{5}}
\put(150,0){\circle{5}}
\put(180,0){\circle{5}}
\put(210,0){\circle{5}}
\put(90,15){\circle*{5}}
\put(-27,0){\vector(1,0){24}}
\put(27,0){\vector(-1,0){24}}
\put(33,0){\vector(1,0){24}}
\put(87,0){\vector(-1,0){24}}
\put(93,0){\vector(1,0){24}}
\put(123,0){\vector(1,0){24}}
\put(177,0){\vector(-1,0){24}}
\put(183,0){\vector(1,0){24}}
}
\put(0,60)
{
\put(-30,0){\circle{5}}
\put(0,0){\circle{5}}
\put(30,0){\circle{5}}
\put(60,0){\circle{5}}
\put(90,0){\circle*{5}}
\put(120,0){\circle{5}}
\put(150,0){\circle{5}}
\put(180,0){\circle{5}}
\put(210,0){\circle{5}}
\put(90,15){\circle*{5}}
\put(-3,0){\vector(-1,0){24}}
\put(3,0){\vector(1,0){24}}
\put(57,0){\vector(-1,0){24}}
\put(87,0){\vector(-1,0){24}}
\put(93,0){\vector(1,0){24}}
\put(147,0){\vector(-1,0){24}}
\put(153,0){\vector(1,0){24}}
\put(207,0){\vector(-1,0){24}}
}
\put(90,-15){\circle*{5}}
\put(0,90)
{
\put(-30,0){\circle{5}}
\put(0,0){\circle{5}}
\put(30,0){\circle{5}}
\put(60,0){\circle{5}}
\put(90,0){\circle*{5}}
\put(120,0){\circle{5}}
\put(150,0){\circle{5}}
\put(180,0){\circle{5}}
\put(210,0){\circle{5}}
\put(90,15){\circle*{5}}
\put(-27,0){\vector(1,0){24}}
\put(27,0){\vector(-1,0){24}}
\put(33,0){\vector(1,0){24}}
\put(87,0){\vector(-1,0){24}}
\put(93,0){\vector(1,0){24}}
\put(123,0){\vector(1,0){24}}
\put(177,0){\vector(-1,0){24}}
\put(183,0){\vector(1,0){24}}
}
\put(90,-12){\vector(0,1){9}}
\put(90,12){\vector(0,-1){9}}
\put(90,18){\vector(0,1){9}}
\put(90,42){\vector(0,-1){9}}
\put(90,48){\vector(0,1){9}}
\put(90,72){\vector(0,-1){9}}
\put(90,78){\vector(0,1){9}}
\put(90,102){\vector(0,-1){9}}
\put(-30,3){\vector(0,1){24}}
\put(0,27){\vector(0,-1){24}}
\put(30,3){\vector(0,1){24}}
\put(60,27){\vector(0,-1){24}}
\put(120,3){\vector(0,1){24}}
\put(150,27){\vector(0,-1){24}}
\put(180,3){\vector(0,1){24}}
\put(210,27){\vector(0,-1){24}}
\put(-30,57){\vector(0,-1){24}}
\put(0,33){\vector(0,1){24}}
\put(30,57){\vector(0,-1){24}}
\put(60,33){\vector(0,1){24}}
\put(120,57){\vector(0,-1){24}}
\put(150,33){\vector(0,1){24}}
\put(180,57){\vector(0,-1){24}}
\put(210,33){\vector(0,1){24}}
\put(-30,63){\vector(0,1){24}}
\put(0,87){\vector(0,-1){24}}
\put(30,63){\vector(0,1){24}}
\put(60,87){\vector(0,-1){24}}
\put(120,63){\vector(0,1){24}}
\put(150,87){\vector(0,-1){24}}
\put(180,63){\vector(0,1){24}}
\put(210,87){\vector(0,-1){24}}
\put(63,-2){\vector(2,-1){24}}
\put(63,2){\vector(2,1){24}}
\put(63,58){\vector(2,-1){24}}
\put(63,62){\vector(2,1){24}}
\put(117,28){\vector(-2,-1){24}}
\put(117,32){\vector(-2,1){24}}
\put(117,88){\vector(-2,-1){24}}
\put(117,92){\vector(-2,1){24}}
\put(-50,-2){$\cdots$}
\put(-50,28){$\cdots$}
\put(-50,58){$\cdots$}
\put(-50,88){$\cdots$}
\put(220,-2){$\cdots$}
\put(220,28){$\cdots$}
\put(220,58){$\cdots$}
\put(220,88){$\cdots$}
\put(0,1)
{
\put(-28,2){\small $ +$}
\put(-28,32){\small $-$}
\put(-28,62){\small $+$}
\put(-28,92){\small $-$}
\put(2,2){\small $-$}
\put(2,32){\small $+$}
\put(2,62){\small $-$}
\put(2,92){\small $+$}
\put(32,2){\small $+$}
\put(32,32){\small $-$}
\put(32,62){\small $+$}
\put(32,92){\small $-$}
\put(62,5){\small $-$}
\put(62,32){\small $+$}
\put(62,65){\small $-$}
\put(62,92){\small $+$}
\put(92,-13){\small $+$}
\put(92,2){\small $-$}
\put(92,20){\small $+$}
\put(92,32){\small $-$}
\put(92,47){\small $+$}
\put(92,62){\small $-$}
\put(92,80){\small $+$}
\put(92,92){\small $-$}
\put(92,107){\small $+$}
\put(122,2){\small $+$}
\put(122,32){\small $-$}
\put(122,62){\small $+$}
\put(122,92){\small $-$}
\put(152,2){\small $-$}
\put(152,32){\small $+$}
\put(152,62){\small $-$}
\put(152,92){\small $+$}
\put(182,2){\small $+$}
\put(182,32){\small $-$}
\put(182,62){\small $+$}
\put(182,92){\small $-$}
\put(212,2){\small $-$}
\put(212,32){\small $+$}
\put(212,62){\small $-$}
\put(212,92){\small $+$}
}
\put(-60,-15){$\underbrace{\hskip120pt}_{r-1}$}
\put(120,-15){$\underbrace{\hskip120pt}_{r-1}$}
\put(-85,43){${\scriptstyle\ell -1}\left\{ \makebox(0,50){}\right.$}
\end{picture}
\caption{The quiver $Q_{\ell}(B_r)$  for even $\ell$
(upper) and for odd $\ell$ (lower).}
\label{fig:quiverB}
\end{figure}
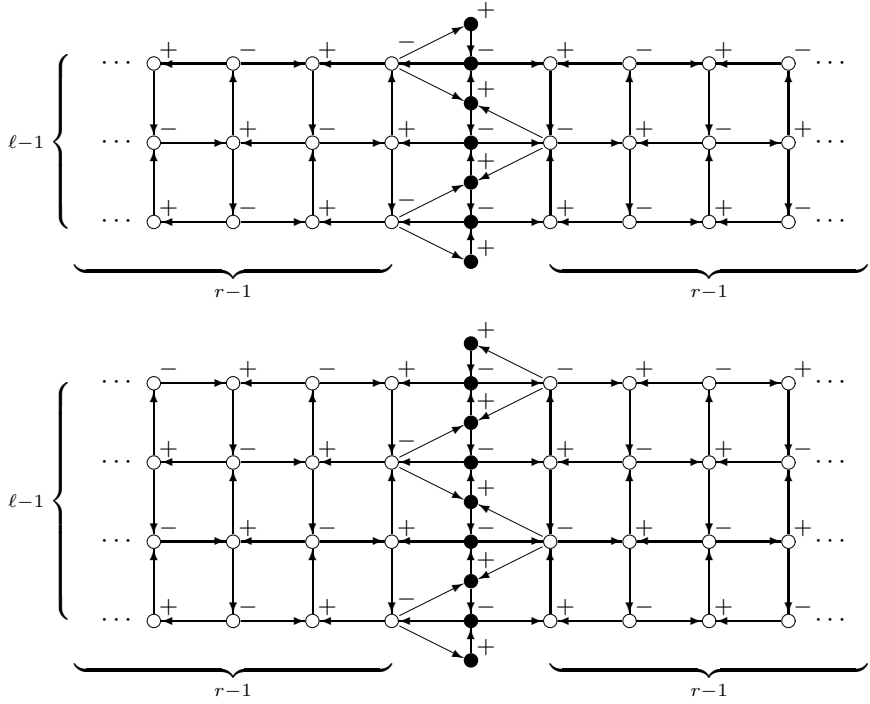

Let $\boldsymbol{r}$ be the involution acting on $\mathbf{I}$
by the left-right reflection.
Let $\boldsymbol{\omega}$ be the involution acting on $\mathbf{I}$
by the $180^{\circ}$ rotation.
Let $\boldsymbol{r}(Q_{\ell}(B_r))$
and $\boldsymbol{\omega}(Q_{\ell}(B_r))$ denote the quivers
 induced from $Q_{\ell}(B_r)$
 by
$\boldsymbol{r}$ and $\boldsymbol{\omega}$,
respectively.
For example, 
if there is an arrow
 $\mathbf{i}\rightarrow
\mathbf{j}$ in $Q_{\ell}(B_r)$,
then, there is an arrow
$\boldsymbol{r}(\mathbf{i})
\rightarrow
\boldsymbol{r}(\mathbf{j})
$
in  $\boldsymbol{r}(Q_{\ell}(B_r))$.
For a quiver $Q$, $Q^{\mathrm{op}}$ denotes the opposite quiver.

\begin{lemma}
\label{lem:Qmut}
Let $Q=Q_{\ell}(B_r)$.
\par
(i)
We have a periodic sequence of mutations of quivers
\begin{align}
\label{eq:B2}
Q\ 
\mathop{\longleftrightarrow}^{\mu^{\bullet}_+
\mu^{\circ}_+}
\
Q^{\mathrm{op}}
\
\mathop{\longleftrightarrow}^{\mu^{\bullet}_-}
\
\boldsymbol{r}(Q)
\
\mathop{\longleftrightarrow}^{\mu^{\bullet}_+
\mu^{\circ}_-}
\
\boldsymbol{r}(Q){}^{\mathrm{op}}
\
\mathop{\longleftrightarrow}^{\mu^{\bullet}_-}
\
Q.
\end{align}
\par
(ii) $\boldsymbol{\omega}(Q)=Q$ if $h^{\vee}+\ell$ is even,
and
$\boldsymbol{\omega}(Q)=\boldsymbol{r}(Q)$ if $h^{\vee}+\ell$
 is odd.
\end{lemma}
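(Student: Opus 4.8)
The plan is to prove Lemma~\ref{lem:Qmut} by a direct combinatorial inspection of the explicit quiver $Q=Q_\ell(B_r)$ drawn in Figure~\ref{fig:quiverB}; both statements reduce to a finite list of local checks that are uniform in $r$ and $\ell$. First I would record the structural features that are needed. Away from the middle column — on either ``wing'', i.e.\ the columns $1,\dots,r-1$ or $r+1,\dots,2r-1$ — the quiver is a rectangular grid whose $\pm$ labels form a checkerboard and along whose horizontal and vertical edges the orientation is the one familiar from the simply-laced level-$\ell$ quivers; within the middle column the signs alternate, and columns $r-1$ and $r+1$ are never joined by an arrow. Consequently each of $\mathbf{I}^\circ_+,\mathbf{I}^\circ_-,\mathbf{I}^\bullet_+,\mathbf{I}^\bullet_-$ is a discrete subset of the vertex set (no two of its vertices joined by an arrow), so the composite mutations $\mu^\circ_\pm,\mu^\bullet_\pm$ of \eqref{eq:mupm2} are well defined and independent of the order of their factors; one also checks, and this is part of the local analysis below, that after the half-step $\mu^\circ_\varepsilon$ the set $\mathbf{I}^\bullet_\varepsilon$ is still discrete, so that the composite $\mu^\bullet_\varepsilon\mu^\circ_\varepsilon$ occurring in \eqref{eq:B2} is meaningful.

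For part~(i) I would treat each of the four transitions in \eqref{eq:B2} by splitting the quiver into the two wings and the ``triple point'' formed by columns $r-1,r,r+1$. On a wing, the effect of the relevant composite mutation is computed vertex by vertex exactly as in the simply-laced case: the arrows created by the mutation cancel against arrows already present, and the net effect is to reverse every edge of the wing, so that the wing of $Q$ turns into the wing of $Q^{\mathrm{op}}$, then of $\boldsymbol r(Q)$, and so on. At the triple point the quiver is no longer of the simply-laced grid type — for instance a column-$r$ vertex and a column-$(r{\pm}1)$ vertex of equal sign may be joined — so one instead tabulates the finitely many local arrow-and-sign configurations around column $r$ and verifies each of the four transitions by hand, checking in particular that the single step $\mu^\bullet_-$ interchanges the two diagonal patterns attached to column $r$ and therefore realizes the left--right reflection $\boldsymbol r$. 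Since \eqref{eq:B2} is a closed cycle of quivers, these checks (wing together with triple point, for each of the four arrows) establish it.

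For part~(ii) I would compare $Q$ with $\boldsymbol\omega(Q)$ directly. The $180^\circ$ rotation sends column $i$ to column $2r-i$ and reverses the vertical order of each column, hence preserves the global shape (width $2r-1$ with the middle column doubled) and pairs row $j$ of a wing column with its row $\ell-j$. The horizontal edges of a wing reverse direction from one row to the next — they have vertical period $2$ — so whether $\boldsymbol\omega$, together with the left--right reflection it already contains, preserves them depends only on the parity of $\ell-2j$, i.e.\ on the parity of $\ell$; the middle column, having the odd number $2\ell-1$ of rows, is carried to itself by its own vertical flip (which fixes the central vertex, pairs vertices of equal sign, and preserves the internal column-$r$ arrows), so it does not affect the dichotomy. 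Tracking the signs through, one obtains $\boldsymbol\omega(Q)=Q$ for $\ell$ odd and $\boldsymbol\omega(Q)=\boldsymbol r(Q)$ for $\ell$ even; since $h^\vee=2r-1$ is odd, $h^\vee+\ell$ is even exactly when $\ell$ is odd, which is the assertion.

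I expect the main obstacle to be the triple-point region in part~(i). There $Q$ fails to be a simply-laced grid quiver, so the clean ``all arrows get reversed'' principle must be replaced by an explicit configuration-by-configuration verification; in particular one has to check that the diagonal arrows between columns $r{\pm}1$ and $r$ produce no $2$-cycle inside $\mathbf{I}^\bullet_\varepsilon$ after the half-step $\mu^\circ_\varepsilon$, which is exactly where the precise choice of the $\pm$-labelling, and the prescribed order ($\mu^\circ_\varepsilon$ before $\mu^\bullet_\varepsilon$), are used. Everything outside the triple point is routine bookkeeping with the checkerboard pattern.
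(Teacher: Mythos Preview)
Your plan is correct and matches the paper's approach: the paper does not give a formal proof of this lemma at all, but treats it as a direct verification, merely illustrating the cycle \eqref{eq:B2} by working out the example $Q_2(B_3)$ in full. Your decomposition into the two wings (where the standard simply-laced bipartite argument applies) and the triple-point region around column $r$ (handled by a finite case check) is exactly the right way to organise that verification, and your parity analysis for part~(ii) is correct. One small simplification: in $Q$ the set $\mathbf{I}^\circ_\varepsilon\cup\mathbf{I}^\bullet_\varepsilon$ is already discrete (a $\circ_\varepsilon$ vertex in column $r\pm1$ is joined only to $\bullet_{-\varepsilon}$ vertices in column $r$, and $\bullet_\varepsilon$ vertices are joined only to $\bullet_{-\varepsilon}$ and $\circ_{-\varepsilon}$), so the composite $\mu^\bullet_\varepsilon\mu^\circ_\varepsilon$ is a single simultaneous source/sink mutation and no intermediate check after the ``half-step'' is needed.
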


\begin{example} The sequence \eqref{eq:B2} for
$Q=Q_{2}(B_3)$ is given below.
\begin{align*}
\begin{picture}(310,95)(-70,0)
\put(-110,70)
{
\put(30,0){\circle{5}}
\put(60,0){\circle{5}}
\put(90,0){\circle*{5}}
\put(120,0){\circle{5}}
\put(150,0){\circle{5}}
\put(90,15){\circle*{5}}
\put(57,0){\vector(-1,0){24}}
\put(87,0){\vector(-1,0){24}}
\put(93,0){\vector(1,0){24}}
\put(147,0){\vector(-1,0){24}}
\put(90,-15){\circle*{5}}
\put(90,12){\vector(0,-1){9}}
\put(90,-12){\vector(0,1){9}}
\put(63,-2){\vector(2,-1){24}}
\put(63,2){\vector(2,1){24}}
\put(0,1)
{
\put(32,2){\small $+$}
\put(62,5){\small $-$}
\put(92,-13){\small $+$}
\put(92,2){\small $-$}
\put(92,20){\small $+$}
\put(122,2){\small $+$}
\put(152,2){\small $-$}
}
\put(170,0){$\displaystyle
\mathop{\longleftrightarrow}^{\mu^{\bullet}_+
\mu^{\circ}_+}$
}
}
\put(70,70)
{
\put(30,0){\circle{5}}
\put(60,0){\circle{5}}
\put(90,0){\circle*{5}}
\put(120,0){\circle{5}}
\put(150,0){\circle{5}}
\put(90,15){\circle*{5}}
\put(33,0){\vector(1,0){24}}
\put(63,0){\vector(1,0){24}}
\put(117,0){\vector(-1,0){24}}
\put(123,0){\vector(1,0){24}}
\put(90,-15){\circle*{5}}
\put(90,-3){\vector(0,-1){9}}
\put(90,3){\vector(0,1){9}}
\put(87,-13){\vector(-2,1){24}}
\put(87,13){\vector(-2,-1){24}}
\put(0,1)
{
\put(32,2){\small $+$}
\put(62,5){\small $-$}
\put(92,-13){\small $+$}
\put(92,2){\small $-$}
\put(92,20){\small $+$}
\put(122,2){\small $+$}
\put(152,2){\small $-$}
}
\put(170,0){$\displaystyle
\mathop{\longleftrightarrow}^{\mu^{\bullet}_-}$}
}
\put(-110,20)
{
\put(30,0){\circle{5}}
\put(60,0){\circle{5}}
\put(90,0){\circle*{5}}
\put(120,0){\circle{5}}
\put(150,0){\circle{5}}
\put(90,15){\circle*{5}}
\put(33,0){\vector(1,0){24}}
\put(87,0){\vector(-1,0){24}}
\put(93,0){\vector(1,0){24}}
\put(123,0){\vector(1,0){24}}
\put(90,-15){\circle*{5}}
\put(90,12){\vector(0,-1){9}}
\put(90,-12){\vector(0,1){9}}
\put(117,-2){\vector(-2,-1){24}}
\put(117,2){\vector(-2,1){24}}
\put(0,1)
{
\put(32,2){\small $+$}
\put(62,2){\small $-$}
\put(92,-10){\small $+$}
\put(92,2){\small $-$}
\put(92,20){\small $+$}
\put(122,2){\small $+$}
\put(152,2){\small $-$}
}
\put(170,0){$\displaystyle
\mathop{\longleftrightarrow}^{\mu^{\bullet}_+
\mu^{\circ}_-}$}
}
\put(70,20)
{
\put(30,0){\circle{5}}
\put(60,0){\circle{5}}
\put(90,0){\circle*{5}}
\put(120,0){\circle{5}}
\put(150,0){\circle{5}}
\put(90,15){\circle*{5}}
\put(57,0){\vector(-1,0){24}}
\put(63,0){\vector(1,0){24}}
\put(117,0){\vector(-1,0){24}}
\put(147,0){\vector(-1,0){24}}
\put(90,-15){\circle*{5}}
\put(90,3){\vector(0,1){9}}
\put(90,-3){\vector(0,-1){9}}
\put(93,13){\vector(2,-1){24}}
\put(93,-13){\vector(2,1){24}}
\put(0,1)
{
\put(32,2){\small $+$}
\put(62,2){\small $-$}
\put(92,-10){\small $+$}
\put(92,2){\small $-$}
\put(92,20){\small $+$}
\put(122,2){\small $+$}
\put(152,2){\small $-$}
}
\put(170,0){$\displaystyle
\mathop{\longleftrightarrow}^{\mu^{\bullet}_-}$}
}
\end{picture}
\end{align*}
\end{example}

\subsection{Cluster algebra $\mathcal{A}(B,x,y)$ 
and coefficient group $\mathcal{G}(B,y)$}

For the matrix $B=(B_{\mathbf{i}\mathbf{j}})_{\mathbf{i},
\mathbf{j}\in \mathbf{I}}=B_{\ell}(B_r)$,
let $\mathcal{A}(B,x,y)$ 
be the {\em cluster algebra
 with coefficients
in the universal
semifield
$\mathbb{Q}_{\mathrm{sf}}(y)$},
where $(B,x,y)$ is the initial seed.
(Here we use the symbol $+$ instead of $\oplus$ 
in $\mathbb{Q}_{\mathrm{sf}}(y)$,
since it is the ordinary addition of subtraction-free
expressions of rational functions of $y$.)

To our purpose, it is natural to introduce not only
the `ring of cluster variables' but also
the `group of coefficients'.

\begin{definition}
The {\em coefficient group $\mathcal{G}(B,y)$
associated with $\mathcal{A}(B,x,y)$}
is the multiplicative subgroup of
the semifield $\mathbb{Q}_{\mathrm{sf}}(y)$ generated by all
the coefficients $y_{\mathbf{i}}'$ of $\mathcal{A}(B,x,y)$
together with $1+y_{\mathbf{i}}'$.
\end{definition}

In view of Lemma \ref{lem:Qmut}
we set $x(0)=x$, $y(0)=y$ and define 
clusters $x(u)=(x_{\mathbf{i}}(u))_{\mathbf{i}\in \mathbf{I}}$
 ($u\in \frac{1}{2}\mathbb{Z}$)
 and coefficient tuples $y(u)=(y_\mathbf{i}(u))_{\mathbf{i}\in \mathbf{I}}$
 ($u\in \frac{1}{2}\mathbb{Z}$)
by the sequence of mutations
\begin{align}
\label{eq:mutseq}
\begin{split}
\cdots
& 
\mathop{\longleftrightarrow}^{\mu^{\bullet}_-}
\
(B,x(0),y(0))\
\mathop{\longleftrightarrow}^{\mu^{\bullet}_+
\mu^{\circ}_+}
\
(-B,x({\textstyle \frac{1}{2}),y(\frac{1}{2})})
\\
&
\mathop{\longleftrightarrow}^{\mu^{\bullet}_-}
\
(\boldsymbol{r}(B),x(1),y(1))
\
\mathop{\longleftrightarrow}^{\mu^{\bullet}_+
\mu^{\circ}_-}
\
(-\boldsymbol{r}(B),x({\textstyle \frac{3}{2}),y(\frac{3}{2})})
\
\mathop{\longleftrightarrow}^{\mu^{\bullet}_-}
\
\cdots,
\end{split}
\end{align}
where $\boldsymbol{r}(B)=B'$ is defined
by $B'_{\boldsymbol{r}(\mathbf{i})
\boldsymbol{r}(\mathbf{j})}=B_{\mathbf{i}\mathbf{j}}$.

For a pair $(\mathbf{i},u)\in
 \mathbf{I}\times \frac{1}{2}\mathbb{Z}$,
we set the parity condition $\mathbf{p}_+$
and $\mathbf{p}_-$ by
\begin{align}
\mathbf{p}_+:
\begin{cases}
 \mathbf{i}\in \mathbf{I}^{\circ}_+\sqcup \mathbf{I}^{\bullet}_+
& u\equiv 0\\
 \mathbf{i}\in \mathbf{I}^{\bullet}_-
& u\equiv \frac{1}{2},\frac{3}{2}\\
 \mathbf{i}\in \mathbf{I}^{\circ}_-\sqcup \mathbf{I}^{\bullet}_+
& u\equiv 1,
\end{cases}
\qquad
\mathbf{p}_-:
\begin{cases}
 \mathbf{i}\in \mathbf{I}^{\circ}_+\sqcup \mathbf{I}^{\bullet}_+
& u\equiv \frac{1}{2}\\
 \mathbf{i}\in \mathbf{I}^{\bullet}_-
& u\equiv 0,1\\
 \mathbf{i}\in \mathbf{I}^{\circ}_-\sqcup \mathbf{I}^{\bullet}_+
& u\equiv \frac{3}{2},
\end{cases}
\end{align}
where $\equiv$ is modulo $2\mathbb{Z}$.
We have
\begin{align}
(\mathbf{i},u):\mathbf{p}_+
\quad
\Longleftrightarrow
\quad
(\mathbf{i},u+\frac{1}{2}):\mathbf{p}_-.
\end{align}
Each $(\mathbf{i},u):\mathbf{p}_+$
is a mutation point of \eqref{eq:mutseq} in the forward
direction of $u$,
and each  $(\mathbf{i},u):\mathbf{p}_-$
is so in the backward direction of $u$.
Notice that there are also some $(\mathbf{i},u)$ which
do not satisfy $\mathbf{p}_+$ nor $\mathbf{p}_-$,
and are not mutation points of \eqref{eq:mutseq};
explicitly, they are $(\mathbf{i},u)$
with $\mathbf{i}\in \mathbf{I}^{\circ}_+$,
$u\equiv 1,\frac{3}{2}$ mod $2\mathbb{Z}$,
or 
with $\mathbf{i}\in \mathbf{I}^{\circ}_-$,
$u\equiv 0,\frac{1}{2}$ mod $2\mathbb{Z}$.
Consequently, we have the following relations
for $(\mathbf{i},u):\mathbf{p}_{\pm}$.
\begin{align}
\label{eq:xmu1}
x_{\mathbf{i}}(u)&
\textstyle
=x_{\mathbf{i}}(u\mp\frac{1}{2})
\quad (\mathbf{i}\in \mathbf{I}^{\bullet}),\\
\label{eq:xmu2}
x_{\mathbf{i}}(u)&
\textstyle
=x_{\mathbf{i}}(u\mp\frac{1}{2})
=x_{\mathbf{i}}(u\mp1)=x_{\mathbf{i}}(u\mp\frac{3}{2}),
\quad
(\mathbf{i}\in \mathbf{I}^{\circ}),\\
\label{eq:ymu}
y_{\mathbf{i}}(u)&
\textstyle
=y_{\mathbf{i}}(u\pm\frac{1}{2})^{-1}.
\end{align}

There is a correspondence between the parity condition
$\mathbf{p}_{+}$ here and $\mathbf{P}_{+}$,
$\mathbf{P}'_{+}$
in \eqref{eq:Pcond} and \eqref{eq:Pcond2}.
\begin{lemma}
Below $\equiv$ means the equivalence modulo $2\mathbb{Z}$.
\par
(i)
The map
\begin{align}
\begin{matrix}
g: &\mathcal{I}_{\ell+}&\rightarrow & \{ (\mathbf{i},u): \mathbf{p}_+
\}\hfill \\
&(a,m,u-\frac{1}{t_a})&\mapsto &
\begin{cases}
((a,m),u)& \mbox{\rm $a\neq r$;
$r+a+m+u\equiv 1$}\\
((2r-a,m),u)& \mbox{\rm $a\neq r$;
$r+a+m+u\equiv 0$}\\
((r,m),u)& \mbox{\rm $a=r$}\\
\end{cases}
\end{matrix}
\end{align}
is a bijection.
\par
(ii)
The map
\begin{align}
\begin{matrix}
g': &\mathcal{I}'_{\ell+}&\rightarrow & \{ (\mathbf{i},u): \mathbf{p}_+
\}\hfill \\
&(a,m,u)&\mapsto &
\begin{cases}
((a,m),u)& \mbox{\rm $a\neq r$;
$r+a+m+u\equiv 1$}\\
((2r-a,m),u)& \mbox{\rm $a\neq r$;
$r+a+m+u\equiv 0$}\\
((r,m),u)& a=r\\
\end{cases}
\end{matrix}
\end{align}
is a bijection.
\end{lemma}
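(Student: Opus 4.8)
The plan is to reduce part (ii) to part (i) and then to prove part (i) by a direct combinatorial check against the explicit form of the quiver $Q_{\ell}(B_r)$ in Figure \ref{fig:quiverB}. For the reduction, consider the map $\sigma\colon \mathcal{I}'_{\ell+}\to \mathcal{I}_{\ell+}$, $(a,m,u)\mapsto (a,m,u-\frac{1}{t_a})$. By the equivalence $(a,m,u):\mathbf{P}'_+\Leftrightarrow (a,m,u\pm\frac{1}{t_a}):\mathbf{P}_+$ displayed just before the lemma, $\sigma$ is well defined and injective, and it is surjective since $(a,m,v):\mathbf{P}_+$ forces $(a,m,v+\frac{1}{t_a}):\mathbf{P}'_+$. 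The defining formula for $g'$ is precisely that of $g$ applied to $(a,m,u-\frac{1}{t_a})$, so $g'=g\circ\sigma$, and $g'$ is a bijection whenever $g$ is. Thus it suffices to treat (i).

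For (i), I would first record, straight from Figure \ref{fig:quiverB}, the decoration of a vertex $\mathbf{i}=(i,i')$: it is filled ($\bullet$) exactly when $i=r$ --- in which case $i'$ runs over $1,\dots,2\ell-1$ --- and empty ($\circ$) otherwise, with $i'$ running over $1,\dots,\ell-1$; the middle vertex $(r,i')$ carries the sign $+$ iff $i'$ is odd; and for $i\neq r$ the sign of $(i,i')$ is controlled by the parity of $r+i+i'$, the rule for $i>r$ being the opposite of the rule for $i<r$ (the combinatorial shadow of the fork at the $r$-th column). Since $t_a\ell-1$ equals $\ell-1$ for $a\neq r$ and $2\ell-1$ for $a=r$, the index ranges of $\mathcal{I}_\ell$ match the column heights of $Q_\ell(B_r)$, so for $(a,m,v)\in\mathcal{I}_{\ell+}$ the index $\mathbf{i}$ output by $g$ lies in $\mathbf{I}$; and the clause defining $\mathbf{p}_+$ ($u\equiv 0$ on $\mathbf{I}^{\circ}_+$, $u\equiv 1$ on $\mathbf{I}^{\circ}_-$, $u\in\mathbb{Z}$ on $\mathbf{I}^{\bullet}_+$, $u\in\frac{1}{2}+\mathbb{Z}$ on $\mathbf{I}^{\bullet}_-$) turns, once the sign of $\mathbf{i}$ is expressed through the parity of $r+a+m+u$ and the folding $a\mapsto 2r-a$ is taken into account, into exactly $\mathbf{P}_+$ for $(a,m,u-\frac{1}{t_a})$; in particular the two branches of the definition of $g$ in the case $a\neq r$ are precisely the ones singled out by the parity of $r+a+m+u$ that $\mathbf{p}_+$ demands. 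This shows $g$ is well defined into $\{(\mathbf{i},u):\mathbf{p}_+\}$.

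Bijectivity then follows by writing the inverse explicitly: send $((i,i'),u):\mathbf{p}_+$ to $(i,i',u-1)$ if $i<r$, to $(2r-i,i',u-1)$ if $i>r$, and to $(r,i',u-\frac{1}{2})$ if $i=r$; the same parity bookkeeping shows this lands in $\mathcal{I}_{\ell+}$ and that both composites are the identity --- the only point to watch being that in the cases $i<r$ and $i>r$ the parity of $r+i+i'+u$ forced by $\mathbf{p}_+$ selects exactly the branch $((i,i'),u)$ of the two-valued rule defining $g$, rather than $((2r-i,i'),u)$. Hence $g$, and therefore $g'$, is a bijection. The expected obstacle is purely bookkeeping: pinning down the sign pattern of $Q_\ell(B_r)$ near the middle column and keeping the parity of $r+a+m+u$ straight across the identification $a\leftrightarrow 2r-a$; the real work is arranging the case analysis so that the correspondence is visibly term-by-term rather than a tangle of sign chases.
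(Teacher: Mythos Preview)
Your argument is correct and follows the same approach as the paper---a direct verification against the combinatorics of the quiver $Q_{\ell}(B_r)$---but where the paper simply writes ``They can be easily confirmed by looking at the example in Figures \ref{fig:labelxB1}--\ref{fig:labelyB2}'', you actually carry out the bookkeeping explicitly, and you add the clean observation that $g'=g\circ\sigma$ reduces (ii) to (i). Your sign rule for $(i,i')$ with $i\neq r$ (sign $+$ iff $r+i+i'$ is odd for $i<r$, and the opposite for $i>r$), your unpacking of $\mathbf{p}_+$ into the four cases, and your explicit inverse all check out against the figures and the definitions in the paper.
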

\begin{proof}
They can be easily confirmed by looking at the example in Figures
\ref{fig:labelxB1}--\ref{fig:labelyB2}.
\end{proof}

We introduce alternative labels
$x_{\mathbf{i}}(u)=x^{(a)}_m(u-1/t_a)$
($(a,m,u-1/t_a)\in \mathcal{I}_{\ell+}$)
for $(\mathbf{i},u)=g((a,m,u-1/t_a))$
and
$y_{\mathbf{i}}(u)=y^{(a)}_m(u)$
($(a,m,u)\in \mathcal{I}'_{\ell+}$)
for $(\mathbf{i},u)=g'((a,m,u))$,
respectively, as in Figures \ref{fig:labelxB1}--\ref{fig:labelyB2}.
They will be used below (and also in Section \ref{sec:dilog})
to relate the T and Y-systems with
$\mathcal{A}(B,x,y)$ and $\mathcal{G}(B,y)$.

\begin{figure}
\begin{picture}(280,370)(-50,-230)
\put(-60,46.5){$x(0)$}
\put(-19,-7.5)
{
\put(0,0){\framebox(38,15){\tiny $x^{(r-2)}_1(-1)$}}
\put(50,0){\makebox(38,15){\small $*$}}
\put(150,0){\framebox(38,15){\tiny $x^{(r-1)}_1(-1)$}}
\put(200,0){\makebox(38,15){\small $*$}}
\put(0,50){\makebox(38,15){\small $*$}}
\put(50,50){\framebox(38,15){\tiny $x^{(r-1)}_2(-1)$}}
\put(150,50){\makebox(38,15){\small $*$}}
\put(200,50){\framebox(38,15){\tiny $x^{(r-2)}_2(-1)$}}
\put(0,100){\framebox(38,15){\tiny $x^{(r-2)}_3(-1)$}}
\put(50,100){\makebox(38,15){\small $*$}}
\put(150,100){\framebox(38,15){\tiny $x^{(r-1)}_3(-1)$}}
\put(200,100){\makebox(38,15){\small $*$}}
\put(100,-25){\framebox(38,15){\small $x^{(r)}_1(-\frac{1}{2})$}}
\put(100,0){\dashbox{3}(38,15){\small $*$}}
\put(100,25){\framebox(38,15){\small $x^{(r)}_3(-\frac{1}{2})$}}
\put(100,50){\dashbox{3}(38,15){\small $*$}}
\put(100,75){\framebox(38,15){\small $x^{(r)}_5(-\frac{1}{2})$}}
\put(100,100){\dashbox{3}(38,15){\small $*$}}
\put(100,125){\framebox(38,15){\small $x^{(r)}_7(-\frac{1}{2})$}}
}
%
\put(30,0){$\vector(-1,0){10}$}
\put(80,0){$\vector(-1,0){10}$}
\put(120,0){$\vector(1,0){10}$}
\put(180,0){$\vector(-1,0){10}$}
\put(20,50){$\vector(1,0){10}$}
\put(80,50){$\vector(-1,0){10}$}
\put(120,50){$\vector(1,0){10}$}
\put(170,50){$\vector(1,0){10}$}
\put(30,100){$\vector(-1,0){10}$}
\put(80,100){$\vector(-1,0){10}$}
\put(120,100){$\vector(1,0){10}$}
\put(180,100){$\vector(-1,0){10}$}
\put(0,18){\vector(0,1){14}}
\put(0,82){\vector(0,-1){14}}
\put(50,32){\vector(0,-1){14}}
\put(50,68){\vector(0,1){14}}
\put(100,-17){\vector(0,1){9}}
\put(100,17){\vector(0,-1){9}}
\put(100,33){\vector(0,1){9}}
\put(100,67){\vector(0,-1){9}}
\put(100,83){\vector(0,1){9}}
\put(100,117){\vector(0,-1){9}}
\put(150,18){\vector(0,1){14}}
\put(150,82){\vector(0,-1){14}}
\put(200,32){\vector(0,-1){14}}
\put(200,68){\vector(0,1){14}}

\put(70,-10){\vector(2,-1){10}}
\put(70,10){\vector(2,1){10}}
\put(70,90){\vector(2,-1){10}}
\put(70,110){\vector(2,1){10}}
\put(130,40){\vector(-2,-1){10}}
\put(130,60){\vector(-2,1){10}}
\put(-35,-1){$\dots$}
\put(-35,49){$\dots$}
\put(-35,99){$\dots$}
\put(225,-1){$\dots$}
\put(225,49){$\dots$}
\put(225,99){$\dots$}
\dottedline(-20,-40)(220,-40)
\put(-45,-42){$\updownarrow$}
\put(-70,-42){$\mu^{\bullet}_+\mu^{\circ}_+$}
\put(0,-180)
{
\put(-60,46.5){$x(\frac{1}{2})$}
\put(-19,-7.5)
{
\put(0,0){\dashbox{3}(38,15){\small $*$}}
\put(50,0){\makebox(38,15){\small $*$}}
\put(150,0){\dashbox{3}(38,15){\small $*$}}
\put(200,0){\makebox(38,15){\small $*$}}
\put(0,50){\makebox(38,15){\small $*$}}
\put(50,50){\dashbox{3}(38,15){\small $*$}}
\put(150,50){\makebox(38,15){\small $*$}}
\put(200,50){\dashbox{3}(38,15){\small $*$}}
\put(0,100){\dashbox{3}(38,15){\small $*$}}
\put(50,100){\makebox(38,15){\small $*$}}
\put(150,100){\dashbox{3}(38,15){\small $*$}}
\put(200,100){\makebox(38,15){\small $*$}}
\put(100,-25){\dashbox{3}(38,15){\small $*$}}
\put(100,0){\framebox(38,15){\small $x^{(r)}_2(0)$}}
\put(100,25){\dashbox{3}(38,15){\small $*$}}
\put(100,50){\framebox(38,15){\small $x^{(r)}_4(0)$}}
\put(100,75){\dashbox{3}(38,15){\small $*$}}
\put(100,100){\framebox(38,15){\small $x^{(r)}_6(0)$}}
\put(100,125){\dashbox{3}(38,15){\small $*$}}
}
%
\put(20,0){$\vector(1,0){10}$}
\put(70,0){$\vector(1,0){10}$}
\put(130,0){$\vector(-1,0){10}$}
\put(170,0){$\vector(1,0){10}$}
\put(30,50){$\vector(-1,0){10}$}
\put(70,50){$\vector(1,0){10}$}
\put(130,50){$\vector(-1,0){10}$}
\put(180,50){$\vector(-1,0){10}$}
\put(20,100){$\vector(1,0){10}$}
\put(70,100){$\vector(1,0){10}$}
\put(130,100){$\vector(-1,0){10}$}
\put(170,100){$\vector(1,0){10}$}
\put(0,32){\vector(0,-1){14}}
\put(0,68){\vector(0,1){14}}
\put(50,18){\vector(0,1){14}}
\put(50,82){\vector(0,-1){14}}
\put(100,-8){\vector(0,-1){9}}
\put(100,8){\vector(0,1){9}}
\put(100,42){\vector(0,-1){9}}
\put(100,58){\vector(0,1){9}}
\put(100,92){\vector(0,-1){9}}
\put(100,108){\vector(0,1){9}}
\put(150,32){\vector(0,-1){14}}
\put(150,68){\vector(0,1){14}}
\put(200,18){\vector(0,1){14}}
\put(200,82){\vector(0,-1){14}}

\put(80,-15){\vector(-2,1){10}}
\put(80,15){\vector(-2,-1){10}}
\put(80,85){\vector(-2,1){10}}
\put(80,115){\vector(-2,-1){10}}
\put(120,35){\vector(2,1){10}}
\put(120,65){\vector(2,-1){10}}
\put(-35,-1){$\dots$}
\put(-35,49){$\dots$}
\put(-35,99){$\dots$}
\put(225,-1){$\dots$}
\put(225,49){$\dots$}
\put(225,99){$\dots$}
\dottedline(-20,-40)(220,-40)
\put(-45,-42){$\updownarrow$}
\put(-60,-42){$\mu^{\bullet}_-$}
}
\end{picture}
\caption{(Continues to Figure \ref{fig:labelxB2})
Label of cluster variables $x_{\mathbf{i}}(u)$
by $\mathcal{I}_{\ell+}$  for
$B_r$, $\ell=4$.
The variables framed by solid/dashed lines
satisfy the condition $\mathbf{p}_+$/$\mathbf{p}_-$,
respectively.}
\label{fig:labelxB1}
\end{figure}
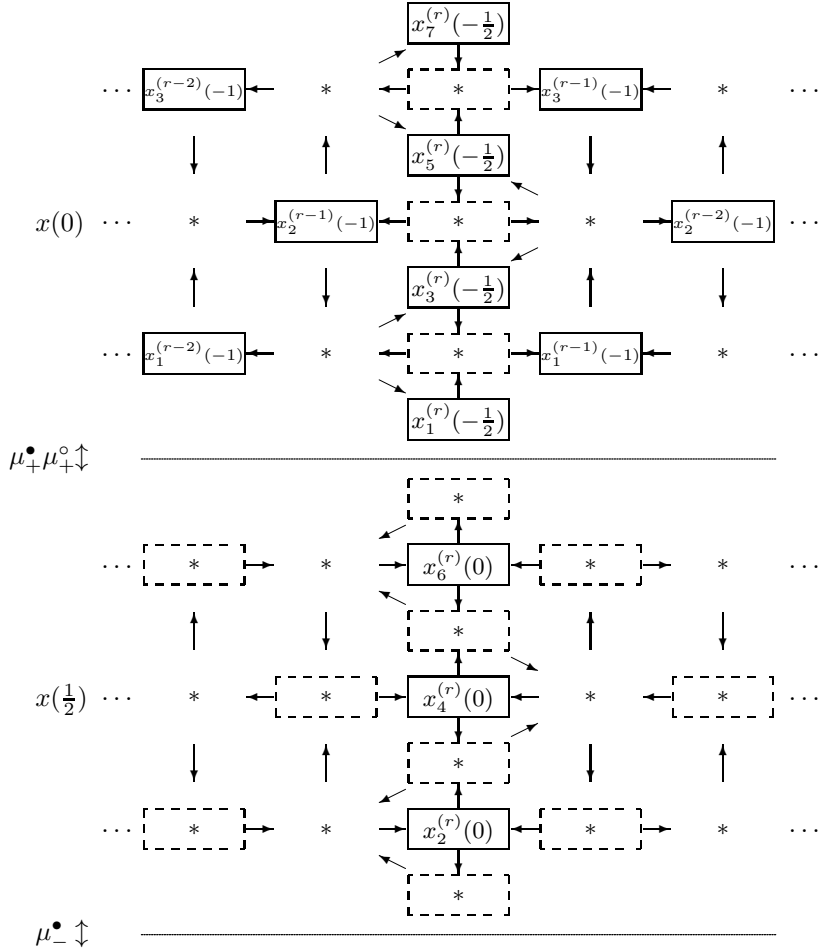

\begin{figure}
\begin{picture}(280,370)(-50,-230)
\put(0,0)
{
\put(-60,46.5){$x(1)$}
\put(-19,-7.5)
{
\put(0,0){\makebox(38,15){\small $*$}}
\put(50,0){\framebox(38,15){\small $x^{(r-1)}_1(0)$}}
\put(150,0){\makebox(38,15){\small $*$}}
\put(200,0){\framebox(38,15){\small $x^{(r-2)}_1(0)$}}
\put(0,50){\framebox(38,15){\small $x^{(r-2)}_2(0)$}}
\put(50,50){\makebox(38,15){\small $*$}}
\put(150,50){\framebox(38,15){\small $x^{(r-1)}_2(0)$}}
\put(200,50){\makebox(38,15){\small $*$}}
\put(0,100){\makebox(38,15){\small $*$}}
\put(50,100){\framebox(38,15){\small $x^{(r-1)}_3(0)$}}
\put(150,100){\makebox(38,15){\small $*$}}
\put(200,100){\framebox(38,15){\small $x^{(r-2)}_3(0)$}}
\put(100,-25){\framebox(38,15){\small $x^{(r)}_1(\frac{1}{2})$}}
\put(100,0){\dashbox{3}(38,15){\small $*$}}
\put(100,25){\framebox(38,15){\small $x^{(r)}_3(\frac{1}{2})$}}
\put(100,50){\dashbox{3}(38,15){\small $*$}}
\put(100,75){\framebox(38,15){\small $x^{(r)}_5(\frac{1}{2})$}}
\put(100,100){\dashbox{3}(38,15){\small $*$}}
\put(100,125){\framebox(38,15){\small $x^{(r)}_7(\frac{1}{2})$}}
}
%
\put(20,0){$\vector(1,0){10}$}
\put(80,0){$\vector(-1,0){10}$}
\put(120,0){$\vector(1,0){10}$}
\put(170,0){$\vector(1,0){10}$}
\put(30,50){$\vector(-1,0){10}$}
\put(80,50){$\vector(-1,0){10}$}
\put(120,50){$\vector(1,0){10}$}
\put(180,50){$\vector(-1,0){10}$}
\put(20,100){$\vector(1,0){10}$}
\put(80,100){$\vector(-1,0){10}$}
\put(120,100){$\vector(1,0){10}$}
\put(170,100){$\vector(1,0){10}$}
\put(0,32){\vector(0,-1){14}}
\put(0,68){\vector(0,1){14}}
\put(50,18){\vector(0,1){14}}
\put(50,82){\vector(0,-1){14}}
\put(100,-17){\vector(0,1){9}}
\put(100,17){\vector(0,-1){9}}
\put(100,33){\vector(0,1){9}}
\put(100,67){\vector(0,-1){9}}
\put(100,83){\vector(0,1){9}}
\put(100,117){\vector(0,-1){9}}
\put(150,32){\vector(0,-1){14}}
\put(150,68){\vector(0,1){14}}
\put(200,18){\vector(0,1){14}}
\put(200,82){\vector(0,-1){14}}

\put(130,-10){\vector(-2,-1){10}}
\put(130,10){\vector(-2,1){10}}
\put(130,90){\vector(-2,-1){10}}
\put(130,110){\vector(-2,1){10}}
\put(70,40){\vector(2,-1){10}}
\put(70,60){\vector(2,1){10}}
\put(-35,-1){$\dots$}
\put(-35,49){$\dots$}
\put(-35,99){$\dots$}
\put(225,-1){$\dots$}
\put(225,49){$\dots$}
\put(225,99){$\dots$}
\dottedline(-20,-40)(220,-40)
\put(-45,-42){$\updownarrow$}
\put(-70,-42){$\mu^{\bullet}_+\mu^{\circ}_-$}
}
\put(0,-180)
{
\put(-60,46.5){$x(\frac{3}{2})$}
\put(-19,-7.5)
{
\put(0,0){\makebox(38,15){\small $*$}}
\put(50,0){\dashbox{3}(38,15){\small $*$}}
\put(150,0){\makebox(38,15){\small $*$}}
\put(200,0){\dashbox{3}(38,15){\small $*$}}
\put(0,50){\dashbox{3}(38,15){\small $*$}}
\put(50,50){\makebox(38,15){\small $*$}}
\put(150,50){\dashbox{3}(38,15){\small $*$}}
\put(200,50){\makebox(38,15){\small $*$}}
\put(0,100){\makebox(38,15){\small $*$}}
\put(50,100){\dashbox{3}(38,15){\small $*$}}
\put(150,100){\makebox(38,15){\small $*$}}
\put(200,100){\dashbox{3}(38,15){\small $*$}}
\put(100,-25){\dashbox{3}(38,15){\small $*$}}
\put(100,0){\framebox(38,15){\small $x^{(r)}_2(1)$}}
\put(100,25){\dashbox{3}(38,15){\small $*$}}
\put(100,50){\framebox(38,15){\small $x^{(r)}_4(1)$}}
\put(100,75){\dashbox{3}(38,15){\small $*$}}
\put(100,100){\framebox(38,15){\small $x^{(r)}_6(1)$}}
\put(100,125){\dashbox{3}(38,15){\small $*$}}
}
%
\put(30,0){$\vector(-1,0){10}$}
\put(70,0){$\vector(1,0){10}$}
\put(130,0){$\vector(-1,0){10}$}
\put(180,0){$\vector(-1,0){10}$}
\put(20,50){$\vector(1,0){10}$}
\put(70,50){$\vector(1,0){10}$}
\put(130,50){$\vector(-1,0){10}$}
\put(170,50){$\vector(1,0){10}$}
\put(30,100){$\vector(-1,0){10}$}
\put(70,100){$\vector(1,0){10}$}
\put(130,100){$\vector(-1,0){10}$}
\put(180,100){$\vector(-1,0){10}$}
\put(0,18){\vector(0,1){14}}
\put(0,82){\vector(0,-1){14}}
\put(50,32){\vector(0,-1){14}}
\put(50,68){\vector(0,1){14}}
\put(100,-8){\vector(0,-1){9}}
\put(100,8){\vector(0,1){9}}
\put(100,42){\vector(0,-1){9}}
\put(100,58){\vector(0,1){9}}
\put(100,92){\vector(0,-1){9}}
\put(100,108){\vector(0,1){9}}
\put(150,18){\vector(0,1){14}}
\put(150,82){\vector(0,-1){14}}
\put(200,32){\vector(0,-1){14}}
\put(200,68){\vector(0,1){14}}

\put(120,-15){\vector(2,1){10}}
\put(120,15){\vector(2,-1){10}}
\put(120,85){\vector(2,1){10}}
\put(120,115){\vector(2,-1){10}}
\put(80,35){\vector(-2,1){10}}
\put(80,65){\vector(-2,-1){10}}
\put(-35,-1){$\dots$}
\put(-35,49){$\dots$}
\put(-35,99){$\dots$}
\put(225,-1){$\dots$}
\put(225,49){$\dots$}
\put(225,99){$\dots$}
\dottedline(-20,-40)(220,-40)
\put(-45,-42){$\updownarrow$}
\put(-60,-42){$\mu^{\bullet}_-$}
}

\end{picture}
\caption{
(Continues from Figure \ref{fig:labelxB1}).
}
\label{fig:labelxB2}
\end{figure}

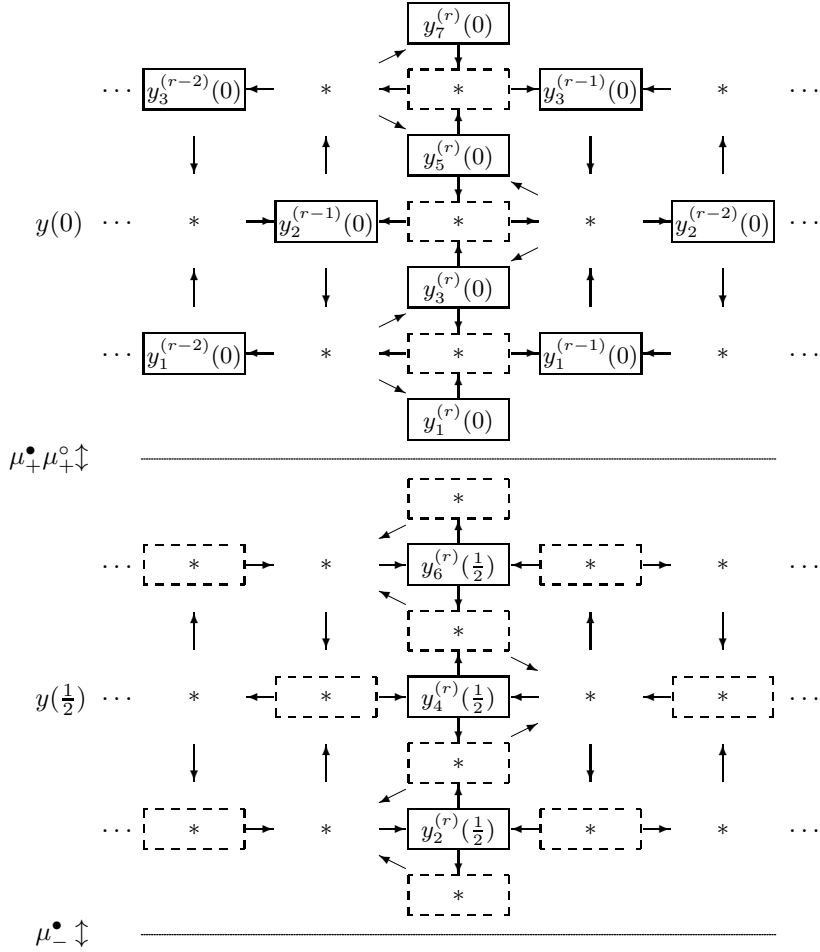
\begin{figure}
\begin{picture}(280,370)(-50,-230)
\put(-60,46.5){$y(0)$}
\put(-19,-7.5)
{
\put(0,0){\framebox(38,15){\small $y^{(r-2)}_1(0)$}}
\put(50,0){\makebox(38,15){\small $*$}}
\put(150,0){\framebox(38,15){\small $y^{(r-1)}_1(0)$}}
\put(200,0){\makebox(38,15){\small $*$}}
\put(0,50){\makebox(38,15){\small $*$}}
\put(50,50){\framebox(38,15){\small $y^{(r-1)}_2(0)$}}
\put(150,50){\makebox(38,15){\small $*$}}
\put(200,50){\framebox(38,15){\small $y^{(r-2)}_2(0)$}}
\put(0,100){\framebox(38,15){\small $y^{(r-2)}_3(0)$}}
\put(50,100){\makebox(38,15){\small $*$}}
\put(150,100){\framebox(38,15){\small $y^{(r-1)}_3(0)$}}
\put(200,100){\makebox(38,15){\small $*$}}
\put(100,-25){\framebox(38,15){\small $y^{(r)}_1(0)$}}
\put(100,0){\dashbox{3}(38,15){\small $*$}}
\put(100,25){\framebox(38,15){\small $y^{(r)}_3(0)$}}
\put(100,50){\dashbox{3}(38,15){\small $*$}}
\put(100,75){\framebox(38,15){\small $y^{(r)}_5(0)$}}
\put(100,100){\dashbox{3}(38,15){\small $*$}}
\put(100,125){\framebox(38,15){\small $y^{(r)}_7(0)$}}
}
%
\put(30,0){$\vector(-1,0){10}$}
\put(80,0){$\vector(-1,0){10}$}
\put(120,0){$\vector(1,0){10}$}
\put(180,0){$\vector(-1,0){10}$}
\put(20,50){$\vector(1,0){10}$}
\put(80,50){$\vector(-1,0){10}$}
\put(120,50){$\vector(1,0){10}$}
\put(170,50){$\vector(1,0){10}$}
\put(30,100){$\vector(-1,0){10}$}
\put(80,100){$\vector(-1,0){10}$}
\put(120,100){$\vector(1,0){10}$}
\put(180,100){$\vector(-1,0){10}$}
\put(0,18){\vector(0,1){14}}
\put(0,82){\vector(0,-1){14}}
\put(50,32){\vector(0,-1){14}}
\put(50,68){\vector(0,1){14}}
\put(100,-17){\vector(0,1){9}}
\put(100,17){\vector(0,-1){9}}
\put(100,33){\vector(0,1){9}}
\put(100,67){\vector(0,-1){9}}
\put(100,83){\vector(0,1){9}}
\put(100,117){\vector(0,-1){9}}
\put(150,18){\vector(0,1){14}}
\put(150,82){\vector(0,-1){14}}
\put(200,32){\vector(0,-1){14}}
\put(200,68){\vector(0,1){14}}

\put(70,-10){\vector(2,-1){10}}
\put(70,10){\vector(2,1){10}}
\put(70,90){\vector(2,-1){10}}
\put(70,110){\vector(2,1){10}}
\put(130,40){\vector(-2,-1){10}}
\put(130,60){\vector(-2,1){10}}
\put(-35,-1){$\dots$}
\put(-35,49){$\dots$}
\put(-35,99){$\dots$}
\put(225,-1){$\dots$}
\put(225,49){$\dots$}
\put(225,99){$\dots$}
\dottedline(-20,-40)(220,-40)
\put(-45,-42){$\updownarrow$}
\put(-70,-42){$\mu^{\bullet}_+\mu^{\circ}_+$}
\put(0,-180)
{
\put(-60,46.5){$y(\frac{1}{2})$}
\put(-19,-7.5)
{
\put(0,0){\dashbox{3}(38,15){\small $*$}}
\put(50,0){\makebox(38,15){\small $*$}}
\put(150,0){\dashbox{3}(38,15){\small $*$}}
\put(200,0){\makebox(38,15){\small $*$}}
\put(0,50){\makebox(38,15){\small $*$}}
\put(50,50){\dashbox{3}(38,15){\small $*$}}
\put(150,50){\makebox(38,15){\small $*$}}
\put(200,50){\dashbox{3}(38,15){\small $*$}}
\put(0,100){\dashbox{3}(38,15){\small $*$}}
\put(50,100){\makebox(38,15){\small $*$}}
\put(150,100){\dashbox{3}(38,15){\small $*$}}
\put(200,100){\makebox(38,15){\small $*$}}
\put(100,-25){\dashbox{3}(38,15){\small $*$}}
\put(100,0){\framebox(38,15){\small $y^{(r)}_2(\frac{1}{2})$}}
\put(100,25){\dashbox{3}(38,15){\small $*$}}
\put(100,50){\framebox(38,15){\small $y^{(r)}_4(\frac{1}{2})$}}
\put(100,75){\dashbox{3}(38,15){\small $*$}}
\put(100,100){\framebox(38,15){\small $y^{(r)}_6(\frac{1}{2})$}}
\put(100,125){\dashbox{3}(38,15){\small $*$}}
}
%
\put(20,0){$\vector(1,0){10}$}
\put(70,0){$\vector(1,0){10}$}
\put(130,0){$\vector(-1,0){10}$}
\put(170,0){$\vector(1,0){10}$}
\put(30,50){$\vector(-1,0){10}$}
\put(70,50){$\vector(1,0){10}$}
\put(130,50){$\vector(-1,0){10}$}
\put(180,50){$\vector(-1,0){10}$}
\put(20,100){$\vector(1,0){10}$}
\put(70,100){$\vector(1,0){10}$}
\put(130,100){$\vector(-1,0){10}$}
\put(170,100){$\vector(1,0){10}$}
\put(0,32){\vector(0,-1){14}}
\put(0,68){\vector(0,1){14}}
\put(50,18){\vector(0,1){14}}
\put(50,82){\vector(0,-1){14}}
\put(100,-8){\vector(0,-1){9}}
\put(100,8){\vector(0,1){9}}
\put(100,42){\vector(0,-1){9}}
\put(100,58){\vector(0,1){9}}
\put(100,92){\vector(0,-1){9}}
\put(100,108){\vector(0,1){9}}
\put(150,32){\vector(0,-1){14}}
\put(150,68){\vector(0,1){14}}
\put(200,18){\vector(0,1){14}}
\put(200,82){\vector(0,-1){14}}

\put(80,-15){\vector(-2,1){10}}
\put(80,15){\vector(-2,-1){10}}
\put(80,85){\vector(-2,1){10}}
\put(80,115){\vector(-2,-1){10}}
\put(120,35){\vector(2,1){10}}
\put(120,65){\vector(2,-1){10}}
\put(-35,-1){$\dots$}
\put(-35,49){$\dots$}
\put(-35,99){$\dots$}
\put(225,-1){$\dots$}
\put(225,49){$\dots$}
\put(225,99){$\dots$}
\dottedline(-20,-40)(220,-40)
\put(-45,-42){$\updownarrow$}
\put(-60,-42){$\mu^{\bullet}_-$}
}
\end{picture}
\caption{(Continues to Figure \ref{fig:labelyB2})
Label of coefficients $y_{\mathbf{i}}(u)$
by $\mathcal{I}'_{\ell+}$  for
$B_r$, $\ell=4$.
The variables framed by solid/dashed lines
satisfy the condition $\mathbf{p}_+$/$\mathbf{p}_-$,
respectively.}
\label{fig:labelyB1}
\end{figure}

\begin{figure}
\begin{picture}(280,370)(-50,-230)
\put(0,0)
{
\put(-60,46.5){$y(1)$}
\put(-19,-7.5)
{
\put(0,0){\makebox(38,15){\small $*$}}
\put(50,0){\framebox(38,15){\small $y^{(r-1)}_1(1)$}}
\put(150,0){\makebox(38,15){\small $*$}}
\put(200,0){\framebox(38,15){\small $y^{(r-2)}_1(1)$}}
\put(0,50){\framebox(38,15){\small $y^{(r-2)}_2(1)$}}
\put(50,50){\makebox(38,15){\small $*$}}
\put(150,50){\framebox(38,15){\small $y^{(r-1)}_2(1)$}}
\put(200,50){\makebox(38,15){\small $*$}}
\put(0,100){\makebox(38,15){\small $*$}}
\put(50,100){\framebox(38,15){\small $y^{(r-1)}_3(1)$}}
\put(150,100){\makebox(38,15){\small $*$}}
\put(200,100){\framebox(38,15){\small $y^{(r-2)}_3(1)$}}
\put(100,-25){\framebox(38,15){\small $y^{(r)}_1(1)$}}
\put(100,0){\dashbox{3}(38,15){\small $*$}}
\put(100,25){\framebox(38,15){\small $y^{(r)}_3(1)$}}
\put(100,50){\dashbox{3}(38,15){\small $*$}}
\put(100,75){\framebox(38,15){\small $y^{(r)}_5(1)$}}
\put(100,100){\dashbox{3}(38,15){\small $*$}}
\put(100,125){\framebox(38,15){\small $y^{(r)}_7(1)$}}
}
%
\put(20,0){$\vector(1,0){10}$}
\put(80,0){$\vector(-1,0){10}$}
\put(120,0){$\vector(1,0){10}$}
\put(170,0){$\vector(1,0){10}$}
\put(30,50){$\vector(-1,0){10}$}
\put(80,50){$\vector(-1,0){10}$}
\put(120,50){$\vector(1,0){10}$}
\put(180,50){$\vector(-1,0){10}$}
\put(20,100){$\vector(1,0){10}$}
\put(80,100){$\vector(-1,0){10}$}
\put(120,100){$\vector(1,0){10}$}
\put(170,100){$\vector(1,0){10}$}
\put(0,32){\vector(0,-1){14}}
\put(0,68){\vector(0,1){14}}
\put(50,18){\vector(0,1){14}}
\put(50,82){\vector(0,-1){14}}
\put(100,-17){\vector(0,1){9}}
\put(100,17){\vector(0,-1){9}}
\put(100,33){\vector(0,1){9}}
\put(100,67){\vector(0,-1){9}}
\put(100,83){\vector(0,1){9}}
\put(100,117){\vector(0,-1){9}}
\put(150,32){\vector(0,-1){14}}
\put(150,68){\vector(0,1){14}}
\put(200,18){\vector(0,1){14}}
\put(200,82){\vector(0,-1){14}}

\put(130,-10){\vector(-2,-1){10}}
\put(130,10){\vector(-2,1){10}}
\put(130,90){\vector(-2,-1){10}}
\put(130,110){\vector(-2,1){10}}
\put(70,40){\vector(2,-1){10}}
\put(70,60){\vector(2,1){10}}
\put(-35,-1){$\dots$}
\put(-35,49){$\dots$}
\put(-35,99){$\dots$}
\put(225,-1){$\dots$}
\put(225,49){$\dots$}
\put(225,99){$\dots$}
\dottedline(-20,-40)(220,-40)
\put(-45,-42){$\updownarrow$}
\put(-70,-42){$\mu^{\bullet}_+\mu^{\circ}_-$}
}
\put(0,-180)
{
\put(-60,46.5){$y(\frac{3}{2})$}
\put(-19,-7.5)
{
\put(0,0){\makebox(38,15){\small $*$}}
\put(50,0){\dashbox{3}(38,15){\small $*$}}
\put(150,0){\makebox(38,15){\small $*$}}
\put(200,0){\dashbox{3}(38,15){\small $*$}}
\put(0,50){\dashbox{3}(38,15){\small $*$}}
\put(50,50){\makebox(38,15){\small $*$}}
\put(150,50){\dashbox{3}(38,15){\small $*$}}
\put(200,50){\makebox(38,15){\small $*$}}
\put(0,100){\makebox(38,15){\small $*$}}
\put(50,100){\dashbox{3}(38,15){\small $*$}}
\put(150,100){\makebox(38,15){\small $*$}}
\put(200,100){\dashbox{3}(38,15){\small $*$}}
\put(100,-25){\dashbox{3}(38,15){\small $*$}}
\put(100,0){\framebox(38,15){\small $y^{(r)}_2(\frac{3}{2})$}}
\put(100,25){\dashbox{3}(38,15){\small $*$}}
\put(100,50){\framebox(38,15){\small $y^{(r)}_4(\frac{3}{2})$}}
\put(100,75){\dashbox{3}(38,15){\small $*$}}
\put(100,100){\framebox(38,15){\small $y^{(r)}_6(\frac{3}{2})$}}
\put(100,125){\dashbox{3}(38,15){\small $*$}}
}
%
\put(30,0){$\vector(-1,0){10}$}
\put(70,0){$\vector(1,0){10}$}
\put(130,0){$\vector(-1,0){10}$}
\put(180,0){$\vector(-1,0){10}$}
\put(20,50){$\vector(1,0){10}$}
\put(70,50){$\vector(1,0){10}$}
\put(130,50){$\vector(-1,0){10}$}
\put(170,50){$\vector(1,0){10}$}
\put(30,100){$\vector(-1,0){10}$}
\put(70,100){$\vector(1,0){10}$}
\put(130,100){$\vector(-1,0){10}$}
\put(180,100){$\vector(-1,0){10}$}
\put(0,18){\vector(0,1){14}}
\put(0,82){\vector(0,-1){14}}
\put(50,32){\vector(0,-1){14}}
\put(50,68){\vector(0,1){14}}
\put(100,-8){\vector(0,-1){9}}
\put(100,8){\vector(0,1){9}}
\put(100,42){\vector(0,-1){9}}
\put(100,58){\vector(0,1){9}}
\put(100,92){\vector(0,-1){9}}
\put(100,108){\vector(0,1){9}}
\put(150,18){\vector(0,1){14}}
\put(150,82){\vector(0,-1){14}}
\put(200,32){\vector(0,-1){14}}
\put(200,68){\vector(0,1){14}}

\put(120,-15){\vector(2,1){10}}
\put(120,15){\vector(2,-1){10}}
\put(120,85){\vector(2,1){10}}
\put(120,115){\vector(2,-1){10}}
\put(80,35){\vector(-2,1){10}}
\put(80,65){\vector(-2,-1){10}}
\put(-35,-1){$\dots$}
\put(-35,49){$\dots$}
\put(-35,99){$\dots$}
\put(225,-1){$\dots$}
\put(225,49){$\dots$}
\put(225,99){$\dots$}
\dottedline(-20,-40)(220,-40)
\put(-45,-42){$\updownarrow$}
\put(-60,-42){$\mu^{\bullet}_-$}
}

\end{picture}
\caption{
(Continues from Figure \ref{fig:labelyB1}).
}
\label{fig:labelyB2}
\end{figure}

\subsection{T-system and cluster algebra}
The T-system $\mathbb{T}_{\ell}(B_r)$
naturally appears as a system of relations
among the cluster variables
$x_{\mathbf{i}}(u)$ 
in the trivial evaluation of coefficients.
(The quiver $Q_{\ell}(B_r)$ is designed to do so.)
Let $\mathcal{A}(B,x)$ be the cluster algebra
with trivial coefficients, where $(B,x)$ is
the initial seed.
Let $\mathbf{1}=\{1\}$ 
be the {\em trivial semifield}
and $\pi_{\mathbf{1}}:
\mathbb{Q}_{\mathrm{sf}}(y)\rightarrow 
\mathbf{1}$, $y_{\mathbf{i}}\mapsto 1$ be the projection.
Let $[x_{\mathbf{i}}(u)]_{\mathbf{1}}$
denote the image of $x_{\mathbf{i}}(u)$
 by the algebra homomorphism
$\mathcal{A}(B,x,y)\rightarrow \mathcal{A}(B,x)$
 induced from $\pi_{\mathbf{1}}$.
It is called the {\em trivial evaluation}.

Recall that $G(b,k,v;a,m,u)$ is defined in \eqref{eq:Tu}.

\begin{lemma}
\label{lem:x2}
The family $\{x^{(a)}_m(u)
\mid (a,m,u)\in \mathcal{I}_{\ell+}\}$
satisfies a system of relations
\begin{align}
\begin{split}
x^{(a)}_{m}\left(u-\textstyle\frac{1}{t_a}\right)
x^{(a)}_{m}\left(u+\textstyle\frac{1}{t_a}\right)
&=
\frac{y^{(a)}_m(u)}{1+y^{(a)}_m(u)}
\prod_{(b,k,v)\in \mathcal{I}_{\ell+}}
x^{(b)}_{k}(v)^{G(b,k,v;\, a,m,u)}\\
&\qquad +
\frac{1}{1+y^{(a)}_m(u)}
x^{(a)}_{m-1}(u)x^{(a)}_{m+1}(u),
\end{split}
\end{align}
where $(a,m,u)\in \mathcal{I}'_{+\ell}$.
In particular,
the family $\{ [x^{(a)}_m(u)]_{\mathbf{1}}
\mid (a,m,u)\in \mathcal{I}_{\ell+}\}$
satisfies the T-system $\mathbb{T}_{\ell}(B_r)$
in $\mathcal{A}(B,x)$
by replacing $T^{(a)}_m(u)$ with $[x^{(a)}_m(u)]_{\mathbf{1}}$.
\end{lemma}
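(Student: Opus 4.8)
The plan is to read off the asserted identities directly from the exchange relation \eqref{eq:clust} for the individual mutations that make up the sequence \eqref{eq:mutseq}, and then to specialize the coefficients. First fix $(a,m,u)\in\mathcal{I}'_{\ell+}$ and put $(\mathbf{i},u)=g'((a,m,u))$, a point with property $\mathbf{p}_+$, hence a forward mutation point of \eqref{eq:mutseq}. Since each composite mutation in \eqref{eq:mupm2} is a product of mutations at pairwise non-adjacent vertices, the exchange relation \eqref{eq:clust} for the single mutation at $\mathbf{i}$ involves only the coefficient $y_{\mathbf{i}}(u)$, the values $x_{\mathbf{j}}(u)$ at the quiver-neighbours $\mathbf{j}$ of $\mathbf{i}$ at that stage (which are untouched by the same composite mutation, so keep the value indexed by $u$), and the current matrix entries $B_{\mathbf{j}\mathbf{i}}$, which one reads off Figure~\ref{fig:quiverB} via Lemma~\ref{lem:Qmut}; the resulting new cluster variable is the subsequent value of $x_{\mathbf{i}}$.

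Next I would match the four ingredients of \eqref{eq:clust} with those of the displayed identity. By the definition of the labels through $g$ (whose spatial part coincides with that of $g'$), the ``initial'' variable in the exchange relation is $x_{\mathbf{i}}(u)=x^{(a)}_m(u-1/t_a)$, and by \eqref{eq:xmu1}--\eqref{eq:xmu2} together with the bijections $g,g'$ the ``new'' variable is $x^{(a)}_m(u+1/t_a)$ --- the underlying point being that a vertex in $\mathbf{I}^{\bullet}$ (column $r$, $t_a=2$) is mutated at every half-step while a vertex in $\mathbf{I}^{\circ}$ (columns $\neq r$, $t_a=1$) is mutated with period $2$, so the new value of $x_{\mathbf{i}}$ persists exactly over a $u$-window of length $1/t_a$. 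The coefficient is $y_{\mathbf{i}}(u)=y^{(a)}_m(u)$ by the definition of the labels through $g'$. Finally, the two monomials $\prod_{B_{\mathbf{j}\mathbf{i}}>0}x_{\mathbf{j}}(u)^{B_{\mathbf{j}\mathbf{i}}}$ and $\prod_{B_{\mathbf{j}\mathbf{i}}<0}x_{\mathbf{j}}(u)^{-B_{\mathbf{j}\mathbf{i}}}$ are identified, respectively, with $\prod_{(b,k,v)}x^{(b)}_k(v)^{G(b,k,v;a,m,u)}$ (the horizontal and diagonal neighbours) and with $x^{(a)}_{m-1}(u)x^{(a)}_{m+1}(u)$ (the vertical neighbours); hence the numerator of \eqref{eq:clust} is $y^{(a)}_m(u)\prod x^{(b)}_k(v)^{G}+x^{(a)}_{m-1}(u)x^{(a)}_{m+1}(u)$ and the denominator is $(1+y^{(a)}_m(u))\,x^{(a)}_m(u-1/t_a)$, which gives the claimed relation after dividing by $x^{(a)}_m(u-1/t_a)$.

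This last identification is a finite inspection of the quiver --- it is where $Q_{\ell}(B_r)$ was designed to work, and I expect it to be the one genuinely laborious step. Using the left--right reflection $\boldsymbol{r}$ and the half-period shift $u\mapsto u+\tfrac12$ from Lemma~\ref{lem:Qmut}, it reduces to a single stage of \eqref{eq:mutseq}, and within that stage to three local pictures that reproduce the four cases of \eqref{eq:TB1}: generic columns $a\le r-2$ (two horizontal neighbours in columns $a\pm1$, equivalently $2r-a\mp1$, and two vertical neighbours); the branch column $a=r-1$, where $\mathbf{i}$ is in addition joined to two vertices of column $r$ --- this is what produces the term $T^{(r-2)}_m(u)T^{(r)}_{2m}(u)$, while at those two column-$r$ vertices it produces the terms involving $T^{(r-1)}_m(u\pm\tfrac12)$; and column $a=r$ itself, whose rows labelled $2m$ and $2m+1$ give the last two relations of \eqref{eq:TB1}. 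One also checks that the conventions $T^{(0)}_m=T^{(a)}_0=T^{(a)}_{t_a\ell}=1$ correspond to the absence of the out-of-range vertices from $\mathbf{I}$.

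For the final assertion, apply the algebra homomorphism $\mathcal{A}(B,x,y)\to\mathcal{A}(B,x)$ induced by $\pi_{\mathbf{1}}$. It sends every $y_{\mathbf{i}}$, hence every coefficient $y^{(a)}_m(u)$, to $1$, and it sends $1+y^{(a)}_m(u)=1\oplus y^{(a)}_m(u)$ to $1\oplus 1=1$ in $\mathbf{1}$ (not to $2$, since $\oplus$ is the addition of the trivial semifield). Therefore both $y^{(a)}_m(u)/(1+y^{(a)}_m(u))$ and $1/(1+y^{(a)}_m(u))$ specialize to $1$, and the relation becomes
\[
[x^{(a)}_m(u-\tfrac{1}{t_a})]_{\mathbf{1}}\,[x^{(a)}_m(u+\tfrac{1}{t_a})]_{\mathbf{1}}
=\prod_{(b,k,v)\in\mathcal{I}_{\ell+}}[x^{(b)}_k(v)]_{\mathbf{1}}^{G(b,k,v;a,m,u)}
+[x^{(a)}_{m-1}(u)]_{\mathbf{1}}\,[x^{(a)}_{m+1}(u)]_{\mathbf{1}},
\]
which is precisely \eqref{eq:Tu}, i.e.\ the T-system $\mathbb{T}_{\ell}(B_r)$, under the substitution $T^{(a)}_m(u)=[x^{(a)}_m(u)]_{\mathbf{1}}$.
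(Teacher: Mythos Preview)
Your proof is correct and follows essentially the same approach as the paper: both verify the lemma by reading off the exchange relation \eqref{eq:clust} at each forward mutation point $(\mathbf{i},u):\mathbf{p}_+$ and identifying the ingredients with the labels $x^{(a)}_m(u\pm 1/t_a)$, $y^{(a)}_m(u)$, the $G$-product, and $x^{(a)}_{m\pm1}(u)$. The paper carries this out by direct inspection of Figures~\ref{fig:labelxB1}--\ref{fig:labelxB2} (illustrated with the single example of the mutation at $((r,3),0)$), whereas you supply a bit more of the structural reasoning behind the identifications; the content is the same.
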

\begin{proof}
An easy way to prove it is
to represent all the relevant cluster variables in
 Figures
\ref{fig:labelxB1} and \ref{fig:labelxB2}
by $x^{(a)}_m(u)$
$(a,m,u)\in \mathcal{I}_{\ell+}$,
then to apply  mutations
at $(\mathbf{i},u):\mathbf{p}_+$
 in the figures.
For example,
consider the mutation at
$((r,3),0):\mathbf{p}_+$.
Then, by the exchange relation,
 $x^{(r)}_3(-\frac{1}{2})$ is
mutated to
\begin{align}
\begin{split}
\frac{1}{
x^{(r)}_{3}\left(\textstyle -\frac{1}{2}\right)
}
\left\{
\frac{y^{(r)}_3(0)}{1+y^{(r)}_3(0)}
x^{(r-1)}_{1}(0)x^{(r-1)}_{2}(0)
+
\frac{1}{1+y^{(r)}_3(0)}
x^{(r)}_{2}(0)x^{(r)}_{4}(0)
\right\}
,
\end{split}
\end{align}
which should be equal to $x^{(r)}_3(\frac{1}{2})$.
We note that  Figures
\ref{fig:labelxB1} and \ref{fig:labelxB2}
are general enough for that purpose.
\end{proof}

\begin{definition}
The {\em T-subalgebra
$\mathcal{A}_T(B,x)$
of ${\mathcal{A}}(B,x)$
associated with the sequence \eqref{eq:mutseq}}
is the subalgebra of
${\mathcal{A}}(B,x)$
generated by
$[x_{\mathbf{i}}(u)]_{\mathbf{1}}$
($(\mathbf{i},u)\in \mathbf{I}\times \frac{1}{2}\mathbb{Z}$).
\end{definition}

\begin{theorem}
\label{thm:Tiso}
The ring $\EuScript{T}^{\circ}_{\ell}(B_r)_+$ is isomorphic to
$\mathcal{A}_T(B,x)$ by the correspondence
$T^{(a)}_m(u)\mapsto [x^{(a)}_m(u)]_{\mathbf{1}}$.
\end{theorem}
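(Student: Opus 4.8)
\emph{Plan.} I would obtain $\phi$ as the surjection furnished by Lemma~\ref{lem:x2}, and then prove it is injective by identifying a suitable localization of $\EuScript T^{\circ}_{\ell}(B_r)_+$ with a Laurent polynomial ring attached to a single cluster of the sequence \eqref{eq:mutseq}. For the \emph{surjection}: each $[x^{(a)}_m(u)]_{\mathbf 1}$ is an element of $\mathcal A(B,x)\subseteq\mathbb Q(x)$, and by Lemma~\ref{lem:x2} the family $\{[x^{(a)}_m(u)]_{\mathbf 1}\mid(a,m,u)\in\mathcal I_{\ell+}\}$ satisfies the subsystem of $\mathbb T_{\ell}(B_r)$ indexed by the triples of parity $\mathbf P'_+$, which is exactly the defining system of $\EuScript T^{\circ}_{\ell}(B_r)_+$; hence $T^{(a)}_m(u)\mapsto[x^{(a)}_m(u)]_{\mathbf 1}$ extends to a ring homomorphism $\phi\colon\EuScript T^{\circ}_{\ell}(B_r)_+\to\mathcal A(B,x)$. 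Using the relabelling $x_{\mathbf i}(u)=x^{(a)}_m(u-1/t_a)$ together with \eqref{eq:xmu1}--\eqref{eq:xmu2}, every generator $[x_{\mathbf i}(u)]_{\mathbf 1}$ of $\mathcal A_T(B,x)$ equals some $[x^{(a)}_m(v)]_{\mathbf 1}$ with $(a,m,v)\in\mathcal I_{\ell+}$, so the image of $\phi$ is exactly $\mathcal A_T(B,x)$.

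\emph{Injectivity.} By the construction in the proof of Lemma~\ref{lem:x2}, the $\mathbf P'_+$-part of $\mathbb T_{\ell}(B_r)$ is precisely the list of exchange relations \eqref{eq:clust} (in the trivial semifield) for the individual mutations composing the periodic sequence \eqref{eq:mutseq} of Lemma~\ref{lem:Qmut}. Reading these as a recursion based at the cluster $x=x(0)$, one can rewrite, inside the localization $\widetilde{\EuScript T}:=\EuScript T^{\circ}_{\ell}(B_r)_+[\,T^{(a)}_m(u)^{-1}\,]$, every generator $T^{(a)}_m(u)$ as a Laurent polynomial in the finitely many generators $T^{(a)}_m(v)$ which, via \eqref{eq:xmu1}--\eqref{eq:xmu2} and the labelling recorded in Figures~\ref{fig:labelxB1}--\ref{fig:labelxB2}, correspond to the entries $x_{\mathbf i}(0)$ ($\mathbf i\in\mathbf I$) of $x(0)$. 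Thus the tautological map $\psi\colon\mathbb Z[\,s_{\mathbf i}^{\pm1}\mid\mathbf i\in\mathbf I\,]\to\widetilde{\EuScript T}$ sending $s_{\mathbf i}$ to that generator is surjective; and the composite with the extension $\bar\phi\colon\widetilde{\EuScript T}\to\mathbb Q(x)$ of $\phi$ sends $s_{\mathbf i}$ to $x_{\mathbf i}(0)$. Since the entries of any cluster are algebraically independent over $\mathbb Q$ \cite{FZ3}, $\bar\phi\circ\psi$ is injective; hence $\psi$ is an isomorphism, $\widetilde{\EuScript T}$ is a Laurent polynomial ring, and $\bar\phi$ is injective. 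As $\EuScript T^{\circ}_{\ell}(B_r)_+$ is a domain (being a subring of $\EuScript T_{\ell}(B_r)$, which the same argument shows to be a Laurent polynomial ring) it embeds in $\widetilde{\EuScript T}$, so $\phi$ is injective; together with surjectivity this gives the asserted isomorphism.

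\emph{Main obstacle.} The only non-formal input — everything else being bookkeeping controlled by Lemma~\ref{lem:Qmut} and the parity decomposition recalled in Section~2.2 — is the surjectivity of $\psi$, i.e.\ the fact that the ``initial data'' of the T-system recursion is precisely one cluster $x(0)$ of \eqref{eq:mutseq}: iterating the $\mathbf P'_+$-relations forward and backward in $u$ must reach every $T^{(a)}_m(u)$ while introducing no new relation among the chosen generators. This is exactly what the explicit labellings in Figures~\ref{fig:labelxB1}--\ref{fig:labelxB2} are designed to check, namely that mutating $x(u)$ at the vertices satisfying $\mathbf p_+$ reproduces each $\mathbf P'_+$-relation of $\mathbb T_{\ell}(B_r)$, and conversely that every $T^{(a)}_m(u)$ occurs as an entry of some cluster $x(v)$.
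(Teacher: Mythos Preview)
Your proof is correct and follows essentially the same route as the paper's, which simply cites \cite[Proposition~4.24]{IIKNS}: construct the surjection from Lemma~\ref{lem:x2}, then prove injectivity by identifying the localized $T$-ring with a Laurent polynomial ring on one initial cluster and invoking the algebraic independence of cluster variables. One small point of exposition: the embedding $\EuScript{T}^{\circ}_{\ell}(B_r)_+\hookrightarrow\widetilde{\EuScript{T}}$ follows most cleanly not from ``being a domain'' per se but from the fact that $\EuScript{T}^{\circ}_{\ell}(B_r)_+$ is by definition a subring of $\EuScript{T}_{\ell}(B_r)$, where the $T^{(a)}_m(u)$ are already invertible, so the localization map factors the given inclusion and is therefore injective.
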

\begin{proof}
First we note that $\mathcal{A}_T(B,x)$ is generated
by
$[x_{\mathbf{i}}(u)]_{\mathbf{1}}$
($(\mathbf{i},u): \mathbf{p}_+$) by
\eqref{eq:xmu1} and \eqref{eq:xmu2}.
Then, the claim follows from Lemma \ref{lem:x2} in the same way
as \cite[Proposition 4.24]{IIKNS}.
\end{proof}

\subsection{Y-system and cluster algebra}

The Y-system $\mathbb{Y}_{\ell}(B_r)$
also naturally appears as a system of relations
among the coefficients
$y_{\mathbf{i}}(u)$.

\begin{lemma}
\label{lem:y2}
The family $\{ y^{(a)}_m(u)
\mid (a,m,u)\in \mathcal{I}'_{\ell+}\}$
satisfies the Y-system $\mathbb{Y}_{\ell}(B_r)$
by replacing $Y^{(a)}_m(u)$ with $y^{(a)}_m(u)$.
\end{lemma}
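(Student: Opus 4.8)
The plan is to prove this exactly as Lemma~\ref{lem:x2}, replacing the cluster exchange relation~\eqref{eq:clust} by the coefficient exchange relation~\eqref{eq:coef} along the mutation sequence~\eqref{eq:mutseq}. First I would attach to every coefficient occurring in Figures~\ref{fig:labelyB1}--\ref{fig:labelyB2} the variable $y^{(a)}_m(u)$ prescribed by the bijection $g'$ together with the inversion rule~\eqref{eq:ymu}; as in the proof of Lemma~\ref{lem:x2}, the figures, drawn for $\ell=4$, already exhibit every local configuration that can occur around a vertex of $Q_\ell(B_r)$ for arbitrary $r$ and $\ell\ge2$. Note that, by the parity decomposition of the Y-system, the relations~\eqref{eq:YB1} all of whose entries are indexed by $\mathcal{I}'_{\ell+}$ form exactly one of the two isomorphic copies into which \eqref{eq:YB1} splits, so it is this copy that the family $\{y^{(a)}_m(u)\}_{(a,m,u)\in\mathcal{I}'_{\ell+}}$ has to satisfy.

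The heart of the matter is the following computation. Fix $(a,m,u)\in\mathcal{I}'_{\ell+}$, put $\mathbf{i}=g'(a,m,u)$ so that $(\mathbf{i},u):\mathbf{p}_+$, and follow the coefficient $y_{\mathbf{i}}$ along the half-integer steps of~\eqref{eq:mutseq} from time $u-1/t_a$ to time $u+1/t_a$ (four half-steps when $a\neq r$, two when $a=r$, since $t_r=2$). Exactly one of these steps mutates $\mathbf{i}$ itself, and there the first case of~\eqref{eq:coef} produces the inversion $y_{\mathbf{i}}\mapsto y_{\mathbf{i}}^{-1}$ of~\eqref{eq:ymu}; it is this inversion that converts the product $y^{(a)}_m(u-1/t_a)\,y^{(a)}_m(u+1/t_a)$ into a product over the remaining steps. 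At each remaining step a neighbour $\mathbf{k}$ of $\mathbf{i}$ is mutated and, by the second and third cases of~\eqref{eq:coef}, $y_{\mathbf{i}}$ is multiplied by $\bigl(y_{\mathbf{k}}/(1+y_{\mathbf{k}})\bigr)^{B_{\mathbf{k}\mathbf{i}}}$ or by $(1+y_{\mathbf{k}})^{-B_{\mathbf{k}\mathbf{i}}}$ according to the sign of $B_{\mathbf{k}\mathbf{i}}$, the relevant quiver being, by Lemma~\ref{lem:Qmut}, one of $Q_\ell(B_r)$, $Q_\ell(B_r)^{\mathrm{op}}$, $\boldsymbol{r}(Q_\ell(B_r))$, $\boldsymbol{r}(Q_\ell(B_r))^{\mathrm{op}}$. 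Using~\eqref{eq:ymu} once more to rewrite each $y_{\mathbf{k}}$, at the time of its mutation, in terms of its value at $u$, one then collects the factors: the two neighbours in the same column give the denominator of~\eqref{eq:Yu}, and the remaining (horizontal and diagonal) neighbours give the numerator $\prod_{(b,k,v)}(1+y^{(b)}_k(v))^{{}^{t}G(b,k,v;a,m,u)}$, which should match the corresponding line of~\eqref{eq:YB1} with $Y$ replaced by $y$. By translation invariance of~\eqref{eq:mutseq} up to the periodicity in Lemma~\ref{lem:Qmut}, one representative vertex per relation type and per residue of $u$ suffices, and all are visible in Figures~\ref{fig:labelyB1}--\ref{fig:labelyB2}.

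The part I expect to require the most care is the column $a=r$, where type $B_r$ parts company with the simply-laced template of~\cite{IIKNS}: the vertices of column $r$ are mutated at \emph{every} half-integer step of~\eqref{eq:mutseq}, i.e.\ twice as often as those of columns $1,\dots,r-1$. This is what forces the half-integer spectral shifts $\pm1/t_r=\pm\frac12$ on the left-hand sides of the third and fourth lines of~\eqref{eq:YB1}, and it is also why a vertex of column $r-1$ picks up the factor $(1+y^{(r)}_{2m}(\,\cdot\,))$ at \emph{two} distinct times $u-\frac12$ and $u+\frac12$ -- even though every arrow incident to column $r$ has multiplicity one -- producing the doubled numerator piece $(1+Y^{(r)}_{2m}(u-\frac12))(1+Y^{(r)}_{2m}(u+\frac12))$ in the second line of~\eqref{eq:YB1}. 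Running this bookkeeping for the two subtypes $a=r$ with $m$ even and with $m$ odd, and keeping straight the $m+2u$ parity convention built into $\mathbf{P}'_{\pm}$ in~\eqref{eq:Pcond2} (which dictates, at each half-step, whether a given column-$r$ vertex is mutated), is the one genuinely new step; once it is in place the columns $1\le a\le r-2$ and $a=r-1$ are a transcription of the simply-laced computation, and the lemma follows just as Lemma~\ref{lem:x2}.
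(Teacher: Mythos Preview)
Your approach is exactly the paper's: track a coefficient through the mutation sequence \eqref{eq:mutseq}, using the figures to read off the factors contributed by each neighbour mutation, and observe that the accumulated product reproduces the appropriate line of \eqref{eq:YB1}. The paper in fact works the single example $y^{(r-1)}_2(0)\cdot y^{(r-1)}_2(2)$ explicitly and leaves the rest to the figures.

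There is, however, a small but genuine bookkeeping slip in your setup. For $a\neq r$ the bijection $g'$ involves the parity-dependent left--right reflection $(a,m)\leftrightarrow(2r-a,m)$, so the vertex carrying $y^{(a)}_m(u\pm 1)$ is \emph{not} the vertex $\mathbf{i}=g'(a,m,u)$ you fix, but its reflection. Concretely, if you take the centre of the relation to be $u=1$ in the paper's example, then $g'(r-1,2,1)=((r+1,2),1)$, whereas $y^{(r-1)}_2(0)$ and $y^{(r-1)}_2(2)$ both live at the vertex $(r-1,2)$; tracking $y_{(r+1,2)}$ from time $0$ to $2$ would give the wrong answer. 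The correct vertex to follow is the one at an \emph{endpoint} of the relation, namely $\mathbf{j}$ with $(\mathbf{j},u-1/t_a)=g'(a,m,u-1/t_a)$; the mutation of $\mathbf{j}$ then happens at the starting time $u-1/t_a$ (not in the interior), producing the inversion, and the subsequent steps up to $u+1/t_a$ contribute the neighbour factors. (For $a=r$ there is no discrepancy, since $g'(r,m,v)=((r,m),v)$ regardless of $v$; but note that for even $m$ the centre $(r,2m,u)$ is not itself in $\mathcal{I}'_{\ell+}$, so ``fix $(a,m,u)\in\mathcal{I}'_{\ell+}$'' is not quite the right quantifier either.) With this correction, your argument goes through verbatim and coincides with the paper's.
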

\begin{proof}
This can be easily shown
 using Figures \ref{fig:labelyB1} and \ref{fig:labelyB2}.
For example,
consider the mutation at
$((r-1,2),0):\mathbf{p}_+$.
The coefficient $y^{(r-1)}_2(0)$
is mutated to $y^{(r-1)}_2(0)^{-1}$.
Then, at $u=\frac{1}{2}, 1,\frac{3}{2}$,
the factors 
\begin{align}
\begin{split}
{\textstyle
1+y^{(r)}_4(\frac{1}{2})},\quad
\frac{y^{(r-1)}_3(1)}{1+y^{(r-1)}_1(1)},\quad
\frac{y^{(r-1)}_3(1)}{1+y^{(r-1)}_1(1)},\\
\textstyle
1+y^{(r)}_3(1),\quad
1+y^{(r)}_5(1),\quad
1+y^{(r)}_4(\frac{3}{2})
\end{split}
\end{align}
are multiplied to $y^{(r-1)}_2(0)^{-1}$.
The result should be equal to $y^{(r-1)}_2(2)$.
\end{proof}

\begin{definition}
The {\em Y-subgroup
$\mathcal{G}_Y(B,y)$
of ${\mathcal{G}}(B,y)$
associated with the sequence \eqref{eq:mutseq}}
is the subgroup of
${\mathcal{G}}(B,y)$ 
generated by
$y_{\mathbf{i}}(u)$
($(\mathbf{i},u)\in \mathbf{I}\times \frac{1}{2}\mathbb{Z}$)
and $1+y_{\mathbf{i}}(u)$
($(\mathbf{i},u):\mathbf{p}_+$ or 
$\mathbf{p}_-$).
\end{definition}
Notice that we excluded $1+y_{\mathbf{i}}(u)$
for $(\mathbf{i},u)$
 not satisfying $\mathbf{p}_+$
nor $\mathbf{p}_-$.
This is because such $(\mathbf{i},u)$
is not a mutation point so that
the factor $1+y_{\mathbf{i}}(u)$
does not appear anywhere for the mutation
sequence  \eqref{eq:mutseq}.

\begin{theorem}
\label{thm:Yiso}
The group $\EuScript{Y}^{\circ}_{\ell}(B_r)_+$ is isomorphic to
$\mathcal{G}_Y(B,y)$ by the correspondence
$Y^{(a)}_m(u)\mapsto y^{(a)}_m(u)$
and $1+Y^{(a)}_m(u)\mapsto 1+y^{(a)}_m(u)$.
\end{theorem}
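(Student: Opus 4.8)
The plan is to realize the correspondence as a pair of mutually inverse group homomorphisms. For the forward map, observe that by Definition~\ref{def:YB} the group $\EuScript{Y}^{\circ}_{\ell}(B_r)_+$ is generated by the symbols $Y^{(a)}_m(u)$, $1+Y^{(a)}_m(u)$ ($(a,m,u)\in\mathcal{I}'_{\ell+}$) subject to the relations induced by $\mathbb{Y}_{\ell}(B_r)$, while Lemma~\ref{lem:y2} says precisely that the elements $y^{(a)}_m(u)$, $1+y^{(a)}_m(u)$ of $\mathbb{Q}_{\mathrm{sf}}(y)$ satisfy these relations; hence $Y^{(a)}_m(u)\mapsto y^{(a)}_m(u)$, $1+Y^{(a)}_m(u)\mapsto 1+y^{(a)}_m(u)$ extends to a homomorphism $\varphi\colon\EuScript{Y}^{\circ}_{\ell}(B_r)_+\to\mathcal{G}_Y(B,y)$. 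It is surjective: via the bijection $g'$, together with \eqref{eq:ymu} --- which presents the $\mathbf{p}_-$ coefficients as inverses of $\mathbf{p}_+$ ones and lets the coefficient at a non-mutation point be expressed by propagating \eqref{eq:coef} a few steps along \eqref{eq:mutseq} through neighbouring mutation points --- every generator $y_{\mathbf{i}}(u)$, $1+y_{\mathbf{i}}(u)$ of $\mathcal{G}_Y(B,y)$ lies in the image of $\varphi$. This half is routine and runs exactly as in the simply laced case \cite{IIKNS}.

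For injectivity I would construct a left inverse of $\varphi$ using the universal property of $\mathbb{Q}_{\mathrm{sf}}(y)$. Run the coefficient part of the mutation sequence \eqref{eq:mutseq} over the semifield $\EuScript{Y}_{\ell}(B_r)$ itself, prescribing that the coefficient at each $\mathbf{p}_+$ point $(\mathbf{i},u)$ be the corresponding generator $Y^{(a)}_m(v)^{\pm1}$ under $g'$ and \eqref{eq:ymu}; that this is consistent --- and so determines a coefficient tuple $z(u)\in\EuScript{Y}_{\ell}(B_r)^{\mathbf{I}}$ for every $u$ --- is exactly the content of Lemma~\ref{lem:y2}, which matches one step of \eqref{eq:coef} with the Y-system \eqref{eq:YB1}. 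The universal property of $\mathbb{Q}_{\mathrm{sf}}(y)$ then yields a semifield homomorphism $h\colon\mathbb{Q}_{\mathrm{sf}}(y)\to\EuScript{Y}_{\ell}(B_r)$ with $h(y_{\mathbf{i}})=z_{\mathbf{i}}(0)$ for all $\mathbf{i}\in\mathbf{I}$; since semifield homomorphisms intertwine the exchange relation \eqref{eq:coef}, it follows that $h(y_{\mathbf{i}}(u))=z_{\mathbf{i}}(u)$, and in particular $h(y^{(a)}_m(u))=Y^{(a)}_m(u)$ and $h(1+y^{(a)}_m(u))=1+Y^{(a)}_m(u)$. Therefore $h$ carries $\mathcal{G}_Y(B,y)$ into $\EuScript{Y}^{\circ}_{\ell}(B_r)_+$ and satisfies $h\circ\varphi=\mathrm{id}$, so $\varphi$ is injective. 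Together with surjectivity, $\varphi$ is the asserted isomorphism, with inverse $h|_{\mathcal{G}_Y(B,y)}$.

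The part that genuinely needs care is the index bookkeeping in the inverse construction: one must describe the tuple $z(0)$ --- equivalently, the values of $h$ on the free generators $y_{\mathbf{i}}$ --- compatibly with the parity splitting $\mathcal{I}_{\ell}=\mathcal{I}'_{\ell+}\sqcup\mathcal{I}'_{\ell-}$ and with the boundary conventions of Definition~\ref{defn:RY}, taking account of the non-mutation points (at which the coefficient is simply carried along), and then verify that \eqref{eq:coef}, evaluated step by step along \eqref{eq:mutseq}, really does reproduce \eqref{eq:YB1}. Conceptually this is nothing but the statement that the coefficients of $\mathcal{A}(B,x,y)$, living in the universal semifield, obey no relation beyond $\mathbb{Y}_{\ell}(B_r)$, which is exactly what the functoriality of $h$ encodes; and the required verification is already contained in the proof of Lemma~\ref{lem:y2}, so no essentially new difficulty arises. (One can also avoid $h$ altogether: $\mathbb{Y}_{\ell}(B_r)$ rewrites any element of $\EuScript{Y}^{\circ}_{\ell}(B_r)_+$ as a word in the generators attached to a bounded band of values of $u$, and the $\varphi$-images of those are multiplicatively independent in $\mathbb{Q}_{\mathrm{sf}}(y)$, being --- up to inversion --- the free generators $y_{\mathbf{i}}$ together with the pairwise distinct irreducibles $1+y_{\mathbf{i}}$.) In all cases the argument is parallel to its counterpart in \cite{IIKNS}.
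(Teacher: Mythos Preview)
Your proof is correct and follows essentially the same approach as the paper's. The paper's own proof is extremely terse: it reduces the generating set of $\mathcal{G}_Y(B,y)$ to the $\mathbf{p}_+$ points via \eqref{eq:ymu} and then invokes \cite[Theorem~6.19]{KNS3} for the remainder, whereas you spell out the construction of the forward map via Lemma~\ref{lem:y2} and of the inverse via the universal property of $\mathbb{Q}_{\mathrm{sf}}(y)$, which is precisely what the cited argument supplies. One minor point: the paper cites \cite{KNS3} rather than \cite{IIKNS} for the parallel argument, but the content is the same.
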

\begin{proof}
We note that $\mathcal{G}_Y(B,y)$ is generated
by
$y_{\mathbf{i}}(u)$, $1+y_{\mathbf{i}}(u)$
($(\mathbf{i},u): \mathbf{p}_+$) by
\eqref{eq:ymu}.
Then, the claim follows from Lemma \ref{lem:y2} in the same way
as \cite[Theorem 6.19]{KNS3}.
\end{proof}

\section{Tropical Y-system at level 2}

In this section we study the tropical version of
the Y-system at level 2.

\subsection{Tropical Y-system}

Let $y=y(0)$ be the initial coefficient tuple
of the cluster algebra $\mathcal{A}(B,x,y)$
with $B=B_{\ell}(B_r)$ in the previous section.
Let 
$\mathrm{Trop}(y)$ be
the {\em tropical semifield} for $y$.
Let $\pi_{\mathbf{T}}:
\mathbb{Q}_{\mathrm{sf}}(y)\rightarrow 
\mathrm{Trop}(y)$, $y_{\mathbf{i}}\mapsto 
y_{\mathbf{i}}$ be the projection.
Let $[y_{\mathbf{i}}(u)]_{\mathbf{T}}$
and $[\mathcal{G}_Y(B,y)]_{\mathbf{T}}$
denote the images of $y_{\mathbf{i}}(u)$
and $\mathcal{G}_Y(B,y)$
by the multiplicative group
 homomorphism induced from $\pi_{\mathbf{T}}$, respectively.
They are called the {\em tropical evaluations},
and the resulting relations in
the group $[\mathcal{G}_Y(B,y)]_{\mathbf{T}}$
is called the {\em tropical Y-system}.
They are first studied in \cite{FZ2}
for simply laced type at level 2 in our
terminology.

We say a (Laurent) monomial $m=\prod_{\mathbf{i}\in \mathbf{I}}
y_{\mathbf{i}}^{k_{\mathbf{i}}}$
is {\em positive} (resp. {\em negative})
if $m\neq 1$ and $k_{\mathbf{i}}\geq 0$
(resp. $k_{\mathbf{i}}\leq 0$)
for any $\mathbf{i}$.

The next `tropical mutation rule' for $[y_{\mathbf{i}}(u)]_{\mathbf{T}}$
is general and  useful.
\begin{lemma}
\label{lem:mutation}
Suppose that $y''$ is the coefficient tuple obtained from
the mutation of another coefficient tuple $y'$ at $\mathbf{k}$
with mutation matrix $B'$. Then, for any $\mathbf{i}\neq \mathbf{k}$,
we have the rule:
\par
(i) $[y''_{\mathbf{i}}]_{\mathbf{T}} =
 [y'_{\mathbf{i}}]_{\mathbf{T}}[y'_{\mathbf{k}}]_{\mathbf{T}}$
 if one of the following conditions holds.
\par\quad
(a) $B'_{\mathbf{k}\mathbf{i}}> 0$,
 and $[y'_{\mathbf{k}}]_{\mathbf{T}}$ is 
positive.
\par\quad
(b)  $B'_{\mathbf{k}\mathbf{i}}< 0$,
 and $[y'_{\mathbf{k}}]_{\mathbf{T}}$ is 
negative.
\par
(ii)
$[y''_{\mathbf{i}}]_{\mathbf{T}} =
 [y'_{\mathbf{i}}]_{\mathbf{T}}$
 if one of the following conditions holds.
\par\quad
(a) $B'_{\mathbf{k}\mathbf{i}}=0$.
\par\quad
(b)
 $B'_{\mathbf{k}\mathbf{i}}> 0$,
 and $[y'_{\mathbf{k}}]_{\mathbf{T}}$ is 
 negative.
\par\quad
(c) $B'_{\mathbf{k}\mathbf{i}}< 0$,
 and $[y'_{\mathbf{k}}]_{\mathbf{T}}$ is
 positive.
\end{lemma}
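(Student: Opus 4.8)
\emph{Plan.} Everything follows by pushing the coefficient exchange relation \eqref{eq:coef} through the tropical projection $\pi_{\mathbf T}\colon\mathbb{Q}_{\mathrm{sf}}(y)\to\mathrm{Trop}(y)$. The essential fact is that $\pi_{\mathbf T}$, being the unique semifield homomorphism extending $y_{\mathbf i}\mapsto y_{\mathbf i}$ (universal property of $\mathbb{Q}_{\mathrm{sf}}(y)$), is compatible with the addition $\oplus$ as well as with the multiplication; in particular $\pi_{\mathbf T}(1+y'_{\mathbf k})=1\oplus[y'_{\mathbf k}]_{\mathbf T}$, where on the right $\oplus$ is the tropical addition \eqref{eq:trop}. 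So the whole statement reduces to understanding $1\oplus m$ for a monomial $m\in\mathrm{Trop}(y)$.

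\emph{Step 1.} Record the elementary identity. Writing $m=\prod_{\mathbf j}y_{\mathbf j}^{c_{\mathbf j}}$, formula \eqref{eq:trop} gives $1\oplus m=\prod_{\mathbf j}y_{\mathbf j}^{\min(0,\,c_{\mathbf j})}$. Hence $1\oplus m=1$ if $m$ is positive (or $m=1$), and $1\oplus m=m$ if $m$ is negative. Consequently, for any integer $e$: $m/(1\oplus m)=m$ and $(1\oplus m)^{e}=1$ when $m$ is positive, whereas $m/(1\oplus m)=1$ and $(1\oplus m)^{e}=m^{e}$ when $m$ is negative.

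\emph{Step 2.} Substitute, with $m=[y'_{\mathbf k}]_{\mathbf T}$. Applying $\pi_{\mathbf T}$ to \eqref{eq:coef} for the mutation $y'\mapsto y''$ at $\mathbf k$ with matrix $B'$ and $\mathbf i\neq\mathbf k$ yields $[y''_{\mathbf i}]_{\mathbf T}=[y'_{\mathbf i}]_{\mathbf T}\bigl(m/(1\oplus m)\bigr)^{B'_{\mathbf k\mathbf i}}$ if $B'_{\mathbf k\mathbf i}\ge0$, and $[y''_{\mathbf i}]_{\mathbf T}=[y'_{\mathbf i}]_{\mathbf T}(1\oplus m)^{-B'_{\mathbf k\mathbf i}}$ if $B'_{\mathbf k\mathbf i}\le0$. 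Inserting Step 1 disposes of all six cases at once: the correcting factor is $[y'_{\mathbf k}]_{\mathbf T}^{\,|B'_{\mathbf k\mathbf i}|}$ in (i)(a), (i)(b), and it is $1$ in (ii)(a)--(c). Since every quiver relevant here has only simple arrows (for the sequence \eqref{eq:mutseq} this follows from the explicit description underlying Lemma \ref{lem:Qmut}), one has $|B'_{\mathbf k\mathbf i}|\le1$ whenever the rule is applied, so the factor in (i) is precisely $[y'_{\mathbf k}]_{\mathbf T}$, as asserted.

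\emph{Main difficulty.} There is essentially none: the proof is a direct unwinding of the definitions. The only points requiring care are that $\pi_{\mathbf T}$ respects $\oplus$ (so that the denominators $1+y'_{\mathbf k}$ collapse tropically to $1$ or to $y'_{\mathbf k}$), the matching of the two sign branches of \eqref{eq:coef} with the positivity/negativity of $[y'_{\mathbf k}]_{\mathbf T}$, and the observation that the excluded value $m=1$, where both positivity and negativity fail, is ruled out by the hypotheses of (i) and of (ii)(b), (ii)(c).
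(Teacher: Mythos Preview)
Your proof is correct and follows the same approach as the paper, which simply notes that the lemma is an immediate consequence of the exchange relation \eqref{eq:coef} and the tropical addition \eqref{eq:trop}. Your explicit observation that the general correcting factor is $[y'_{\mathbf k}]_{\mathbf T}^{\,|B'_{\mathbf k\mathbf i}|}$ rather than $[y'_{\mathbf k}]_{\mathbf T}$, and that the two agree in the situations where the lemma is actually applied because the quivers occurring in \eqref{eq:mutseq} have only simple arrows, is a useful clarification that the paper's one-line proof leaves implicit.
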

\begin{proof}
This is an immediate consequence of the exchange relation
\eqref{eq:coef} and \eqref{eq:trop}.
\end{proof}

The following properties
of the tropical Y-system at level 2
will be the key in the entire method.

\begin{proposition}
\label{prop:lev2}
 For 
$[\mathcal{G}_Y(B,y)]_{\mathbf{T}}$
with $B=B_{2}(B_r)$, the following facts hold.
\par
(i) Let $u$ be in the region $0\le u < 2$.
For any $(\mathbf{i},u):\mathbf{p}_+$,
the  monomial $[y_{\mathbf{i}}(u)]_{\mathbf{T}}$
is positive.

\par
(ii) Let $u$ be in the region $-h^{\vee}\le u < 0$.
\begin{itemize}
\item[\em (a)] Let $\mathbf{i}\in \mathbf{I}^{\circ}
\sqcup \mathbf{I}^{\bullet}_-
$.
For any $(\mathbf{i},u):\mathbf{p}_+$,
the  monomial $[y_{\mathbf{i}}(u)]_{\mathbf{T}}$
is negative.
\item[\em (b)]
 Let $\mathbf{i}\in \mathbf{I}^{\bullet}_+$.
For any $(\mathbf{i},u):\mathbf{p}_+$,
the  monomial $[y_{\mathbf{i}}(u)]_{\mathbf{T}}$
is negative for $u=-1,-3,\dots$
and positive for $u=-2,-4,\dots$.
\end{itemize}
\par
(iii)
$y_{ii'}(2)=y_{ii'}^{-1}$ if $i\neq r$ and
$y_{r,4-i'}^{-1}$ if $i=r$.
\par
(iv) $y_{ii'}(-h^{\vee})=
y_{2r-i,i'}^{-1}$.
\end{proposition}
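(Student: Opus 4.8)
The plan is to prove (i)--(iv) together by a single induction on $u$ along the mutation sequence \eqref{eq:mutseq}, using the tropical mutation rule of Lemma \ref{lem:mutation} at each step. The base case is $u=0$: by construction $[y_{\mathbf{i}}(0)]_{\mathbf{T}}=y_{\mathbf{i}}$, which is trivially positive, so statement (i) holds at $u=0$ (after checking that every $(\mathbf{i},0):\mathbf{p}_+$). For the inductive step I would proceed in two passes. In the \emph{forward} pass, covering $0\le u<2$, I would run the four composite mutations $\mu^{\bullet}_+\mu^{\circ}_+$, $\mu^{\bullet}_-$, $\mu^{\bullet}_+\mu^{\circ}_-$, $\mu^{\bullet}_-$ that carry $u=0$ to $u=2$, and at each individual mutation apply Lemma \ref{lem:mutation}: since all monomials encountered stay positive, case (i)(a) of the lemma always applies, so $[y''_{\mathbf{i}}]_{\mathbf{T}}=[y'_{\mathbf{i}}]_{\mathbf{T}}[y'_{\mathbf{k}}]_{\mathbf{T}}$ is a product of positive monomials, hence positive. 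Tracking these products through one full period $0\le u<2$ and comparing with the initial tuple yields the explicit formula (iii): $y_{ii'}(2)=y_{ii'}^{-1}$ for $i\neq r$, and $y_{r,4-i'}^{-1}$ for $i=r$ (the index flip $i'\mapsto 4-i'$ at the $r$th column reflects the identification $\boldsymbol{\omega}$ of the spine rows for $\ell=2$).

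In the \emph{backward} pass, covering $-h^{\vee}\le u<0$, I would run the mutation sequence in the reverse direction starting from $u=0$. Here one must be more careful because signs change. The claim is that as $u$ decreases the spine vertices $\mathbf{I}^{\bullet}_+$ oscillate in sign with period $2$ (negative at odd $u$, positive at even $u$), while all other vertices $\mathbf{I}^{\circ}\sqcup\mathbf{I}^{\bullet}_-$ become and stay negative. I would verify this by induction: assuming the sign pattern of (ii)(a),(b) holds down to some level $u$, apply Lemma \ref{lem:mutation} to the next batch of mutations. When $[y'_{\mathbf{k}}]_{\mathbf{T}}$ is negative and $B'_{\mathbf{k}\mathbf{i}}<0$, case (i)(b) gives a product of two negative monomials, which is negative; when the signs of $[y'_{\mathbf{k}}]_{\mathbf{T}}$ and $B'_{\mathbf{k}\mathbf{i}}$ are opposite, case (ii) leaves $[y'_{\mathbf{i}}]_{\mathbf{T}}$ unchanged. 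The arrow directions in $Q_2(B_r)$ (and its mutates $Q^{\mathrm{op}}$, $\boldsymbol{r}(Q)$, etc.\ along \eqref{eq:B2}) have to be read off so that exactly the right case of Lemma \ref{lem:mutation} fires at each vertex; this is the bookkeeping heart of the argument. Carrying the sign-tracked products down to $u=-h^{\vee}$ and again comparing with the initial tuple gives (iv): $y_{ii'}(-h^{\vee})=y_{2r-i,i'}^{-1}$, the left--right reflection $\boldsymbol{r}$ appearing because $-h^{\vee}=-(2r-1)$ is an odd number of ``half-steps'' away from $0$, so by Lemma \ref{lem:Qmut}(ii) one lands on $\boldsymbol{r}(Q)$ rather than $Q$.

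The main obstacle I anticipate is not conceptual but combinatorial: reliably reading the signs of $B'_{\mathbf{k}\mathbf{i}}$ for the intermediate quivers at every one of the $O(h^{\vee})$ mutation steps, especially around the branch vertex where the columns of length $\ell-1=1$ meet the spine of length $2\ell-1=3$, since there the double arrows of $B_r$ translate into a nontrivial local quiver that changes under mutation. I expect one can organize this by exploiting the reflection symmetries $\boldsymbol{r}$ and $\boldsymbol{\omega}$ of Lemma \ref{lem:Qmut} to reduce the number of cases, and by noting that the type-$A$ ``arms'' evolve exactly as in the classical Zamolodchikov periodicity for $A_{r-1}$ at level $2$, so only the behaviour near the spine genuinely needs new work. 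Once the sign pattern is established, parts (iii) and (iv) are immediate from the telescoped products, and part (i) for the first period together with the recursion $[y_{\mathbf{i}}(u)]_{\mathbf{T}}=[y_{\mathbf{i}}(u-\tfrac12)]_{\mathbf{T}}^{-1}$ at non-mutation half-steps (cf.\ \eqref{eq:ymu}) gives the full statement.
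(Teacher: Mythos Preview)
Your plan for parts (i) and (iii) is fine and matches the paper: the forward region $0\le u<2$ is only four half-steps long, so direct computation suffices for any $r$, and this is exactly what the paper does.

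The genuine gap is in the backward pass for (ii) and (iv). You propose to carry the induction on \emph{signs alone}, but signs alone are not enough to close the induction. The pattern in (ii) is mixed: the spine vertices $\mathbf{I}^{\bullet}_+$ are \emph{positive} at even negative $u$ while everything else is negative. When Lemma~\ref{lem:mutation}(i) fires, the new monomial $[y''_{\mathbf{i}}]_{\mathbf{T}}=[y'_{\mathbf{i}}]_{\mathbf{T}}[y'_{\mathbf{k}}]_{\mathbf{T}}$ can therefore be a product of a positive and a negative monomial, whose sign is not determined; your sentence ``a product of two negative monomials, which is negative'' presupposes that $[y'_{\mathbf{i}}]_{\mathbf{T}}$ is already negative, which is exactly what is in question at those vertices. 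Moreover, the backward region has length $h^{\vee}=2r-1$, so for general $r$ you cannot verify the required arrow/sign coincidences by inspection; you need a closed-form description of the monomials, not just their signs.

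The paper supplies precisely this missing ingredient: it identifies (after projecting away $y_{r1},y_{r3}$) the tropical coefficients with \emph{negatives of positive roots} of type $A_{2r-1}$, via a piecewise-linear Coxeter-like element $\sigma=\sigma_r\sigma_-\sigma_r\sigma_+$ acting on the almost positive roots $\Phi_{\ge -1}$. Concretely, one shows $\mathbf{t}_i(u)=-\alpha_i(u)$ where $\alpha_i(u)=\sigma^{k}(\pm\alpha_i)$ for an explicit $k$ depending on $u$. The orbit structure of $\sigma$ (each $\sigma$-orbit stays in $\Phi_+$ for exactly $r$ or $r{-}1$ steps before returning to $-\Pi$) then gives the negativity in (ii) and the exact endpoint $\mathbf{t}_i(-h^{\vee})=-\alpha_{2r-i}$ for (iv). This root-system identification is what turns the unbounded case analysis you anticipate into a finite, uniform computation; without it, your inductive scheme has no way to control the mixed-sign products at the spine for arbitrary $r$.
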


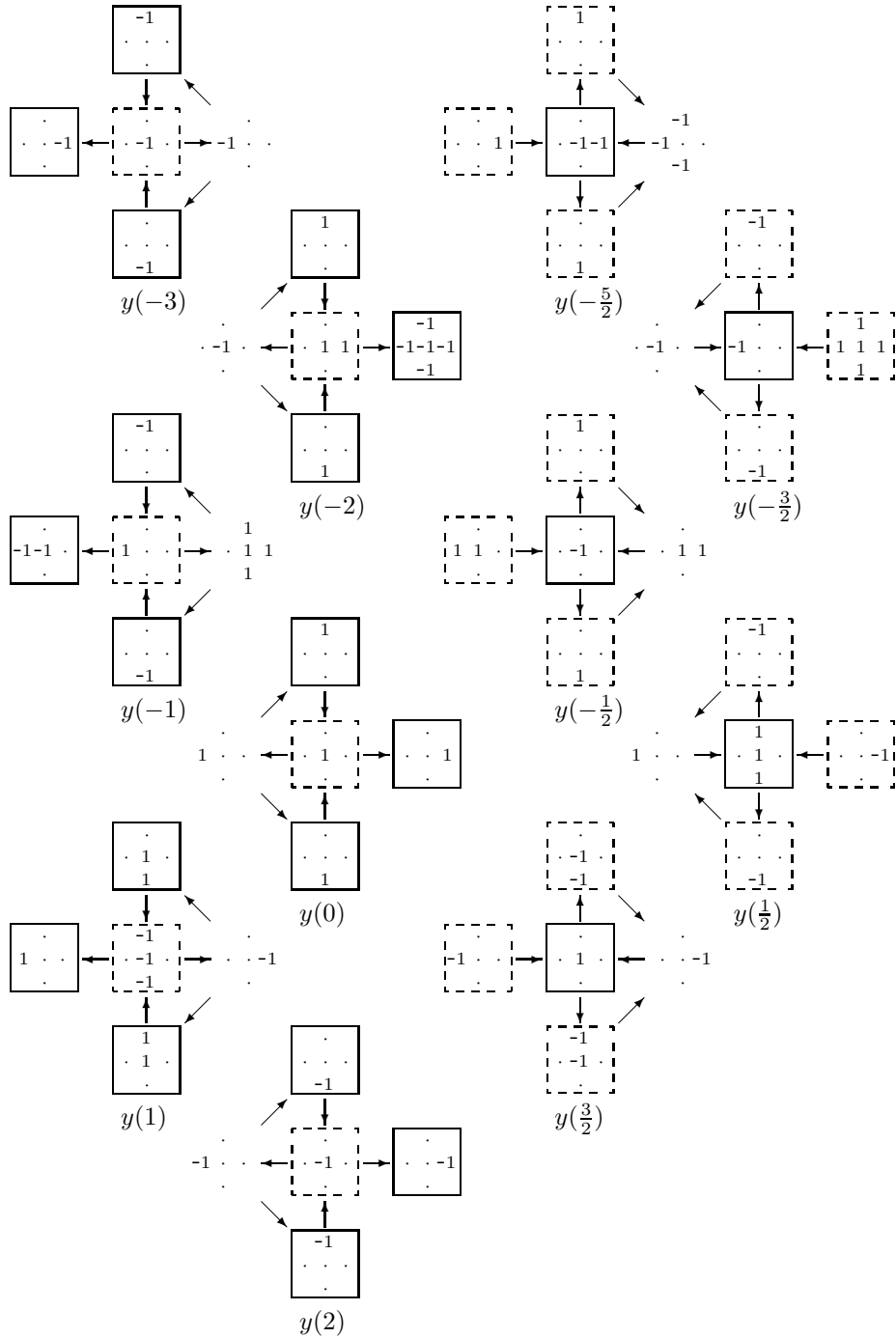
\begin{figure}
\begin{picture}(355,530)(-15,-75)
\put(0,340)
{
\put(37,77.5){\framebox(26,26)[c]}
\put(46,97){-$\scriptstyle 1$}
\put(41,88){$\scriptstyle \cdot$}
\put(49,88){$\scriptstyle \cdot$}
\put(57,88){$\scriptstyle \cdot$}
\put(49,79){$\scriptstyle \cdot$}
\put(-3,37.5){\framebox(26,26)[c]}
\put(9,57){$\scriptstyle \cdot$}
\put(1,48){$\scriptstyle \cdot$}
\put(9,48){$\scriptstyle \cdot$}
\put(14,48){-$\scriptstyle  1$}
\put(9,39){$\scriptstyle \cdot$}
%
\put(37,37.5){\dashbox{3}(26,26)[c]}
\put(49,57){$\scriptstyle \cdot$}
\put(41,48){$\scriptstyle \cdot$}
\put(46,48){-$\scriptstyle 1$}
\put(57,48){$\scriptstyle \cdot$}
\put(49,39){$\scriptstyle \cdot$}
%
%
\put(89,57){$\scriptstyle \cdot$}
\put(78,48){-$\scriptstyle 1$}
\put(89,48){$\scriptstyle \cdot$}
\put(97,48){$\scriptstyle \cdot$}
\put(89,39){$\scriptstyle \cdot$}
%
%
\put(37,-2.5){\framebox(26,26)[c]}
\put(49,17){$\scriptstyle \cdot$}
\put(41,8){$\scriptstyle \cdot$}
\put(49,8){$\scriptstyle \cdot$}
\put(57,8){$\scriptstyle \cdot$}
\put(46,-1){-$\scriptstyle 1$}
%
\put(50,75){\vector(0,-1){10}}
\put(35,50){\vector(-1,0){10}}
\put(65,50){\vector(1,0){10}}
\put(50,25){\vector(0,1){10}}
\put(75,65){\vector(-1,1){10}}
\put(75,35){\vector(-1,-1){10}}
\put(40,-15){$y(-3)$}
}
\put(170,340)
{
\put(37,77.5){\dashbox{3}(26,26)[c]}
\put(48,97){$\scriptstyle 1$}
\put(41,88){$\scriptstyle \cdot$}
\put(49,88){$\scriptstyle \cdot$}
\put(57,88){$\scriptstyle \cdot$}
\put(49,79){$\scriptstyle \cdot$}
\put(-3,37.5){\dashbox{3}(26,26)[c]}
\put(9,57){$\scriptstyle \cdot$}
\put(1,48){$\scriptstyle \cdot$}
\put(9,48){$\scriptstyle \cdot$}
\put(16,48){$\scriptstyle  1$}
\put(9,39){$\scriptstyle \cdot$}
%
\put(37,37.5){\framebox(26,26)[c]}
\put(49,57){$\scriptstyle \cdot$}
\put(41,48){$\scriptstyle \cdot$}
\put(46,48){-$\scriptstyle 1$}
\put(54,48){-$\scriptstyle 1$}
\put(49,39){$\scriptstyle \cdot$}
%
%
\put(86,57){-$\scriptstyle 1$}
\put(78,48){-$\scriptstyle 1$}
\put(89,48){$\scriptstyle \cdot$}
\put(97,48){$\scriptstyle \cdot$}
\put(86,39){-$\scriptstyle 1$}
%
%
\put(37,-2.5){\dashbox{3}(26,26)[c]}
\put(49,17){$\scriptstyle \cdot$}
\put(41,8){$\scriptstyle \cdot$}
\put(49,8){$\scriptstyle \cdot$}
\put(57,8){$\scriptstyle \cdot$}
\put(48,-1){$\scriptstyle 1$}
%
\put(50,65){\vector(0,1){10}}
\put(50,35){\vector(0,-1){10}}
\put(25,50){\vector(1,0){10}}
\put(75,50){\vector(-1,0){10}}
\put(65,75){\vector(1,-1){10}}
\put(65,25){\vector(1,1){10}}
\put(40,-15){$y(-\frac{5}{2})$}
}
\put(70,260)
{
\put(37,77.5){\framebox(26,26)[c]}
\put(48,97){$\scriptstyle 1$}
\put(41,88){$\scriptstyle \cdot$}
\put(49,88){$\scriptstyle \cdot$}
\put(57,88){$\scriptstyle \cdot$}
\put(49,79){$\scriptstyle \cdot$}
%
%
\put(9,57){$\scriptstyle \cdot$}
\put(1,48){$\scriptstyle \cdot$}
\put(6,48){-$\scriptstyle 1$}
\put(17,48){$\scriptstyle \cdot$}
\put(9,39){$\scriptstyle \cdot$}
%
\put(37,37.5){\dashbox{3}(26,26)[c]}
\put(49,57){$\scriptstyle \cdot$}
\put(41,48){$\scriptstyle \cdot$}
\put(48,48){$\scriptstyle 1$}
\put(56,48){$\scriptstyle 1$}
\put(49,39){$\scriptstyle \cdot$}
%
%
\put(77,37.5){\framebox(26,26)[c]}
\put(86,57){-$\scriptstyle 1$}
\put(78,48){-$\scriptstyle 1$}
\put(86,48){-$\scriptstyle 1$}
\put(94,48){-$\scriptstyle 1$}
\put(86,39){-$\scriptstyle 1$}
%
%
\put(37,-2.5){\framebox(26,26)[c]}
\put(49,17){$\scriptstyle \cdot$}
\put(41,8){$\scriptstyle \cdot$}
\put(49,8){$\scriptstyle \cdot$}
\put(57,8){$\scriptstyle \cdot$}
\put(48,-1){$\scriptstyle 1$}
%
\put(50,75){\vector(0,-1){10}}
\put(35,50){\vector(-1,0){10}}
\put(65,50){\vector(1,0){10}}
\put(50,25){\vector(0,1){10}}
\put(25,65){\vector(1,1){10}}
\put(25,35){\vector(1,-1){10}}
\put(40,-15){$y(-2)$}
}
\put(240,260)
{
\put(37,77.5){\dashbox{3}(26,26)[c]}
\put(46,97){-$\scriptstyle 1$}
\put(41,88){$\scriptstyle \cdot$}
\put(49,88){$\scriptstyle \cdot$}
\put(57,88){$\scriptstyle \cdot$}
\put(49,79){$\scriptstyle \cdot$}
%
%
\put(9,57){$\scriptstyle \cdot$}
\put(1,48){$\scriptstyle \cdot$}
\put(6,48){-$\scriptstyle 1$}
\put(17,48){$\scriptstyle \cdot$}
\put(9,39){$\scriptstyle \cdot$}
%
\put(37,37.5){\framebox(26,26)[c]}
\put(49,57){$\scriptstyle \cdot$}
\put(38,48){-$\scriptstyle 1$}
\put(49,48){$\scriptstyle \cdot$}
\put(57,48){$\scriptstyle \cdot$}
\put(49,39){$\scriptstyle \cdot$}
%
%
\put(77,37.5){\dashbox{3}(26,26)[c]}
\put(88,57){$\scriptstyle 1$}
\put(80,48){$\scriptstyle 1$}
\put(88,48){$\scriptstyle 1$}
\put(96,48){$\scriptstyle 1$}
\put(88,39){$\scriptstyle 1$}
%
%
\put(37,-2.5){\dashbox{3}(26,26)[c]}
\put(49,17){$\scriptstyle \cdot$}
\put(41,8){$\scriptstyle \cdot$}
\put(49,8){$\scriptstyle \cdot$}
\put(57,8){$\scriptstyle \cdot$}
\put(46,-1){-$\scriptstyle 1$}
%
\put(25,50){\vector(1,0){10}}
\put(75,50){\vector(-1,0){10}}
\put(50,65){\vector(0,1){10}}
\put(50,35){\vector(0,-1){10}}
\put(35,75){\vector(-1,-1){10}}
\put(35,25){\vector(-1,1){10}}
\put(40,-15){$y(-\frac{3}{2})$}
}
\put(0,180)
{
\put(37,77.5){\framebox(26,26)[c]}
\put(46,97){-$\scriptstyle 1$}
\put(41,88){$\scriptstyle \cdot$}
\put(49,88){$\scriptstyle \cdot$}
\put(57,88){$\scriptstyle \cdot$}
\put(49,79){$\scriptstyle \cdot$}
\put(-3,37.5){\framebox(26,26)[c]}
\put(9,57){$\scriptstyle \cdot$}
\put(-2,48){-$\scriptstyle 1$}
\put(6,48){-$\scriptstyle 1$}
\put(17,48){$\scriptstyle \cdot$}
\put(9,39){$\scriptstyle \cdot$}
%
\put(37,37.5){\dashbox{3}(26,26)[c]}
\put(49,57){$\scriptstyle \cdot$}
\put(40,48){$\scriptstyle 1$}
\put(49,48){$\scriptstyle \cdot$}
\put(57,48){$\scriptstyle \cdot$}
\put(49,39){$\scriptstyle \cdot$}
%
%
\put(88,57){$\scriptstyle 1$}
\put(81,48){$\scriptstyle \cdot$}
\put(88,48){$\scriptstyle 1$}
\put(96,48){$\scriptstyle 1$}
\put(88,39){$\scriptstyle 1$}
%
%
\put(37,-2.5){\framebox(26,26)[c]}
\put(49,17){$\scriptstyle \cdot$}
\put(41,8){$\scriptstyle \cdot$}
\put(49,8){$\scriptstyle \cdot$}
\put(57,8){$\scriptstyle \cdot$}
\put(46,-1){-$\scriptstyle 1$}
%
\put(50,75){\vector(0,-1){10}}
\put(35,50){\vector(-1,0){10}}
\put(65,50){\vector(1,0){10}}
\put(50,25){\vector(0,1){10}}
\put(75,65){\vector(-1,1){10}}
\put(75,35){\vector(-1,-1){10}}
\put(40,-15){$y(-1)$}
}
\put(170,180)
{
\put(37,77.5){\dashbox{3}(26,26)[c]}
\put(48,97){$\scriptstyle 1$}
\put(41,88){$\scriptstyle \cdot$}
\put(49,88){$\scriptstyle \cdot$}
\put(57,88){$\scriptstyle \cdot$}
\put(49,79){$\scriptstyle \cdot$}
\put(-3,37.5){\dashbox{3}(26,26)[c]}
\put(9,57){$\scriptstyle \cdot$}
\put(0,48){$\scriptstyle 1$}
\put(8,48){$\scriptstyle 1$}
\put(17,48){$\scriptstyle \cdot$}
\put(9,39){$\scriptstyle \cdot$}
%
\put(37,37.5){\framebox(26,26)[c]}
\put(49,57){$\scriptstyle \cdot$}
\put(41,48){$\scriptstyle \cdot$}
\put(46,48){-$\scriptstyle 1$}
\put(57,48){$\scriptstyle \cdot$}
\put(49,39){$\scriptstyle \cdot$}
%
%
\put(89,57){$\scriptstyle \cdot$}
\put(81,48){$\scriptstyle \cdot$}
\put(88,48){$\scriptstyle 1$}
\put(96,48){$\scriptstyle 1$}
\put(89,39){$\scriptstyle \cdot$}
%
%
\put(37,-2.5){\dashbox{3}(26,26)[c]}
\put(49,17){$\scriptstyle \cdot$}
\put(41,8){$\scriptstyle \cdot$}
\put(49,8){$\scriptstyle \cdot$}
\put(57,8){$\scriptstyle \cdot$}
\put(48,-1){$\scriptstyle 1$}
%
\put(50,65){\vector(0,1){10}}
\put(50,35){\vector(0,-1){10}}
\put(25,50){\vector(1,0){10}}
\put(75,50){\vector(-1,0){10}}
\put(65,75){\vector(1,-1){10}}
\put(65,25){\vector(1,1){10}}
\put(40,-15){$y(-\frac{1}{2})$}
}
\put(70,100)
{
\put(37,77.5){\framebox(26,26)[c]}
\put(48,97){$\scriptstyle 1$}
\put(41,88){$\scriptstyle \cdot$}
\put(49,88){$\scriptstyle \cdot$}
\put(57,88){$\scriptstyle \cdot$}
\put(49,79){$\scriptstyle \cdot$}
%
%
\put(9,57){$\scriptstyle \cdot$}
\put(0,48){$\scriptstyle 1$}
\put(9,48){$\scriptstyle \cdot$}
\put(17,48){$\scriptstyle \cdot$}
\put(9,39){$\scriptstyle \cdot$}
%
\put(37,37.5){\dashbox{3}(26,26)[c]}
\put(49,57){$\scriptstyle \cdot$}
\put(41,48){$\scriptstyle \cdot$}
\put(48,48){$\scriptstyle 1$}
\put(57,48){$\scriptstyle \cdot$}
\put(49,39){$\scriptstyle \cdot$}
%
%
\put(77,37.5){\framebox(26,26)[c]}
\put(89,57){$\scriptstyle \cdot$}
\put(81,48){$\scriptstyle \cdot$}
\put(89,48){$\scriptstyle \cdot$}
\put(96,48){$\scriptstyle 1$}
\put(89,39){$\scriptstyle \cdot$}
%
%
\put(37,-2.5){\framebox(26,26)[c]}
\put(49,17){$\scriptstyle \cdot$}
\put(41,8){$\scriptstyle \cdot$}
\put(49,8){$\scriptstyle \cdot$}
\put(57,8){$\scriptstyle \cdot$}
\put(48,-1){$\scriptstyle 1$}
%
\put(50,75){\vector(0,-1){10}}
\put(35,50){\vector(-1,0){10}}
\put(65,50){\vector(1,0){10}}
\put(50,25){\vector(0,1){10}}
\put(25,65){\vector(1,1){10}}
\put(25,35){\vector(1,-1){10}}
\put(40,-15){$y(0)$}
}
\put(240,100)
{
\put(37,77.5){\dashbox{3}(26,26)[c]}
\put(46,97){-$\scriptstyle 1$}
\put(41,88){$\scriptstyle \cdot$}
\put(49,88){$\scriptstyle \cdot$}
\put(57,88){$\scriptstyle \cdot$}
\put(49,79){$\scriptstyle \cdot$}
%
%
\put(9,57){$\scriptstyle \cdot$}
\put(0,48){$\scriptstyle 1$}
\put(9,48){$\scriptstyle \cdot$}
\put(17,48){$\scriptstyle  \cdot$}
\put(9,39){$\scriptstyle \cdot$}
%
\put(37,37.5){\framebox(26,26)[c]}
\put(48,57){$\scriptstyle 1$}
\put(41,48){$\scriptstyle \cdot$}
\put(48,48){$\scriptstyle 1$}
\put(57,48){$\scriptstyle \cdot$}
\put(48,39){$\scriptstyle 1$}
%
%
\put(77,37.5){\dashbox{3}(26,26)[c]}
\put(89,57){$\scriptstyle \cdot$}
\put(81,48){$\scriptstyle \cdot$}
\put(89,48){$\scriptstyle \cdot$}
\put(94,48){-$\scriptstyle 1$}
\put(89,39){$\scriptstyle \cdot$}
%
%
\put(37,-2.5){\dashbox{3}(26,26)[c]}
\put(49,17){$\scriptstyle \cdot$}
\put(41,8){$\scriptstyle \cdot$}
\put(49,8){$\scriptstyle \cdot$}
\put(57,8){$\scriptstyle \cdot$}
\put(46,-1){-$\scriptstyle 1$}
%
\put(25,50){\vector(1,0){10}}
\put(75,50){\vector(-1,0){10}}
\put(50,65){\vector(0,1){10}}
\put(50,35){\vector(0,-1){10}}
\put(35,75){\vector(-1,-1){10}}
\put(35,25){\vector(-1,1){10}}
\put(40,-15){$y(\frac{1}{2})$}
}
\put(0,20)
{
\put(37,77.5){\framebox(26,26)[c]}
\put(49,97){$\scriptstyle \cdot$}
\put(41,88){$\scriptstyle \cdot$}
\put(48,88){$\scriptstyle 1$}
\put(57,88){$\scriptstyle \cdot$}
\put(48,79){$\scriptstyle 1$}
\put(-3,37.5){\framebox(26,26)[c]}
\put(9,57){$\scriptstyle \cdot$}
\put(0,48){$\scriptstyle 1$}
\put(9,48){$\scriptstyle \cdot$}
\put(16,48){$\scriptstyle \cdot$}
\put(9,39){$\scriptstyle \cdot$}
%
\put(37,37.5){\dashbox{3}(26,26)[c]}
\put(46,57){-$\scriptstyle 1$}
\put(41,48){$\scriptstyle \cdot$}
\put(46,48){-$\scriptstyle 1$}
\put(57,48){$\scriptstyle \cdot$}
\put(46,39){-$\scriptstyle 1$}
%
%
\put(89,57){$\scriptstyle \cdot$}
\put(81,48){$\scriptstyle \cdot$}
\put(89,48){$\scriptstyle \cdot$}
\put(94,48){-$\scriptstyle 1$}
\put(89,39){$\scriptstyle \cdot$}
%
%
\put(37,-2.5){\framebox(26,26)[c]}
\put(48,17){$\scriptstyle 1$}
\put(41,8){$\scriptstyle \cdot$}
\put(48,8){$\scriptstyle 1$}
\put(57,8){$\scriptstyle \cdot$}
\put(49,-1){$\scriptstyle \cdot$}
%
\put(50,75){\vector(0,-1){10}}
\put(35,50){\vector(-1,0){10}}
\put(65,50){\vector(1,0){10}}
\put(50,25){\vector(0,1){10}}
\put(75,65){\vector(-1,1){10}}
\put(75,35){\vector(-1,-1){10}}
\put(40,-15){$y(1)$}
}
\put(170,20)
{
\put(37,77.5){\dashbox{3}(26,26)[c]}
\put(49,97){$\scriptstyle \cdot$}
\put(41,88){$\scriptstyle \cdot$}
\put(46,88){-$\scriptstyle 1$}
\put(57,88){$\scriptstyle \cdot$}
\put(46,79){-$\scriptstyle 1$}
\put(-3,37.5){\dashbox{3}(26,26)[c]}
\put(9,57){$\scriptstyle \cdot$}
\put(-2,48){-$\scriptstyle 1$}
\put(9,48){$\scriptstyle \cdot$}
\put(17,48){$\scriptstyle \cdot$}
\put(9,39){$\scriptstyle \cdot$}
%
\put(37,37.5){\framebox(26,26)[c]}
\put(49,57){$\scriptstyle \cdot$}
\put(41,48){$\scriptstyle \cdot$}
\put(48,48){$\scriptstyle 1$}
\put(57,48){$\scriptstyle \cdot$}
\put(49,39){$\scriptstyle \cdot$}
%
%
\put(89,57){$\scriptstyle \cdot$}
\put(81,48){$\scriptstyle \cdot$}
\put(89,48){$\scriptstyle \cdot$}
\put(94,48){-$\scriptstyle 1$}
\put(89,39){$\scriptstyle \cdot$}
%
%
\put(37,-2.5){\dashbox{3}(26,26)[c]}
\put(46,17){-$\scriptstyle 1$}
\put(41,8){$\scriptstyle \cdot$}
\put(46,8){-$\scriptstyle 1$}
\put(57,8){$\scriptstyle \cdot$}
\put(49,-1){$\scriptstyle \cdot$}
%
\put(50,65){\vector(0,1){10}}
\put(50,35){\vector(0,-1){10}}
\put(25,50){\vector(1,0){10}}
\put(75,50){\vector(-1,0){10}}
\put(65,75){\vector(1,-1){10}}
\put(65,25){\vector(1,1){10}}
\put(40,-15){$y(\frac{3}{2})$}
}
\put(70,-60)
{
\put(37,77.5){\framebox(26,26)[c]}
\put(49,97){$\scriptstyle \cdot$}
\put(41,88){$\scriptstyle \cdot$}
\put(49,88){$\scriptstyle \cdot$}
\put(57,88){$\scriptstyle \cdot$}
\put(46,79){-$\scriptstyle 1$}
%
%
\put(9,57){$\scriptstyle \cdot$}
\put(-2,48){-$\scriptstyle 1$}
\put(9,48){$\scriptstyle \cdot$}
\put(17,48){$\scriptstyle  \cdot$}
\put(9,39){$\scriptstyle \cdot$}
%
\put(37,37.5){\dashbox{3}(26,26)[c]}
\put(49,57){$\scriptstyle \cdot$}
\put(41,48){$\scriptstyle \cdot$}
\put(46,48){-$\scriptstyle 1$}
\put(57,48){$\scriptstyle \cdot$}
\put(49,39){$\scriptstyle \cdot$}
%
%
\put(77,37.5){\framebox(26,26)[c]}
\put(89,57){$\scriptstyle \cdot$}
\put(81,48){$\scriptstyle \cdot$}
\put(89,48){$\scriptstyle \cdot$}
\put(94,48){-$\scriptstyle 1$}
\put(89,39){$\scriptstyle \cdot$}
%
%
\put(37,-2.5){\framebox(26,26)[c]}
\put(46,17){-$\scriptstyle 1$}
\put(41,8){$\scriptstyle \cdot$}
\put(49,8){$\scriptstyle \cdot$}
\put(57,8){$\scriptstyle \cdot$}
\put(49,-1){$\scriptstyle \cdot$}
%
\put(50,75){\vector(0,-1){10}}
\put(35,50){\vector(-1,0){10}}
\put(65,50){\vector(1,0){10}}
\put(50,25){\vector(0,1){10}}
\put(25,65){\vector(1,1){10}}
\put(25,35){\vector(1,-1){10}}
\put(40,-15){$y(2)$}
}

\end{picture}
\caption{Tropical Y-system of type $B_2$ at level 2.
We continue to use the convention in
 Figures \ref{fig:labelxB1}--\ref{fig:labelyB2}
such that the variables framed by solid/dashed lines
satisfy the condition $\mathbf{p}_+$/$\mathbf{p}_-$.
}
\label{fig:tropB22}
\end{figure}

\begin{example}
Consider the simplest case $B_2$.
All the coefficients $[y_{\mathbf{i}}(u)]_{\mathbf{T}}$
in the region $-3 \leq u \leq 2$  are
calculated with Lemma \ref{lem:mutation} and
explicitly given in Figure \ref{fig:tropB22}.
We continue to use the convention in
 Figures \ref{fig:labelxB1}--\ref{fig:labelyB2}
such that the variables framed by solid/dashed lines
satisfy the condition $\mathbf{p}_+$/$\mathbf{p}_-$.
We recall that they are the mutation points for
the forward/backward direction of $u$.
In Figure \ref{fig:tropB22},
the configuration
\begin{align*}
\begin{picture}(50,23)(20,0)
\put(49,18){$\cdot$}
\put(36,8){ $\scriptstyle 1$}
\put(48,8){$\scriptstyle 1$}
\put(59,8){$\cdot$}
\put(49,-2){$\cdot$}
\end{picture}
\end{align*}
for example represents the monomial $y_{11}y_{22}$,
where $\cdot$ stands for 0.
One can observe all the properties in
Proposition \ref{prop:lev2}
in Figure \ref{fig:tropB22}.
Let us further observe that,
in the region $-3\leq u <0$,
we have six negative monomials
for  $\mathbf{i}\in \mathbf{I}^{\circ}
\sqcup \mathbf{I}^{\bullet}_-
$, $(\mathbf{i},u):\mathbf{p}_+$,
\begin{align*}
\begin{picture}(50,23)(20,0)
\put(49,18){$ \cdot$}
\put(39,8){$\cdot$}
\put(49,8){$\cdot$}
\put(56,8){-$\scriptstyle 1$}
\put(49,-2){$\cdot$}
\end{picture}
\begin{picture}(50,20)(20,0)
\put(49,18){$\cdot$}
\put(39,8){$\cdot$}
\put(46,8){-$\scriptstyle 1$}
\put(56,8){-$\scriptstyle 1$}
\put(49,-2){$\cdot$}
\end{picture}
\begin{picture}(50,20)(20,0)
\put(46,18){-$\scriptstyle 1$}
\put(36,8){-$\scriptstyle 1$}
\put(46,8){-$\scriptstyle 1$}
\put(56,8){-$\scriptstyle 1$}
\put(46,-2){-$\scriptstyle 1$}
\end{picture}
\begin{picture}(50,20)(20,0)
\put(49,18){$\cdot$}
\put(36,8){-$\scriptstyle 1$}
\put(49,8){$\cdot$}
\put(59,8){$\cdot$}
\put(49,-2){$\cdot$}
\end{picture}
\begin{picture}(50,20)(20,0)
\put(49,18){$\cdot$}
\put(39,8){$\cdot$}
\put(46,8){-$\scriptstyle 1$}
\put(59,8){$\cdot$}
\put(49,-2){$\cdot$}
\end{picture}
\begin{picture}(50,20)(20,0)
\put(49,18){$\cdot$}
\put(36,8){-$\scriptstyle 1$}
\put(46,8){-$\scriptstyle 1$}
\put(59,8){$\cdot$}
\put(49,-2){$\cdot$}
\end{picture}
\end{align*}
{\em If we concentrate on the middle row},
then they naturally correspond to the {\em positive roots} of type $A_3$
\begin{align}
\alpha_1,
\alpha_2,
\alpha_3,
\alpha_1+\alpha_2,
\alpha_2+\alpha_3,
\alpha_1+\alpha_2+\alpha_3.
\end{align}
\end{example}

\subsection{Proof of Proposition \ref{prop:lev2}}
For general $r$,
one can directly verify  (i) and (iii)
as in Figure \ref{fig:tropB22}.

Note that (ii) and (iv) can be proved
independently for each variable $y_{\mathbf{i}}$.
(To be precise, we also need to assure that
each monomial is not 1 in total.
However, this can be easily followed up, so that we do not
describe details here.)
As for the powers of  variables
$y_{r1}$ and $y_{r3}$,
it is easy to verify the claim by direct calculations.
Therefore, it is enough to prove   (ii)
 and (iv)
only for the powers of variables
$y_{i1}$ ($i\neq r$) and $y_{r2}$.
To do that, we use the description of  the tropical Y-system
in the region $-h^{\vee}\leq u <0$ by the {\em root system
of type $A_{2r-1}$}, following the spirit of  \cite{FZ2}.

Let $A_{2r-1}$ be the Dynkin diagram of type $A$
with index set $J=\{1,\dots,2r-1\}$.
We assign 
the sign +/$-$ to vertices
(except  for $r$)
 of $A_{2r-1}$ as inherited from
$Q_2(B_r)$.
\begin{align*}
\begin{picture}(310,68)(0,20)
\put(45,70)
{
\put(0,0){\circle{5}}
\put(30,0){\circle{5}}
\put(60,0){\circle{5}}
\put(90,0){\circle{5}}
\put(120,0){\circle{5}}
\put(150,0){\circle{5}}
\put(180,0){\circle{5}}
\put(27,0){\line(-1,0){24}}
\put(57,0){\line(-1,0){24}}
\put(87,0){\line(-1,0){24}}
\put(93,0){\line(1,0){24}}
\put(147,0){\line(-1,0){24}}
\put(177,0){\line(-1,0){24}}
\put(-4,8)
{
\put(2,2){\small $1$}
\put(32,2){\small $2$}
\put(62,2){\small }
\put(92,2){\small $r$}
\put(122,2){\small }
\put(142,2){\small $2r-2$}
\put(172,2){\small $2r-1$}
}
\put(-5,-14)
{
\put(2,2){\small $-$}
\put(32,2){\small $+$}
\put(62,2){\small $-$}
\put(122,2){\small $+$}
\put(152,2){\small $-$}
\put(182,2){\small $+$}
}
\put(230,-2){$r$: even}
}
%
%
\put(45,30)
{
\put(-30,0){\circle{5}}
\put(0,0){\circle{5}}
\put(30,0){\circle{5}}
\put(60,0){\circle{5}}
\put(90,0){\circle{5}}
\put(120,0){\circle{5}}
\put(150,0){\circle{5}}
\put(180,0){\circle{5}}
\put(210,0){\circle{5}}
\put(-3,0){\line(-1,0){24}}
\put(27,0){\line(-1,0){24}}
\put(57,0){\line(-1,0){24}}
\put(87,0){\line(-1,0){24}}
\put(93,0){\line(1,0){24}}
\put(147,0){\line(-1,0){24}}
\put(177,0){\line(-1,0){24}}
\put(207,0){\line(-1,0){24}}
\put(-4,8)
{
\put(-28,2){\small $1$}
\put(2,2){\small $2$}
\put(62,2){\small }
\put(92,2){\small $r$}
\put(122,2){\small }
\put(172,2){\small $2r-2$}
\put(202,2){\small $2r-1$}
}
\put(-5,-14)
{
\put(-28,2){\small $+$}
\put(2,2){\small $-$}
\put(32,2){\small $+$}
\put(62,2){\small $-$}
\put(122,2){\small $+$}
\put(152,2){\small $-$}
\put(182,2){\small $+$}
\put(212,2){\small $-$}
}
\put(230,-2){$r$: odd}
}
\end{picture}
\end{align*}

Let $\Pi=\{ \alpha_1,\dots,\alpha_{2r-1}\}$, $-\Pi$,
 $\Phi_+$ be the set of the simple roots,
the negative simple roots, the positive roots, respectively,
of type $A_{2r-1}$.
Following \cite{FZ2}, we introduce the {\em piecewise-linear
analogue} $\sigma_i$ of the simple reflection $s_i$,
acting on the set
of the {\em almost positive roots}
$\Phi_{\geq -1}=\Phi_{+}\sqcup (-\Pi)$,
by
\begin{align}
\begin{split}
\sigma_i(\alpha)&=s_i(\alpha),\quad  \alpha\in \Phi_+,\\
\sigma_i(-\alpha_j)&=
\begin{cases}
\alpha_j& j=i,\\
-\alpha_j&\mbox{otherwise}.\\
\end{cases}
\end{split}
\end{align}
Let
\begin{align}
\sigma_+=\prod_{i\in J_+} \sigma_i,\quad
\sigma_-=\prod_{i\in J_-} \sigma_i,
\end{align}
where $J_{\pm}$ is the set of the
vertices of $A_{2r-1}$
 with property $\pm$. We define $\sigma$ as the composition
\begin{align}
\label{eq:sigma}
\sigma=\sigma_{r}\sigma_-\sigma_{r}\sigma_+.
\end{align}

Let $\omega:i\rightarrow 2r-i$ be an involution on $J$.

\begin{lemma}
\label{lem:orbit}
 The following facts hold.
\par
 (i) For $i\in J_+$,
$\sigma^{k}(-\alpha_i)\in\Phi_+$, $(1\leq k\leq r-1)$,
$\sigma^r(-\alpha_i)=-\alpha_{\omega(i)}$.
\par
 (ii) For $i\in J_-$,
$\sigma^{k}(-\alpha_i)\in\Phi_+$, $(1\leq k\leq r)$,
$\sigma^{r+1}(-\alpha_i)=-\alpha_{\omega(i)}$.
\par
(iii) 
$\sigma^{k}(-\alpha_r)\in\Phi_+$, $(1\leq k\leq r-1)$,
$\sigma^r(-\alpha_r)=-\alpha_{r}$.
\par
(iv) 
$\sigma^{k}(\alpha_r)\in\Phi_+$, $(0\leq k\leq r-1)$,
$\sigma^r(\alpha_r)=\alpha_{r}$.\par
(v) The elements in $\Phi_+$ in (i)--(iv) exhaust the set $\Phi_+$,
thereby providing the orbit decomposition
of $\Phi_+$ by $\sigma$.
\end{lemma}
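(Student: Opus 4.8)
The plan is to prove (i)--(v) by a direct computation inside the root system of type $A_{2r-1}$, using the interval model for positive roots together with the explicit piecewise-linear action of the maps $\sigma_i$; part (v) will then drop out as a dimension count. Concretely, write each positive root as an interval $[i,j]:=\alpha_i+\alpha_{i+1}+\dots+\alpha_j$ with $1\le i\le j\le 2r-1$, so that $\alpha_i=[i,i]$. On $\Phi_{\geq -1}$ the involution $\sigma_k$ fixes $-\alpha_j$ for $j\ne k$, sends $-\alpha_k$ to $[k,k]$, and on a positive root acts through $s_k$, which merely moves an endpoint by one unit: $[i,j]\mapsto[i-1,j]$ if $k=i-1$, $[i,j+1]$ if $k=j+1$, $[i+1,j]$ if $k=i<j$, $[i,j-1]$ if $k=j>i$, $[k,k]\mapsto-[k,k]$, and $[i,j]\mapsto[i,j]$ otherwise. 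Since $J_+$ and $J_-$ are independent sets in the path $A_{2r-1}$ and $r\notin J_+\cup J_-$, the composites $\sigma_+$, $\sigma_-$, $\sigma_r$ and hence $\sigma=\sigma_r\sigma_-\sigma_r\sigma_+$ are well-defined bijections of $\Phi_{\geq -1}$, so the $\sigma$-orbits partition $\Phi_{\geq -1}$; moreover from the sign rule one checks that $\omega\colon i\mapsto 2r-i$ interchanges $J_+$ and $J_-$, whence $|J_+|=|J_-|=r-1$.

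I would first dispose of the two ``central'' orbits (iii) and (iv). Applying the factors of $\sigma$ from right to left to $\alpha_r$ and to $-\alpha_r$ and iterating, one checks that $\sigma^k(\alpha_r)$ and $\sigma^k(-\alpha_r)$ remain intervals which march across the diagram: the two $\sigma_r$ factors act as a reflection at the central vertex while $\sigma_\pm$ advance the free endpoint, so the interval travels outward, rebounds off an end of $A_{2r-1}$, and returns after exactly $r$ applications of $\sigma$. This gives (iv), with $\alpha_r,\sigma(\alpha_r),\dots,\sigma^{r-1}(\alpha_r)$ all lying in $\Phi_+$, and (iii), with $r-1$ positive roots strictly between the two occurrences of $-\alpha_r$.

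For (i) and (ii), take $i\in J_+\cup J_-$. The first application of $\sigma$ turns $-\alpha_i$ into $[i,i]$ through the relevant factor of $\sigma_+$ or $\sigma_-$, and the later applications translate and lengthen the interval in a controlled way until it finally collapses to a negative simple root; tracking the two endpoints shows that this collapse occurs after $r$ steps when $i\in J_+$ and after $r+1$ steps when $i\in J_-$ --- the one-step discrepancy coming from the parity with which the moving endpoint meets the alternating signs and the $\sigma_r$ factors --- and that the terminal root is $-\alpha_{\omega(i)}$. Because $\omega$ swaps $J_+$ and $J_-$, each $i\in J_+$ is joined with $\omega(i)\in J_-$ into a single $\sigma$-orbit of size $1+(r-1)+1+r=2r+1$ in $\Phi_{\geq -1}$, containing $2r-1$ positive roots, all intermediate roots being genuinely positive, which is the assertion ``$\sigma^k(-\alpha_i)\in\Phi_+$''. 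Part (v) is then arithmetic: the orbits in (i)/(ii) are indexed by the $r-1$ elements of $J_+$ and account for $(r-1)(2r-1)$ positive roots, which together with the $r-1$ from (iii) and the $r$ from (iv) give $(r-1)(2r-1)+(r-1)+r=r(2r-1)=|\Phi_+|$; since $\sigma$ permutes $\Phi_{\geq -1}$ and the orbits in (i)--(iv) are pairwise distinct of these sizes, they exhaust $\Phi_+$.

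The one genuinely delicate point is the bookkeeping behind the last two paragraphs: proving, uniformly in $r$ and in the position of $i$ relative to the alternating sign pattern, that $\sigma^k(-\alpha_i)$ and $\sigma^k(\pm\alpha_r)$ stay inside $\Phi_+$ for every $k$ below the stated bound and degenerate to the predicted negative simple root exactly at that bound. I would manage this by positing an explicit closed form for $\sigma^k(-\alpha_i)$ as a ``marching interval'' and verifying it by induction on $k$, the only subtle steps of the induction being those in which the moving endpoint crosses the central vertex $r$ or rebounds from an end of the diagram; the pattern here is governed by the same mechanism as the type-$A$ analysis in Fomin--Zelevinsky \cite{FZ2}, but the self-contained interval computation is what I would actually carry out.
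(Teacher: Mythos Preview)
Your proposal is correct and follows essentially the same approach as the paper: both arguments compute the $\sigma$-orbits explicitly using the interval notation $[i,j]$ for positive roots of $A_{2r-1}$, track how the endpoints move under the factors $\sigma_+,\sigma_-,\sigma_r$, and finish (v) with the count $(r-1)(2r-1)+(r-1)+r=r(2r-1)=|\Phi_+|$. The paper is content to exhibit the $r=6$ table and say the general rule can be read off, whereas you propose to carry out the induction with an explicit closed form for the marching interval; this is the same computation, only written out more carefully.
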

\begin{proof}
(i)--(iv). They are verified by explicitly calculating
$\sigma^{k}(-\alpha_i)$ ($i\in J$)
 and $\sigma^{k}(\alpha_r)$.
The example for $r=6$
is given in Table \ref{tab:orbit},
where
we use the notation
\begin{align}
[i,j]=\alpha_i+\cdots + \alpha_j
\quad (i < j),
\quad
[i]=\alpha_i.
\end{align}
In fact, it is not difficult to read off the general rule from 
this example.

(v). The total number of 
the elements in $\Phi_+$ in (i)--(iv) is
$r(2r-1)$, which coincides with $|\Phi_+|$.
\end{proof}

\begin{table}
\rotatebox{90}
{
\begin{minipage}{\textheight}
\setlength{\unitlength}{0.95pt}
\begin{picture}(350,230)(-5,-30)
\put(0,220){\small }
\put(20,220){\small }
\put(40,20){\small }
\put(60,220){\small }
\put(80,220){\small }
\put(100,220){ $-1$}
\put(140,220){ $-2$}
\put(180,220){ $-3$}
\put(220,220){ $-4$}
\put(260,220){ $-5$}
\put(300,220){ $-6$}
\put(340,220){ $-7$}
\put(380,220){ $-8$}
\put(420,220){ $-9$}
\put(458,220){ $-10$}
\put(498,220){ $-11$}
\put(-2,200){ 1\ $-$}
\put(20,200){  $-\alpha_1$}
\put(100,200){ [1]}
\put(177,200){ [2,3]}
\put(257,200){ [4,6]}
\put(337,200){ [7,8]}
\put(417,200){ [9,10]}
\put(500,200){ [11]}
\put(572,200){ $-\alpha_{11}$}
\put(-2,180){ 2\ $+$}
\put(53,180){  $-\alpha_2$}
\put(140,180){ [1,3]}
\put(217,180){ [2,6]}
\put(297,180){ [4,8]}
\put(377,180){ [7,10]}
\put(457,180){ [9,11]}
\put(536,180){ $-\alpha_{10}$}
\put(-2,160){ 3\ $-$}
\put(20,160){  $-\alpha_3$}
\put(100,160){ [3]}
\put(177,160){ [1,6]}
\put(257,160){ [2,8]}
\put(337,160){ [4,10]}
\put(417,160){ [7,11]}
\put(500,160){ [9]}
\put(572,160){ $-\alpha_{9}$}
\put(-2,140){ 4\ $+$}
\put(53,140){  $-\alpha_4$}
\put(140,140){ [3,6]}
\put(217,140){ [1,8]}
\put(297,140){ [2,10]}
\put(377,140){ [4,11]}
\put(457,140){ [7,9]}
\put(536,140){ $-\alpha_{8}$}
\put(-2,120){ 5\ $-$}
\put(20,120){  $-\alpha_5$}
\put(97,120){ [5,6]}
\put(177,120){ [3,8]}
\put(257,120){ [1,10]}
\put(337,120){ [2,11]}
\put(417,120){ [4,9]}
\put(500,120){ [7]}
\put(572,120){ $-\alpha_{7}$}
\put(-2,100){ 6}
\put(37,100){  $-\alpha_6$}
\put(80,100){  [6]}
\put(120,100){ [5]}
\put(157,100){ [6,8]}
\put(197,100){ [3,5]}
\put(237,100){ [6,10]}
\put(277,100){ [1,5]}
\put(317,100){ [6,11]}
\put(357,100){ [2,5]}
\put(397,100){ [6,9]}
\put(437,100){ [4,5]}
\put(477,100){ [6,7]}
\put(518,100){ $-\alpha_{6}$}
\put(558,100){ $\alpha_{6}$}
\put(-2,80){ 7\ $+$}
\put(53,80){  $-\alpha_7$}
\put(137,80){ [5,8]}
\put(217,80){ [3,10]}
\put(297,80){ [1,11]}
\put(377,80){ [2,9]}
\put(457,80){ [4,7]}
\put(536,80){ $-\alpha_{5}$}
\put(-2,60){ 8\ $-$}
\put(20,60){  $-\alpha_8$}
\put(100,60){ [8]}
\put(177,60){ [5,10]}
\put(257,60){ [3,11]}
\put(337,60){ [1,9]}
\put(417,60){ [2,7]}
\put(500,60){ [4]}
\put(572,60){ $-\alpha_{4}$}
\put(-2,40){ 9\ $+$}
\put(53,40){  $-\alpha_9$}
\put(137,40){ [8,10]}
\put(217,40){ [5,11]}
\put(297,40){ [3,9]}
\put(377,40){ [1,7]}
\put(457,40){ [2,4]}
\put(536,40){ $-\alpha_{3}$}
\put(-7,20){ 10\ $-$}
\put(20,20){  $-\alpha_{10}$}
\put(100,20){ [10]}
\put(177,20){ [8,11]}
\put(257,20){ [5,9]}
\put(337,20){ [3,7]}
\put(417,20){ [1,4]}
\put(500,20){ [2]}
\put(572,20){ $-\alpha_{2}$}
\put(-7,0){ 11\ $+$}
\put(52,0){  $-\alpha_{11}$}
\put(137,0){ [10,11]}
\put(217,0){ [8,9]}
\put(297,0){ [5,7]}
\put(377,0){ [3,4]}
\put(457,0){ [1,2]}
\put(536,0){ $-\alpha_{1}$}
\put(-5,232){\line(1,0){605}}
\put(-5,213){\line(1,0){605}}
\put(-5,-7){\line(1,0){605}}
\put(-5,-7){\line(0,1){239}}
\put(20,-7){\line(0,1){239}}
\put(80,-7){\line(0,1){239}}
\put(520,-7){\line(0,1){239}}
\put(600,-7){\line(0,1){239}}
%
\end{picture}
\caption{The orbits of $\sigma^k(-\alpha_i)$ and
$\sigma^k(\alpha_r)$ in $\Phi_+$
by $\sigma$ of $\eqref{eq:sigma}$
 for $r=6$.
The orbits of $-\alpha_6$ and $\alpha_6$,
i.e.,  $-\alpha_6\rightarrow [5]
\rightarrow [3,5]\rightarrow \cdots \rightarrow -\alpha_6 $
and
$\alpha_6 \rightarrow [6,8]
\rightarrow [6,10]\rightarrow \cdots \rightarrow \alpha_6 $
 are alternatively
aligned.
The numbers $-1, -2, \cdots$ in the head line
will be identified with the parameter $u$
in \eqref{eq:alpha}.
}
\label{tab:orbit}
\end{minipage}
}
\end{table}

The orbit
of $\sigma(-\alpha_i)$ ($i\neq r$) is further
described by the {\em root system of type $A_{2r-2}$}.

\begin{lemma}
\label{lem:cox}
Let $O_i=\{\sigma^{k}(-\alpha_i)\mid 1\leq k\leq r-1\}$
for $i\in J_+$ and 
$O_i=\{\sigma^{k}(-\alpha_i)\mid 1\leq k\leq r\}$
for $i\in J_-$.
Let $\Phi'_+$ be the set of the positive roots
of type $A_{2r-2}$ with index set $J'=J-\{r\}$,
and
\begin{align}
\rho:\bigsqcup_{i\in J'} O_i \rightarrow \Phi'_+
\end{align}
be the map which removes $\alpha_r$ from $\alpha$ if $\alpha$
 contains $\alpha_r$
and does nothing otherwise.
Then, $\rho$ is a bijection,
and its inverse
$\rho^{-1}(\alpha')$  adds $\alpha_r$ 
if $\alpha'$ contains $\alpha_{r-1}$ and does nothing otherwise.
Furthermore, under the bijection $\rho$,
the action of $\sigma$ is translated into the one
of the Coxeter element $s=s_- s_+$ of type $A_{2r-2}$
acting on $\Phi'_+$,
where $s_{\pm}=\prod_{i\in J_{\pm}} s_i$.
\end{lemma}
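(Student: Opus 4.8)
The plan is to translate the whole statement into the combinatorics of intervals and then verify it by a finite case analysis, using Lemma~\ref{lem:orbit} and Table~\ref{tab:orbit} as the computational input. Identify $\Phi_+$ of type $A_{2r-1}$ with the intervals $[i,j]=\alpha_i+\cdots+\alpha_j$, $1\le i\le j\le 2r-1$, and $\Phi'_+$ of type $A_{2r-2}$ with the intervals along the path obtained from the $A_{2r-1}$ diagram by deleting the node $r$; under these identifications $\rho$ is simply ``forget the coefficient of $\alpha_r$''. First I would determine $D:=\bigsqcup_{i\in J'}O_i$ as a subset of $\Phi_+$: by Lemma~\ref{lem:orbit}(v) it is the complement of the $\sigma$-orbits of $\alpha_r$ and of $-\alpha_r$, and from the explicit orbits one reads off that these consist respectively of the intervals \emph{starting} at $r$ and of the intervals \emph{ending} at $r-1$. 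Hence $D=\{[i,j]\mid i\ne r,\ j\ne r-1\}$; counting gives $|D|=(r-1)(2r-1)=|\Phi'_+|$, which is the first consistency check.

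Next I would show that $\rho\colon D\to\Phi'_+$ is a bijection with the stated inverse. This is a short case check on the position of $[i,j]\in D$ relative to the nodes $r-1$ and $r$. If $[i,j]$ lies entirely on one side of $r$ it is already a root of $A_{2r-2}$ and $\rho$ is the identity on it; if it contains $\alpha_r$ (equivalently $i\le r-1\le j$), removing $\alpha_r$ yields the interval from $i$ to $j$ along the deleted-node path, which still contains $\alpha_{r-1}$, so the prescription ``re-insert $\alpha_r$ precisely when $\alpha_{r-1}$ occurs'' recovers $[i,j]$. Surjectivity is checked the same way, the only slightly delicate point being that an $A_{2r-2}$-interval ending at $r-1$ is hit by $[i,r]\in D$ rather than by $[i,r-1]\notin D$ (and symmetrically on the other side). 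Injectivity is immediate from the fact that a positive root of $A_{2r-1}$ has $\alpha_r$-coefficient $0$ or $1$: two elements of $D$ with the same image would differ by $\pm\alpha_r$, and one checks such a pair cannot both lie in $D$.

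For the intertwining with $s=s_-s_+$, the plan is to compare closed-form interval rules. I would first establish, for general $r$, the explicit step-by-step formula for $\sigma=\sigma_r\sigma_-\sigma_r\sigma_+$ on $D\cap\sigma^{-1}(D)$ — this is the rule illustrated in Table~\ref{tab:orbit}, whose salient feature is that $\sigma$ ``passes through the node $r$'' exactly when $s$ passes from the $\le r-1$ side to the $\ge r+1$ side — and likewise use the standard description of the Coxeter element $s$ on $\Phi'_+$ on the range where it preserves positivity. Comparing the two rules gives $\rho(\sigma(\beta))=s(\rho(\beta))$ whenever $\beta,\sigma(\beta)\in D$; the remaining cases $\sigma(\beta)\notin D$ are exactly those in which $s(\rho(\beta))$ leaves $\Phi'_+$, and there $\sigma(\beta)$ is one of the negative simple roots $-\alpha_{\omega(i)}$ of Lemma~\ref{lem:orbit}, so the orbit endpoints match. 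Together with the bijection of the previous step this shows that $\rho$ carries the $\sigma$-segments $O_i$ onto the positive portions of the $s$-orbits, which is the assertion.

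The step I expect to be the genuine obstacle is the intertwining. It is tempting to argue factorwise, noting that $\rho\circ\sigma_r=\rho$ on $\Phi_+$ because $s_r$ alters only the $\alpha_r$-coefficient; but this only kills the outer $\sigma_r$, and one cannot continue, since $\rho\circ\sigma_-\ne s_-\circ\rho$ — the two differ by a multiple of $\alpha_{r-1}$ depending on the $\alpha_r$- and $\alpha_{r+1}$-coefficients, a discrepancy cancelled only in combination with the inner $\sigma_r$. So the efficient route is to avoid factorwise matching and establish the closed-form interval rule for $\sigma$ directly, for all $r$ rather than only the displayed $r=6$ instance; once that is in hand, the comparison with the well-known interval action of $s$, and with it the whole lemma, is routine.
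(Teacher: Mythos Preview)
The paper does not give a proof of this lemma at all; it is stated and then immediately used, the implicit justification being the explicit orbit description established in the proof of Lemma~\ref{lem:orbit} and displayed in Table~\ref{tab:orbit}. Your proposal is correct and is precisely the argument one would write down to fill in this gap: identifying $D=\bigsqcup_{i\in J'}O_i$ via Lemma~\ref{lem:orbit}(v) as the complement of the $\sigma$-orbits through $\pm\alpha_r$, reading off from the explicit formulas that these orbits are exactly the intervals starting at $r$ and ending at $r-1$, and then checking the bijection and the intertwining $\rho\circ\sigma=s\circ\rho$ by the interval calculus.

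Two minor comments. First, your remark ``and symmetrically on the other side'' in the surjectivity check is slightly misleading: the rule for $\rho^{-1}$ is genuinely asymmetric (it keys on $\alpha_{r-1}$, not on $\alpha_{r+1}$), so an $A_{2r-2}$-interval $[r+1,j]$ is hit by itself, not by $[r,j]$; but this is harmless and the argument still goes through. Second, your diagnosis of the intertwining step is accurate: the factorwise approach $\rho\circ\sigma_r=\rho$ only disposes of the outer $\sigma_r$, and one really does need the closed-form interval rule for the full composite $\sigma$ (which the paper computes in the proof of Lemma~\ref{lem:orbit} for general $r$, not only for $r=6$). Once that rule is in hand, the comparison with the standard interval action of $s=s_-s_+$ on $A_{2r-2}$ is a routine case check, as you say.
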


For $-h^{\vee}\leq u< 0$, define
\begin{align}
\label{eq:alpha}
&\alpha_{i}(u)=
\begin{cases}
\sigma^{-u/2}(-\alpha_i)
& \mbox{\rm $i\in J_+$, $u\equiv 0$,}\\
\sigma^{-(u-1)/2}(-\alpha_i)
& \mbox{\rm $i\in J_-$, $u\equiv -1$,}\\
\sigma^{-(2u-1)/4}(-\alpha_r)
& \mbox{\rm $i=r$, $u\equiv -\frac{3}{2}$,}\\
\sigma^{-(2u+1)/4}(\alpha_r)
& \mbox{\rm $i=r$, $u\equiv -\frac{1}{2}$,}\\
\end{cases}
\end{align}
where $\equiv$ is  modulo $2\mathbb{Z}$.
Note that they correspond to the positive roots
in Table \ref{tab:orbit}
with $u$ being the parameter in the head line.
By Lemma \ref{lem:orbit} they are all the positive roots
of $A_{2r-1}$.

\begin{lemma}
\label{lem:trec}
The family in \eqref{eq:alpha} satisfies the recurrence relations
\begin{align}
\label{eq:alpha1}
\begin{split}
\alpha_{i}(u-1)+\alpha_{i}(u+1)
&=\alpha_{i-1}(u)+\alpha_{i+1}(u)
\quad (i\neq r-1,r,r+1),\\
\alpha_{r-1}(u-1)+\alpha_{r-1}(u+1)
&=\alpha_{r-2}(u)+\alpha_{r+1}(u),\\
\alpha_{r+1}(u-1)+\alpha_{r+1}(u+1)
&=\alpha_{r-1}(u)+\alpha_{r+2}(u),\\
\textstyle\alpha_{r}(u-\frac{1}{2})+\alpha_{r}(u+\frac{1}{2})
&=\alpha_{r-1}(u) \quad \mbox{\rm ($u$: odd)},\\
\textstyle\alpha_{r}(u-\frac{1}{2})+\alpha_{r}(u+\frac{1}{2})
&=\alpha_{r+1}(u)\quad \mbox{\rm ($u$: even)},
\end{split}
\end{align}
where $\alpha_0(u)=\alpha_{2r}(u)=0$.
\end{lemma}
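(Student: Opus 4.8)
The plan is to turn the five families of identities in \eqref{eq:alpha1} into \emph{linear} identities in the root lattice of $A_{2r-1}$ and to read them off from the orbit structure already established in Lemmas \ref{lem:orbit} and \ref{lem:cox}. The preliminary observation is that the piecewise-linear maps behave linearly away from $-\Pi$: $\sigma_i$ agrees with the reflection $s_i$ on $\Phi_+$, and since $J_+$ and $J_-$ are independent sets of vertices one checks readily that $\sigma_+$, $\sigma_-$ and $\sigma_r$ agree with $s_+$, $s_-$ and $s_r$ at every point of $\Phi_+$; the piecewise-linear correction intervenes only when a root of an orbit lands in $-\Pi$. By Lemma \ref{lem:orbit}, for a fixed instance of \eqref{eq:alpha1} all the roots $\alpha_i(u)$ that occur lie in $\Phi_+$, with finitely many exceptions near the two ends of the range $-h^\vee\le u<0$.

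For the first three families---those in which every column index is $\neq r$---I would transport the identity to type $A_{2r-2}$ via the bijection $\rho$ of Lemma \ref{lem:cox}, writing $\beta_i(u):=\rho(\alpha_i(u))$. Since $\rho$ conjugates $\sigma$ into the Coxeter element $s=s_-s_+$ of $A_{2r-2}$, whose diagram is the path $1-\cdots-(r-1)-(r+1)-\cdots-(2r-1)$, the three families become precisely the full set of mesh relations $\beta_j(u-1)+\beta_j(u+1)=\sum_{j'\sim j}\beta_{j'}(u)$ for the bipartite $s$-orbits of the simple roots of $A_{2r-2}$; this is the additive analogue of the Fomin--Zelevinsky bipartite-belt recurrence (equivalently, the mesh relations for dimension vectors in the Auslander--Reiten quiver of a type $A$ quiver with alternating orientation), and is checked directly from the explicit $s$-orbits. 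To lift the identity back to $A_{2r-1}$ I would use the description of $\rho^{-1}$ in Lemma \ref{lem:cox}: the $\alpha_r$-coefficient of $\alpha_i(u)$ equals the $\alpha_{r-1}$-coefficient of $\beta_i(u)$, so the $\alpha_r$-components of the two sides of \eqref{eq:alpha1} match as soon as the $A_{2r-2}$ identity does, while all other components match because $\rho$ leaves them untouched.

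For the last two families, which couple $\alpha_r(u\pm\frac12)$ to a single neighbour, the fold at vertex $r$ obstructs a clean transport, so I would verify them directly from \eqref{eq:alpha}. Setting $k=-(u-1)/2$ in the case ``$u$ odd'', the first of these reads $\sigma^{k}(-\alpha_r)+\sigma^{k-1}(\alpha_r)=\sigma^{k}(-\alpha_{r-1})$, and the case ``$u$ even'' is analogous; these follow by applying the half-steps $\sigma_r\sigma_+$ and $\sigma_-\sigma_r$ (the two halves of $\sigma$) and matching against the interlaced orbits of $-\alpha_r$ and $\alpha_r$ from Lemma \ref{lem:orbit}(iii)--(iv), exactly as displayed in Table \ref{tab:orbit}: the two $\alpha_r$-roots flanking a given $\alpha_{r\mp1}(u)$ sum to it. The same bookkeeping disposes of the degenerate boundary instances of all five families, where one entry lies in $-\Pi$ or is the formal symbol $\alpha_0(u)=\alpha_{2r}(u)=0$.

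The step I expect to be the real obstacle is the interface between this linear bulk and the non-linear endpoints: one must check that the reduction through $\rho$ and the coefficient-wise argument remain valid as an orbit wraps through $-\Pi$, and the folded relations at column $r$ require a somewhat delicate matching of the half-integer $u$-increments with the factorization $\sigma=\sigma_r\sigma_-\sigma_r\sigma_+$. A more computational alternative, sufficient for the whole lemma, is to dispense with Lemma \ref{lem:cox} and verify all five recurrences straight from the explicit orbit formulas, whose pattern is already visible in Table \ref{tab:orbit}.
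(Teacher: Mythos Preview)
Your proposal is correct and matches the paper's approach: the paper's proof is a terse two-liner saying the relations are verified from the explicit orbit formulas (Table \ref{tab:orbit}), and that the first three can alternatively be obtained from Lemma \ref{lem:cox} together with \cite[Eq.~(10.9)]{FZ3}, which is exactly the Fomin--Zelevinsky bipartite recurrence you invoke. The only difference is emphasis---you lead with the structural argument via $\rho$ and mention the direct computation as the fallback, while the paper does the reverse---but both routes are the same ones the paper records.
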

\begin{proof}
These relations are easily
verified by the explicit expressions
of $\alpha_i(u)$.
See Table \ref{tab:orbit}.
The first three relations are also obtained from
Lemma \ref{lem:cox} and \cite[Eq.~(10.9)]{FZ3}.
\end{proof}

Let us return to
 prove  (ii) of Proposition \ref{prop:lev2}
for the powers of variables
$y_{i1}$ ($i\neq r$) and $y_{r2}$.
For a monomial $m$ in $y=(y_{\mathbf{i}})_{\mathbf{i}\in
\mathbf{I}}$,
let $\pi_A(m)$ denote the specialization
with $y_{r1}=y_{r3}=1$.
For simplicity, we set
$y_{i1}=y_i$ ($i\neq r$), $y_{r2}=y_r$,
and also,
$y_{i1}(u)=y_{i}(u)$
($i\neq r$), $y_{r2}(u)=y_r(u)$.
We define the vectors $\mathbf{t}_{i}(u)
=(t_{i}(u)_k)_{k=1}^{2r-1}$
by
\begin{align}
\pi_A([y_{i}(u)]_{\mathbf{T}})
=
\prod_{k=1}^{2r-1}
 y_{k}^{t_{i}(u)_k}.
\end{align}
We also identify each vector $\mathbf{t}_{i}(u)$
with $\alpha=\sum_{k=1}^{2r-1}
t_{i}(u)_k \alpha_k \in \mathbb{Z}\Pi$.

\begin{proposition}
\label{prop:tvec}
Let $-h^{\vee}\leq u< 0$.
Then, we have
\begin{align}
\label{eq:tvec1}
\mathbf{t}_{i}(u)=-\alpha_i(u)
\end{align}
for $(i,u)$ in \eqref{eq:alpha},
and 
\begin{align}
\label{eq:piA}
&\pi_A([y_{r1}(u)]_{\mathbf{T}})
=\pi_A([y_{r3}(u)]_{\mathbf{T}})=1,
\quad 
\mbox{\rm $u\equiv 0$ mod $2\mathbb{Z}$}.
\end{align}
\end{proposition}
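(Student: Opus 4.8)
The plan is to establish \eqref{eq:tvec1} and \eqref{eq:piA} together by induction on $u$, sweeping downward through the region $-h^{\vee}\le u\le 0$ and reading the tropical Y-system, after the specialization $\pi_A$, as the additive recurrences of Lemma~\ref{lem:trec}. First I would fix the labelling of the coefficients $y_{\mathbf i}(u)$ by $\mathcal I'_{\ell+}$ via $g'$ (Figures~\ref{fig:labelyB1}--\ref{fig:labelyB2}) so that, by Lemma~\ref{lem:y2}, the family $\{[y^{(a)}_m(u)]_{\mathbf T}\}$ satisfies the tropicalization of \eqref{eq:YB1}; at $\ell=2$ every denominator $(1+Y^{(a)}_{m\pm1}(u)^{-1})$ collapses to $1$ because $Y^{(a)}_0{}^{-1}=Y^{(a)}_{t_a\ell}{}^{-1}=0$, which simplifies the relations drastically. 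Since $\pi_A$ (setting $y_{r1}=y_{r3}=1$) extends to a homomorphism of semifields, applying it produces relations among the $\pi_A([y^{(a)}_m(u)]_{\mathbf T})$; under $g'$ the index $a$ of a middle-row coefficient is recorded as a column of $Q_2(B_r)$, and these columns are precisely the nodes $\ne r$ of $A_{2r-1}$, while $y_{r2}$ plays the role of node $r$ and $y_{r1},y_{r3}$ are the variables that $\pi_A$ kills.

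For the part concerning $y_{r1},y_{r3}$ I would argue directly, which is the ``easy direct calculation'' alluded to in the proof of Proposition~\ref{prop:lev2}: the $Y^{(r)}_{2m+1}$-relations in \eqref{eq:YB1} have numerator $1$, and, using the inductive hypothesis that $Y^{(r)}_2(u)$ is negative (so $Y^{(r)}_2(u)^{-1}$ is positive and $[1+Y^{(r)}_2(u)^{-1}]_{\mathbf T}=1$), their tropicalizations read $[y_{r1}(u-\tfrac12)]_{\mathbf T}[y_{r1}(u+\tfrac12)]_{\mathbf T}=1$ and likewise for $y_{r3}$; together with $[y_{r1}(0)]_{\mathbf T}=y_{r1}$ this forces $[y_{r1}(u)]_{\mathbf T}=y_{r1}^{\pm1}$ and $[y_{r3}(u)]_{\mathbf T}=y_{r3}^{\pm1}$, which yields \eqref{eq:piA} (and, incidentally, the $y_{r1},y_{r3}$-parts of Proposition~\ref{prop:lev2}(ii)(b) and (iv)). The core of the argument is then the inductive step for \eqref{eq:tvec1}: by Lemma~\ref{lem:orbit} each $\alpha_i(u)$ lies in $\Phi_+$ in the open region, so by the inductive hypothesis every $\mathbf t_i(u)=-\alpha_i(u)$ is a negative monomial; hence every numerator factor $Y$ in the simplified, $\pi_A$-specialized relations satisfies $[1+Y]_{\mathbf T}=[Y]_{\mathbf T}$, and the $Y^{(r)}_1,Y^{(r)}_3$ factors occurring in the $Y^{(r-1)}_1$-relation are $\pi_A$-trivial by the previous step. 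The relations therefore become additive in the exponent vectors, and --- with the spatial boundary conventions $\alpha_0(u)=\alpha_{2r}(u)=0$, and with the $Y^{(r)}_2$-relation and its half-integer shifts matching the last two lines of \eqref{eq:alpha1} --- they coincide, up to the global sign $\mathbf t=-\alpha$, with the recurrences \eqref{eq:alpha1}; so Lemma~\ref{lem:trec} propagates $\mathbf t_i=-\alpha_i$ one step further. One can also bypass part of this for $i\ne r$ by invoking Lemma~\ref{lem:cox}: collapsing the column-$r$ data turns the $\pi_A$-specialized level-$2$ system into the bipartite tropical Y-system of type $A_{2r-2}$, described by \cite[Eq.~(10.9)]{FZ3} in terms of $s=s_-s_+$, which $\rho^{-1}$ carries back to $\sigma$.

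The base of the induction --- the first few half-steps below $u=0$ --- I would check by hand, computing $[y_{\mathbf i}(u)]_{\mathbf T}$ from $[y_{\mathbf i}(0)]_{\mathbf T}=y_{\mathbf i}$ with Lemma~\ref{lem:mutation}, exactly as in Figure~\ref{fig:tropB22}; here the monomials near $u=0$ are still positive, so $[1+Y]_{\mathbf T}=1$ rather than $[Y]_{\mathbf T}$, and the additive recurrence only takes over once one is safely inside the negative region. I expect the main obstacle to be bookkeeping rather than anything conceptual: keeping the column-versus-Weyl-index dictionary straight through the reflection $\boldsymbol r$ and the parity conditions $\mathbf p_{\pm}$, tracking which $(\mathbf i,u)$ are genuine mutation points, treating the $a=r-1,r,r+1$ relations uniformly, and --- as flagged in the text --- verifying that each vector claimed to equal $-\alpha_i(u)\in-\Phi_+$ is in fact nonzero, so that the corresponding monomial is honestly negative and not equal to $1$.
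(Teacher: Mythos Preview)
Your proposal is correct and follows essentially the same route as the paper: verify the first few half-steps below $u=0$ by hand, then run a backward induction in which the sign information supplied by the inductive hypothesis (via Lemma~\ref{lem:orbit}) collapses each tropicalized level-$2$ Y-relation to an additive identity among the exponent vectors, producing exactly the recurrences the paper records as \eqref{eq:trec}, which are \eqref{eq:alpha1} under $\mathbf t=-\alpha$. One small point of hygiene: when you argue that $[y_{r1}(u)]_{\mathbf T}=y_{r1}^{\pm1}$ you are using a sign of $[Y^{(r)}_2(u)]_{\mathbf T}$ in the \emph{full} tropical semifield, whereas your inductive hypothesis \eqref{eq:tvec1} only controls the $\pi_A$-image; the clean fix is to note that $\pi_A:\mathrm{Trop}(y)\to\mathrm{Trop}(y')$ is a semifield homomorphism (since $\oplus$ acts coordinatewise), so you may work entirely after projection---then $\pi_A([Y^{(r)}_1(u)]_{\mathbf T})=1$ gives $1\oplus 1=1$ and the argument proceeds without ever needing the unprojected signs.
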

Note that these formulas
determine
$\pi_A([y_{\mathbf{i}}(u)]_{\mathbf{T}})$
 for any $(\mathbf{i},u):\mathbf{p}_+$. 

\begin{proof} 
We can verify the claim
for $-2\leq u \leq -\frac{1}{2}$ by direct
computation.
Then, by induction on $u$ in the backward direction,
one can  establish  the claim,
 together with
the recurrence relations among
$\mathbf{t}_{i}(u)$'s with $(i,u)$ in \eqref{eq:alpha},
\begin{align}
\label{eq:trec}
\begin{split}
\mathbf{t}_{i}(u-1)+\mathbf{t}_{i}(u+1)
&=\mathbf{t}_{i-1}(u)+\mathbf{t}_{i+1}(u),
\quad i\neq r-1,r,r+1,\\
\mathbf{t}_{r-1}(u-1)+\mathbf{t}_{r-1}(u+1)
&=\mathbf{t}_{r-2}(u)+\textstyle\mathbf{t}_{r}(u-\frac{1}{2})
+\mathbf{t}_{r}(u+\frac{1}{2}),\\
\mathbf{t}_{r+1}(u-1)+\mathbf{t}_{r+1}(u+1)
&=\mathbf{t}_{r+2}(u)+\textstyle\mathbf{t}_{r}(u-\frac{1}{2})
+\mathbf{t}_{r}(u+\frac{1}{2}),\\
\textstyle\mathbf{t}_{r}(u-\frac{1}{2})+\mathbf{t}_{r}(u+\frac{1}{2})
&=\mathbf{t}_{r-1}(u), \quad \mbox{\rm $u$: odd},\\
\textstyle\mathbf{t}_{r}(u-\frac{1}{2})+\mathbf{t}_{r}(u+\frac{1}{2})
&=\mathbf{t}_{r+1}(u),\quad \mbox{\rm $u$: even}.
\end{split}
\end{align}
Note that \eqref{eq:trec} coincides with
\eqref{eq:alpha1} under \eqref{eq:tvec1}.
To derive \eqref{eq:trec}, one uses
 the mutations as in Figure \ref{fig:tropB22}
(or  the tropical version of the Y-system
$\mathbb{Y}_2(B_r)$ directly)
 and the positivity/negativity
of $\pi_A([y_{\mathbf{i}}(u)]_{\mathbf{T}})$
resulting from \eqref{eq:tvec1} and \eqref{eq:piA}
by induction hypothesis.
\end{proof}

Now (ii) in Proposition \ref{prop:lev2}
is an immediate consequence of
 Lemma
\ref{lem:orbit} and Proposition \ref{prop:tvec}.
Finally, let us prove (iv), i.e.,
 $\mathbf{t}_{i}(-h^{\vee})=-\alpha_{\omega(i)}$.
This is shown by the following formulas obtained from
Lemma \ref{lem:orbit} and \eqref{eq:ymu}:
\begin{align}
\begin{split}
\mathbf{t}_i(-h^{\vee}+1)&=
\begin{cases}
-\alpha_{\omega(i)-1}-\alpha_{\omega(i)}-\alpha_{\omega(i)+1}
& i\neq r+1,
\quad
\\
-\alpha_{r-2}-\alpha_{r-1}-\alpha_{r}-\alpha_{r+1}
& i= r+1,\\
\end{cases}
i\in J_+,\\
\mathbf{t}_i(-h^{\vee}+\textstyle\frac{1}{2})&=\alpha_{\omega(i)},
\quad
\mathbf{t}_i(-h^{\vee})=-\alpha_{\omega(i)},
\quad
i\in J_-,
\\
\mathbf{t}_r(-h^{\vee}+1)&=\alpha_{r}+\alpha_{r+1},\quad
\mathbf{t}_r(-h^{\vee}+\textstyle\frac{1}{2})=
-\alpha_{r}-\alpha_{r+1},
\end{split}
\end{align}
where $\alpha_0=\alpha_{2r}=0$.
This completes the proof of Proposition \ref{prop:lev2}.

\section{Tropical Y-systems at higher levels}

\begin{figure}
\begin{picture}(355,550)(-15,-95)
%
\put(0,170)
{
\put(0,240)
{
\put(37,-2.5){\framebox(26,46)[c]}
\put(48,36){$\scriptstyle  1$}
\put(41,27){$\scriptstyle \cdot$}
\put(49,27){$\scriptstyle \cdot$}
\put(57,27){$\scriptstyle \cdot$}
\put(49,18){$\scriptstyle \cdot$}
\put(41,9){$\scriptstyle \cdot$}
\put(49,9){$\scriptstyle \cdot$}
\put(57,9){$\scriptstyle \cdot$}
\put(49,0){$\scriptstyle \cdot$}
}
\put(-40,180)
{
\put(37,-2.5){\framebox(26,46)[c]}
\put(46,36){-$\scriptstyle  1$}
\put(38,27){-$\scriptstyle 1$}
\put(46,27){-$\scriptstyle 1$}
\put(54,27){-$\scriptstyle 1$}
\put(46,18){-$\scriptstyle 1$}
\put(41,9){$\scriptstyle \cdot$}
\put(49,9){$\scriptstyle \cdot$}
\put(57,9){$\scriptstyle \cdot$}
\put(49,0){$\scriptstyle \cdot$}
}
\put(0,180)
{
\put(37,-2.5){\dashbox{3}(26,46)[c]}
\put(49,36){$\scriptstyle  \cdot$}
\put(40,27){$\scriptstyle 1$}
\put(48,27){$\scriptstyle 1$}
\put(57,27){$\scriptstyle \cdot$}
\put(49,18){$\scriptstyle \cdot$}
\put(41,9){$\scriptstyle \cdot$}
\put(49,9){$\scriptstyle \cdot$}
\put(57,9){$\scriptstyle \cdot$}
\put(49,0){$\scriptstyle \cdot$}
}
\put(40,180)
{
\put(37,-2.5){\makebox(26,46)[c]}
\put(49,36){$\scriptstyle  \cdot$}
\put(41,27){$\scriptstyle \cdot$}
\put(46,27){-$\scriptstyle 1$}
\put(57,27){$\scriptstyle \cdot$}
\put(49,18){$\scriptstyle \cdot$}
\put(41,9){$\scriptstyle \cdot$}
\put(49,9){$\scriptstyle \cdot$}
\put(57,9){$\scriptstyle \cdot$}
\put(49,0){$\scriptstyle \cdot$}
}
\put(0,120)
{
\put(37,-2.5){\framebox(26,46)[c]}
\put(49,36){$\scriptstyle  \cdot$}
\put(41,27){$\scriptstyle \cdot$}
\put(49,27){$\scriptstyle \cdot$}
\put(57,27){$\scriptstyle \cdot$}
\put(48,18){$\scriptstyle 1$}
\put(41,9){$\scriptstyle \cdot$}
\put(49,9){$\scriptstyle \cdot$}
\put(57,9){$\scriptstyle \cdot$}
\put(49,0){$\scriptstyle \cdot$}
}
\put(-40,60)
{
\put(37,-2.5){\makebox(26,46)[c]}
\put(49,36){$\scriptstyle  \cdot$}
\put(41,27){$\scriptstyle \cdot$}
\put(49,27){$\scriptstyle \cdot$}
\put(57,27){$\scriptstyle \cdot$}
\put(49,18){$\scriptstyle \cdot$}
\put(41,9){$\scriptstyle \cdot$}
\put(46,9){-$\scriptstyle 1$}
\put(57,9){$\scriptstyle \cdot$}
\put(49,0){$\scriptstyle \cdot$}
}
\put(0,60)
{
\put(37,-2.5){\dashbox{3}(26,46)[c]}
\put(49,36){$\scriptstyle  \cdot$}
\put(41,27){$\scriptstyle \cdot$}
\put(49,27){$\scriptstyle \cdot$}
\put(57,27){$\scriptstyle \cdot$}
\put(49,18){$\scriptstyle \cdot$}
\put(41,9){$\scriptstyle \cdot$}
\put(48,9){$\scriptstyle 1$}
\put(56,9){$\scriptstyle 1$}
\put(49,0){$\scriptstyle \cdot$}
}
\put(40,60)
{
\put(37,-2.5){\framebox(26,46)[c]}
\put(49,36){$\scriptstyle  \cdot$}
\put(41,27){$\scriptstyle \cdot$}
\put(49,27){$\scriptstyle \cdot$}
\put(57,27){$\scriptstyle \cdot$}
\put(46,18){-$\scriptstyle 1$}
\put(38,9){-$\scriptstyle 1$}
\put(46,9){-$\scriptstyle 1$}
\put(54,9){-$\scriptstyle 1$}
\put(46,0){-$\scriptstyle 1$}
}
\put(0,0)
{
\put(37,-2.5){\framebox(26,46)[c]}
\put(49,36){$\scriptstyle  \cdot$}
\put(41,27){$\scriptstyle \cdot$}
\put(49,27){$\scriptstyle \cdot$}
\put(57,27){$\scriptstyle \cdot$}
\put(49,18){$\scriptstyle \cdot$}
\put(41,9){$\scriptstyle \cdot$}
\put(49,9){$\scriptstyle \cdot$}
\put(57,9){$\scriptstyle \cdot$}
\put(48,0){$\scriptstyle 1$}
}

%
\put(35,200){\vector(-1,0){10}}
\put(65,200){\vector(1,0){10}}
\put(35,80){\vector(-1,0){10}}
\put(65,80){\vector(1,0){10}}
\put(90,105){\vector(0,1){70}}
\put(10,175){\vector(0,-1){70}}
\put(50,235){\vector(0,-1){10}}
\put(50,165){\vector(0,1){10}}
\put(50,115){\vector(0,-1){10}}
\put(50,45){\vector(0,1){10}}
\put(75,225){\vector(-1,1){10}}
\put(75,175){\vector(-1,-1){10}}
\put(25,105){\vector(1,1){10}}
\put(25,55){\vector(1,-1){10}}
\put(40,-15){$y(-2)$}
}
%
%
\put(120,170)
{
\put(0,240)
{
\put(37,-2.5){\dashbox{3}(26,46)[c]}
\put(46,36){-$\scriptstyle  1$}
\put(41,27){$\scriptstyle \cdot$}
\put(49,27){$\scriptstyle \cdot$}
\put(57,27){$\scriptstyle \cdot$}
\put(49,18){$\scriptstyle \cdot$}
\put(41,9){$\scriptstyle \cdot$}
\put(49,9){$\scriptstyle \cdot$}
\put(57,9){$\scriptstyle \cdot$}
\put(49,0){$\scriptstyle \cdot$}
}
\put(-40,180)
{
\put(37,-2.5){\dashbox{3}(26,46)[c]}
\put(48,36){$\scriptstyle  1$}
\put(40,27){$\scriptstyle 1$}
\put(48,27){$\scriptstyle 1$}
\put(56,27){$\scriptstyle 1$}
\put(48,18){$\scriptstyle 1$}
\put(41,9){$\scriptstyle \cdot$}
\put(49,9){$\scriptstyle \cdot$}
\put(57,9){$\scriptstyle \cdot$}
\put(49,0){$\scriptstyle \cdot$}
}
\put(0,180)
{
\put(37,-2.5){\framebox(26,46)[c]}
\put(49,36){$\scriptstyle  \cdot$}
\put(41,27){$\scriptstyle \cdot$}
\put(49,27){$\scriptstyle \cdot$}
\put(54,27){-$\scriptstyle 1$}
\put(49,18){$\scriptstyle \cdot$}
\put(41,9){$\scriptstyle \cdot$}
\put(49,9){$\scriptstyle \cdot$}
\put(57,9){$\scriptstyle \cdot$}
\put(49,0){$\scriptstyle \cdot$}
}
\put(40,180)
{
\put(37,-2.5){\makebox(26,46)[c]}
\put(49,36){$\scriptstyle  \cdot$}
\put(41,27){$\scriptstyle \cdot$}
\put(46,27){-$\scriptstyle 1$}
\put(57,27){$\scriptstyle \cdot$}
\put(49,18){$\scriptstyle \cdot$}
\put(41,9){$\scriptstyle \cdot$}
\put(49,9){$\scriptstyle \cdot$}
\put(57,9){$\scriptstyle \cdot$}
\put(49,0){$\scriptstyle \cdot$}
}
\put(0,120)
{
\put(37,-2.5){\dashbox{3}(26,46)[c]}
\put(49,36){$\scriptstyle  \cdot$}
\put(41,27){$\scriptstyle \cdot$}
\put(49,27){$\scriptstyle \cdot$}
\put(57,27){$\scriptstyle \cdot$}
\put(46,18){-$\scriptstyle 1$}
\put(41,9){$\scriptstyle \cdot$}
\put(49,9){$\scriptstyle \cdot$}
\put(57,9){$\scriptstyle \cdot$}
\put(49,0){$\scriptstyle \cdot$}
}
\put(-40,60)
{
\put(37,-2.5){\makebox(26,46)[c]}
\put(49,36){$\scriptstyle  \cdot$}
\put(41,27){$\scriptstyle \cdot$}
\put(49,27){$\scriptstyle \cdot$}
\put(57,27){$\scriptstyle \cdot$}
\put(49,18){$\scriptstyle \cdot$}
\put(41,9){$\scriptstyle \cdot$}
\put(46,9){-$\scriptstyle 1$}
\put(57,9){$\scriptstyle \cdot$}
\put(49,0){$\scriptstyle \cdot$}
}
\put(0,60)
{
\put(37,-2.5){\framebox(26,46)[c]}
\put(49,36){$\scriptstyle  \cdot$}
\put(41,27){$\scriptstyle \cdot$}
\put(49,27){$\scriptstyle \cdot$}
\put(57,27){$\scriptstyle \cdot$}
\put(49,18){$\scriptstyle \cdot$}
\put(38,9){-$\scriptstyle 1$}
\put(49,9){$\scriptstyle \cdot$}
\put(57,9){$\scriptstyle \cdot$}
\put(49,0){$\scriptstyle \cdot$}
}
\put(40,60)
{
\put(37,-2.5){\dashbox{3}(26,46)[c]}
\put(49,36){$\scriptstyle  \cdot$}
\put(41,27){$\scriptstyle \cdot$}
\put(49,27){$\scriptstyle \cdot$}
\put(57,27){$\scriptstyle \cdot$}
\put(48,18){$\scriptstyle 1$}
\put(40,9){$\scriptstyle 1$}
\put(48,9){$\scriptstyle 1$}
\put(56,9){$\scriptstyle 1$}
\put(48,0){$\scriptstyle 1$}
}
\put(0,0)
{
\put(37,-2.5){\dashbox{3}(26,46)[c]}
\put(49,36){$\scriptstyle  \cdot$}
\put(41,27){$\scriptstyle \cdot$}
\put(49,27){$\scriptstyle \cdot$}
\put(57,27){$\scriptstyle \cdot$}
\put(49,18){$\scriptstyle \cdot$}
\put(41,9){$\scriptstyle \cdot$}
\put(49,9){$\scriptstyle \cdot$}
\put(57,9){$\scriptstyle \cdot$}
\put(46,0){-$\scriptstyle 1$}
}

%
\put(25,200){\vector(1,0){10}}
\put(75,200){\vector(-1,0){10}}
\put(25,80){\vector(1,0){10}}
\put(75,80){\vector(-1,0){10}}
\put(90,175){\vector(0,-1){70}}
\put(10,105){\vector(0,1){70}}
\put(50,225){\vector(0,1){10}}
\put(50,175){\vector(0,-1){10}}
\put(50,105){\vector(0,1){10}}
\put(50,55){\vector(0,-1){10}}
\put(65,235){\vector(1,-1){10}}
\put(65,165){\vector(1,1){10}}
\put(35,115){\vector(-1,-1){10}}
\put(35,45){\vector(-1,1){10}}
\put(40,-15){$y(-\frac{3}{2})$}
}
%
%
\put(240,170)
{
\put(0,240)
{
\put(37,-2.5){\framebox(26,46)[c]}
\put(46,36){-$\scriptstyle  1$}
\put(41,27){$\scriptstyle \cdot$}
\put(49,27){$\scriptstyle \cdot$}
\put(57,27){$\scriptstyle \cdot$}
\put(49,18){$\scriptstyle \cdot$}
\put(41,9){$\scriptstyle \cdot$}
\put(49,9){$\scriptstyle \cdot$}
\put(57,9){$\scriptstyle \cdot$}
\put(49,0){$\scriptstyle \cdot$}
}
\put(-40,180)
{
\put(37,-2.5){\makebox(26,46)[c]}
\put(48,36){$\scriptstyle  1$}
\put(40,27){$\scriptstyle 1$}
\put(48,27){$\scriptstyle 1$}
\put(57,27){$\scriptstyle \cdot$}
\put(48,18){$\scriptstyle 1$}
\put(41,9){$\scriptstyle \cdot$}
\put(49,9){$\scriptstyle \cdot$}
\put(57,9){$\scriptstyle \cdot$}
\put(49,0){$\scriptstyle \cdot$}
}
\put(0,180)
{
\put(37,-2.5){\dashbox{3}(26,46)[c]}
\put(49,36){$\scriptstyle  \cdot$}
\put(41,27){$\scriptstyle \cdot$}
\put(49,27){$\scriptstyle \cdot$}
\put(56,27){$\scriptstyle 1$}
\put(49,18){$\scriptstyle \cdot$}
\put(41,9){$\scriptstyle \cdot$}
\put(49,9){$\scriptstyle \cdot$}
\put(57,9){$\scriptstyle \cdot$}
\put(49,0){$\scriptstyle \cdot$}
}
\put(40,180)
{
\put(37,-2.5){\framebox(26,46)[c]}
\put(49,36){$\scriptstyle  \cdot$}
\put(41,27){$\scriptstyle \cdot$}
\put(46,27){-$\scriptstyle 1$}
\put(54,27){-$\scriptstyle 1$}
\put(49,18){$\scriptstyle \cdot$}
\put(41,9){$\scriptstyle \cdot$}
\put(49,9){$\scriptstyle \cdot$}
\put(57,9){$\scriptstyle \cdot$}
\put(49,0){$\scriptstyle \cdot$}
}
\put(0,120)
{
\put(37,-2.5){\framebox(26,46)[c]}
\put(49,36){$\scriptstyle  \cdot$}
\put(41,27){$\scriptstyle \cdot$}
\put(49,27){$\scriptstyle \cdot$}
\put(57,27){$\scriptstyle \cdot$}
\put(46,18){-$\scriptstyle 1$}
\put(41,9){$\scriptstyle \cdot$}
\put(49,9){$\scriptstyle \cdot$}
\put(57,9){$\scriptstyle \cdot$}
\put(49,0){$\scriptstyle \cdot$}
}
\put(-40,60)
{
\put(37,-2.5){\framebox(26,46)[c]}
\put(49,36){$\scriptstyle  \cdot$}
\put(41,27){$\scriptstyle \cdot$}
\put(49,27){$\scriptstyle \cdot$}
\put(57,27){$\scriptstyle \cdot$}
\put(49,18){$\scriptstyle \cdot$}
\put(40,9){-$\scriptstyle 1$}
\put(48,9){-$\scriptstyle 1$}
\put(57,9){$\scriptstyle \cdot$}
\put(49,0){$\scriptstyle \cdot$}
}
\put(0,60)
{
\put(37,-2.5){\dashbox{3}(26,46)[c]}
\put(49,36){$\scriptstyle  \cdot$}
\put(41,27){$\scriptstyle \cdot$}
\put(49,27){$\scriptstyle \cdot$}
\put(57,27){$\scriptstyle \cdot$}
\put(49,18){$\scriptstyle \cdot$}
\put(40,9){$\scriptstyle 1$}
\put(49,9){$\scriptstyle \cdot$}
\put(57,9){$\scriptstyle \cdot$}
\put(49,0){$\scriptstyle \cdot$}
}
\put(40,60)
{
\put(37,-2.5){\makebox(26,46)[c]}
\put(49,36){$\scriptstyle  \cdot$}
\put(41,27){$\scriptstyle \cdot$}
\put(49,27){$\scriptstyle \cdot$}
\put(57,27){$\scriptstyle \cdot$}
\put(48,18){$\scriptstyle 1$}
\put(41,9){$\scriptstyle \cdot$}
\put(48,9){$\scriptstyle 1$}
\put(56,9){$\scriptstyle 1$}
\put(48,0){$\scriptstyle 1$}
}
\put(0,0)
{
\put(37,-2.5){\framebox(26,46)[c]}
\put(49,36){$\scriptstyle  \cdot$}
\put(41,27){$\scriptstyle \cdot$}
\put(49,27){$\scriptstyle \cdot$}
\put(57,27){$\scriptstyle \cdot$}
\put(49,18){$\scriptstyle \cdot$}
\put(41,9){$\scriptstyle \cdot$}
\put(49,9){$\scriptstyle \cdot$}
\put(57,9){$\scriptstyle \cdot$}
\put(46,0){-$\scriptstyle 1$}
}

%
\put(35,200){\vector(-1,0){10}}
\put(65,200){\vector(1,0){10}}
\put(35,80){\vector(-1,0){10}}
\put(65,80){\vector(1,0){10}}
\put(90,175){\vector(0,-1){70}}
\put(10,105){\vector(0,1){70}}
\put(50,235){\vector(0,-1){10}}
\put(50,165){\vector(0,1){10}}
\put(50,115){\vector(0,-1){10}}
\put(50,45){\vector(0,1){10}}
\put(25,225){\vector(1,1){10}}
\put(25,175){\vector(1,-1){10}}
\put(75,105){\vector(-1,1){10}}
\put(75,55){\vector(-1,-1){10}}
\put(40,-15){$y(-1)$}
}
%
%
\put(60,-80)
{
\put(0,240)
{
\put(37,-2.5){\dashbox{3}(26,46)[c]}
\put(48,36){$\scriptstyle  1$}
\put(41,27){$\scriptstyle \cdot$}
\put(49,27){$\scriptstyle \cdot$}
\put(57,27){$\scriptstyle \cdot$}
\put(49,18){$\scriptstyle \cdot$}
\put(41,9){$\scriptstyle \cdot$}
\put(49,9){$\scriptstyle \cdot$}
\put(57,9){$\scriptstyle \cdot$}
\put(49,0){$\scriptstyle \cdot$}
}
\put(-40,180)
{
\put(37,-2.5){\makebox(26,46)[c]}
\put(49,36){$\scriptstyle  \cdot$}
\put(40,27){$\scriptstyle 1$}
\put(48,27){$\scriptstyle 1$}
\put(57,27){$\scriptstyle \cdot$}
\put(49,18){$\scriptstyle \cdot$}
\put(41,9){$\scriptstyle \cdot$}
\put(49,9){$\scriptstyle \cdot$}
\put(57,9){$\scriptstyle \cdot$}
\put(49,0){$\scriptstyle \cdot$}
}
\put(0,180)
{
\put(37,-2.5){\framebox(26,46)[c]}
\put(49,36){$\scriptstyle  \cdot$}
\put(41,27){$\scriptstyle \cdot$}
\put(46,27){-$\scriptstyle 1$}
\put(57,27){$\scriptstyle \cdot$}
\put(49,18){$\scriptstyle \cdot$}
\put(41,9){$\scriptstyle \cdot$}
\put(49,9){$\scriptstyle \cdot$}
\put(57,9){$\scriptstyle \cdot$}
\put(49,0){$\scriptstyle \cdot$}
}
\put(40,180)
{
\put(37,-2.5){\dashbox{3}(26,46)[c]}
\put(49,36){$\scriptstyle  \cdot$}
\put(41,27){$\scriptstyle \cdot$}
\put(48,27){$\scriptstyle 1$}
\put(56,27){$\scriptstyle 1$}
\put(49,18){$\scriptstyle \cdot$}
\put(41,9){$\scriptstyle \cdot$}
\put(49,9){$\scriptstyle \cdot$}
\put(57,9){$\scriptstyle \cdot$}
\put(49,0){$\scriptstyle \cdot$}
}
\put(0,120)
{
\put(37,-2.5){\dashbox{3}(26,46)[c]}
\put(49,36){$\scriptstyle  \cdot$}
\put(41,27){$\scriptstyle \cdot$}
\put(49,27){$\scriptstyle \cdot$}
\put(57,27){$\scriptstyle \cdot$}
\put(48,18){$\scriptstyle 1$}
\put(41,9){$\scriptstyle \cdot$}
\put(49,9){$\scriptstyle \cdot$}
\put(57,9){$\scriptstyle \cdot$}
\put(49,0){$\scriptstyle \cdot$}
}
\put(-40,60)
{
\put(37,-2.5){\dashbox{3}(26,46)[c]}
\put(49,36){$\scriptstyle  \cdot$}
\put(41,27){$\scriptstyle \cdot$}
\put(49,27){$\scriptstyle \cdot$}
\put(57,27){$\scriptstyle \cdot$}
\put(49,18){$\scriptstyle \cdot$}
\put(40,9){$\scriptstyle 1$}
\put(48,9){$\scriptstyle 1$}
\put(57,9){$\scriptstyle \cdot$}
\put(49,0){$\scriptstyle \cdot$}
}
\put(0,60)
{
\put(37,-2.5){\framebox(26,46)[c]}
\put(49,36){$\scriptstyle  \cdot$}
\put(41,27){$\scriptstyle \cdot$}
\put(49,27){$\scriptstyle \cdot$}
\put(57,27){$\scriptstyle \cdot$}
\put(49,18){$\scriptstyle \cdot$}
\put(41,9){$\scriptstyle \cdot$}
\put(46,9){-$\scriptstyle 1$}
\put(57,9){$\scriptstyle \cdot$}
\put(49,0){$\scriptstyle \cdot$}
}
\put(40,60)
{
\put(37,-2.5){\makebox(26,46)[c]}
\put(49,36){$\scriptstyle  \cdot$}
\put(41,27){$\scriptstyle \cdot$}
\put(49,27){$\scriptstyle \cdot$}
\put(57,27){$\scriptstyle \cdot$}
\put(49,18){$\scriptstyle \cdot$}
\put(41,9){$\scriptstyle \cdot$}
\put(48,9){$\scriptstyle 1$}
\put(56,9){$\scriptstyle 1$}
\put(49,0){$\scriptstyle \cdot$}
}
\put(0,0)
{
\put(37,-2.5){\dashbox{3}(26,46)[c]}
\put(49,36){$\scriptstyle  \cdot$}
\put(41,27){$\scriptstyle \cdot$}
\put(49,27){$\scriptstyle \cdot$}
\put(57,27){$\scriptstyle \cdot$}
\put(49,18){$\scriptstyle \cdot$}
\put(41,9){$\scriptstyle \cdot$}
\put(49,9){$\scriptstyle \cdot$}
\put(57,9){$\scriptstyle \cdot$}
\put(48,0){$\scriptstyle 1$}
}

%
\put(25,200){\vector(1,0){10}}
\put(75,200){\vector(-1,0){10}}
\put(25,80){\vector(1,0){10}}
\put(75,80){\vector(-1,0){10}}
\put(90,105){\vector(0,1){70}}
\put(10,175){\vector(0,-1){70}}
\put(50,225){\vector(0,1){10}}
\put(50,175){\vector(0,-1){10}}
\put(50,105){\vector(0,1){10}}
\put(50,55){\vector(0,-1){10}}
\put(35,245){\vector(-1,-1){10}}
\put(35,165){\vector(-1,1){10}}
\put(65,115){\vector(1,-1){10}}
\put(65,45){\vector(1,1){10}}
\put(40,-15){$y(-\frac{1}{2})$}
}
%
%
\put(180,-80)
{
\put(0,240)
{
\put(37,-2.5){\framebox(26,46)[c]}
\put(48,36){$\scriptstyle  1$}
\put(41,27){$\scriptstyle \cdot$}
\put(49,27){$\scriptstyle \cdot$}
\put(57,27){$\scriptstyle \cdot$}
\put(49,18){$\scriptstyle \cdot$}
\put(41,9){$\scriptstyle \cdot$}
\put(49,9){$\scriptstyle \cdot$}
\put(57,9){$\scriptstyle \cdot$}
\put(49,0){$\scriptstyle \cdot$}
}
\put(-40,180)
{
\put(37,-2.5){\framebox(26,46)[c]}
\put(49,36){$\scriptstyle  \cdot$}
\put(40,27){$\scriptstyle 1$}
\put(49,27){$\scriptstyle \cdot$}
\put(57,27){$\scriptstyle \cdot$}
\put(49,18){$\scriptstyle \cdot$}
\put(41,9){$\scriptstyle \cdot$}
\put(49,9){$\scriptstyle \cdot$}
\put(57,9){$\scriptstyle \cdot$}
\put(49,0){$\scriptstyle \cdot$}
}
\put(0,180)
{
\put(37,-2.5){\dashbox{3}(26,46)[c]}
\put(49,36){$\scriptstyle  \cdot$}
\put(41,27){$\scriptstyle \cdot$}
\put(48,27){$\scriptstyle 1$}
\put(57,27){$\scriptstyle \cdot$}
\put(49,18){$\scriptstyle \cdot$}
\put(41,9){$\scriptstyle \cdot$}
\put(49,9){$\scriptstyle \cdot$}
\put(57,9){$\scriptstyle \cdot$}
\put(49,0){$\scriptstyle \cdot$}
}
\put(40,180)
{
\put(37,-2.5){\makebox(26,46)[c]}
\put(49,36){$\scriptstyle  \cdot$}
\put(41,27){$\scriptstyle \cdot$}
\put(49,27){$\scriptstyle \cdot$}
\put(56,27){$\scriptstyle 1$}
\put(49,18){$\scriptstyle \cdot$}
\put(41,9){$\scriptstyle \cdot$}
\put(49,9){$\scriptstyle \cdot$}
\put(57,9){$\scriptstyle \cdot$}
\put(49,0){$\scriptstyle \cdot$}
}
\put(0,120)
{
\put(37,-2.5){\framebox(26,46)[c]}
\put(49,36){$\scriptstyle  \cdot$}
\put(41,27){$\scriptstyle \cdot$}
\put(49,27){$\scriptstyle \cdot$}
\put(57,27){$\scriptstyle \cdot$}
\put(48,18){$\scriptstyle 1$}
\put(41,9){$\scriptstyle \cdot$}
\put(49,9){$\scriptstyle \cdot$}
\put(57,9){$\scriptstyle \cdot$}
\put(49,0){$\scriptstyle \cdot$}
}
\put(-40,60)
{
\put(37,-2.5){\makebox(26,46)[c]}
\put(49,36){$\scriptstyle  \cdot$}
\put(41,27){$\scriptstyle \cdot$}
\put(49,27){$\scriptstyle \cdot$}
\put(57,27){$\scriptstyle \cdot$}
\put(49,18){$\scriptstyle \cdot$}
\put(40,9){$\scriptstyle 1$}
\put(49,9){$\scriptstyle \cdot$}
\put(57,9){$\scriptstyle \cdot$}
\put(49,0){$\scriptstyle \cdot$}
}
\put(0,60)
{
\put(37,-2.5){\dashbox{3}(26,46)[c]}
\put(49,36){$\scriptstyle  \cdot$}
\put(41,27){$\scriptstyle \cdot$}
\put(49,27){$\scriptstyle \cdot$}
\put(57,27){$\scriptstyle \cdot$}
\put(49,18){$\scriptstyle \cdot$}
\put(41,9){$\scriptstyle \cdot$}
\put(48,9){$\scriptstyle 1$}
\put(57,9){$\scriptstyle \cdot$}
\put(49,0){$\scriptstyle \cdot$}
}
\put(40,60)
{
\put(37,-2.5){\framebox(26,46)[c]}
\put(49,36){$\scriptstyle  \cdot$}
\put(41,27){$\scriptstyle \cdot$}
\put(49,27){$\scriptstyle \cdot$}
\put(57,27){$\scriptstyle \cdot$}
\put(49,18){$\scriptstyle \cdot$}
\put(41,9){$\scriptstyle \cdot$}
\put(49,9){$\scriptstyle \cdot$}
\put(56,9){$\scriptstyle 1$}
\put(49,0){$\scriptstyle \cdot$}
}
\put(0,0)
{
\put(37,-2.5){\framebox(26,46)[c]}
\put(49,36){$\scriptstyle  \cdot$}
\put(41,27){$\scriptstyle \cdot$}
\put(49,27){$\scriptstyle \cdot$}
\put(57,27){$\scriptstyle \cdot$}
\put(49,18){$\scriptstyle \cdot$}
\put(41,9){$\scriptstyle \cdot$}
\put(49,9){$\scriptstyle \cdot$}
\put(57,9){$\scriptstyle \cdot$}
\put(48,0){$\scriptstyle 1$}
}

%
\put(35,200){\vector(-1,0){10}}
\put(65,200){\vector(1,0){10}}
\put(35,80){\vector(-1,0){10}}
\put(65,80){\vector(1,0){10}}
\put(90,105){\vector(0,1){70}}
\put(10,175){\vector(0,-1){70}}
\put(50,235){\vector(0,-1){10}}
\put(50,165){\vector(0,1){10}}
\put(50,115){\vector(0,-1){10}}
\put(50,45){\vector(0,1){10}}
\put(75,225){\vector(-1,1){10}}
\put(75,175){\vector(-1,-1){10}}
\put(25,105){\vector(1,1){10}}
\put(25,55){\vector(1,-1){10}}
\put(40,-15){$y(0)$}
}
\end{picture}
\caption{Tropical Y-system of type $B_2$ at level 3
in the region $-2 \leq u \leq 0$.
}
\label{fig:Blev3}
\end{figure}


\begin{figure}
\begin{picture}(355,550)(-15,-95)
%
\put(0,170)
{
\put(0,240)
{
\put(37,-2.5){\framebox(26,46)[c]}
\put(48,36){$\scriptstyle  1$}
\put(41,27){$\scriptstyle \cdot$}
\put(49,27){$\scriptstyle \cdot$}
\put(57,27){$\scriptstyle \cdot$}
\put(49,18){$\scriptstyle \cdot$}
\put(41,9){$\scriptstyle \cdot$}
\put(49,9){$\scriptstyle \cdot$}
\put(57,9){$\scriptstyle \cdot$}
\put(49,0){$\scriptstyle \cdot$}
}
\put(-40,180)
{
\put(37,-2.5){\framebox(26,46)[c]}
\put(49,36){$\scriptstyle  \cdot$}
\put(40,27){$\scriptstyle 1$}
\put(49,27){$\scriptstyle \cdot$}
\put(57,27){$\scriptstyle \cdot$}
\put(49,18){$\scriptstyle \cdot$}
\put(41,9){$\scriptstyle \cdot$}
\put(49,9){$\scriptstyle \cdot$}
\put(57,9){$\scriptstyle \cdot$}
\put(49,0){$\scriptstyle \cdot$}
}
\put(0,180)
{
\put(37,-2.5){\dashbox{3}(26,46)[c]}
\put(49,36){$\scriptstyle  \cdot$}
\put(41,27){$\scriptstyle \cdot$}
\put(48,27){$\scriptstyle 1$}
\put(57,27){$\scriptstyle \cdot$}
\put(49,18){$\scriptstyle \cdot$}
\put(41,9){$\scriptstyle \cdot$}
\put(49,9){$\scriptstyle \cdot$}
\put(57,9){$\scriptstyle \cdot$}
\put(49,0){$\scriptstyle \cdot$}
}
\put(40,180)
{
\put(37,-2.5){\makebox(26,46)[c]}
\put(49,36){$\scriptstyle  \cdot$}
\put(41,27){$\scriptstyle \cdot$}
\put(49,27){$\scriptstyle \cdot$}
\put(56,27){$\scriptstyle 1$}
\put(49,18){$\scriptstyle \cdot$}
\put(41,9){$\scriptstyle \cdot$}
\put(49,9){$\scriptstyle \cdot$}
\put(57,9){$\scriptstyle \cdot$}
\put(49,0){$\scriptstyle \cdot$}
}
\put(0,120)
{
\put(37,-2.5){\framebox(26,46)[c]}
\put(49,36){$\scriptstyle  \cdot$}
\put(41,27){$\scriptstyle \cdot$}
\put(49,27){$\scriptstyle \cdot$}
\put(57,27){$\scriptstyle \cdot$}
\put(48,18){$\scriptstyle 1$}
\put(41,9){$\scriptstyle \cdot$}
\put(49,9){$\scriptstyle \cdot$}
\put(57,9){$\scriptstyle \cdot$}
\put(49,0){$\scriptstyle \cdot$}
}
\put(-40,60)
{
\put(37,-2.5){\makebox(26,46)[c]}
\put(49,36){$\scriptstyle  \cdot$}
\put(41,27){$\scriptstyle \cdot$}
\put(49,27){$\scriptstyle \cdot$}
\put(57,27){$\scriptstyle \cdot$}
\put(49,18){$\scriptstyle \cdot$}
\put(40,9){$\scriptstyle 1$}
\put(49,9){$\scriptstyle \cdot$}
\put(57,9){$\scriptstyle \cdot$}
\put(49,0){$\scriptstyle \cdot$}
}
\put(0,60)
{
\put(37,-2.5){\dashbox{3}(26,46)[c]}
\put(49,36){$\scriptstyle  \cdot$}
\put(41,27){$\scriptstyle \cdot$}
\put(49,27){$\scriptstyle \cdot$}
\put(57,27){$\scriptstyle \cdot$}
\put(49,18){$\scriptstyle \cdot$}
\put(41,9){$\scriptstyle \cdot$}
\put(48,9){$\scriptstyle 1$}
\put(57,9){$\scriptstyle \cdot$}
\put(49,0){$\scriptstyle \cdot$}
}
\put(40,60)
{
\put(37,-2.5){\framebox(26,46)[c]}
\put(49,36){$\scriptstyle  \cdot$}
\put(41,27){$\scriptstyle \cdot$}
\put(49,27){$\scriptstyle \cdot$}
\put(57,27){$\scriptstyle \cdot$}
\put(49,18){$\scriptstyle \cdot$}
\put(41,9){$\scriptstyle \cdot$}
\put(49,9){$\scriptstyle \cdot$}
\put(56,9){$\scriptstyle 1$}
\put(49,0){$\scriptstyle \cdot$}
}
\put(0,0)
{
\put(37,-2.5){\framebox(26,46)[c]}
\put(49,36){$\scriptstyle  \cdot$}
\put(41,27){$\scriptstyle \cdot$}
\put(49,27){$\scriptstyle \cdot$}
\put(57,27){$\scriptstyle \cdot$}
\put(49,18){$\scriptstyle \cdot$}
\put(41,9){$\scriptstyle \cdot$}
\put(49,9){$\scriptstyle \cdot$}
\put(57,9){$\scriptstyle \cdot$}
\put(48,0){$\scriptstyle 1$}
}

%
\put(35,200){\vector(-1,0){10}}
\put(65,200){\vector(1,0){10}}
\put(35,80){\vector(-1,0){10}}
\put(65,80){\vector(1,0){10}}
\put(90,105){\vector(0,1){70}}
\put(10,175){\vector(0,-1){70}}
\put(50,235){\vector(0,-1){10}}
\put(50,165){\vector(0,1){10}}
\put(50,115){\vector(0,-1){10}}
\put(50,45){\vector(0,1){10}}
\put(75,225){\vector(-1,1){10}}
\put(75,175){\vector(-1,-1){10}}
\put(25,105){\vector(1,1){10}}
\put(25,55){\vector(1,-1){10}}
\put(40,-15){$y(0)$}
}
%
%
\put(120,170)
{
\put(0,240)
{
\put(37,-2.5){\dashbox{3}(26,46)[c]}
\put(46,36){-$\scriptstyle  1$}
\put(41,27){$\scriptstyle \cdot$}
\put(49,27){$\scriptstyle \cdot$}
\put(57,27){$\scriptstyle \cdot$}
\put(49,18){$\scriptstyle \cdot$}
\put(41,9){$\scriptstyle \cdot$}
\put(49,9){$\scriptstyle \cdot$}
\put(57,9){$\scriptstyle \cdot$}
\put(49,0){$\scriptstyle \cdot$}
}
\put(-40,180)
{
\put(37,-2.5){\dashbox{3}(26,46)[c]}
\put(49,36){$\scriptstyle  \cdot$}
\put(38,27){-$\scriptstyle 1$}
\put(49,27){$\scriptstyle \cdot$}
\put(57,27){$\scriptstyle \cdot$}
\put(49,18){$\scriptstyle \cdot$}
\put(41,9){$\scriptstyle \cdot$}
\put(49,9){$\scriptstyle \cdot$}
\put(57,9){$\scriptstyle \cdot$}
\put(49,0){$\scriptstyle \cdot$}
}
\put(0,180)
{
\put(37,-2.5){\framebox(26,46)[c]}
\put(48,36){$\scriptstyle  1$}
\put(41,27){$\scriptstyle \cdot$}
\put(48,27){$\scriptstyle 1$}
\put(57,27){$\scriptstyle \cdot$}
\put(48,18){$\scriptstyle 1$}
\put(41,9){$\scriptstyle \cdot$}
\put(49,9){$\scriptstyle \cdot$}
\put(57,9){$\scriptstyle \cdot$}
\put(49,0){$\scriptstyle \cdot$}
}
\put(40,180)
{
\put(37,-2.5){\makebox(26,46)[c]}
\put(49,36){$\scriptstyle  \cdot$}
\put(41,27){$\scriptstyle \cdot$}
\put(49,27){$\scriptstyle \cdot$}
\put(56,27){$\scriptstyle 1$}
\put(49,18){$\scriptstyle \cdot$}
\put(41,9){$\scriptstyle \cdot$}
\put(49,9){$\scriptstyle \cdot$}
\put(56,9){$\scriptstyle 1$}
\put(49,0){$\scriptstyle \cdot$}
}
\put(0,120)
{
\put(37,-2.5){\dashbox{3}(26,46)[c]}
\put(49,36){$\scriptstyle  \cdot$}
\put(41,27){$\scriptstyle \cdot$}
\put(49,27){$\scriptstyle \cdot$}
\put(57,27){$\scriptstyle \cdot$}
\put(46,18){-$\scriptstyle 1$}
\put(41,9){$\scriptstyle \cdot$}
\put(49,9){$\scriptstyle \cdot$}
\put(57,9){$\scriptstyle \cdot$}
\put(49,0){$\scriptstyle \cdot$}
}
\put(-40,60)
{
\put(37,-2.5){\makebox(26,46)[c]}
\put(49,36){$\scriptstyle  \cdot$}
\put(40,27){$\scriptstyle 1$}
\put(49,27){$\scriptstyle \cdot$}
\put(57,27){$\scriptstyle \cdot$}
\put(49,18){$\scriptstyle \cdot$}
\put(40,9){$\scriptstyle 1$}
\put(49,9){$\scriptstyle \cdot$}
\put(57,9){$\scriptstyle \cdot$}
\put(49,0){$\scriptstyle \cdot$}
}
\put(0,60)
{
\put(37,-2.5){\framebox(26,46)[c]}
\put(49,36){$\scriptstyle  \cdot$}
\put(41,27){$\scriptstyle \cdot$}
\put(49,27){$\scriptstyle \cdot$}
\put(57,27){$\scriptstyle \cdot$}
\put(48,18){$\scriptstyle 1$}
\put(41,9){$\scriptstyle \cdot$}
\put(48,9){$\scriptstyle 1$}
\put(57,9){$\scriptstyle \cdot$}
\put(48,0){$\scriptstyle 1$}
}
\put(40,60)
{
\put(37,-2.5){\dashbox{3}(26,46)[c]}
\put(49,36){$\scriptstyle  \cdot$}
\put(41,27){$\scriptstyle \cdot$}
\put(49,27){$\scriptstyle \cdot$}
\put(57,27){$\scriptstyle \cdot$}
\put(49,18){$\scriptstyle \cdot$}
\put(41,9){$\scriptstyle \cdot$}
\put(49,9){$\scriptstyle \cdot$}
\put(54,9){-$\scriptstyle 1$}
\put(49,0){$\scriptstyle \cdot$}
}
\put(0,0)
{
\put(37,-2.5){\dashbox{3}(26,46)[c]}
\put(49,36){$\scriptstyle  \cdot$}
\put(41,27){$\scriptstyle \cdot$}
\put(49,27){$\scriptstyle \cdot$}
\put(57,27){$\scriptstyle \cdot$}
\put(49,18){$\scriptstyle \cdot$}
\put(41,9){$\scriptstyle \cdot$}
\put(49,9){$\scriptstyle \cdot$}
\put(57,9){$\scriptstyle \cdot$}
\put(46,0){-$\scriptstyle 1$}
}

%
\put(25,200){\vector(1,0){10}}
\put(75,200){\vector(-1,0){10}}
\put(25,80){\vector(1,0){10}}
\put(75,80){\vector(-1,0){10}}
\put(90,175){\vector(0,-1){70}}
\put(10,105){\vector(0,1){70}}
\put(50,225){\vector(0,1){10}}
\put(50,175){\vector(0,-1){10}}
\put(50,105){\vector(0,1){10}}
\put(50,55){\vector(0,-1){10}}
\put(65,235){\vector(1,-1){10}}
\put(65,165){\vector(1,1){10}}
\put(35,115){\vector(-1,-1){10}}
\put(35,45){\vector(-1,1){10}}
\put(40,-15){$y(\frac{1}{2})$}
}
%
%
\put(240,170)
{
\put(0,240)
{
\put(37,-2.5){\framebox(26,46)[c]}
\put(49,36){$\scriptstyle \cdot$}
\put(41,27){$\scriptstyle \cdot$}
\put(48,27){$\scriptstyle 1$}
\put(57,27){$\scriptstyle \cdot$}
\put(48,18){$\scriptstyle 1$}
\put(41,9){$\scriptstyle \cdot$}
\put(49,9){$\scriptstyle \cdot$}
\put(57,9){$\scriptstyle \cdot$}
\put(49,0){$\scriptstyle \cdot$}
}
\put(-40,180)
{
\put(37,-2.5){\makebox(26,46)[c]}
\put(49,36){$\scriptstyle  \cdot$}
\put(38,27){-$\scriptstyle 1$}
\put(49,27){$\scriptstyle \cdot$}
\put(57,27){$\scriptstyle \cdot$}
\put(49,18){$\scriptstyle \cdot$}
\put(41,9){$\scriptstyle \cdot$}
\put(49,9){$\scriptstyle \cdot$}
\put(57,9){$\scriptstyle \cdot$}
\put(49,0){$\scriptstyle \cdot$}
}
\put(0,180)
{
\put(37,-2.5){\dashbox{3}(26,46)[c]}
\put(46,36){-$\scriptstyle  1$}
\put(41,27){$\scriptstyle \cdot$}
\put(46,27){-$\scriptstyle 1$}
\put(57,27){$\scriptstyle \cdot$}
\put(46,18){-$\scriptstyle 1$}
\put(41,9){$\scriptstyle \cdot$}
\put(49,9){$\scriptstyle \cdot$}
\put(57,9){$\scriptstyle \cdot$}
\put(49,0){$\scriptstyle \cdot$}
}
\put(40,180)
{
\put(37,-2.5){\framebox(26,46)[c]}
\put(49,36){$\scriptstyle  \cdot$}
\put(41,27){$\scriptstyle \cdot$}
\put(49,27){$\scriptstyle \cdot$}
\put(56,27){$\scriptstyle 1$}
\put(49,18){$\scriptstyle \cdot$}
\put(41,9){$\scriptstyle \cdot$}
\put(49,9){$\scriptstyle \cdot$}
\put(56,9){$\scriptstyle 1$}
\put(49,0){$\scriptstyle \cdot$}
}
\put(0,120)
{
\put(37,-2.5){\framebox(26,46)[c]}
\put(48,36){$\scriptstyle  1$}
\put(41,27){$\scriptstyle \cdot$}
\put(48,27){$\scriptstyle 1$}
\put(57,27){$\scriptstyle \cdot$}
\put(48,18){$\scriptstyle 1$}
\put(41,9){$\scriptstyle \cdot$}
\put(48,9){$\scriptstyle 1$}
\put(57,9){$\scriptstyle \cdot$}
\put(48,0){$\scriptstyle 1$}
}
\put(-40,60)
{
\put(37,-2.5){\framebox(26,46)[c]}
\put(49,36){$\scriptstyle  \cdot$}
\put(40,27){$\scriptstyle 1$}
\put(49,27){$\scriptstyle \cdot$}
\put(57,27){$\scriptstyle \cdot$}
\put(49,18){$\scriptstyle \cdot$}
\put(40,9){$\scriptstyle 1$}
\put(49,9){$\scriptstyle \cdot$}
\put(57,9){$\scriptstyle \cdot$}
\put(49,0){$\scriptstyle \cdot$}
}
\put(0,60)
{
\put(37,-2.5){\dashbox{3}(26,46)[c]}
\put(49,36){$\scriptstyle  \cdot$}
\put(41,27){$\scriptstyle \cdot$}
\put(49,27){$\scriptstyle \cdot$}
\put(57,27){$\scriptstyle \cdot$}
\put(46,18){-$\scriptstyle 1$}
\put(41,9){$\scriptstyle \cdot$}
\put(46,9){-$\scriptstyle 1$}
\put(57,9){$\scriptstyle \cdot$}
\put(46,0){-$\scriptstyle 1$}
}
\put(40,60)
{
\put(37,-2.5){\makebox(26,46)[c]}
\put(49,36){$\scriptstyle  \cdot$}
\put(41,27){$\scriptstyle \cdot$}
\put(49,27){$\scriptstyle \cdot$}
\put(57,27){$\scriptstyle \cdot$}
\put(49,18){$\scriptstyle \cdot$}
\put(41,9){$\scriptstyle \cdot$}
\put(49,9){$\scriptstyle \cdot$}
\put(54,9){-$\scriptstyle 1$}
\put(49,0){$\scriptstyle \cdot$}
}
\put(0,0)
{
\put(37,-2.5){\framebox(26,46)[c]}
\put(49,36){$\scriptstyle  \cdot$}
\put(41,27){$\scriptstyle \cdot$}
\put(49,27){$\scriptstyle \cdot$}
\put(57,27){$\scriptstyle \cdot$}
\put(48,18){$\scriptstyle 1$}
\put(41,9){$\scriptstyle \cdot$}
\put(48,9){$\scriptstyle 1$}
\put(57,9){$\scriptstyle \cdot$}
\put(49,0){$\scriptstyle \cdot$}
}

%
\put(35,200){\vector(-1,0){10}}
\put(65,200){\vector(1,0){10}}
\put(35,80){\vector(-1,0){10}}
\put(65,80){\vector(1,0){10}}
\put(90,175){\vector(0,-1){70}}
\put(10,105){\vector(0,1){70}}
\put(50,235){\vector(0,-1){10}}
\put(50,165){\vector(0,1){10}}
\put(50,115){\vector(0,-1){10}}
\put(50,45){\vector(0,1){10}}
\put(25,225){\vector(1,1){10}}
\put(25,175){\vector(1,-1){10}}
\put(75,105){\vector(-1,1){10}}
\put(75,55){\vector(-1,-1){10}}
\put(40,-15){$y(1)$}
}
%
%
\put(60,-80)
{
\put(0,240)
{
\put(37,-2.5){\dashbox{3}(26,46)[c]}
\put(49,36){$\scriptstyle  \cdot$}
\put(41,27){$\scriptstyle \cdot$}
\put(46,27){-$\scriptstyle 1$}
\put(57,27){$\scriptstyle \cdot$}
\put(46,18){-$\scriptstyle 1$}
\put(41,9){$\scriptstyle \cdot$}
\put(49,9){$\scriptstyle \cdot$}
\put(57,9){$\scriptstyle \cdot$}
\put(49,0){$\scriptstyle \cdot$}
}
\put(-40,180)
{
\put(37,-2.5){\makebox(26,46)[c]}
\put(49,36){$\scriptstyle  \cdot$}
\put(41,27){$\scriptstyle \cdot$}
\put(49,27){$\scriptstyle \cdot$}
\put(57,27){$\scriptstyle \cdot$}
\put(49,18){$\scriptstyle \cdot$}
\put(40,9){$\scriptstyle 1$}
\put(49,9){$\scriptstyle \cdot$}
\put(57,9){$\scriptstyle \cdot$}
\put(49,0){$\scriptstyle \cdot$}
}
\put(0,180)
{
\put(37,-2.5){\framebox(26,46)[c]}
\put(49,36){$\scriptstyle  \cdot$}
\put(41,27){$\scriptstyle \cdot$}
\put(48,27){$\scriptstyle 1$}
\put(57,27){$\scriptstyle \cdot$}
\put(48,18){$\scriptstyle 1$}
\put(41,9){$\scriptstyle \cdot$}
\put(48,9){$\scriptstyle 1$}
\put(57,9){$\scriptstyle \cdot$}
\put(48,0){$\scriptstyle 1$}
}
\put(40,180)
{
\put(37,-2.5){\dashbox{3}(26,46)[c]}
\put(49,36){$\scriptstyle  \cdot$}
\put(41,27){$\scriptstyle \cdot$}
\put(49,27){$\scriptstyle \cdot$}
\put(54,27){-$\scriptstyle 1$}
\put(49,18){$\scriptstyle \cdot$}
\put(41,9){$\scriptstyle \cdot$}
\put(49,9){$\scriptstyle \cdot$}
\put(54,9){-$\scriptstyle 1$}
\put(49,0){$\scriptstyle \cdot$}
}
\put(0,120)
{
\put(37,-2.5){\dashbox{3}(26,46)[c]}
\put(46,36){-$\scriptstyle  1$}
\put(41,27){$\scriptstyle \cdot$}
\put(46,27){-$\scriptstyle 1$}
\put(57,27){$\scriptstyle \cdot$}
\put(46,18){-$\scriptstyle 1$}
\put(41,9){$\scriptstyle \cdot$}
\put(46,9){-$\scriptstyle 1$}
\put(57,9){$\scriptstyle \cdot$}
\put(46,0){-$\scriptstyle 1$}
}
\put(-40,60)
{
\put(37,-2.5){\dashbox{3}(26,46)[c]}
\put(49,36){$\scriptstyle  \cdot$}
\put(38,27){-$\scriptstyle 1$}
\put(49,27){$\scriptstyle \cdot$}
\put(57,27){$\scriptstyle \cdot$}
\put(49,18){$\scriptstyle \cdot$}
\put(38,9){-$\scriptstyle 1$}
\put(49,9){$\scriptstyle \cdot$}
\put(57,9){$\scriptstyle \cdot$}
\put(49,0){$\scriptstyle \cdot$}
}
\put(0,60)
{
\put(37,-2.5){\framebox(26,46)[c]}
\put(48,36){$\scriptstyle  1$}
\put(41,27){$\scriptstyle \cdot$}
\put(48,27){$\scriptstyle 1$}
\put(57,27){$\scriptstyle \cdot$}
\put(48,18){$\scriptstyle 1$}
\put(41,9){$\scriptstyle \cdot$}
\put(48,9){$\scriptstyle 1$}
\put(57,9){$\scriptstyle \cdot$}
\put(49,0){$\scriptstyle \cdot$}
}
\put(40,60)
{
\put(37,-2.5){\makebox(26,46)[c]}
\put(49,36){$\scriptstyle  \cdot$}
\put(41,27){$\scriptstyle \cdot$}
\put(49,27){$\scriptstyle \cdot$}
\put(56,27){$\scriptstyle 1$}
\put(49,18){$\scriptstyle \cdot$}
\put(41,9){$\scriptstyle \cdot$}
\put(49,9){$\scriptstyle \cdot$}
\put(57,9){$\scriptstyle \cdot$}
\put(49,0){$\scriptstyle \cdot$}
}
\put(0,0)
{
\put(37,-2.5){\dashbox{3}(26,46)[c]}
\put(49,36){$\scriptstyle  \cdot$}
\put(41,27){$\scriptstyle \cdot$}
\put(49,27){$\scriptstyle \cdot$}
\put(57,27){$\scriptstyle \cdot$}
\put(46,18){-$\scriptstyle 1$}
\put(41,9){$\scriptstyle \cdot$}
\put(46,9){-$\scriptstyle 1$}
\put(57,9){$\scriptstyle \cdot$}
\put(49,0){$\scriptstyle \cdot$}
}

%
\put(25,200){\vector(1,0){10}}
\put(75,200){\vector(-1,0){10}}
\put(25,80){\vector(1,0){10}}
\put(75,80){\vector(-1,0){10}}
\put(90,105){\vector(0,1){70}}
\put(10,175){\vector(0,-1){70}}
\put(50,225){\vector(0,1){10}}
\put(50,175){\vector(0,-1){10}}
\put(50,105){\vector(0,1){10}}
\put(50,55){\vector(0,-1){10}}
\put(35,245){\vector(-1,-1){10}}
\put(35,165){\vector(-1,1){10}}
\put(65,115){\vector(1,-1){10}}
\put(65,45){\vector(1,1){10}}
\put(40,-15){$y(\frac{3}{2})$}
}
%
%
\put(180,-80)
{
\put(0,240)
{
\put(37,-2.5){\framebox(26,46)[c]}
\put(49,36){$\scriptstyle  \cdot$}
\put(41,27){$\scriptstyle \cdot$}
\put(49,27){$\scriptstyle \cdot$}
\put(57,27){$\scriptstyle \cdot$}
\put(49,18){$\scriptstyle \cdot$}
\put(41,9){$\scriptstyle \cdot$}
\put(48,9){$\scriptstyle 1$}
\put(57,9){$\scriptstyle \cdot$}
\put(48,0){$\scriptstyle 1$}
}
\put(-40,180)
{
\put(37,-2.5){\framebox(26,46)[c]}
\put(49,36){$\scriptstyle  \cdot$}
\put(41,27){$\scriptstyle \cdot$}
\put(49,27){$\scriptstyle \cdot$}
\put(57,27){$\scriptstyle \cdot$}
\put(49,18){$\scriptstyle \cdot$}
\put(40,9){$\scriptstyle 1$}
\put(49,9){$\scriptstyle \cdot$}
\put(57,9){$\scriptstyle \cdot$}
\put(49,0){$\scriptstyle \cdot$}
}
\put(0,180)
{
\put(37,-2.5){\dashbox{3}(26,46)[c]}
\put(49,36){$\scriptstyle  \cdot$}
\put(41,27){$\scriptstyle \cdot$}
\put(46,27){-$\scriptstyle 1$}
\put(57,27){$\scriptstyle \cdot$}
\put(46,18){-$\scriptstyle 1$}
\put(41,9){$\scriptstyle \cdot$}
\put(46,9){-$\scriptstyle 1$}
\put(57,9){$\scriptstyle \cdot$}
\put(46,0){-$\scriptstyle 1$}
}
\put(40,180)
{
\put(37,-2.5){\makebox(26,46)[c]}
\put(49,36){$\scriptstyle  \cdot$}
\put(41,27){$\scriptstyle \cdot$}
\put(49,27){$\scriptstyle \cdot$}
\put(54,27){-$\scriptstyle 1$}
\put(49,18){$\scriptstyle \cdot$}
\put(41,9){$\scriptstyle \cdot$}
\put(49,9){$\scriptstyle \cdot$}
\put(54,9){-$\scriptstyle 1$}
\put(49,0){$\scriptstyle \cdot$}
}
\put(0,120)
{
\put(37,-2.5){\framebox(26,46)[c]}
\put(49,36){$\scriptstyle  \cdot$}
\put(41,27){$\scriptstyle \cdot$}
\put(48,27){$\scriptstyle 1$}
\put(57,27){$\scriptstyle \cdot$}
\put(48,18){$\scriptstyle 1$}
\put(41,9){$\scriptstyle \cdot$}
\put(48,9){$\scriptstyle 1$}
\put(57,9){$\scriptstyle \cdot$}
\put(49,0){$\scriptstyle \cdot$}
}
\put(-40,60)
{
\put(37,-2.5){\makebox(26,46)[c]}
\put(49,36){$\scriptstyle  \cdot$}
\put(38,27){-$\scriptstyle 1$}
\put(49,27){$\scriptstyle \cdot$}
\put(57,27){$\scriptstyle \cdot$}
\put(49,18){$\scriptstyle \cdot$}
\put(38,9){-$\scriptstyle 1$}
\put(49,9){$\scriptstyle \cdot$}
\put(57,9){$\scriptstyle \cdot$}
\put(49,0){$\scriptstyle \cdot$}
}
\put(0,60)
{
\put(37,-2.5){\dashbox{3}(26,46)[c]}
\put(46,36){-$\scriptstyle  1$}
\put(41,27){$\scriptstyle \cdot$}
\put(46,27){-$\scriptstyle 1$}
\put(57,27){$\scriptstyle \cdot$}
\put(46,18){-$\scriptstyle 1$}
\put(41,9){$\scriptstyle \cdot$}
\put(46,9){-$\scriptstyle 1$}
\put(57,9){$\scriptstyle \cdot$}
\put(49,0){$\scriptstyle \cdot$}
}
\put(40,60)
{
\put(37,-2.5){\makebox(26,46)[c]}
\put(49,36){$\scriptstyle  \cdot$}
\put(41,27){$\scriptstyle \cdot$}
\put(49,27){$\scriptstyle \cdot$}
\put(56,27){$\scriptstyle 1$}
\put(49,18){$\scriptstyle \cdot$}
\put(41,9){$\scriptstyle \cdot$}
\put(49,9){$\scriptstyle \cdot$}
\put(57,9){$\scriptstyle \cdot$}
\put(49,0){$\scriptstyle \cdot$}
}
\put(0,0)
{
\put(37,-2.5){\framebox(26,46)[c]}
\put(48,36){$\scriptstyle  1$}
\put(41,27){$\scriptstyle \cdot$}
\put(48,27){$\scriptstyle 1$}
\put(57,27){$\scriptstyle \cdot$}
\put(49,18){$\scriptstyle \cdot$}
\put(41,9){$\scriptstyle \cdot$}
\put(49,9){$\scriptstyle \cdot$}
\put(57,9){$\scriptstyle \cdot$}
\put(49,0){$\scriptstyle \cdot$}
}

%
\put(35,200){\vector(-1,0){10}}
\put(65,200){\vector(1,0){10}}
\put(35,80){\vector(-1,0){10}}
\put(65,80){\vector(1,0){10}}
\put(90,105){\vector(0,1){70}}
\put(10,175){\vector(0,-1){70}}
\put(50,235){\vector(0,-1){10}}
\put(50,165){\vector(0,1){10}}
\put(50,115){\vector(0,-1){10}}
\put(50,45){\vector(0,1){10}}
\put(75,225){\vector(-1,1){10}}
\put(75,175){\vector(-1,-1){10}}
\put(25,105){\vector(1,1){10}}
\put(25,55){\vector(1,-1){10}}
\put(40,-15){$y(2)$}
}
\end{picture}
\caption{Tropical Y-system of type $B_2$ at level 3
in the region $0 \leq u \leq 2$.
}
\label{fig:Blev3-2}
\end{figure}

We have a generalization of Proposition
\ref{prop:lev2} for the tropical
Y-systems at higher levels.

\begin{proposition}
\label{prop:levh}
Let $\ell> 2$ be an integer.
 For 
$[\mathcal{G}_Y(B,y)]_{\mathbf{T}}$
with $B=B_{\ell}(B_r)$, the following facts hold.
\par
(i) Let $u$ be in the region $0\le u < \ell$.
For any $(\mathbf{i},u):\mathbf{p}_+$,
the  monomial $[y_{\mathbf{i}}(u)]_{\mathbf{T}}$
is positive.

\par
(ii) Let $u$ be in the region $-h^{\vee}\le u < 0$.
\begin{itemize}
\item[\em (a)] Let $\mathbf{i}\in \mathbf{I}^{\circ}
\sqcup \mathbf{I}^{\bullet}_-
$.
For any $(\mathbf{i},u):\mathbf{p}_+$,
the  monomial $[y_{\mathbf{i}}(u)]_{\mathbf{T}}$
is negative.
\item[\em (b)]
 Let $\mathbf{i}\in \mathbf{I}^{\bullet}_+$.
For any $(\mathbf{i},u):\mathbf{p}_+$,
the  monomial $[y_{\mathbf{i}}(u)]_{\mathbf{T}}$
is negative for $u=-1,-3,\dots$
and positive for $u=-2,-4,\dots$.
\end{itemize}
\par
\par
(iii)
$y_{ii'}(\ell)=y_{i,\ell-i'}^{-1}$ if $i\neq r$ and
$y_{r,2\ell-i'}^{-1}$ if $i=r$.
\par
(iv) $y_{ii'}(-h^{\vee})=
y_{2r-i,i'}^{-1}$.
\end{proposition}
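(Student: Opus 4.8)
The plan is to follow the strategy of Nakanishi \cite{Nkn} in the simply-laced case and to make the \emph{factorization property} of the tropical Y-system precise for $B=B_{\ell}(B_r)$. Just as in the proof of Proposition \ref{prop:lev2}, parts (ii) and (iv) may be checked component by component on the exponent vectors, so it suffices to control, for each fixed source vertex $\mathbf{j}$, the exponent of $y_{\mathbf{j}}$ in $[y_{\mathbf{i}}(u)]_{\mathbf{T}}$ as a function of $u$ along the mutation sequence \eqref{eq:mutseq}; the only extra bookkeeping needed is to check that no monomial collapses to $1$, which is routine and can be suppressed as in the level-$2$ argument.

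First I would set up the factorization. Away from the middle column, the quiver $Q_{\ell}(B_r)$ is a product-type arrangement of the column direction $i=1,\dots,2r-1$, which carries the type $A_{2r-1}$ / level-$2$ $B_r$ structure of Section 3, with the row direction $i'=1,\dots,\ell-1$, which carries a type $A_{\ell-1}$ structure; the middle column, with its doubled grid of $2\ell-1$ rows and the double arrows inherited from $B_r$, carries the corresponding larger type $A$ structure in the row direction. Concretely, I would prove that in the region $-h^{\vee}\le u<0$ the tropical monomial $[y_{\mathbf{i}}(u)]_{\mathbf{T}}$ factors as the product of a \emph{horizontal} part, governed by the piecewise-linear simple reflections $\sigma_i$ of type $A_{2r-1}$ and the element $\sigma$ of \eqref{eq:sigma} exactly as in Proposition \ref{prop:tvec}, and a \emph{vertical} part, governed by the piecewise-linear simple reflections and the Coxeter element of the relevant type $A$ root system in the row index. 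This would be established by induction on $u$ in the backward direction, using the tropical mutation rule of Lemma \ref{lem:mutation}, the recursions satisfied by the two factors (the analogues of Lemma \ref{lem:trec}), and the sign-coherence of each factor as the induction hypothesis, in exact parallel with the role played by \eqref{eq:tvec1} and \eqref{eq:piA} at level $2$.

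Granting the factorization, the sign statements follow formally. For part (ii), a negative horizontal factor---which is the content of Proposition \ref{prop:lev2}(ii), equivalently of the orbit description in Lemma \ref{lem:orbit}---times a vertical factor that is positive or negative (positivity/negativity for the tropical Y-system of type $A$, obtained by the piecewise-linear reflection technique of \cite{FZ2} or by a direct orbit computation) has the asserted sign; the alternation for $\mathbf{i}\in\mathbf{I}^{\bullet}_+$ at $u=-1,-2,\dots$ is inherited from the parity already present in Proposition \ref{prop:lev2}(ii)(b) for the horizontal factor. Part (i), positivity of $[y_{\mathbf{i}}(u)]_{\mathbf{T}}$ for $0\le u<\ell$, reduces likewise to positivity of each factor over the corresponding window, the bound $u<\ell$ being exactly the span before the type $A_{\ell-1}$ factor first reflects. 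Finally, part (iii) is the half-periodicity $i'\mapsto\ell-i'$ of the vertical factor after ``time'' $\ell$, and part (iv) is the half-periodicity $i\mapsto 2r-i$ of the horizontal factor after time $-h^{\vee}$---the latter being Proposition \ref{prop:lev2}(iv)---each combined with the relation \eqref{eq:ymu}.

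The main obstacle is making the factorization precise and verifying it, and the delicate point is the middle column. There the row direction sees a doubled grid and interacts with the two neighbouring ``spin'' columns through the double arrows of $B_r$, so the bookkeeping of exponents, and the verification that the inductive sign hypotheses are never violated at these vertices, requires genuine work; this is the analogue, at the level of the whole quiver, of the separate treatment of the variables $y_{r1},y_{r3}$ versus $y_{i1},y_{r2}$ in the proof of Proposition \ref{prop:lev2}. Once this is under control, the remainder is a transcription of the Section 3 argument run simultaneously on the two root systems.
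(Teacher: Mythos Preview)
You correctly identify the factorization property of \cite{Nkn} as the key, and this is indeed the paper's method. But you misdescribe what the factorization says. It is \emph{not} that each monomial $[y_{\mathbf{i}}(u)]_{\mathbf{T}}$ is a product of a horizontal part and a vertical part with both simultaneously running their respective Coxeter-type dynamics. If that were so, then in the region $-h^{\vee}\le u<0$ you would have a negative horizontal factor (from Proposition~\ref{prop:lev2}(ii)) times a nontrivially evolving vertical type-$A$ factor; were the latter also negative the product would be positive, contradicting (ii)(a), and were it positive you would have no account of why.

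The factorization is instead a \emph{decoupling into independent subsystems}, with a different direction active in each region. In the backward region $-h^{\vee}\le u<0$, the quiver $Q_{\ell}(B_r)$ splits into horizontal strips each isomorphic to $Q_2(B_r)$, and the vertical arrows in the outer columns linking adjacent strips can be ignored during the backward mutations: at every backward mutation point $(\mathbf{i},u):\mathbf{p}_-$ those arrows are incoming while $[y_{\mathbf{i}}(u)]_{\mathbf{T}}$ is positive (by \eqref{eq:ymu} and the inductive sign hypothesis), so Lemma~\ref{lem:mutation}(ii) applies. Each strip then evolves as an independent level-$2$ tropical Y-system, and (ii), (iv) follow directly from Proposition~\ref{prop:lev2}. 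In the forward region $0\le u<\ell$, the horizontal arrows linking columns can likewise be ignored (now the forward mutation points carry positive monomials with incoming inter-column arrows), so each column evolves independently as a type-$A$ tropical Y-system---$A_{\ell-1}$ for the $2r-2$ outer columns, $A_{2\ell-1}$ at twice the pace for the middle column---and (i), (iii) follow from \cite[Proposition 10.7]{FZ3}. In each region only one direction is active; the other is frozen to a single row or column index. There is no genuinely two-factor product to track.

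With the correct picture, the obstacle you anticipate at the middle column largely dissolves: no separate inductive bookkeeping of exponents is needed there, since the middle column simply participates in each level-$2$ strip in the backward region and constitutes a single $A_{2\ell-1}$ column in the forward region.
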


\begin{proof}
This proposition is a consequence of the {\em factorization
property} of the tropical Y-system found in \cite{Nkn}
for simply laced case.
Roughly speaking,
 in the region $-h^{\vee}\leq u \leq 0$,
the system is factorized into the `level 2 pieces',
while 
in the region $0\leq u \leq \ell$,
the system is factorized into the `type $A$ pieces'.

First, we consider the region $-h^{\vee}\leq u < 0$.
Let us concentrate on the simplest nontrivial example
$B_2$ with level 3.
It turns out that this example is almost general enough.
In Figure \ref{fig:Blev3}, all the
variables $[y_{\mathbf{i}}(u)]_{\mathbf{T}}$ in the region
$-2\leq u \leq 0$
are given explicitly.
One had better view it in the backward direction from
$u=0$ to $-2$.
If we look at only the first three rows from the bottom,
we observe that the mutations occur in exactly the same
pattern as the level 2 case in Figure
\ref{fig:tropB22}.
So is for the last three rows
(with $180^{\circ}$ rotation).
This is the factorization property.
It occurs  due to the coordination of the
mutation sequence \eqref{eq:mutseq}
and the positivity/negativity
in Proposition \ref{prop:lev2}.
For example, in Figure \ref{fig:Blev3},
let us look at the vertical arrows between the second and the fourth
rows.
At any mutation point $(\mathbf{i},u):\mathbf{p}_-$
these arrows are {\em incoming}
while $[y_{\mathbf{i}}(u)]_{\mathbf{T}}$ are {\em positive}
by \eqref{eq:ymu} and Proposition \ref{prop:lev2} (ii).
Therefore, by Lemma \ref{lem:mutation},
these arrows can be forgotten during mutations.
By a similar reason, the  variables in the third rows
are not affected by mutations in the second and the fourth rows.
Therefore, as long as the positivity in the
second and the fourth rows continue,
so does the factorization;
hence (ii) holds.
Moreover, (iv) holds because it does for level 2.
This argument is also applicable to any rank $r$ and level $\ell$
because of the definitions of the quiver $Q_{\ell}(B_r)$
and the mutation sequence \eqref{eq:mutseq}.

Next, we consider the region $0 \leq u < \ell$.
Again, let us concentrate on the case
$B_2$ with level 3.
In Figure \ref{fig:Blev3-2}, all the
variables $[y_{\mathbf{i}}(u)]_{\mathbf{T}}$ in the region
$0\leq u \leq 2$
are given explicitly.
One had better view it in the forward direction from
$u=0$ to $2$.
We observe that each column does not affect each other
in mutations.
More precisely, the first and third columns (from the left)
are in the same mutation pattern as
the tropical Y-system of type $A_2$ studied in \cite{FZ2,FZ3}
(see also \cite{Nkn} for more detail).
The second column is in the same mutation pattern 
 as the tropical Y-system of type $A_5$
(at the {\em twice} faster pace).
Due to \cite[Proposition 10.7]{FZ3},
 $[y_{\mathbf{i}}(u)]_{\mathbf{T}}$ for $(\mathbf{i},u):
\mathbf{p}_+$ are positive in the region
$0\leq u < 3$
(3 is the Coxeter number of $A_2$
{\em and} the {\em half} of the Coxeter number of $A_5$).
The factorization occurs by the same reason as before;
at any mutation point $(\mathbf{i},u):\mathbf{p}_+$
the arrows between adjacent columns are {\em incoming}
while $[y_{\mathbf{i}}(u)]_{\mathbf{T}}$ are {\em positive}.
Therefore, these arrows can be forgotten,
and we have (ii).
Finally, (iii) is also a consequence 
of \cite[Proposition 10.7]{FZ3}.
Again,
this argument is also applicable to any rank $r$ and level $\ell$
by replacing the number 3
 in the above with $\ell$, which is the Coxeter number
of $A_{\ell-1}$ and the half of the Coxeter number
of $A_{2\ell-1}$.
\end{proof}

We obtain two important corollaries of
Propositions \ref{prop:lev2} and  \ref{prop:levh}.

\begin{theorem}
\label{thm:tYperiod}
 For $[\mathcal{G}_Y(B,y)]_{\mathbf{T}}$
with $B=B_{\ell}(B_r)$,
the following relations hold.
\par
(i) Half periodicity: 
$[y_{\mathbf{i}}(u+h^{\vee}+\ell)]_{\mathbf{T}}
=[y_{\boldsymbol\omega(\mathbf{i})}(u)]_{\mathbf{T}}$.
\par
(ii) 
 Full periodicity: 
$[y_{\mathbf{i}}(u+2(h^{\vee}+\ell))]_{\mathbf{T}}
=[y_{\mathbf{i}}(u)]_{\mathbf{T}}$.
\end{theorem}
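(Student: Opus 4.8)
The plan is to deduce Theorem \ref{thm:tYperiod} from Propositions \ref{prop:lev2} and \ref{prop:levh} by a short formal argument, the genuine content having already gone into those propositions. The crucial remark is that parts (i) and (ii) of the propositions determine, for every $(\mathbf{i},u)$, whether the monomial $[y_{\mathbf{i}}(u)]_{\mathbf{T}}$ is positive or negative; so by Lemma \ref{lem:mutation} the tropical mutation rule along \eqref{eq:mutseq} becomes a \emph{deterministic} recursion, and the whole family $\{[y_{\mathbf{i}}(u)]_{\mathbf{T}}\}$ is recovered from its value at a single $u$ by pushing the (composite) mutations of \eqref{eq:mutseq} forward and, mutations being involutive, backward as well. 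Hence to prove the half-periodicity (i) it will suffice (a) to verify it at one value of $u$, and (b) to check the purely combinatorial statement that the relabelling $\mathbf{i}\mapsto\boldsymbol{\omega}(\mathbf{i})$ carries the mutation sequence \eqref{eq:mutseq} onto the same sequence shifted by $h^{\vee}+\ell$. Part (ii) then follows by iterating (i) and using $\boldsymbol{\omega}^{2}=\mathrm{id}$.

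For step (b) I would first introduce the vertical reflection $\boldsymbol{v}:=\boldsymbol{r}\boldsymbol{\omega}=\boldsymbol{\omega}\boldsymbol{r}$ of $\mathbf{I}$ (reflection of each column about its midpoint, so $\boldsymbol{v}(i,i')=(i,\ell-i')$ for $i\neq r$ and $\boldsymbol{v}(r,i')=(r,2\ell-i')$), and then read off from the definition of $Q_{\ell}(B_r)$ how $\boldsymbol{\omega}$ acts on the decorations: $\boldsymbol{\omega}$ preserves $\mathbf{I}^{\bullet}_{\pm}$ setwise, while $\boldsymbol{\omega}(\mathbf{I}^{\circ}_{\pm})=\mathbf{I}^{\circ}_{\pm}$ when $h^{\vee}+\ell$ is even and $\boldsymbol{\omega}(\mathbf{I}^{\circ}_{\pm})=\mathbf{I}^{\circ}_{\mp}$ when $h^{\vee}+\ell$ is odd. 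Combined with Lemma \ref{lem:Qmut} and the resulting period-$2$ structure of \eqref{eq:mutseq} in the integer direction (quiver $B$ at even and $\boldsymbol{r}(B)$ at odd integers), together with the fact that shifting $u$ by $h^{\vee}+\ell$ changes the parity of $u$ precisely when $h^{\vee}+\ell$ is odd, this matches up both the exchange matrices, $B(u+h^{\vee}+\ell)=\boldsymbol{\omega}(B(u))$, and the composite mutations $\mu^{\bullet}_{\pm},\mu^{\circ}_{\pm}$ applied at corresponding steps, which is exactly the asserted intertwining.

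For step (a) I would evaluate (i) at $u=-h^{\vee}$, where it reads $[y_{\mathbf{i}}(\ell)]_{\mathbf{T}}=[y_{\boldsymbol{\omega}(\mathbf{i})}(-h^{\vee})]_{\mathbf{T}}$ (note $\ell=-h^{\vee}+(h^{\vee}+\ell)$). Part (iii) of Proposition \ref{prop:lev2} (for $\ell=2$) and of Proposition \ref{prop:levh} (for $\ell>2$) says precisely $[y_{\mathbf{i}}(\ell)]_{\mathbf{T}}=[y_{\boldsymbol{v}(\mathbf{i})}]_{\mathbf{T}}^{-1}$, and part (iv) of the same propositions says $[y_{\mathbf{j}}(-h^{\vee})]_{\mathbf{T}}=[y_{\boldsymbol{r}(\mathbf{j})}]_{\mathbf{T}}^{-1}$; substituting $\mathbf{j}=\boldsymbol{\omega}(\mathbf{i})$ and using $\boldsymbol{r}\boldsymbol{\omega}=\boldsymbol{v}$ gives $[y_{\boldsymbol{\omega}(\mathbf{i})}(-h^{\vee})]_{\mathbf{T}}=[y_{\boldsymbol{v}(\mathbf{i})}]_{\mathbf{T}}^{-1}=[y_{\mathbf{i}}(\ell)]_{\mathbf{T}}$, as required. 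Feeding this into the deterministic recursion propagates (i) to all $u\in\frac{1}{2}\mathbb{Z}$, and then $[y_{\mathbf{i}}(u+2(h^{\vee}+\ell))]_{\mathbf{T}}=[y_{\boldsymbol{\omega}(\mathbf{i})}(u+h^{\vee}+\ell)]_{\mathbf{T}}=[y_{\boldsymbol{\omega}^{2}(\mathbf{i})}(u)]_{\mathbf{T}}=[y_{\mathbf{i}}(u)]_{\mathbf{T}}$ is (ii).

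I expect the only real obstacle to be the bookkeeping in step (b): pinning down exactly how the $\circ/\bullet$ and $\pm$ decorations of $Q_{\ell}(B_r)$ behave under $\boldsymbol{\omega}$ according to the parity of $h^{\vee}+\ell$, and, when propagating the recursion, keeping correct account of the vertices $(\mathbf{i},u)$ that are not mutation points (those with $\mathbf{i}\in\mathbf{I}^{\circ}_{+}$, $u\equiv1,\frac{3}{2}$, or $\mathbf{i}\in\mathbf{I}^{\circ}_{-}$, $u\equiv0,\frac{1}{2}$ mod $2\mathbb{Z}$). This is bookkeeping rather than a conceptual difficulty — the delicate positivity/negativity that makes the recursion deterministic is already supplied by Propositions \ref{prop:lev2} and \ref{prop:levh} — and it should be readable off the pictures in Figures \ref{fig:tropB22} and \ref{fig:labelyB1}--\ref{fig:labelyB2}.
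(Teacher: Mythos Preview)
Your proposal is correct and follows the same route as the paper: the paper's proof is the one-liner ``(i) follows from (iii) and (iv) of Propositions \ref{prop:lev2} and \ref{prop:levh}; (ii) follows from (i)'', and what you have written is exactly the unpacking of that sentence --- match the tuples at $u=-h^\vee$ and $u=\ell$ via $\boldsymbol{r}\boldsymbol{\omega}=\boldsymbol{v}$, then propagate along \eqref{eq:mutseq} using the $B$-matrix compatibility $B(u+h^\vee+\ell)=\boldsymbol{\omega}(B(u))$, which the paper records immediately afterwards as \eqref{eq:Bperiod}.

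One small overcomplication: you do not need parts (i)--(ii) of the Propositions, nor Lemma \ref{lem:mutation}, for the propagation step. The tropical exchange relation \eqref{eq:coef} evaluated in $\mathrm{Trop}(y)$ is \emph{already} a deterministic function of the coefficient tuple and the $B$-matrix, regardless of whether each monomial happens to be positive or negative; Lemma \ref{lem:mutation} merely gives a simplified formula in the sign-definite case. So your propagation needs only: (a) the identity at $u=-h^\vee$ that you derive from (iii)/(iv), and (b) the matching of mutation indices and $B$-matrices under $\boldsymbol{\omega}$, i.e.\ your step (b) together with Lemma \ref{lem:Qmut}(ii). The sign information from (i)--(ii) of the Propositions is what goes into \emph{proving} (iii) and (iv); once those are in hand it plays no further role here.
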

\begin{proof}
(i) follows from (iii) and (iv) of
 Propositions
\ref{prop:lev2} and \ref{prop:levh}.
 (ii) follows from (i).
\end{proof}

We remark that the half periodicity above
is compatible with the one for mutation matrices;
namely, set
$B(u)=B:=B_{\ell}(B_r),-B,\boldsymbol{r}(B),
-\boldsymbol{r}(B)$
for $u\equiv 0,\frac{1}{2},1,\frac{3}{2}$ {\rm mod} $2\mathbb{Z}$,
respectively.
Then,
\begin{align}
\label{eq:Bperiod}
B(u+h^{\vee}+\ell)=\boldsymbol{\omega}(B(u))
\end{align}
holds due to Lemma \ref{lem:Qmut} (ii).

\begin{theorem}
\label{thm:levhd}
For $[\mathcal{G}_Y(B,y)]_{\mathbf{T}}$
with $B=B_{\ell}(B_r)$,
let $N_+$ and $N_-$ denote the
total numbers of the positive and negative monomials,
respectively,
among $[y_{\mathbf{i}}(u)]_{\mathbf{T}}$
for $(\mathbf{i},u):\mathbf{p}_+$
in the region $0\leq u < 2(h^{\vee}+\ell)$.
Then, we have
\begin{align}
N_+=2\ell(\ell r+\ell-1),
\quad
N_-=2r(2\ell r -2r+1).
\end{align}
\end{theorem}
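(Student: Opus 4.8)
The plan is to reduce the count to a single half-period in $u$ and then read off the signs from Propositions \ref{prop:lev2} and \ref{prop:levh}. First I would use full periodicity (Theorem \ref{thm:tYperiod}(ii)) to replace the window $0\le u<2(h^{\vee}+\ell)$ by $-h^{\vee}\le u<h^{\vee}+2\ell$, which is the union of the two half-period windows $[-h^{\vee},\ell)$ and $[\ell,h^{\vee}+2\ell)$. By half periodicity (Theorem \ref{thm:tYperiod}(i)) together with the compatible matrix periodicity \eqref{eq:Bperiod}, the shift $(\mathbf{i},u)\mapsto(\boldsymbol{\omega}(\mathbf{i}),u+h^{\vee}+\ell)$ is a bijection of $\mathbf{p}_+$-points between the two windows and it preserves the monomial $[y_{\mathbf{i}}(u)]_{\mathbf{T}}$, hence its sign. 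Thus $N_{\pm}=2N_{\pm}'$, where $N_{\pm}'$ is the number of positive (resp. negative) $\mathbf{p}_+$-monomials with $-h^{\vee}\le u<\ell$; I would split this window as $[-h^{\vee},0)\sqcup[0,\ell)$.

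On $[0,\ell)$ every $\mathbf{p}_+$-monomial is positive by Proposition \ref{prop:lev2}(i)/\ref{prop:levh}(i). Via the bijection $g'$, the $\mathbf{p}_+$-points with $0\le u<\ell$ correspond to the triples of $\mathcal{I}'_{\ell+}$ with $0\le u<\ell$; counting these (treating $a\neq r$, where $\mathbf{P}'_+$ forces $u\in\mathbb{Z}$, and $a=r$, where $m+2u$ is odd, separately) gives $\ell(r\ell+\ell-r)$ of them. On $[-h^{\vee},0)$, Proposition \ref{prop:lev2}(ii)/\ref{prop:levh}(ii) tells us a $\mathbf{p}_+$-monomial is positive exactly when $\mathbf{i}\in\mathbf{I}^{\bullet}_+$ and $u$ is even, and negative otherwise. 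Since $h^{\vee}=2r-1$ is odd, the even integers in $[-h^{\vee},0)$ are $-2,\dots,-(2r-2)$, i.e. $r-1$ of them, and at each such $u$ the $\mathbf{p}_+$-points lying in $\mathbf{I}^{\bullet}_+$ number $|\mathbf{I}^{\bullet}_+|=\ell$ (the $\bullet$-column of $Q_{\ell}(B_r)$ has $2\ell-1$ vertices with signs $+,-,\dots,+$; equivalently, via $g'$, $(r,m)\in\mathbf{I}^{\bullet}_+$ iff $m$ is odd), so there are $(r-1)\ell$ positive monomials on $[-h^{\vee},0)$. For the negatives, I would use that the total number of $\mathbf{p}_+$-points over the half-period $[-h^{\vee},\ell)$ is $(h^{\vee}+\ell)(r\ell+\ell-r)$ (again via $g'$, or as one half of the per-period total, which is in turn half of the number $4(h^{\vee}+\ell)(r\ell+\ell-r)$ of triples of $\mathcal{I}_{\ell}$ in one period recorded after Theorem \ref{thm:DI2}); subtracting the $\ell(r\ell+\ell-r)$ points on $[0,\ell)$ leaves $(2r-1)(r\ell+\ell-r)$ points on $[-h^{\vee},0)$, of which all but the $(r-1)\ell$ positive ones are negative.

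Assembling the pieces, $N_+'=\ell(r\ell+\ell-r)+(r-1)\ell$ and $N_-'=(2r-1)(r\ell+\ell-r)-(r-1)\ell$; an elementary simplification (using $h^{\vee}=2r-1$) rewrites these as $N_+'=\ell(\ell r+\ell-1)$ and $N_-'=r(2\ell r-2r+1)$, whence $N_+=2\ell(\ell r+\ell-1)$ and $N_-=2r(2\ell r-2r+1)$ as claimed; as a check, $N_++N_-$ equals the number of $\mathbf{p}_+$-points in one period, which also reconfirms that none of the $[y_{\mathbf{i}}(u)]_{\mathbf{T}}$ is the trivial monomial. I do not expect a single hard step: the work is bookkeeping. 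The most error-prone points will be the residues of $u$ modulo $2$ inside $[-h^{\vee},0)$ (the oddness of $h^{\vee}$ breaks the left-right symmetry there), pinning down $|\mathbf{I}^{\bullet}_+|=\ell$ from Figure \ref{fig:quiverB}, and verifying that the half periodicity of Theorem \ref{thm:tYperiod} genuinely yields a sign-preserving bijection of $\mathbf{p}_+$-points between consecutive half-period windows, which rests on the $\boldsymbol{\omega}$-twisted periodicity of the mutation sequence \eqref{eq:mutseq}.
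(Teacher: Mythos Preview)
Your proposal is correct and follows essentially the same route as the paper's proof, which is a one-line citation of Propositions~\ref{prop:lev2}(i)(ii), \ref{prop:levh}(i)(ii), and Theorem~\ref{thm:tYperiod}(i). You have carefully carried out the bookkeeping that the paper leaves implicit: reducing to a single half-period via the half periodicity of Theorem~\ref{thm:tYperiod}(i), splitting $[-h^{\vee},\ell)$ into the positive region $[0,\ell)$ and the mixed region $[-h^{\vee},0)$, and counting the exceptional positive monomials on $[-h^{\vee},0)$ coming from $\mathbf{I}^{\bullet}_+$ at even $u$. Your counts $\ell(r\ell+\ell-r)$, $|\mathbf{I}^{\bullet}_+|=\ell$, and $(2r-1)(r\ell+\ell-r)$ are all correct, and the point you flag about the half periodicity inducing a sign-preserving bijection of $\mathbf{p}_+$-points is exactly what is encoded in \eqref{eq:Bperiod} together with Lemma~\ref{lem:Qmut}(ii).
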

\begin{proof}
This follows from (i) and (ii) of  Propositions
\ref{prop:lev2} and \ref{prop:levh},
and (i)
in Theorem \ref{thm:tYperiod}.
\end{proof}

\section{2-Calabi-Yau realization
and periodicity}

In this section we prove   Theorems \ref{thm:Tperiod}
and \ref{thm:Yperiod}.
Our method is based on the 2-Calabi-Yau realizations of
cluster algebras. They are triangulated categories satisfying
2-Calabi-Yau property, and their categorical structure realizes
the combinatorial structure of cluster algebras.

\subsection{Periodicity theorem for cluster algebras.}

In this subsection we assume
that $B=(B_{ij})_{i,j\in I}$
is an arbitrary {\em skew-symmetric} (integer) matrix.
Let ${\mathcal{A}}(B,x,y)$ be the cluster algebra
with coefficients in the {\em universal semifield}
$\mathbb{Q}_{\mathrm{sf}}(y)$, where $(B,x,y)$ is the initial
seed.
Alternatively, we may consider the {\em cluster pattern}
assigned to the $n$-regular tree $\mathbb{T}_n$, $n=|I|$,
with the following data (see Section 2.1).
Let $Q$ be the quiver (without loops or 2-cycles)
corresponding to $B$ with the set of vertices $I$.
We fix a vertex $t_0\in {\mathbb{T}}_n$, and assign the initial seed
to $t_0$, $(Q(t_0),x(t_0),y(t_0))=(Q,x,y)$.
Then we have  a seed $(Q(t),x(t),y(t))$ for each $t\in{\mathbb{T}}_n$,
where we identify the quiver $Q(t)$ and the corresponding
skew-symmetric matrix $B(t)$ in each seed.

Let  ${\rm Trop}(y)$ be the tropical semifield.
As in Section 3.1, we define the
tropical evaluation of $y_i(t)$
\begin{align}
\label{eq:tropy}
[y_i(t)]_{{\mathbf{T}}}\in {\rm Trop}(y)
\end{align}
as the image of $y_i(t)\in
\mathbb{Q}_{\mathrm{sf}}(y)$ under the natural map
$\pi_{\mathbf{T}}:
\mathbb{Q}_{\mathrm{sf}}(y)\to {\rm Trop}(y)$.
Here we call $[y_i(t)]_{{\mathbf{T}}}$'s  the {\em principal coefficients}
in accordance with the nomenclature of \cite{FZ3}.
(They may be also called the {\em tropical $Y$-variables}.)

For an automorphism $\omega:I\to I$ of $I$, we define a new quiver $\omega(Q)$ with the same set $I$ of vertices by
drawing an arrow $\omega(a):\omega(i)\to\omega(j)$ in $\omega(Q)$
for each arrow $a:i\to j$ in $Q$.

The following theorem is Observation 1 explained in Section 1.5.
While it is crucial in our proof of the periodicities
of the T and Y-systems in this paper and also in \cite{IIKKN},
it is also expected to be  useful to other applications.

\begin{theorem}[Periodicity theorem]
\label{thm:XXX}
Let $B$ be an arbitrary skew-symmetric matrix,
and let $Q$ be the quiver corresponding to $B$.
Let $(Q(t),x(t),y(t))$ be the seed at
 $t\in \mathbb{T}_n$ for ${\mathcal{A}}(B,x,y)$ as above.
Suppose that there exists some $t\in{\mathbb{T}}_n$
and an automorphism  $\omega$ of $I$
such that $[y_i(t)]_{{\mathbf{T}}}=[y_{\omega(i)}]_{{\mathbf{T}}}$
 holds for any $i\in I$.
Then we have
\begin{align}
\begin{split}
Q(t)&=\omega^{-1}(Q)
\quad (\mbox{equivalently, $B_{ij}(t)=B_{\omega(i)\omega(j)}$}),
\\
x_i(t)&=x_{\omega(i)}\quad (i\in I) ,\\
y_i(t)&=y_{\omega(i)}\quad (i\in I).
\end{split}
\end{align}
In particular, 
 the periodicity of seeds of $\mathcal{A}(B,x,y)$
  coincides with the periodicity of
principal coefficient tuples.
\end{theorem}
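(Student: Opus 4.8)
The plan is to deduce the periodicity of the entire seed at $t$ from that of its principal coefficients by passing through Plamondon's categorification \cite{Pl1,Pl2} of $\mathcal{A}(B,x,y)$. Since $B$ is skew-symmetric it is the adjacency matrix of the quiver $Q$, and one attaches to $Q$ its (generalized, possibly Hom-infinite) cluster category $\mathcal{C}=\mathcal{C}_Q$ together with the canonical cluster-tilting object $T(t_0)=\bigoplus_{i\in I}T_i(t_0)$. For every $t'\in\mathbb{T}_n$ the seed $(Q(t'),x(t'),y(t'))$ is realized by a \emph{reachable} cluster-tilting object $T(t')=\bigoplus_{i\in I}T_i(t')$ (reachable because $\mathbb{T}_n$ is connected), in such a way that $Q(t')$ is the Gabriel quiver of $\operatorname{End}_{\mathcal{C}}(T(t'))$, the cluster variable $x_i(t')$ is the value of the Palu cluster character at $T_i(t')$ (normalized so that $T_k(t_0)\mapsto x_k$), and the $F$-polynomial $F_{i;t'}$ (Section~\ref{subsec:groc}) is the generating function of Euler characteristics of the associated quiver Grassmannians. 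The single nontrivial input I would invoke is Plamondon's theorem that, on reachable rigid objects, the \emph{index} $T\mapsto\operatorname{ind}_{T(t_0)}T\in K_0(\operatorname{add}T(t_0))\cong\mathbb{Z}^{I}$ is injective, and that for the indecomposable summands of the $T(t')$ these indices are precisely the $g$-vectors $g^{t'}_i$ (in the sign convention matching $B$).

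\textbf{From principal coefficients to the cluster-tilting object.} First I would note that $[y_i(t)]_{\mathbf{T}}=\prod_{k}y_k^{(C_t)_{ki}}$, where $C_t=(c^t_1\mid\cdots\mid c^t_n)$ is the $c$-matrix of $t$ relative to $t_0$; hence the hypothesis $[y_i(t)]_{\mathbf{T}}=[y_{\omega(i)}]_{\mathbf{T}}=y_{\omega(i)}$ says exactly that $C_t=P_\omega$, the permutation matrix with $P_\omega e_i=e_{\omega(i)}$. Next, by the tropical duality of Nakanishi--Zelevinsky between $c$- and $g$-matrices (valid for skew-symmetrizable $B$; in the skew-symmetric case it reads $C_t^{\mathrm{T}}G_t=\mathrm{Id}$), one gets $G_t=(P_\omega^{\mathrm{T}})^{-1}=P_\omega$, so $g^t_i=e_{\omega(i)}$ for all $i$. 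Therefore $\operatorname{ind}_{T(t_0)}T_i(t)=e_{\omega(i)}=\operatorname{ind}_{T(t_0)}T_{\omega(i)}(t_0)$, and injectivity of the index on reachable rigid objects forces
\[
T_i(t)\ \cong\ T_{\omega(i)}(t_0)\qquad(i\in I).
\]

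\textbf{Reading off the seed.} From this, as objects $T(t)=\bigoplus_i T_{\omega(i)}(t_0)\cong T(t_0)$, with vertex $i$ of $Q(t)$ corresponding to the summand $T_{\omega(i)}(t_0)$; comparing Gabriel quivers gives $B_{ij}(t)=B_{\omega(i)\omega(j)}$, i.e.\ $Q(t)=\omega^{-1}(Q)$. Applying the cluster character gives $x_i(t)=X_{T_i(t)}=X_{T_{\omega(i)}(t_0)}=x_{\omega(i)}$. Finally, the $F$-polynomials are invariants of the $T_i(t)$, so $F_{i;t}=F_{\omega(i);t_0}=1$; substituting $c^t_i=e_{\omega(i)}$ and $F_{k;t}\equiv1$ into the separation-of-additions formula of \cite{FZ3}, which expresses $y_i(t)\in\mathbb{Q}_{\mathrm{sf}}(y)$ through $c^t_i$, the exchange matrices, and the $F_{k;t}$, the $F$-polynomial contribution collapses and leaves $y_i(t)=\prod_k y_k^{(c^t_i)_k}=y_{\omega(i)}$. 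This proves the three displayed equalities, and since the converse implication is trivial, the periodicity of seeds then coincides with that of the principal coefficient tuples.

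\textbf{The main obstacle.} The combinatorial pieces --- tropical duality and the separation formula --- are routine, so essentially all the weight rests on the categorical input of the first paragraph. The hard part is that $\mathcal{C}_Q$ need not be Hom-finite for a general quiver (which is exactly why one restricts to skew-symmetric $B$ and appeals to \cite{Pl1,Pl2} rather than to the older Hom-finite 2-Calabi--Yau theory): one must know that in this generality the index is still well defined, injective on reachable rigid objects, and compatible with $g$-vectors, $F$-polynomials and the cluster character. Granting Plamondon's results, the remaining argument is short and formal.
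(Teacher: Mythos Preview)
Your argument is correct and follows the same underlying strategy as the paper --- deduce an isomorphism of reachable cluster-tilting objects from the principal-coefficient hypothesis via injectivity of the index (Plamondon), then read off the seed --- but the packaging differs in two places.

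First, the paper reduces the statement to its principal-coefficient version (Theorem~\ref{thm:reduced}) via \cite[Theorem~4.6]{FZ3} and then categorifies $\mathcal{A}_\bullet(B,x,y)$ using the cluster category of the \emph{principal extension} $\widetilde{Q}$, not of $Q$ itself; this is what makes the cluster character land in the principal-coefficient algebra (Proposition~\ref{CT and F}). You instead categorify $Q$ and handle the coefficients through the separation formula. That is fine, but note that the Palu/Plamondon character on $\mathcal{C}_Q$ only gives the coefficient-free $x$-variables; the equality $x_i(t)=x_{\omega(i)}$ in $\mathcal{A}(B,x,y)$ really comes from \cite[Corollary~6.3]{FZ3} together with $g^t_i=e_{\omega(i)}$ and $F_{i;t}=1$, not from the character alone.

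Second, to get from the hypothesis to $T_i(t)\cong T_{\omega(i)}$, the paper identifies the $c$-vectors directly with indices of the shifted initial summands $T_i[1]$ taken \emph{with respect to $T(t)$} (Proposition~\ref{y and index}), and then applies index injectivity (Proposition~\ref{index preparation}). You instead convert $C_t$ to $G_t$ via Nakanishi--Zelevinsky tropical duality and identify $g$-vectors with indices with respect to $T(t_0)$. Both routes work; the paper's is more self-contained (tropical duality postdates this paper, and its proof in the skew-symmetric case rests on the same categorical inputs you are already invoking), while yours is the now-standard formulation.
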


Let ${\mathcal{A}}_{\bullet}(B,x,y)$ be the cluster algebra with coefficients
in the {\em tropical semifield} ${\rm Trop}(y)$,
where $(B,x,y)$ is the initial seed.
It is also called the cluster algebra with \emph{principal coefficients}
  \cite{FZ3}.
Note that a coefficient $y_i(t)$ in ${\mathcal{A}}_{\bullet}(B,x,y)$
coincides with $[y_i(t)]_{\mathbf{T}}$ in \eqref{eq:tropy}.

According to \cite[Theorem 4.6]{FZ3} and its proof,
Theorem \ref{thm:XXX} reduces to
the following result for ${\mathcal{A}}_{\bullet}(B,x,y)$.

\begin{theorem}\label{thm:reduced}
Let $B$ and $Q$ be the same as in Theorem \ref{thm:XXX}.
Let $(Q(t),x(t),y(t))$ be the seed at
 $t\in \mathbb{T}_n$ for ${\mathcal{A}}_{\bullet}(B,x,y)$.
Suppose that there exists some $t\in{\mathbb{T}}_n$
and an automorphism  $\omega$ of $I$
such that $y_i(t)=y_{\omega(i)}$ holds for any $i\in I$.
Then, we have
\begin{align}
\begin{split}
Q(t)&=\omega^{-1}(Q),\\
x_i(t)&=x_{\omega(i)}\quad (i\in I).
\end{split}
\end{align}
\end{theorem}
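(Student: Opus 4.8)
The plan is to deduce Theorem~\ref{thm:reduced} from the $2$-Calabi--Yau categorification of $\mathcal{A}_{\bullet}(B,x,y)$ due to Plamondon \cite{Pl1,Pl2}, which applies to the cluster algebra attached to an arbitrary quiver $Q$. First I would attach to the initial seed the Ginzburg dg algebra $\Gamma=\Gamma_Q$ and its (generalized) cluster category $\mathcal{C}=\mathcal{C}_Q$, equipped with the canonical cluster-tilting object $\Gamma=\bigoplus_{i\in I}\Gamma_i$ and with Plamondon's cluster character $M\mapsto X_M$ with principal coefficients, normalized so that $X_{\Gamma_i}=x_i$. By the results of \cite{Pl1,Pl2}, iterated mutation gives a bijection $t\mapsto T(t)=\bigoplus_{i\in I}T_i(t)$ between the seeds of $\mathcal{A}_{\bullet}(B,x,y)$ reachable from $t_0$ (with $T(t_0)=\Gamma$) and the cluster-tilting objects of $\mathcal{C}$ reachable from $\Gamma$, compatible with mutation, under which $Q(t)$ is (up to the usual conventions) the Gabriel quiver of $\mathrm{End}_{\mathcal{C}}(T(t))$ and $x_i(t)=X_{T_i(t)}$. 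Since every $t\in\mathbb{T}_n$ is joined to $t_0$ by a finite path, $T(t)$ is always defined.

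The heart of the argument is that the principal coefficient tuple $y(t)$ records the isomorphism class of $T(t)$. Concretely, the tropical monomial $[y_i(t)]_{\mathbf{T}}=\prod_{j}y_j^{(c_i(t))_j}$ is nothing but the $c$-vector $c_i(t)$, i.e.\ the $i$-th column of the $C$-matrix at $t$; using the sign-coherence of $c$-vectors (a consequence of the categorification) and the resulting tropical duality between the $C$- and $G$-matrices in the skew-symmetric case, the family $(c_i(t))_{i\in I}$ is equivalent data to the family of $g$-vectors $(g_i(t))_{i\in I}$, and $g_i(t)=\mathrm{ind}_{\Gamma}(T_i(t))$ is the index of the $i$-th indecomposable summand of $T(t)$. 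By Plamondon's injectivity of the index on reachable rigid objects (the analogue in his framework of the Dehy--Keller theorem), this family determines $T(t)$ up to isomorphism. I would take some care here because $\mathcal{C}$ may carry infinite-dimensional morphism spaces, so that indices, cluster characters and the relevant $\mathrm{Ext}$-computations have to be carried out inside the subcategory of $\mathcal{C}$ set up for this purpose in \cite{Pl1,Pl2}.

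Granting this, the conclusion is immediate. The hypothesis $y_i(t)=y_{\omega(i)}$ in $\mathrm{Trop}(y)$ says exactly that $c_i(t)=e_{\omega(i)}$ for all $i$, i.e.\ that the $C$-matrix at $t$ is the permutation matrix of $\omega$; by tropical duality the same holds for the $G$-matrix, so $g_i(t)=e_{\omega(i)}=\mathrm{ind}_{\Gamma}(\Gamma_{\omega(i)})$, and injectivity of the index forces $T_i(t)\cong\Gamma_{\omega(i)}$ for every $i\in I$. Therefore $\mathrm{End}_{\mathcal{C}}(T(t))\cong\mathrm{End}_{\mathcal{C}}(\Gamma)$ via the bijection $\omega$ on indecomposable summands, so the Gabriel quivers satisfy $Q(t)=\omega^{-1}(Q)$ (equivalently $B_{ij}(t)=B_{\omega(i)\omega(j)}$); and $x_i(t)=X_{T_i(t)}=X_{\Gamma_{\omega(i)}}=x_{\omega(i)}$, since the cluster character depends only on the isomorphism class of the object. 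Combined with the reduction via \cite[Theorem~4.6]{FZ3}, this also yields Theorem~\ref{thm:XXX}.

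I expect the main difficulty to lie not in this combinatorial endgame but in making the bridge ``$[y_i(t)]_{\mathbf{T}}\leftrightarrow\mathrm{ind}_{\Gamma}(T_i(t))$'' fully rigorous: one must match the sign and labelling conventions of \cite{FZ3} for $c$- and $g$-vectors with those of Plamondon's setting, invoke (or re-derive in this generality) the tropical $C$-to-$G$ duality from sign-coherence, and control the passage to the subcategory of $\mathcal{C}$ on which the index is defined and injective. Once these foundational points are in place, the proof is short.
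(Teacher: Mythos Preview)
Your proposal is correct and follows the same overall strategy as the paper: Plamondon's 2-Calabi--Yau categorification together with the injectivity of the index on rigid objects. The one substantive difference is in the bridge step you flag as the ``main difficulty.'' You pass from $c$-vectors to $g$-vectors via tropical $C$--$G$ duality (relying on sign-coherence), and then identify $g_i(t)$ with $\mathrm{ind}_\Gamma(T_i(t))$. The paper avoids this detour entirely: it works in the cluster category of the \emph{principal extension} $\widetilde{Q}$ (with the $I'$ vertices frozen) and invokes Plamondon's result \cite[Corollary~3.10]{Pl2} (Proposition~\ref{y and index}) identifying the $c$-vectors directly with $-\mathrm{ind}_{T(t)}(T_i[1])$, i.e.\ the indices of the \emph{initial} summands $T_i$ computed with respect to $T(t)$. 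From $c_{ij}(t)=\delta_{i,\omega(j)}$ one then gets $\mathrm{ind}_{T(t)}(T_{\omega(j)}[1])=\mathrm{ind}_{T(t)}(T_j(t)[1])$, hence $T_{\omega(j)}\simeq T_j(t)$ by index injectivity, and the conclusions for $Q(t)$ and $x_i(t)$ follow exactly as you say. Your route is sound, but the paper's is shorter precisely because the ``$c$-vector $=$ index'' statement is already available off the shelf in \cite{Pl2}, so no separate tropical duality argument is needed.
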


We give a proof of Theorem \ref{thm:reduced} in 
Section 5.3.

\begin{remark}
 Theorem \ref{thm:reduced} was essentially conjectured by Fomin-Zelevinsky
\cite[Conjecture 4.7]{FZ3} for an {\em arbitrary skew-symmetrizable}
matrix $B$.
Therefore, we partly prove the conjecture
for a {\em skew-symmetric} matrix $B$.
Note that our claim  is a little stronger than \cite[Conjecture 4.7]{FZ3}
because the periodicity
of the {\em principal parts} of exchange matrices also follows from
the periodicity of principal coefficients.
(Meanwhile, the periodicity
of the {\em complementary parts} of exchange matrices
coincides with the periodicity 
of principal coefficients by definition \cite{FZ3}.)

\begin{remark}
Since Theorems \ref{thm:XXX} and \ref{thm:reduced} involve quivers corresponding to arbitrary
skew-symmetric matrices, the categories appearing in their proof 
will in general have infinite-dimensional morphism spaces. It is
an interesting question to ask whether the quivers which appear
in our applications of the two theorems actually admit categorifications
by Hom-finite $2$-CY categories. We conjecture that this is indeed
the case for all levels $l\geq 2$. For $l=2$, it is clear since
the quiver $Q_2(B_r)$ is of finite cluster type $D_{2r+1}$.
However, for $l>2$, the conjecture appears to be nontrivial.
\end{remark}

\end{remark}

\subsection{2-Calabi-Yau realization of ${\mathcal{A}}_{\bullet}(B,x,y)$} 

Our proof of Theorem \ref{thm:reduced} uses the 
categorification of the cluster algebra
${\mathcal{A}}_{\bullet}(B,x,y)$ by a 
certain 2-Calabi-Yau category \cite{Pa,FK,Ke1,A,KY,Pl1,Pl2}.
Here we review the recent result by Plamondon \cite{Pl1,Pl2}.

Let $Q$ be the quiver corresponding to an arbitrary
skew-symmetric matrix $B$.
Define the \emph{principal extension} $\widetilde{Q}$ of $Q$
as the quiver obtained from $Q$ by adding
a new vertex $i'$ and an arrow $i'\to i$ for each $i\in I$.
Thus the set of vertices in $\widetilde{Q}$
is given by $\widetilde{I}:=I\sqcup I'$ with
 $I':=\{i'\ |\ i\in I\}$.
By mutations one can associate 
a quiver $\widetilde{Q}(t)$ with  each $t\in{\mathbb{T}}_n$,
where $\widetilde{Q}(t)$ contains $Q(t)$ as a full subquiver.
Note that we do not make mutations for `frozen indices' $i'\in I'$.

We fix a base field $K$ to be an infinite one.
Since $\widetilde{Q}$ does not have loops and 2-cycles,
we have the following result by \cite[Corollary 7.4]{DWZ1}.

\begin{proposition}
There exists a non-degenerate potential $W$ on $\widetilde{Q}$.
\end{proposition}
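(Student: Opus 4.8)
The plan is to deduce the proposition directly from \cite[Corollary 7.4]{DWZ1}, whose conclusion is exactly that, over an infinite base field, every quiver without loops or $2$-cycles admits a non-degenerate potential. Thus the only point that genuinely requires checking is that the principal extension $\widetilde{Q}$ satisfies the hypotheses of that corollary.

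First I would record that $Q$, being the quiver attached to the skew-symmetric matrix $B$, has neither loops nor $2$-cycles by our standing convention (see Section 2.1). Passing to $\widetilde{Q}$, one adjoins for each $i\in I$ a single new vertex $i'\in I'$ together with a single arrow $i'\to i$. Since $i'\neq i$, no loop is created. Since the only arrow incident with the pair $\{i,i'\}$ is $i'\to i$ --- there is no arrow $i\to i'$, and there are no arrows among the vertices of $I'$ --- no new $2$-cycle is created, and $Q$ itself contributes none. Hence $\widetilde{Q}$ has no loops and no $2$-cycles.

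Second, since the base field $K$ has been fixed to be infinite, \cite[Corollary 7.4]{DWZ1} applies to $\widetilde{Q}$ and furnishes a non-degenerate potential $W$, i.e.\ a (possibly infinite) linear combination of cyclic paths in $\widetilde{Q}$ such that every quiver with potential obtained from $(\widetilde{Q},W)$ by a finite sequence of mutations --- which, for the applications that follow, are performed only at the unfrozen vertices $i\in I$ --- is again reduced ($2$-acyclic). This is precisely the assertion of the proposition.

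There is no real obstacle here: all the substance is contained in the genericity theorem of Derksen--Weyman--Zelevinsky, and the only step demanding attention is the routine verification that adjoining the frozen sources $i'$ to $Q$ introduces neither loops nor $2$-cycles, so that the cited result is applicable.
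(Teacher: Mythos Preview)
Your proof is correct and follows essentially the same approach as the paper: verify that $\widetilde{Q}$ has no loops or $2$-cycles (which the paper simply asserts in the sentence preceding the proposition), and then invoke \cite[Corollary~7.4]{DWZ1} over the infinite base field $K$. Your more detailed verification that the principal extension introduces no new loops or $2$-cycles is a welcome elaboration of what the paper leaves implicit.
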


{}From now on we assume $W$ is a non-degenerate potential on $\widetilde{Q}$.
We denote by
\begin{align}
{\mathcal{C}}:={\mathcal{C}}_{(\widetilde{Q},W)}
\end{align}
the \emph{cluster category} associated to the quiver with potential $(\widetilde{Q},W)$, which is not necessarily Hom-finite \cite{A,KY,Pl1}.
The category ${\mathcal{C}}$ canonically contains a rigid object
\begin{align}
T=\bigoplus_{i\in \widetilde{I}}T_i\in{\mathcal{C}}
\end{align}
such that ${\operatorname{End}\nolimits}_{{\mathcal{C}}}(T)$
 is isomorphic to the Jacobian algebra of 
$(\widetilde{Q},W)$.
For each $t\in{\mathbb{T}}_n$, we have a rigid object
\begin{align}
T(t)=\bigoplus_{i\in \widetilde{I}}T_i(t)\in{\mathcal{C}}
\end{align}
by applying successive mutations (see \cite[Section 2.6]{Pl1}).
We have $T_{i'}(t)=T_{i'}$ for any $i'\in I'$.

{}From the definition of $T(t)$ and the non-degeneracy of $(\widetilde{Q},W)$
we have the following description of $\widetilde{Q}(t)$.

\begin{proposition}\label{quiver and object}
For each $t\in{\mathbb{T}}_n$, the quiver
 of ${\operatorname{End}\nolimits}_{{\mathcal{C}}}(T(t))$
 is $\widetilde{Q}(t)$,
where each vertex $i\in\widetilde{I}$ corresponds
 to the direct summand $T_i(t)$
of $T(t)$.
\end{proposition}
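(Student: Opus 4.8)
The plan is to prove this by induction on the distance $d(t_0,t)$ in $\mathbb{T}_n$, the inductive statement being the slightly stronger assertion that ${\operatorname{End}\nolimits}_{\mathcal{C}}(T(t))$ is isomorphic to the Jacobian algebra $\mathcal{P}(\widetilde{Q}(t),W(t))$ of a non-degenerate quiver with potential whose underlying quiver is $\widetilde{Q}(t)$, compatibly with the labelling of summands. The base case $t=t_0$ is immediate: by construction ${\operatorname{End}\nolimits}_{\mathcal{C}}(T)=\mathcal{P}(\widetilde{Q},W)$, and since $\widetilde{Q}$ has no loops or $2$-cycles, $W$ lies in $\widehat{\mathfrak{m}}^3$, so the cyclic derivatives generating the Jacobian ideal lie in $\widehat{\mathfrak{m}}^2$ and the Gabriel quiver of $\mathcal{P}(\widetilde{Q},W)$ is $\widetilde{Q}$ itself.

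For the inductive step I would take an edge of $\mathbb{T}_n$ labelled $k\in I$ joining $t$ to $t'$ (recall that one never mutates at a frozen vertex $i'\in I'$), assume the claim at $t$, and write $T(t')=\mu_k(T(t))$ for the Iyama--Yoshino-type mutation of the rigid object in the $2$-Calabi--Yau category $\mathcal{C}$. The first key input is the compatibility of this categorical mutation with the Derksen--Weyman--Zelevinsky mutation of quivers with potential, in the (not necessarily Hom-finite) generality established by Keller--Yang and Plamondon \cite{KY,Pl1,Pl2}: one has ${\operatorname{End}\nolimits}_{\mathcal{C}}(\mu_k(T(t)))\cong\mathcal{P}(\mu_k(\widetilde{Q}(t),W(t)))$. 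The second input is that non-degeneracy of $W$, which by \cite{DWZ1} is preserved under all iterated mutations, forces $\mu_k(\widetilde{Q}(t),W(t))$ to be right-equivalent to a reduced quiver with potential whose underlying quiver is the ordinary combinatorial mutation $\mu_k(\widetilde{Q}(t))$, and the latter is $\widetilde{Q}(t')$ by definition of the mutation class. Since $k\in I$ is not frozen, the mutation rule treats the arrows incident to $I'$ on the same footing as all others, so no separate argument is needed for the frozen part; and because $\mu_k$ replaces only the $k$-th summand $T_k(t)$, the identification of vertices with summands is transported correctly. Applying once more the fact that the Gabriel quiver of a reduced Jacobian algebra equals its underlying quiver completes the step.

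The hard part will be packaging the second input cleanly: namely verifying, rather than merely asserting, that non-degeneracy survives mutation \emph{restricted to} $I$ and that the reduction procedure in the DWZ mutation does not collapse any arrows of $\widetilde{Q}(t')$, so that the Gabriel quiver is genuinely $\widetilde{Q}(t')$ and not a proper quotient. In the write-up this is not new work — it is contained in \cite{DWZ1,KY,Pl1,Pl2} — so the proposition becomes a matter of assembling these references, with one sentence devoted to explaining why the frozen vertices $i'\in I'$ never interfere, which is exactly because they are never mutated and the mutation rule at a non-frozen vertex is insensitive to the frozen/non-frozen distinction.
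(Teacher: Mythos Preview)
Your proposal is correct and matches the paper's approach: the paper does not give an explicit proof but simply presents the proposition as a consequence of ``the definition of $T(t)$ and the non-degeneracy of $(\widetilde{Q},W)$'', which is precisely the inductive argument you outline (categorical mutation compatible with DWZ mutation via \cite{KY,Pl1}, with non-degeneracy from \cite{DWZ1} ensuring the reduced quiver is the combinatorial mutation). Your write-up is a faithful and somewhat more detailed unpacking of what the paper leaves implicit.
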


As usual we denote by ${\operatorname{add}\nolimits}\, T(t)$
 the full subcategory of ${\mathcal{C}}$ consisting of
all direct summands of finite direct sums of copies of $T(t)$.
We denote by
\begin{align}
{\operatorname{pr}\nolimits}\, T(t)
\end{align}
the full subcategory of ${\mathcal{C}}$ consisting of objects
$M\in{\mathcal{C}}$ such that there exists a triangle
\begin{eqnarray}\label{resolution}
T''\to T'\to M\to T''[1]
\end{eqnarray}
in ${\mathcal{C}}$ with $T',T''\in{\operatorname{add}\nolimits}\, T(t)$.

\begin{proposition}[{\cite[Proposition 2.7, Corollary 2.12]{Pl1}}]
\label{KS}
(1) We have ${\operatorname{pr}\nolimits}\,
 T(t)={\operatorname{pr}\nolimits}\, T$ for any $t\in{\mathbb{T}}_n$.
\par
(2) The category ${\operatorname{pr}\nolimits}\, T$
 is Krull-Schmidt in the sense that any object can be
written as a finite direct sum of objects whose endomorphism rings are local.
\end{proposition}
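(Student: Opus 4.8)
The plan is to derive both parts from the behaviour of the functor $F=\operatorname{Hom}\nolimits_{\mathcal C}(T,-)\colon \mathcal C\to \operatorname{Mod}\nolimits\Lambda$, where $\Lambda=\operatorname{End}\nolimits_{\mathcal C}(T)$ is the Jacobian algebra of $(\widetilde Q,W)$, together with the exchange triangles controlling a single mutation of $T$. The essential point throughout is that $\mathcal C$ need not be Hom-finite, so the classical arguments available for Hom-finite $2$-Calabi--Yau categories with cluster-tilting objects (Keller--Reiten, Iyama--Yoshino) cannot be used verbatim; one must instead work entirely inside the subcategory $\operatorname{pr}\nolimits T$ of presentable objects and keep track of the morphisms that factor through $\operatorname{add}\nolimits T[1]$.

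For part (1) I would reduce to a single mutation step: it suffices to prove $\operatorname{pr}\nolimits\mu_k(T)=\operatorname{pr}\nolimits T$ for each mutation at a non-frozen vertex $k\in I$, after which $\operatorname{pr}\nolimits T(t)=\operatorname{pr}\nolimits T$ follows by induction on the distance from $t_0$ in $\mathbb T_n$. For the single step I would use the two exchange triangles $T_k\to E\to T_k^{\ast}\to T_k[1]$ and $T_k^{\ast}\to E'\to T_k\to T_k^{\ast}[1]$ attached to the mutation, with $E,E'\in\operatorname{add}\nolimits\bigl(\bigoplus_{i\neq k}T_i\bigr)$. The first triangle presents $T_k^{\ast}$ over $\operatorname{add}\nolimits T$, whence $\operatorname{add}\nolimits\mu_k(T)\subseteq\operatorname{pr}\nolimits T$; the second does the symmetric thing, giving $\operatorname{add}\nolimits T\subseteq\operatorname{pr}\nolimits\mu_k(T)$. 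Given $M\in\operatorname{pr}\nolimits\mu_k(T)$, i.e.\ a presentation triangle of $M$ with the two $\operatorname{add}$-terms in $\operatorname{add}\nolimits\mu_k(T)$, one replaces each occurrence of $T_k^{\ast}$ by its exchange triangle over $\operatorname{add}\nolimits T$ and reassembles, via the octahedral axiom and the fact that $\operatorname{pr}\nolimits T$ is closed under extensions, a presentation of $M$ over $\operatorname{add}\nolimits T$; this gives $\operatorname{pr}\nolimits\mu_k(T)\subseteq\operatorname{pr}\nolimits T$, and the reverse inclusion is symmetric.

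For part (2) I would exploit the behaviour of $F$ on $\operatorname{pr}\nolimits T$. Applying $F$ to a presentation triangle $T''\to T'\to M\to T''[1]$ and using the rigidity of $T$ (so that $\operatorname{Hom}\nolimits_{\mathcal C}(T,T''[1])=0$), one obtains an exact sequence $FT''\to FT'\to FM\to 0$ in which $FT'',FT'$ are finitely generated projective $\Lambda$-modules; hence $F(\operatorname{pr}\nolimits T)\subseteq\operatorname{fp}\nolimits\Lambda$, the category of finitely presented $\Lambda$-modules, and one checks (essential surjectivity and fullness by lifting projective presentations, faithfulness modulo $\operatorname{add}\nolimits T[1]$ by a direct computation of morphism spaces in $\mathcal C$) that $F$ induces an equivalence $\operatorname{pr}\nolimits T/(\operatorname{add}\nolimits T[1])\simeq\operatorname{fp}\nolimits\Lambda$. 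Since $\Lambda$ is a pseudocompact $K$-algebra with only finitely many isomorphism classes of simple modules---one for each vertex of $\widetilde Q$---it is semiperfect, so $\operatorname{fp}\nolimits\Lambda$ is a Krull--Schmidt category. To transfer this to $\operatorname{pr}\nolimits T$ itself, I would show that for $M\in\operatorname{pr}\nolimits T$ the surjection $\operatorname{End}\nolimits_{\mathcal C}(M)\to\operatorname{End}\nolimits_{\Lambda}(FM)$ has kernel contained in the Jacobson radical of $\operatorname{End}\nolimits_{\mathcal C}(M)$---using the completeness of $\Lambda$ to see that this kernel, consisting of endomorphisms that factor through $\operatorname{add}\nolimits T[1]$, is topologically nilpotent; then idempotents lift from the semiperfect ring $\operatorname{End}\nolimits_{\Lambda}(FM)$ to $\operatorname{End}\nolimits_{\mathcal C}(M)$, which is therefore semiperfect, and $\operatorname{pr}\nolimits T$ is Krull--Schmidt.

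The main obstacle is exactly the control of the ideal of morphisms factoring through $\operatorname{add}\nolimits T[1]$ in the last step. In the Hom-finite situation this ideal is automatically nilpotent and everything is routine; in general one has to bring in the adic (pseudocompact) structure of the Jacobian algebra---concretely, by lifting the whole picture to the derived category of the Ginzburg dg algebra $\Gamma(\widetilde Q,W)$, where the subcategory of $\Gamma$-presentable objects is filtered by powers of the arrow ideal and these filtrations are compatible with $F$---which is the technical heart of Plamondon's work and the reason the result is imported here rather than reproved. The closure properties of $\operatorname{pr}\nolimits T$ (under extensions and under the exchange triangles) used in part (1) are likewise more delicate than in the Hom-finite case, but are handled by the same dg-algebra bookkeeping.
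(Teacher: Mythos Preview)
The paper does not prove this proposition; it is stated with a citation to \cite[Proposition 2.7, Corollary 2.12]{Pl1} and no argument is given. Your sketch is a faithful outline of the strategy carried out in Plamondon's paper---mutation-by-mutation invariance of $\operatorname{pr}\nolimits T$ via the exchange triangles and the octahedral axiom for part (1), and the equivalence $\operatorname{pr}\nolimits T/(\operatorname{add}\nolimits T[1])\simeq \operatorname{fp}\nolimits\Lambda$ combined with semiperfectness of the pseudocompact Jacobian algebra for part (2)---and you correctly identify the non-Hom-finite control of the ideal $(\operatorname{add}\nolimits T[1])$ as the technical crux handled via the Ginzburg dg algebra. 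Since the paper itself offers no proof to compare against, there is nothing further to add.
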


Now let us introduce the following notion.

\begin{definition}
Let $t\in{\mathbb{T}}_n$.
For an object $T'=\bigoplus_{i\in\widetilde{I}}T_i(t)^{\ell_i}$
 in ${\operatorname{add}\nolimits}\, T(t)$,
we put
\begin{align}
[T']_{T(t)}:=(\ell_i)_{i\in\widetilde{I}}\in{\mathbb{Z}}^{\widetilde{I}}.
\end{align}
For an object $M\in{\operatorname{pr}\nolimits}\, T(t)$,
 we take a triangle \eqref{resolution}
and define the \emph{index} of $M$ by
\begin{align}
{\operatorname{ind}\nolimits}_{T(t)}(M)
:=[T']_{T(t)}-[T'']_{T(t)}\in{\mathbb{Z}}^{\widetilde{I}}.
\end{align}
This is independent of the choice of the triangle \eqref{resolution}
by Proposition \ref{KS}(2).
\end{definition}

We have the following relationship between indices and principal
coefficients. 

\begin{proposition}[{\cite[Corollary 3.10]{Pl2}, \cite[Theorem 7.13(b)]{Ke1}}]
\label{y and index}
Let $y_i(t)$ be a coefficient in 
${\mathcal{A}}_{\bullet}(B,x,y)$.
For $t\in{\mathbb{T}}_n$, we put $y_j(t)=\prod_{i\in I}y_i^{c_{ij}(t)}$
for any $j\in I$. Then we have
\begin{align}
-{\operatorname{ind}\nolimits}_{T(t)}(T_i[1])=(c_{ij}(t))_{j\in I}
\in{\mathbb{Z}}^I
\end{align}
for any $i\in I$, where we embed ${\mathbb{Z}}^I$ into
 ${\mathbb{Z}}^{\widetilde{I}}$ naturally.
\end{proposition}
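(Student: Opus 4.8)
The plan is to prove Proposition~\ref{y and index} by induction on the distance from the root $t_0$ to $t$ in $\mathbb{T}_n$, showing that the two sides of the asserted identity obey one and the same mutation recursion with the same initial value.

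For the base case $t=t_0$ one has $y_j(t_0)=y_j$, hence $c_{ij}(t_0)=\delta_{ij}$ and $(c_{ij}(t_0))_{j\in I}=e_i$; on the other hand $T_i(t_0)[1]=T_i[1]$ sits in the split triangle $T_i\to 0\to T_i[1]\to T_i[1]$ with $T'=0$ and $T''=T_i$ in $\operatorname{add} T$, so $\operatorname{ind}_{T}(T_i[1])=[0]_{T}-[T_i]_{T}=-e_i\in\mathbb{Z}^{\widetilde{I}}$. Thus $-\operatorname{ind}_{T}(T_i[1])=e_i=(c_{ij}(t_0))_{j\in I}$, the $I'$-components being zero.

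For the inductive step fix an edge $t\xrightarrow{\ k\ }t'$ of $\mathbb{T}_n$ with $k\in I$ (we never mutate at frozen vertices $i'\in I'$). On the combinatorial side $y_j(t)$ lives in $\mathrm{Trop}(y)$, so the exchange relation~\eqref{eq:coef} together with~\eqref{eq:trop} gives the standard piecewise-linear mutation rule for the rows of the $C$-matrix: $c_{ik}(t')=-c_{ik}(t)$, while for $j\ne k$ the entry $c_{ij}(t')$ equals $c_{ij}(t)$ plus a term depending on $c_{ik}(t)$ and $B_{kj}(t)$ whose shape has two branches selected by the sign of $c_{ik}(t)$. On the categorical side $T_k(t)$ and $T_k(t')=\mu_k(T_k(t))$ are linked by the two exchange triangles $T_k(t')\to E\to T_k(t)\to T_k(t')[1]$ and $T_k(t)\to E'\to T_k(t')\to T_k(t)[1]$, where $E$ and $E'$ lie in $\operatorname{add}$ of the non-pivot summands of $T(t)$ and are read off from the arrows at $k$ in $\widetilde{Q}(t)$; by Proposition~\ref{quiver and object} this is the quiver of $\operatorname{End}_{\mathcal{C}}(T(t))$, and its full subquiver on $I$ is $Q(t)$, so the relevant arrow counts among the vertices of $I$ are the entries $B_{kj}(t)$. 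Plamondon's index-mutation formula --- the analogue of the Dehy--Keller rule valid in the possibly Hom-infinite category $\mathcal{C}$, established in \cite{Pl1,Pl2} by working inside $\operatorname{pr} T(t)$, where indices are well defined by Proposition~\ref{KS} --- then expresses $\operatorname{ind}_{T(t')}(M)$ as a piecewise-linear function of $\operatorname{ind}_{T(t)}(M)$ with branches selected by the sign of the $k$-th coordinate. Applying this with $M=T_i[1]$, using the inductive hypothesis to identify the $k$-th coordinate of $\operatorname{ind}_{T(t)}(T_i[1])$ with $-c_{ik}(t)$, and using that the $B$-part of $\widetilde{Q}(t)$ is $B(t)$, one checks branch by branch that $-\operatorname{ind}_{T(t')}(T_i[1])$ satisfies exactly the $C$-matrix rule above; the $I'$-components stay zero because they vanish at $t_0$ and the rule preserves this, no mutation being performed at $I'$.

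The main obstacle is precisely the index-mutation formula in the not-necessarily Hom-finite setting: in the classical $2$-Calabi--Yau case it rests on Hom-finiteness, and the content of \cite{Pl1,Pl2} (and of \cite[Theorem 7.13]{Ke1}) is to push it through for an arbitrary quiver with non-degenerate potential by confining everything to $\operatorname{pr} T(t)$. Granting that formula --- together with the sign-coherence of the vectors $-\operatorname{ind}_{T(t)}(T_i[1])$, which belongs to the same circle of results and makes the case distinctions unambiguous --- the rest is the routine two-branch sign bookkeeping indicated above; in the write-up I would therefore invoke \cite[Corollary 3.10]{Pl2} and \cite[Theorem 7.13(b)]{Ke1} for the formula and carry out the comparison of the two recursions.
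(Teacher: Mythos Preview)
The paper does not give its own proof of this proposition: it is stated purely as a citation of \cite[Corollary 3.10]{Pl2} and \cite[Theorem 7.13(b)]{Ke1}, with no argument supplied. So there is nothing in the paper to compare your proposal against; the authors simply import the result as a black box.

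Your sketch is a reasonable outline of how the proof in those references actually proceeds: one shows that the negative indices $-\operatorname{ind}_{T(t)}(T_i[1])$ and the $C$-matrix columns $(c_{ij}(t))_j$ satisfy the same piecewise-linear mutation recursion from the same initial data, the key input being Plamondon's extension of the Dehy--Keller index-mutation formula to the not-necessarily-Hom-finite setting. You are also right that sign-coherence is what makes the branch selection well defined. Since you explicitly say you would invoke \cite[Corollary 3.10]{Pl2} and \cite[Theorem 7.13(b)]{Ke1} for that formula, your write-up would in the end amount to the same thing the paper does---cite those sources---only with the mechanism spelled out a bit. There is no gap in your plan beyond the acknowledged reliance on Plamondon's formula, which is exactly the content of the cited references.
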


\begin{remark} Proposition \ref{y and index} shows that
the {\em principal coefficients} of a quiver $Q$ are determined by the
{\em $g$-vectors\/} of the opposite quiver $Q^{\mathrm{op}}$. This can
also be deduced from Conjecture~1.6 (proved in Theorem~1.7)
of \cite{DWZ2}
 using Remark~7.15 of \cite{FZ3} on `Langlands duality'.
\end{remark}

The following analogue of \cite[Theorem 2.3]{DK} is
an important ingredient in our proof.

\begin{proposition}[{\cite[Proposition 3.1]{Pl2}}]\label{index preparation}
Let $X,Y\in{\operatorname{pr}\nolimits}\, T$ be rigid objects
 and $t\in{\mathbb{T}}_n$.
Then $X\simeq Y$ if and only if ${\operatorname{ind}\nolimits}_{T(t)}(X)
={\operatorname{ind}\nolimits}_{T(t)}(Y)$.
\end{proposition}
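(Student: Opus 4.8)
The plan is to adapt the argument of Dehy--Keller \cite[Theorem~2.3]{DK} for Hom-finite $2$-Calabi--Yau categories to the possibly Hom-infinite category $\mathcal C$, replacing the dimension counts and the Euler form on $K_0(\operatorname{add}T)$ used there by Plamondon's module-theoretic machinery for the subcategory $\operatorname{pr}T$. The ``only if'' direction is immediate, since $\operatorname{ind}_{T(t)}$ is an isomorphism invariant of objects of $\operatorname{pr}T(t)=\operatorname{pr}T$ (Proposition~\ref{KS}(1)), well defined independently of the chosen triangle \eqref{resolution} by Proposition~\ref{KS}(2). For the converse, observe first that $T(t)$ is again a rigid object of $\mathcal C$ and that all the structural results of \cite{Pl1,Pl2} quoted above hold verbatim with $T(t)$ in place of $T$; since moreover $\operatorname{ind}_{T(t)}$ is computed by $\operatorname{add}T(t)$-presentations exactly as $\operatorname{ind}_T$ is computed by $\operatorname{add}T$-presentations, it suffices to prove that a rigid object $M\in\operatorname{pr}T$ is determined up to isomorphism by $\operatorname{ind}_T(M)\in\mathbb Z^{\widetilde I}$.

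So let $X,Y\in\operatorname{pr}T$ be rigid with $\operatorname{ind}_TX=\operatorname{ind}_TY$. Since $\operatorname{Ext}^1_{\mathcal C}(T,T')=0$ for $T'\in\operatorname{add}T$, the map $T'\to M$ coming from \eqref{resolution} is a right $\operatorname{add}T$-approximation, and using that $\operatorname{add}T$ and $\operatorname{pr}T$ are Krull--Schmidt (Proposition~\ref{KS}(2)) one extracts minimal presentation triangles
\begin{equation}\label{minres}
T_1^X\to T_0^X\to X\to T_1^X[1],\qquad T_1^Y\to T_0^Y\to Y\to T_1^Y[1],
\end{equation}
in which the connecting morphisms are radical; equivalently $T_0^\bullet$ and $T_1^\bullet$ have no common indecomposable summand. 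From $[T_0^X]-[T_1^X]=\operatorname{ind}_TX=\operatorname{ind}_TY=[T_0^Y]-[T_1^Y]$ in $\mathbb Z^{\widetilde I}$, together with these two disjointness conditions, a short Krull--Schmidt bookkeeping on multiplicities forces $T_0^X\cong T_0^Y=:T_0$ and $T_1^X\cong T_1^Y=:T_1$.

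The remaining, and main, step is to show that a radical morphism $T_1\to T_0$ in $\operatorname{add}T$ whose cone is rigid is determined, up to the evident $\operatorname{Aut}(T_0)\times\operatorname{Aut}(T_1)$-action, by $T_0$ and $T_1$; equivalently, that such a cone is unique up to isomorphism. Applying $\operatorname{Hom}_{\mathcal C}(T,-)$ to \eqref{minres} and using $\operatorname{Ext}^1_{\mathcal C}(T,T_1)=0$ presents the $\Lambda$-modules $\operatorname{Hom}_{\mathcal C}(T,X)$ and $\operatorname{Hom}_{\mathcal C}(T,Y)$, $\Lambda=\operatorname{End}_{\mathcal C}(T)$, by minimal projective presentations with the same terms; it then remains to show that the rigidity of $X$ and of $Y$ pins down the connecting $\Lambda$-morphism, and hence the module, up to isomorphism, and that the original object is reconstructed from this module together with its $\operatorname{add}T[1]$-summands (which are controlled separately, the functor $\operatorname{Hom}_{\mathcal C}(T,-)$ killing $\operatorname{add}T[1]$). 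Both points are handled by the arguments of \cite{DK}, with the finite-dimensionality of Hom- and Ext-spaces and the nondegeneracy of the antisymmetrized Euler form on $K_0(\operatorname{add}T)$ --- which are the only places Hom-finiteness is used --- replaced by Plamondon's explicit description of $\operatorname{pr}T$, of the functor $\operatorname{Hom}_{\mathcal C}(T,-)$, and of $\operatorname{Ext}^1_{\mathcal C}$ in terms of $\Lambda$-modules \cite{Pl1,Pl2}. This last step is the technical heart of the proof: it is precisely where the possible Hom-infiniteness of $\mathcal C$ must be circumvented, and where the finite module-theoretic surrogates supplied by \cite{Pl1,Pl2} do the real work.
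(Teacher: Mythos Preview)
The paper does not prove this proposition at all: it is quoted verbatim as \cite[Proposition~3.1]{Pl2}, with no argument supplied beyond the remark that it is ``an analogue of \cite[Theorem~2.3]{DK}''. So there is no in-paper proof to compare against; you are effectively attempting to reconstruct Plamondon's argument.

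Your outline follows the correct strategy and the first two reductions are fine: the ``only if'' direction is trivial, the reduction from $T(t)$ to $T$ is legitimate by Proposition~\ref{KS}(1), and the Krull--Schmidt argument showing that minimal $\operatorname{add}T$-presentations of $X$ and $Y$ must have isomorphic terms $T_0,T_1$ is correct and standard.

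The gap is in your final paragraph. You correctly identify it as ``the technical heart'', but what you write there is not a proof --- it is a promissory note. Saying that rigidity ``pins down the connecting $\Lambda$-morphism'' and that ``both points are handled by the arguments of \cite{DK}'' with Plamondon's substitutes does not tell the reader \emph{how}. In the Hom-finite case \cite{DK} uses a dimension count via the antisymmetrized Euler form to show that the orbit of rigid presentations is open and dense, hence unique; you have not indicated what replaces this orbit argument when the Hom-spaces are infinite-dimensional, nor why the passage from the $\Lambda$-module $\operatorname{Hom}_{\mathcal C}(T,X)$ back to $X$ (modulo $\operatorname{add}T[1]$-summands) is bijective on isomorphism classes in this generality. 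These are exactly the points where Plamondon does real work in \cite{Pl1,Pl2}, and your sketch does not reproduce that work --- it only asserts that it can be done. As written, the proposal is a correct road map but not a proof; since the paper itself simply cites \cite{Pl2}, the honest move is to do the same rather than to present an incomplete reconstruction.
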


Let us introduce Caldero-Chapoton-type map.

\begin{definition}
Define a full subcategory
\begin{align}
{\mathcal{D}}:=\{M\in{\operatorname{pr}\nolimits}\,
 T\cap{\operatorname{pr}\nolimits}\,
 T[-1]\mid \dim_K{\operatorname{Hom}\nolimits}_{{\mathcal{C}}}(T,M[1])<\infty\}.
\end{align}
For any object $M\in{\mathcal{D}}$, we define an element
${\mathbb{X}}_M$ in
$\mathbb{Z}[x^{\pm1},y]$ by
\begin{align}
\widehat{y}_j&:=y_j\prod_{i\in I}x_i^{B_{ij}}\quad (j\in I),\\
{\mathbb{X}}_M&:=
\left(
\prod_{i\in I}x_i^{{\operatorname{ind}\nolimits}_T(M)_i}
y_i^{{\operatorname{ind}\nolimits}_T(M)_{i'}}
\right)
\sum_{e\in{\mathbb{Z}}^{\widetilde{I}}}
\chi({\operatorname{Gr}\nolimits}_e(
{\operatorname{Hom}\nolimits}_{{\mathcal{C}}}(T,M[1])))
\prod_{j\in I}\widehat{y}_j^{e_j},
\end{align}
where ${\operatorname{Gr}\nolimits}_e$ is the quiver Grassmannian
 and $\chi$ is the Euler characteristic.
\end{definition}

For each $t\in{\mathbb{T}}_n$ we have $T(t)\in{\mathcal{D}}$
 by Proposition \ref{KS}(1).
The following description of cluster variables
in $\mathcal{A}_{\bullet}(B,x,y)$  is crucial in our proof.

\begin{proposition}\label{CT and F}
Let $x_i(t)$ be a cluster variable in $\mathcal{A}_{\bullet}(B,x,y)$.
Then, we have
\begin{align}
x_i(t)={\mathbb{X}}_{T_i(t)}
\end{align}
for any $t\in{\mathbb{T}}_n$ and $i\in I$.
\end{proposition}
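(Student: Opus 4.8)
The plan is to prove the identity by induction on the number of mutations needed to pass from a fixed base vertex $t_0\in\mathbb{T}_n$, carrying the initial seed and with $T(t_0)=T$, to a general $t\in\mathbb{T}_n$. What I would verify is that the family $(\mathbb{X}_{T_i(t)})_{i\in I,\,t\in\mathbb{T}_n}$ satisfies the two conditions characterising the cluster variables of $\mathcal{A}_{\bullet}(B,x,y)$: that $\mathbb{X}_{T_i(t_0)}=x_i$ for all $i\in I$, and that along each edge of $\mathbb{T}_n$ labelled $k$ the exchange relation \eqref{eq:clust} holds with $x_i(t)$ replaced by $\mathbb{X}_{T_i(t)}$ and with $y_k(t)$ the principal coefficient $[y_k(t)]_{\mathbf{T}}$. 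By Proposition \ref{KS}(1) every $T_i(t)$ lies in $\mathcal{D}$, so each $\mathbb{X}_{T_i(t)}$ is defined; I would also record at the outset that for the frozen summands $\mathbb{X}_{T_{i'}}=y_i$, since $T$ is rigid and $\operatorname{ind}_T(T_{i'})$ has $i'$-th coordinate $1$ and all others $0$.

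For the base case, rigidity of $T$ gives $\operatorname{Hom}_{\mathcal{C}}(T,T_i[1])=0$, so in $\mathbb{X}_{T_i}$ only the summand indexed by $e=0$ survives, contributing $\chi(\operatorname{Gr}_0(0))=1$; since $\operatorname{ind}_T(T_i)$ has $i$-th coordinate $1$ and all other coordinates $0$, the monomial prefactor is $x_i$, whence $\mathbb{X}_{T_i}=x_i$. For the inductive step, let $t'$ be obtained from $t$ by mutation at $k$; then $T(t')$ differs from $T(t)$ only in the $k$-th summand, and there are exchange triangles
\begin{gather*}
T_k(t')\to E\to T_k(t)\to T_k(t')[1],\\
T_k(t)\to E'\to T_k(t')\to T_k(t)[1]
\end{gather*}
in $\mathcal{C}$ with $E,E'\in\operatorname{add}(T(t)/T_k(t))$, whose summand multiplicities are read off from the arrows of $\widetilde{Q}(t)$ at the vertex $k$ by Proposition \ref{quiver and object}.

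The heart of the argument is a cluster multiplication formula valid in Plamondon's possibly Hom-infinite setting \cite{Pl2}, the analogue of the formulas of Caldero--Keller and Palu, which for these two triangles reads
\[
\mathbb{X}_{T_k(t)}\,\mathbb{X}_{T_k(t')}=\mathbb{X}_{E}+\mathbb{X}_{E'},
\]
together with the fact that $\mathbb{X}$ sends direct sums to products. Invoking the mutation rule for indices and Proposition \ref{y and index} — which interprets the $I'$-exponents occurring in the prefactors of $\mathbb{X}_E$ and $\mathbb{X}_{E'}$ in terms of the principal coefficient $y_k(t)=[y_k(t)]_{\mathbf{T}}$ and of $1\oplus y_k(t)$ — one recognises the displayed identity, after dividing by $\mathbb{X}_{T_k(t)}$, as exactly the exchange relation \eqref{eq:clust} for $\mathbb{X}_{T_k(t')}$. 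Since $\mathbb{X}_{T_j(t)}=x_j(t)$ by the inductive hypothesis, this gives $\mathbb{X}_{T_k(t')}=x_k(t')$ and closes the induction.

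I expect the main obstacle to be precisely the multiplication formula and its translation into \eqref{eq:clust}: in the general situation the quiver Grassmannians $\operatorname{Gr}_e(\operatorname{Hom}_{\mathcal{C}}(T,M[1]))$ need not be proper, so the Euler-characteristic manipulations behind the formula have to be controlled via membership in $\mathcal{D}$, and one must then carry out the Fomin--Zelevinsky separation-of-additions bookkeeping by which the monomial prefactors $\prod_i x_i^{\operatorname{ind}_T(\cdot)_i}y_i^{\operatorname{ind}_T(\cdot)_{i'}}$, together with the substitution $\widehat{y}_j=y_j\prod_i x_i^{B_{ij}}$, reassemble into the coefficient $y_k(t)/(1\oplus y_k(t))$ and its complement $1/(1\oplus y_k(t))$ appearing in \eqref{eq:clust}. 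All of these ingredients are available in \cite{Pl1,Pl2}, so in the write-up the proof reduces to assembling them along the inductive scheme above.
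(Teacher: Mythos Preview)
Your proposal is correct and in substance follows the same route as the paper: the paper simply cites \cite[Theorem~3.12]{Pl1}, specialized to the principal extension matrix $\left(\begin{smallmatrix}B&-E_n\\ E_n&O\end{smallmatrix}\right)$, to conclude that $\mathbb{X}$ is a cluster character in the sense of \cite[Definition~3.10]{Pl1}, from which $x_i(t)=\mathbb{X}_{T_i(t)}$ is immediate. Your induction on mutations, using the base case $\mathbb{X}_{T_i}=x_i$, multiplicativity on direct sums, and the multiplication formula for exchange triangles, is precisely the content of ``$\mathbb{X}$ is a cluster character'' together with the standard deduction that a cluster character reproduces cluster variables; so you are unpacking what the paper encapsulates in a single citation.
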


\begin{proof}
Specializing $(B,n)$ in \cite[Theorem 3.12]{Pl1} to
\begin{align}
(\left(\begin{array}{cc}
B&-E_n\\
E_n&O
\end{array}\right),2n),
\end{align}
we have that ${\mathbb{X}}$ is a cluster character
in the sense of \cite[Definition 3.10]{Pl1}.
Now the assertion is an immediate consequence.
\end{proof}

\subsection{Proof of Theorem \ref{thm:reduced}}

Now we are ready to prove Theorem \ref{thm:reduced}.

Since $y_j(t)=y_{\omega(j)}$ for any $j\in I$, we have
 in Proposition \ref{y and index}
\begin{align}
c_{ij}(t)=\begin{cases}
1&i=\omega(j)\\
0&\mbox{otherwise}
\end{cases}
\end{align}
for any $i,j\in I$. Thus
 ${\operatorname{ind}\nolimits}_{T(t)}(T_{\omega(j)}[1])
={\operatorname{ind}\nolimits}_{T(t)}(T_j(t)[1])$
for any $j\in I$.
By Proposition \ref{index preparation} we have 
\begin{eqnarray}\label{T omega}
T_{\omega(j)}\simeq T_j(t)
\end{eqnarray}
for any $j\in I$.
By Proposition \ref{CT and F} and \eqref{T omega}, we have
\begin{align}
x_j(t)={\mathbb{X}}_{T_j(t)}={\mathbb{X}}_{T_{\omega(j)}}=x_{\omega(j)}
\end{align}
for any $j\in I$.

Finally by \eqref{T omega} and Proposition \ref{quiver and object}
we have
\begin{align}
\omega^{-1}(\widetilde{Q})=\widetilde{Q}(t)
\ \mbox{ and so }\ \omega^{-1}(Q)=Q(t).
\end{align}
\qed

\subsection{Proof of periodicities of T and Y-systems}

Now the proof of   Theorems \ref{thm:Tperiod}
and \ref{thm:Yperiod} is at hand.

As corollaries of
Theorems \ref{thm:tYperiod} and \ref{thm:XXX}
we immediately obtain the periodicities of
cluster variables
and coefficients in $\mathcal{A}(B,x,y)$ with $B=B_{\ell}(B_r)$.

\begin{corollary}
\label{cor:Xperiod1}
For $\mathcal{A}(B,x,y)$ with $B=B_{\ell}(B_r)$,
the following relations hold.
\par
(i) Half periodicity: 
$x_{\mathbf{i}}(u+h^{\vee}+\ell)
=x_{\boldsymbol\omega(\mathbf{i})}(u)$.
\par
(ii) 
 Full periodicity: 
$x_{\mathbf{i}}(u+2(h^{\vee}+\ell))
=x_{\mathbf{i}}(u)$.
\end{corollary}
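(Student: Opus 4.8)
The plan is to read off both periodicities from Theorem~\ref{thm:XXX}, feeding it the input produced by the tropical computation of Theorem~\ref{thm:tYperiod}. Recall that the mutation sequence \eqref{eq:mutseq} realizes the seeds $(B(u),x(u),y(u))$, $u\in\frac{1}{2}\mathbb{Z}$, as the seeds at the vertices of a walk in the $n$-regular tree $\mathbb{T}_n$, $n=|\mathbf{I}|$, with $u=0$ carrying the initial seed $(B,x,y)$; moreover, for the cluster algebra $\mathcal{A}_{\bullet}(B,x,y)$ with principal coefficients, the coefficient attached to $u$ is exactly the tropical $Y$-variable $[y_{\mathbf{i}}(u)]_{\mathbf{T}}$.

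First I would specialize Theorem~\ref{thm:tYperiod}(i) to $u=0$, obtaining $[y_{\mathbf{i}}(h^{\vee}+\ell)]_{\mathbf{T}}=[y_{\boldsymbol{\omega}(\mathbf{i})}(0)]_{\mathbf{T}}=[y_{\boldsymbol{\omega}(\mathbf{i})}]_{\mathbf{T}}$ for every $\mathbf{i}\in\mathbf{I}$. This is precisely the hypothesis of Theorem~\ref{thm:XXX} at the vertex corresponding to $u=h^{\vee}+\ell$, with the automorphism $\omega=\boldsymbol{\omega}$ (an involution, so $\boldsymbol{\omega}^{-1}=\boldsymbol{\omega}$). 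The theorem then gives
\[
B(h^{\vee}+\ell)=\boldsymbol{\omega}(B),\qquad x_{\mathbf{i}}(h^{\vee}+\ell)=x_{\boldsymbol{\omega}(\mathbf{i})}(0),\qquad y_{\mathbf{i}}(h^{\vee}+\ell)=y_{\boldsymbol{\omega}(\mathbf{i})}(0),
\]
the first of which was already recorded in \eqref{eq:Bperiod}.

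To upgrade this single comparison to $x_{\mathbf{i}}(u+h^{\vee}+\ell)=x_{\boldsymbol{\omega}(\mathbf{i})}(u)$ (and the analogue for $y$) for all $u\in\frac{1}{2}\mathbb{Z}$, I would propagate along \eqref{eq:mutseq} by induction on the number of mutation steps from $u=0$. The one point needing attention is that the composite mutation used at a step $u\to u+\frac{1}{2}$ must coincide, after relabeling vertices by $\boldsymbol{\omega}$, with the one used at $u+h^{\vee}+\ell\to u+h^{\vee}+\ell+\frac{1}{2}$; this follows from Lemma~\ref{lem:Qmut}(ii): when $h^{\vee}+\ell$ is even, $\boldsymbol{\omega}$ fixes $Q$ and is equivariant for the $\mu^{\circ}_{\pm},\mu^{\bullet}_{\pm}$, while when $h^{\vee}+\ell$ is odd the half-period phase shift of \eqref{eq:mutseq} induced by the odd shift is exactly compensated by $\boldsymbol{\omega}(Q)=\boldsymbol{r}(Q)$. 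Running the induction in both directions from $u=0$ yields (i), and then (ii) follows by applying (i) twice and using $\boldsymbol{\omega}^{2}=\mathrm{id}$, since
\[
x_{\mathbf{i}}(u+2(h^{\vee}+\ell))=x_{\boldsymbol{\omega}(\mathbf{i})}(u+h^{\vee}+\ell)=x_{\boldsymbol{\omega}^{2}(\mathbf{i})}(u)=x_{\mathbf{i}}(u).
\]
I do not expect any serious obstacle: the substantive work is already carried by Theorem~\ref{thm:tYperiod} and Theorem~\ref{thm:XXX}, and the only care required is the purely combinatorial phase-matching just described, so the corollary is indeed immediate as announced.
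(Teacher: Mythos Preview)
Your argument is correct and follows exactly the route the paper intends: feed Theorem~\ref{thm:tYperiod} into Theorem~\ref{thm:XXX} to obtain the seed identity at $u=h^{\vee}+\ell$, then propagate along the mutation sequence using the phase compatibility recorded in \eqref{eq:Bperiod} (equivalently, Lemma~\ref{lem:Qmut}(ii)). The paper states the corollary without further explanation beyond the remark preceding \eqref{eq:Bperiod}; your write-up simply makes explicit the induction step and the parity bookkeeping that the paper leaves to the reader.
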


\begin{corollary}
\label{cor:Yperiod1}
For $\mathcal{G}(B,y)$ with $B=B_{\ell}(B_r)$,
the following relations hold.
\par
(i) Half periodicity: 
$y_{\mathbf{i}}(u+h^{\vee}+\ell)
=y_{\boldsymbol\omega(\mathbf{i})}(u)$.
\par
(ii) 
 Full periodicity: 
$y_{\mathbf{i}}(u+2(h^{\vee}+\ell))
=y_{\mathbf{i}}(u)$.
\end{corollary}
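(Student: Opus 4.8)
The plan is to obtain Corollary \ref{cor:Yperiod1} by feeding the tropical half periodicity of Theorem \ref{thm:tYperiod} into the abstract periodicity theorem, Theorem \ref{thm:XXX}, exactly as one does for Corollary \ref{cor:Xperiod1}. Recall that $B=B_{\ell}(B_r)$ is skew-symmetric by construction, so Theorem \ref{thm:XXX} is applicable. First I would localize the statement at $u=0$. Let $t\in\mathbb{T}_n$ ($n=|\mathbf{I}|$) be the vertex of the cluster pattern of $\mathcal{A}(B,x,y)$ obtained from the initial vertex $t_0$ by performing the initial segment of the mutation sequence \eqref{eq:mutseq} that carries $u=0$ to $u=h^{\vee}+\ell$; thus the seed at $t$ is $(B(h^{\vee}+\ell),x(h^{\vee}+\ell),y(h^{\vee}+\ell))$. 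Evaluating Theorem \ref{thm:tYperiod}(i) at $u=0$ gives, in $\mathrm{Trop}(y)$,
\begin{align}
[y_{\mathbf{i}}(h^{\vee}+\ell)]_{\mathbf{T}}=[y_{\boldsymbol{\omega}(\mathbf{i})}(0)]_{\mathbf{T}}=y_{\boldsymbol{\omega}(\mathbf{i})}\qquad(\mathbf{i}\in\mathbf{I}),
\end{align}
since the coefficients at $u=0$ are the free generators of $\mathrm{Trop}(y)$. Hence the hypothesis of Theorem \ref{thm:XXX} is met with the automorphism $\boldsymbol{\omega}$ of $\mathbf{I}$, and its conclusion yields at once $y_{\mathbf{i}}(h^{\vee}+\ell)=y_{\boldsymbol{\omega}(\mathbf{i})}$, $x_{\mathbf{i}}(h^{\vee}+\ell)=x_{\boldsymbol{\omega}(\mathbf{i})}$, and $B(h^{\vee}+\ell)=\boldsymbol{\omega}^{-1}(B)=\boldsymbol{\omega}(B)$ (the last being \eqref{eq:Bperiod} at $u=0$); in other words, the entire seed at $u=h^{\vee}+\ell$ is the relabelling of the initial seed by $\boldsymbol{\omega}$.

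To upgrade this from $u=0$ to all $u\in\frac{1}{2}\mathbb{Z}$ I would propagate along \eqref{eq:mutseq}. The step of \eqref{eq:mutseq} from $u$ to $u+\frac{1}{2}$ is a composite mutation at a vertex subset of $\mathbf{I}$ which depends only on $u$ modulo $2\mathbb{Z}$ (one of $\mathbf{I}^{\bullet}_{+}\sqcup\mathbf{I}^{\circ}_{+}$, $\mathbf{I}^{\bullet}_{-}$, $\mathbf{I}^{\bullet}_{+}\sqcup\mathbf{I}^{\circ}_{-}$, $\mathbf{I}^{\bullet}_{-}$). Using the matrix half periodicity \eqref{eq:Bperiod} together with Lemma \ref{lem:Qmut}(ii) one checks that this recipe is $\boldsymbol{\omega}$-equivariant with period $h^{\vee}+\ell$, i.e.\ the $\boldsymbol{\omega}$-image of the recipe at $u$ equals the recipe at $u+h^{\vee}+\ell$. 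Since mutation commutes with vertex relabelling, a one-step induction in each direction of $u$, based on the $u=0$ case above, shows that the seed at $u+h^{\vee}+\ell$ is the $\boldsymbol{\omega}$-relabelling of the seed at $u$ for every $u$; reading off the coefficient coordinates gives the half periodicity $y_{\mathbf{i}}(u+h^{\vee}+\ell)=y_{\boldsymbol{\omega}(\mathbf{i})}(u)$ of part (i) inside $\mathcal{G}(B,y)$ (and, simultaneously, the cluster-variable statement of Corollary \ref{cor:Xperiod1}). Part (ii) then follows by applying (i) twice and using $\boldsymbol{\omega}^2=\mathrm{id}$.

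I expect the only genuine work to lie in the propagation step, and specifically in the verification that \eqref{eq:mutseq} is $\boldsymbol{\omega}$-equivariant after a shift by $h^{\vee}+\ell$: this is where the parity split of Lemma \ref{lem:Qmut}(ii) intervenes — for $h^{\vee}+\ell$ even one uses $\boldsymbol{\omega}(Q)=Q$, while for $h^{\vee}+\ell$ odd one uses $\boldsymbol{\omega}(Q)=\boldsymbol{r}(Q)$, together with $\boldsymbol{\omega}\boldsymbol{r}=\boldsymbol{r}\boldsymbol{\omega}$, to match the shifted recipe — but the matrix identity \eqref{eq:Bperiod} already records precisely this compatibility, so the argument is short. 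Everything else (invoking Theorems \ref{thm:tYperiod} and \ref{thm:XXX}, and deducing (ii) from (i)) is immediate.
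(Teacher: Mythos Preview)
Your proposal is correct and follows essentially the same approach as the paper: the paper derives Corollaries \ref{cor:Xperiod1} and \ref{cor:Yperiod1} simultaneously as immediate consequences of Theorems \ref{thm:tYperiod} and \ref{thm:XXX}, and your argument simply spells out what ``immediate'' means, namely applying Theorem \ref{thm:XXX} at $u=0$ and then propagating along \eqref{eq:mutseq} via the $\boldsymbol{\omega}$-equivariance encoded in \eqref{eq:Bperiod} and Lemma \ref{lem:Qmut}(ii). The only detail you add beyond the paper's terse deduction is the verification that the mutation recipe at $u+h^{\vee}+\ell$ is the $\boldsymbol{\omega}$-image of that at $u$, which is indeed the substance of the propagation and is handled correctly.
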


As further corollaries of Corollaries
\ref{cor:Xperiod1} and \ref{cor:Yperiod1} 
and Theorems \ref{thm:Tiso} and \ref{thm:Yiso}
we obtain Theorems \ref{thm:Tperiod} and \ref{thm:Yperiod}.

As a corollary of
Theorems \ref{thm:tYperiod} and \ref{thm:reduced}
we also obtain the periodicity of
{\em $F$-polynomials} \cite{FZ3}  (see Section 2.1 for the
definition), which will be used in the next section.

\begin{corollary}
\label{cor:Fperiod1}
For $\mathcal{A}(B,x,y)$ with $B=B_{\ell}(B_r)$,
let $F_{\mathbf{i}}(u)$ be the $F$-polynomial
at $(\mathbf{i},u)$.
Then, the following relations hold.
\par
(i) Half periodicity: 
$F_{\mathbf{i}}(u+h^{\vee}+\ell)
=F_{\boldsymbol\omega(\mathbf{i})}(u)$.
\par
(ii) 
 Full periodicity: 
$F_{\mathbf{i}}(u+2(h^{\vee}+\ell))
=F_{\mathbf{i}}(u)$.
\end{corollary}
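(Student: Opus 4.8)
The plan is to obtain Corollary \ref{cor:Fperiod1} as a formal consequence of Theorems \ref{thm:tYperiod} and \ref{thm:reduced}, in exactly the same spirit in which Corollaries \ref{cor:Xperiod1} and \ref{cor:Yperiod1} follow from Theorems \ref{thm:tYperiod} and \ref{thm:XXX}. Recall that, by the definition in Section \ref{subsec:groc}\,(viii), the $F$-polynomial $F_{\mathbf{i}}(u)$ is the specialization at $x=1$ of the cluster variable $x_{\mathbf{i}}(u)\in\mathbb{Z}[x^{\pm1},y]$ of the cluster algebra $\mathcal{A}_{\bullet}(B,x,y)$ with principal coefficients, $B=B_{\ell}(B_r)$. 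Hence it is enough to prove the polynomial identities
\begin{align*}
x_{\mathbf{i}}(u+h^{\vee}+\ell)=x_{\boldsymbol\omega(\mathbf{i})}(u),\qquad
x_{\mathbf{i}}(u+2(h^{\vee}+\ell))=x_{\mathbf{i}}(u)
\end{align*}
in $\mathbb{Z}[x^{\pm1},y]$ for the cluster variables of $\mathcal{A}_{\bullet}(B,x,y)$ along the sequence \eqref{eq:mutseq}, and then to specialize $x=1$.

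First I would show that the seed at $u=h^{\vee}+\ell$ in $\mathcal{A}_{\bullet}(B,x,y)$ is the $\boldsymbol\omega$-relabelling of the initial seed $(B,x,y)$. Its exchange matrix is $B(h^{\vee}+\ell)=\boldsymbol\omega(B)$ by \eqref{eq:Bperiod} (equivalently, by Lemma \ref{lem:Qmut}(ii)). Since the coefficients of $\mathcal{A}_{\bullet}(B,x,y)$ are precisely the tropical evaluations $[y_{\mathbf{i}}(u)]_{\mathbf{T}}$, its coefficient tuple is $[y_{\mathbf{i}}(h^{\vee}+\ell)]_{\mathbf{T}}=[y_{\boldsymbol\omega(\mathbf{i})}(0)]_{\mathbf{T}}=y_{\boldsymbol\omega(\mathbf{i})}$ by Theorem \ref{thm:tYperiod}(i) taken at $u=0$. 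Applying Theorem \ref{thm:reduced} with $\omega=\boldsymbol\omega$ and $t$ the vertex of $\mathbb{T}_n$ reached after the first $h^{\vee}+\ell$ steps of \eqref{eq:mutseq} then upgrades this to $x_{\mathbf{i}}(h^{\vee}+\ell)=x_{\boldsymbol\omega(\mathbf{i})}$ (and reconfirms $Q(h^{\vee}+\ell)=\boldsymbol\omega^{-1}(Q)=\boldsymbol\omega(Q)$, using that $\boldsymbol\omega$ is an involution). Thus the whole seed at $u=h^{\vee}+\ell$ is the $\boldsymbol\omega$-relabelling of the initial one.

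Next I would propagate this identity along the mutation sequence. Because \eqref{eq:mutseq} is $\boldsymbol\omega$-equivariant under the shift $u\mapsto u+h^{\vee}+\ell$ — this is the combinatorial content of Lemma \ref{lem:Qmut}(ii), which was designed precisely so that \eqref{eq:Bperiod} holds — an induction on the number of mutation steps gives that the seed at $u+h^{\vee}+\ell$ is the $\boldsymbol\omega$-relabelling of the seed at $u$, for every $u\in\frac12\mathbb{Z}$. Reading off the $\mathbf{i}$-th cluster variable (and using $\boldsymbol\omega^{2}=\mathrm{id}$) yields $x_{\mathbf{i}}(u+h^{\vee}+\ell)=x_{\boldsymbol\omega(\mathbf{i})}(u)$ in $\mathbb{Z}[x^{\pm1},y]$; iterating it once more gives the full periodicity. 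Specializing $x=1$ in both identities produces Corollary \ref{cor:Fperiod1}(i) and (ii).

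I do not expect a real obstacle here: Corollary \ref{cor:Fperiod1} is genuinely a corollary, and all the substance lies upstream — in Section 4 (Theorem \ref{thm:tYperiod}, via the factorization property of the tropical $Y$-system) and Section 5 (Theorem \ref{thm:reduced}, via Plamondon's $2$-Calabi--Yau categorification, which is exactly the tool that turns the periodicity of principal coefficients into the periodicity of the $x_{\mathbf{i}}(u)$, hence of the $F_{\mathbf{i}}(u)$). The only point that needs a little care is the bookkeeping of the $\boldsymbol\omega$-equivariance of \eqref{eq:mutseq}, i.e.\ that a shift of $u$ by $h^{\vee}+\ell$ is intertwined by $\boldsymbol\omega$ with the period-$2$ alternation of the composite mutations $\mu^{\circ}_{\pm}\mu^{\bullet}_{\pm}$ and $\mu^{\bullet}_{\mp}$; this splits according to the parity of $h^{\vee}+\ell$ exactly as in Lemma \ref{lem:Qmut}(ii), and it is the same mechanism already invoked in the proofs of Corollaries \ref{cor:Xperiod1} and \ref{cor:Yperiod1}.
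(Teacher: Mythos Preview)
Your argument is correct and matches the paper's intended route: the paper simply declares this a corollary of Theorems \ref{thm:tYperiod} and \ref{thm:reduced}, and what you have written is exactly the natural unpacking of that claim---apply Theorem \ref{thm:reduced} in $\mathcal{A}_{\bullet}(B,x,y)$ using the tropical half-periodicity at $u=0$, propagate along \eqref{eq:mutseq} via the $\boldsymbol\omega$-equivariance supplied by Lemma \ref{lem:Qmut}(ii)/\eqref{eq:Bperiod}, and specialize $x=1$. The only care point you flagged (the parity split for $h^{\vee}+\ell$ governing how $\boldsymbol\omega$ interacts with the alternating composite mutations) is indeed the one thing to check, and Lemma \ref{lem:Qmut}(ii) is precisely where it is handled.
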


\section{Dilogarithm identities}
\label{sec:dilog}

In this section we prove Theorem \ref{thm:DI2}.

In the cluster algebraic formulation here,
Theorem \ref{thm:DI2} is expressed as follows.

\begin{theorem}
\label{thm:DI3}
For  $\mathcal{G}_Y(B,y)$
with $B=B_{\ell}(B_r)$,
let $y^{(a)}_m(u)$ be the coefficient
tuple 
in Theorem \ref{thm:Yiso}.
Then, for  any
semifield homomorphism
   $\varphi: \mathbb{Q}_{\mathrm{sf}}(y)
\rightarrow \mathbb{R}_+$,
we have the identity
\begin{align}\label{eq:DI4}
\frac{6}{\pi^2}
\sum_{
(a,m,u)\in S'_+}
L\left(
\frac{\varphi(y^{(a)}_m(u))}{1+\varphi(y^{(a)}_m(u))}
\right)
&=
2r(2r\ell - 2r + 1),
\end{align}
where $S'_{+}=\{(a,m,u)\in \mathcal{I}'_{\ell+} \mid 
0\leq u < 2(h^{\vee}+\ell)\}$.
\end{theorem}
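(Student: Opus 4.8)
The plan is to follow the strategy of \cite{Nkn} and derive \eqref{eq:DI4} from the positivity/negativity data of the tropical Y-system, which is exactly what Theorem \ref{thm:levhd} supplies. First I would fix the semifield homomorphism $\varphi:\mathbb{Q}_{\mathrm{sf}}(y)\to\mathbb{R}_+$ and set $z^{(a)}_m(u):=\varphi(y^{(a)}_m(u))$. By Theorem \ref{thm:Yiso} the family $\{z^{(a)}_m(u)\}$ satisfies $\mathbb{Y}_\ell(B_r)$ in $\mathbb{R}_+$, so \eqref{eq:DI4} is literally \eqref{eq:DI2} for a positive real solution, and via \eqref{eq:L2} it is equivalent to \eqref{eq:DI3}; the numerical equivalence of the two right-hand sides is already recorded in the text. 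So it suffices to prove one telescoping identity.

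The key tool is the standard ``dilogarithm sum along a mutation sequence'' identity: for a single mutation $\mu_{\mathbf{k}}$ at a seed, using the five-term (pentagon) relation together with $L(x)+L(1-x)=\pi^2/6$, one gets a relation expressing the change of $\sum L\big(z_{\mathbf{i}}/(1+z_{\mathbf{i}})\big)$ over the mutated vertices in terms of the tropical sign of $[y_{\mathbf{k}}]_{\mathbf{T}}$ at that point (this is the content of the functional dilogarithm identities of \cite{GT,FS,C,Nkn}; see also \cite[Eq.~(6.4)]{Nkn}-type computations). Concretely, for each mutation point $(\mathbf{i},u):\mathbf{p}_+$ in the sequence \eqref{eq:mutseq}, one obtains a local identity whose ``constant'' contribution is $+\frac{\pi^2}{6}$ if the tropical monomial $[y_{\mathbf{i}}(u)]_{\mathbf{T}}$ is negative and $0$ if it is positive (or vice versa, depending on the sign convention). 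Summing these local identities over one full period $0\le u<2(h^{\vee}+\ell)$, the non-constant terms telescope because of the full periodicity of the coefficients (Corollary \ref{cor:Yperiod1}) and of the $F$-polynomials (Corollary \ref{cor:Fperiod1}), which guarantee that all the cluster/coefficient-dependent terms at $u=0$ and $u=2(h^{\vee}+\ell)$ agree and cancel. What remains is $\frac{6}{\pi^2}$ times the sum equals the number of mutation points with negative tropical sign, i.e.\ $N_-$ in Theorem \ref{thm:levhd}, which is exactly $2r(2r\ell-2r+1)$. (Dually, summing the complementary version gives $N_+=2\ell(\ell r+\ell-1)$, recovering \eqref{eq:DI3}.)

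More precisely I would organize the argument as follows. Step 1: record the one-step functional dilogarithm identity, being careful that at a point $(\mathbf{i},u):\mathbf{p}_+$ the neighbours entering the exchange relation \eqref{eq:Yu} are themselves evaluated through $\varphi$, so the identity is an identity of real numbers; the ``$1+Y^{-1}$'' denominators in \eqref{eq:Yu} are handled by the reflection formula \eqref{eq:L2}. Step 2: note that along \eqref{eq:mutseq} every pair $(\mathbf{i},u)$ with $(\mathbf{i},u):\mathbf{p}_+$ contributes once and only once as a mutation point in a full period, and the vertices that are \emph{not} mutation points (those with $\mathbf{i}\in\mathbf{I}^{\circ}$ at the ``wrong'' $u$) contribute trivially, matching the index set $S'_+$ on the left of \eqref{eq:DI4}. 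Step 3: telescope using periodicity, and identify the surviving constant with $N_-\cdot\frac{\pi^2}{6}$ via Theorem \ref{thm:levhd}. Step 4: divide by $\frac{\pi^2}{6}$.

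The main obstacle I anticipate is Step 1 together with the bookkeeping in Step 2: one must verify the precise form of the one-step identity in the nonsimply-laced situation — in particular that the ``half-integer'' mutations at the $i=r$ column (the $t_r=2$ phenomenon visible in \eqref{eq:Tu}--\eqref{eq:Yu}) still produce the same clean $+\frac{\pi^2}{6}$-per-negative-sign contribution, with no extra multiplicities — and that the denominators $(1+y^{(a)}_{m\pm1}(u)^{-1})$ are accounted for exactly once across the whole period. Once the local identity is pinned down, the global telescoping is formal, and the final count is handed to us by Theorem \ref{thm:levhd}, so the positivity/negativity result really does ``know everything'' here. A secondary technical point is continuity/convergence: since we work with a fixed $\varphi$ into $\mathbb{R}_+$ (equivalently a genuine positive real solution), all $L$-values lie in $(0,\pi^2/6)$ and no limiting argument is needed, unlike in treatments that degenerate coefficients to $0$ or $\infty$.
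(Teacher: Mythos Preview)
Your overall target is right --- reduce \eqref{eq:DI4} to the count $N_-$ from Theorem \ref{thm:levhd} --- and this matches the paper. But the mechanism you propose is different from the paper's, and your Step~1 is where the substance lies and where your sketch is too optimistic.

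The paper does \emph{not} argue via a ``one-step local identity that contributes $\pi^2/6$ per negative tropical sign and otherwise telescopes.'' Instead, following \cite{FS,C,Nkn}, it reduces Theorem \ref{thm:DI3} to two statements (Proposition \ref{prop:wedge}): (i) the vanishing
\[
\sum_{(a,m,u)\in S'_+} y^{(a)}_m(u)\wedge\bigl(1+y^{(a)}_m(u)\bigr)=0
\quad\text{in }\textstyle\bigwedge^2\mathbb{Q}_{\mathrm{sf}}(y),
\]
which is the precise form of the ``constancy'' you are gesturing at, and (ii) the count $N_-=2r(2r\ell-2r+1)$, which is Theorem \ref{thm:levhd}. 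Part (i) is proved by substituting the $F$-polynomial expressions \eqref{eq:F2}--\eqref{eq:F3} into the wedge sum and checking three groups of terms separately: the purely tropical terms vanish because each $[y^{(a)}_m(u)]_{\mathbf{T}}$ is sign-definite; the $F\wedge F$ terms cancel by a symmetry argument together with an explicit computation of the $(a,b)=(r-1,r),(r,r-1)$ cross terms (this is where the $B_r$-specific half-integer shifts enter and where genuine work happens); and the mixed tropical--$F$ terms cancel precisely because the \emph{tropical} Y-system \eqref{eq:Yu} holds. Periodicity of $F$-polynomials (Corollary \ref{cor:Fperiod1}) is used, but periodicity alone does not make things telescope --- the cancellation is driven by the algebraic structure encoded in $G$ and $^tG$.

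Your pentagon-plus-reflection heuristic does not by itself produce the clean local identity you describe; what it really produces, once organized, is exactly the wedge-sum vanishing above. You correctly flag Step~1 as the main obstacle, and indeed it is: to make your outline into a proof you would need to state and prove (i), including the nonsimply-laced cross-term cancellation, which is the actual content of the paper's argument.
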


Let $F^{(a)}_m(u)$ denote the $F$-polynomial
$F_{\mathbf{i}}(v)$
at $(\mathbf{i},v)=g((a,m,u))$,
i.e., with the same parametrization by $
\mathcal{I}_{\ell+}$ as $x^{(a)}_m(u)$.

\begin{lemma}
\label{lem:F}
 (i) For $(a,m,u)\in \mathcal{I}'_{\ell+}$,
 the following relations hold.
\begin{align}
\label{eq:F1}
\begin{split}
\textstyle
F^{(a)}_{m}\left(u-\frac{1}{t_a}\right)
F^{(a)}_{m}\left(u+\frac{1}{t_a}\right)
&=
\left[
\frac{y^{(a)}_m(u)}{1+y^{(a)}_m(u)}
\right]_{\mathbf{T}}
\prod_{(b,k,v)\in \mathcal{I}_{\ell+}}
F^{(b)}_{k}(v)^{G(b,k,v;\,a,m,u)}\\
&\quad +
\left[
\frac{1}{1+y^{(a)}_m(u)}
\right]_{\mathbf{T}}
F^{(a)}_{m-1}(u)F^{(a)}_{m+1}(u),\\
\end{split}
\end{align}
\begin{align}
\label{eq:F2}
y^{(a)}_m(u)
&=
[y^{(a)}_m(u)]_{\mathbf{T}}
\frac{
\displaystyle
\prod_{(b,k,v)\in \mathcal{I}_{\ell+}}
F^{(b)}_{k}(v)^{G
(b,k,v;\, a,m,u)}
}
{
F^{(a)}_{m-1}(u)F^{(a)}_{m+1}(u)
},\\
\label{eq:F3}
1+y^{(a)}_m(u)
&=
[1+y^{(a)}_m(u)]_{\mathbf{T}}
\frac{
F^{(a)}_{m}\left(u-\frac{1}{t_a}\right)
F^{(a)}_{m}\left(u+\frac{1}{t_a}\right)
}
{
F^{(a)}_{m-1}(u)F^{(a)}_{m+1}(u)
}.
\end{align}
\par
(ii) Periodicity:
$
 F^{(a)}_m(u+2(h^{\vee}+\ell))= F^{(a)}_m(u).
$
\par
(iii)
 Each polynomial $F^{(a)}_m(u)$
has constant term 1.
\end{lemma}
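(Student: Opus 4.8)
The plan is to prove the three parts of Lemma~\ref{lem:F} in sequence, with parts (i) and (iii) being essentially bookkeeping and part (ii) being the substantive claim that rests on the machinery already assembled.

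\textbf{Part (i).} First I would recall the general relation between cluster variables, $F$-polynomials, and tropical coefficients from \cite[\S3]{FZ3}: if $x_{\mathbf{i}}(t)$ is a cluster variable in $\mathcal{A}(B,x,y)$ over the universal semifield, then its $F$-polynomial $F_{\mathbf{i}}(t)$ is obtained by the substitution $x_{\mathbf{j}}\mapsto 1$, and the exchange relations for $F$-polynomials are the ``$F$-shadows'' of the exchange relations for cluster variables, with the coefficients $\frac{y}{1\oplus y}$ and $\frac{1}{1\oplus y}$ replaced by their tropical evaluations $[\cdot]_{\mathbf{T}}$. Concretely, \eqref{eq:F1} is obtained from the relation in Lemma~\ref{lem:x2} by applying the specialization $x_{\mathbf{j}}(u)\mapsto 1$ and using that in $\mathrm{Trop}(y)$ the denominators become $[1+y^{(a)}_m(u)]_{\mathbf{T}}$; this is \cite[Proposition~5.1 / Proposition~3.6]{FZ3} transported through the dictionary of Section~2.5--2.7. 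Similarly \eqref{eq:F2} and \eqref{eq:F3} follow from the separation-of-additions formula \cite[Proposition~3.13, Corollary~6.3]{FZ3}: a coefficient $y^{(a)}_m(u)$ in the universal semifield factors as its tropical part times a ratio of $F$-polynomials (this is \cite[Proposition~3.13]{FZ3}), and \eqref{eq:F3} is then the consequence of adding $1$ to \eqref{eq:F2} and using \eqref{eq:F1} to identify the numerator. So part (i) reduces to quoting the already-established identification of the T- and Y-systems with the cluster algebra/coefficient group (Theorems~\ref{thm:Tiso}, \ref{thm:Yiso}, Lemmas~\ref{lem:x2}, \ref{lem:y2}) and then invoking the standard $F$-polynomial formalism of \cite{FZ3}.

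\textbf{Part (ii).} This is the heart of the lemma, and it is an immediate corollary of Corollary~\ref{cor:Fperiod1}(ii): the full periodicity $F_{\mathbf{i}}(u+2(h^{\vee}+\ell))=F_{\mathbf{i}}(u)$ of $F$-polynomials for $\mathcal{A}(B,x,y)$ with $B=B_{\ell}(B_r)$ was established there as a corollary of Theorems~\ref{thm:tYperiod} and \ref{thm:reduced}. Under the parametrization $F^{(a)}_m(u)=F_{\mathbf{i}}(v)$ with $(\mathbf{i},v)=g((a,m,u))$, and using that $g$ intertwines the shift $u\mapsto u+2(h^{\vee}+\ell)$ with the corresponding shift in the $\mathbb{T}_n$-indexing (which is exactly the content of the half-periodicity of mutation matrices \eqref{eq:Bperiod} iterated twice, so that $\boldsymbol{\omega}^2=\mathrm{id}$ and the quiver returns to itself), the periodicity of $F^{(a)}_m(u)$ follows. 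So I would simply write: ``(ii) is a restatement of Corollary~\ref{cor:Fperiod1}(ii) under the identification $F^{(a)}_m(u)=F_{g((a,m,u))}$.''

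\textbf{Part (iii).} Every $F$-polynomial of a cluster algebra has constant term $1$: this is \cite[Proposition~3.6, Proposition~5.2]{FZ3}, and it holds for the cluster algebra with principal coefficients associated to \emph{any} skew-symmetrizable matrix, in particular $B_{\ell}(B_r)$. So part (iii) is again a direct citation.

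\textbf{Main obstacle.} There is no genuine obstacle internal to this lemma: all three parts are consequences of results proved earlier in the paper or of standard facts from \cite{FZ3}. The only point requiring care is the translation between the two indexing conventions --- the intrinsic cluster-algebra labelling $(\mathbf{i},u)$ via the mutation sequence \eqref{eq:mutseq} and the ``physical'' labelling $(a,m,u)\in\mathcal{I}'_{\ell+}$ via the bijection $g$ (and $g'$) --- and in particular verifying that the $G(b,k,v;a,m,u)$ appearing in \eqref{eq:F1}--\eqref{eq:F2} is precisely the same matrix $G$ defined in \eqref{eq:Tu}, i.e.\ that the exchange-matrix entries of $B_{\ell}(B_r)$ read off through $g'$ reproduce $G$ and its transpose. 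This was already checked in the course of proving Lemmas~\ref{lem:x2} and \ref{lem:y2} (the sample mutation computations at $((r,3),0)$ and $((r-1,2),0)$), so here one need only remark that the $F$-polynomial versions are obtained from those by the specialization $x_{\mathbf{j}}\mapsto 1$ together with the tropical evaluation of coefficients, and point to Figures~\ref{fig:labelxB1}--\ref{fig:labelyB2} as in the earlier proofs.
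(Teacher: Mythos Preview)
Your treatment of parts (i) and (ii) matches the paper's proof essentially line for line: \eqref{eq:F1} is the specialization of the exchange relation \eqref{eq:clust}, \eqref{eq:F2} is \cite[Proposition~3.13]{FZ3}, \eqref{eq:F3} follows by combining them, and (ii) is Corollary~\ref{cor:Fperiod1}.

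Part (iii) is where you diverge from the paper, and your argument as written has a gap. The statement ``every $F$-polynomial has constant term $1$'' is \emph{not} a proposition in \cite{FZ3}; it is Conjecture~5.4 there, and \cite[Proposition~5.6]{FZ3} only shows it is equivalent to sign-coherence of the principal coefficients. Your claim that it ``holds for any skew-symmetrizable matrix'' was not known at the time; for skew-symmetric $B$ it was established in \cite{DWZ2}, so if you want to cite a general result you must cite that, not \cite{FZ3}. The paper instead gives a self-contained proof: it argues by induction on $u$ using \eqref{eq:F1}, the initial condition $F_{\mathbf{i}}(0)=1$, and the sign-coherence (positivity/negativity) of $[y^{(a)}_m(u)]_{\mathbf{T}}$ established in Proposition~\ref{prop:levh}. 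The point is that sign-coherence forces exactly one of the two tropical prefactors in \eqref{eq:F1} to be the trivial monomial $1$, so the constant term propagates. With the citation corrected to \cite{DWZ2}, your shortcut is legitimate for this skew-symmetric setting; the paper's route has the virtue of keeping the argument internal to the tropical analysis already performed.
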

\begin{proof}
(i).
\eqref{eq:F1}  is a specialization of \eqref{eq:clust}.
\eqref{eq:F2} is due to \cite[Proposition 3.13]{FZ3}.
\eqref{eq:F3} follows from \eqref{eq:F1} and \eqref{eq:F2}.
(ii). This is a special case of Corollary \ref{cor:Fperiod1}.
(iii). The claim is shown by induction
on $u$, by using  $F_{\mathbf{i}}(0)=1$,
\eqref{eq:F1}, and Proposition \ref{prop:levh}
(cf. \cite[Proposition 5.6]{FZ3}).
\end{proof}

According to \cite{FS,C,Nkn},
the proof of Theorem \ref{thm:DI2}
reduces to the next claim.

\begin{proposition}
\label{prop:wedge}
(i) In $\bigwedge^2 \mathbb{Q}_{\mathrm{sf}}(y)$,
we have
\begin{align}
\label{eq:wedge}
\sum_{(a,m,u)\in S'_+}
y^{(a)}_m(u)
\wedge
(1+y^{(a)}_m(u))
=0.
\end{align}
(ii) The total number of the negative monomials
among $[y^{(a)}_m(u)]_{\mathbf{T}}$
$((a,m,u)\in S'_+)$ is $2r(2r\ell-2r+1)$.
\end{proposition}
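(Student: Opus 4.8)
The plan is to prove Proposition~\ref{prop:wedge} in two independent parts, following the strategy that was sketched in Observation~3 and already carried out in \cite{Nkn} for the simply laced case.

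For part (i), the key is that the relations \eqref{eq:F2} and \eqref{eq:F3} express both $y^{(a)}_m(u)$ and $1+y^{(a)}_m(u)$ as products of a tropical monomial in the $y_{\mathbf{i}}$ and a ratio of $F$-polynomials. Substituting these into the bilinear form $y^{(a)}_m(u)\wedge(1+y^{(a)}_m(u))$ and expanding by bilinearity, the sum over $S'_+$ splits into several pieces: a piece quadratic in the tropical monomials $[y^{(a)}_m(u)]_{\mathbf{T}}$, cross terms pairing a tropical monomial with an $F$-polynomial, and a piece quadratic in the $F$-polynomials. First I would use the periodicity of Corollary~\ref{cor:Yperiod1}, Corollary~\ref{cor:Fperiod1}, and Theorem~\ref{thm:tYperiod}, together with the exchange-type relations \eqref{eq:F1}, to reindex each piece and show they cancel telescopically; this is exactly the computation in \cite[Section~4]{Nkn} adapted to the quiver $Q_{\ell}(B_r)$ and the mutation sequence \eqref{eq:mutseq}. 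The point is that over one full period $0\le u<2(h^\vee+\ell)$ every factor $F^{(b)}_k(v)$ and every $y_{\mathbf{i}}$ that appears in the expansion appears with total exponent zero, because each $(\mathbf{i},u)$ satisfying $\mathbf{p}_+$ is a genuine mutation point, and the forward and backward mutations at the two ends of the period match up by Theorem~\ref{thm:tYperiod}(ii) and \eqref{eq:Bperiod}. I would present this as a direct bilinear computation, citing \cite{FS,C,Nkn} for the fact that such cancellation implies the dilogarithm identity.

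Part (ii) is immediate from the counting already done: Theorem~\ref{thm:levhd} states that among the $[y_{\mathbf{i}}(u)]_{\mathbf{T}}$ with $(\mathbf{i},u):\mathbf{p}_+$ in the region $0\le u<2(h^\vee+\ell)$, the number of negative monomials is $N_-=2r(2\ell r-2r+1)$. Under the bijection $g'$ of the lemma in Section~2.5 this region is exactly $S'_+$, and $[y^{(a)}_m(u)]_{\mathbf{T}}$ is the tropical evaluation of the coefficient $y_{\mathbf{i}}(v)$ with $(\mathbf{i},v)=g'((a,m,u))$; so the count transports verbatim. Hence the total number of negative monomials among $[y^{(a)}_m(u)]_{\mathbf{T}}$ for $(a,m,u)\in S'_+$ is $2r(2r\ell-2r+1)$, which is what part (ii) asserts.

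Given Proposition~\ref{prop:wedge}, Theorem~\ref{thm:DI3} follows from the standard argument of \cite{FS,C,Nkn}: apply the semifield homomorphism $\varphi$, use the five-term (or rather the $\wedge^2$-triviality) mechanism to see that $\sum L\!\left(\varphi(y^{(a)}_m(u))/(1+\varphi(y^{(a)}_m(u)))\right)$ is a constant independent of $\varphi$ (this is where part (i) enters), and then evaluate the constant by letting $\varphi$ degenerate to the tropical limit, where each Rogers dilogarithm term tends to $0$ if the corresponding tropical monomial is positive and to $\pi^2/6$ if it is negative, so the constant equals $(\pi^2/6)$ times the number of negative monomials $=(\pi^2/6)\cdot 2r(2r\ell-2r+1)$, giving \eqref{eq:DI4}. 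The identities \eqref{eq:DI2} and \eqref{eq:DI3} of Theorem~\ref{thm:DI2} then follow since the sum over $S'_\pm$ differs from the sum over the full index set by the factor-of-two relating the two parity classes, together with the reflection $\EuScript{Y}^{\circ}_{\ell}(B_r)_+\simeq\EuScript{Y}^{\circ}_{\ell}(B_r)_-$. The main obstacle is the bookkeeping in part (i): one must check carefully that, over a full period, the exponents of the $F$-polynomials in the expansion of $\sum y^{(a)}_m(u)\wedge(1+y^{(a)}_m(u))$ genuinely cancel, which requires matching the structure constants $G(b,k,v;a,m,u)$ against the combinatorics of $Q_{\ell}(B_r)$ and handling the short root $a=r$ (with its half-integer shifts $1/t_a$) separately; everything else is either a citation or a transcription of Theorem~\ref{thm:levhd}.
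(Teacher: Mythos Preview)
Your approach is essentially the paper's: substitute \eqref{eq:F2}--\eqref{eq:F3}, expand $y\wedge(1+y)$ bilinearly, and show each piece vanishes; part~(ii) is indeed just Theorem~\ref{thm:levhd} transported by the bijection $g'$.

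One point is mischaracterized, though, and it matters because it is precisely where the positivity/negativity property enters. The ``tropical--tropical'' piece
\[
\sum_{(a,m,u)\in S'_+}[y^{(a)}_m(u)]_{\mathbf{T}}\wedge[1+y^{(a)}_m(u)]_{\mathbf{T}}
\]
does \emph{not} vanish by telescoping or by exponents summing to zero over a period. It vanishes term by term: by Proposition~\ref{prop:levh} each monomial $[y^{(a)}_m(u)]_{\mathbf{T}}$ is either positive or negative, and in $\mathrm{Trop}(y)$ this forces $[1+y^{(a)}_m(u)]_{\mathbf{T}}$ to equal either $1$ (positive case) or $[y^{(a)}_m(u)]_{\mathbf{T}}$ itself (negative case), so the wedge is $m\wedge 1=0$ or $m\wedge m=0$. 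This is the crucial link between (i) and the tropical Y-system analysis of Sections~3--4, and it is exactly how the paper handles this piece. The remaining pieces do cancel essentially as you say: the $F\wedge F$ part by the symmetry/reindexing argument of \cite{CGT} together with an explicit check of the $(a,b)\in\{(r-1,r),(r,r-1)\}$ cross terms (your ``short root handled separately''), and the mixed tropical--$F$ part by recognizing that the required identity among tropical factors is precisely the tropicalized Y-system \eqref{eq:Yu}.
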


(ii) is already proved in Theorem \ref{thm:levhd}.
Let us prove (i).
It is parallel to the simply laced case \cite[Proposition 4.1]{Nkn},
but little more complicated.
Therefore, we present the calculations.

We put \eqref{eq:F2} and \eqref{eq:F3} into
\eqref{eq:wedge}, and expand it.

Firstly,
\begin{align}
\sum_{(a,m,u)\in S'_+}
[y^{(a)}_m(u)]_{\mathbf{T}}
\wedge
[1+y^{(a)}_m(u)]_{\mathbf{T}}
=0,
\end{align}
since each monomial
$[y^{(a)}_m(u)]_{\mathbf{T}}$ is either positive or negative
by Proposition \ref{prop:levh}.

Secondly, the contributions from the terms involving
only $F_{\mathbf{i}}(u)$'s vanish.
To see it, we separate them into two parts.
The first part
\begin{align}
\sum_{(a,m,u)\in S'_+}
\textstyle
F^{(a)}_{m-1}(u)
F^{(a)}_{m+1}(u)
\wedge
F^{(a)}_{m}(u-\frac{1}{t_a})
F^{(a)}_{m}(u+\frac{1}{t_a})
\end{align}
vanishes due to the symmetry argument of
\cite[Section 3]{CGT},
where we use
the periodicity of $F$-polynomials (Lemma \ref{lem:F} (ii)).
The second part
\begin{align}
\begin{split}
\sum_{(a,m,u)\in S'_+}
\displaystyle
\prod_{(b,k,v)\in \mathcal{I}_{\ell+}}
&F^{(b)}_{k}(v)^{G(b,k,v;\,
a,m,u)}
\wedge
\displaystyle
\frac{
F^{(a)}_{m}\left(u\textstyle-\frac{1}{t_a}\right)
F^{(a)}_{m}\left(u\textstyle+\frac{1}{t_a}\right)
}
{
F^{(a)}_{m-1}(u)F^{(a)}_{m+1}(u)
}
\end{split}
\end{align}
reduces, by the symmetry argument again, to
the sum consisting of the terms with $(a,b)=(r-1,r), (r,r-1)$;
namely,
\begin{align}
\begin{split}
&\sum_{m=1}^{\ell-1}
\sum_{
\genfrac{}{}{0pt}{1}
{
u\equiv 0\ \mathrm{mod}\ \mathbb{Z}
}
{
0\leq u < 2(h^{\vee}+\ell)
}
}
\displaystyle
F^{(r)}_{2m}(u)
\wedge
\displaystyle
\frac{
F^{(r-1)}_{m}\left(u-1\right)
F^{(r-1)}_{m}\left(u+1\right)
}
{
F^{(r-1)}_{m-1}(u)F^{(r-1)}_{m+1}(u)
}
\\
&
\quad
+
\sum_{m=1}^{\ell-1}
\sum_{
\genfrac{}{}{0pt}{1}
{
u\equiv \frac{1}{2}\ \mathrm{mod}\ \mathbb{Z}
}
{
0\leq u < 2(h^{\vee}+\ell)
}
}
\displaystyle
F^{(r-1)}_{m}(u-\textstyle\frac{1}{2})
F^{(r-1)}_{m}(u+\frac{1}{2})
\wedge
\displaystyle
\frac{
F^{(r)}_{2m}\left(u-\textstyle\frac{1}{2}\right)
F^{(r)}_{2m}\left(u+\frac{1}{2}\right)
}
{
F^{(r)}_{2m-1}(u)F^{(r)}_{2m+1}(u)
}
\\
&
\quad
+
\sum_{m=0}^{\ell-1}
\sum_{
\genfrac{}{}{0pt}{1}
{
u\equiv 0\ \mathrm{mod}\ \mathbb{Z}
}
{
0\leq u < 2(h^{\vee}+\ell)
}
}
\displaystyle
F^{(r-1)}_{m}(u)
F^{(r-1)}_{m+1}(u)
\wedge
\displaystyle
\frac{
F^{(r)}_{2m+1}\left(u-\textstyle\frac{1}{2}\right)
F^{(r)}_{2m+1}\left(u+\frac{1}{2}\right)
}
{
F^{(r)}_{2m}(u)F^{(r)}_{2m+2}(u)
},
\end{split}
\end{align}
where 
$F^{(r-1)}_{0}(u)=
F^{(r-1)}_{\ell}(u)=F^{(r)}_{0}(u)=F^{(r)}_{2\ell}(u)=1$.
It is easy to check that all the terms cancel  each other.

Thirdly, the contribution from
the remaining terms are as follows,
where $S_{+}=\{(a,m,u)\in \mathcal{I}_{\ell+} \mid 
0\leq u < 2(h^{\vee}+\ell)\}$:
\begin{align}
\begin{split}
&\sum_{(a,m,u)\in S'_+}
[y^{(a)}_m(u)]_{\mathbf{T}}
\wedge
F^{(a)}_m(u\textstyle-\frac{1}{t_a})
F^{(a)}_m(u\textstyle+\frac{1}{t_a})\\
&\qquad\qquad
=
\sum_{(a,m,u)\in S_+}
[y^{(a)}_m(u\textstyle-\frac{1}{t_a})]_{\mathbf{T}}
[y^{(a)}_m(u\textstyle+\frac{1}{t_a})]_{\mathbf{T}}
\wedge
F^{(a)}_m(u),
\end{split}
\\
\begin{split}
&-\sum_{(a,m,u)\in S'_+}
[y^{(a)}_m(u)]_{\mathbf{T}}
\wedge
F^{(a)}_{m-1}(u)
F^{(a)}_{m+1}(u)\\
&\qquad\qquad
=
-\sum_{(a,m,u)\in S_+}
[y^{(a)}_{m-1}(u)]_{\mathbf{T}}
[y^{(a)}_{m+1}(u)]_{\mathbf{T}}
\wedge
F^{(a)}_m(u),
\end{split}
\\
\begin{split}
&\sum_{(a,m,u)\in S'_+}
[1+y^{(a)}_m(u)]_{\mathbf{T}}
\wedge
F^{(a)}_{m-1}(u)
F^{(a)}_{m+1}(u)\\
&\qquad\qquad
=
\sum_{(a,m,u)\in S_+}
[1+y^{(a)}_{m-1}(u)]_{\mathbf{T}}
[1+y^{(a)}_{m+1}(u)]_{\mathbf{T}}
\wedge
F^{(a)}_m(u),
\end{split}
\\
\begin{split}
&-\sum_{(a,m,u)\in S'_+}
[1+y^{(a)}_m(u)]_{\mathbf{T}}
\wedge
\displaystyle
\prod_{(b,k,v)\in \mathcal{I}_{\ell+}}
F^{(b)}_{k}(v)^{G(b,k,v;\,
a,m,u)}
\\
&\qquad\qquad
=
-\sum_{(a,m,u)\in S_+}
\displaystyle
\prod_{(b,k,v)\in \mathcal{I}'_{\ell+}}
[1+y^{(b)}_{k}(v)]_{\mathbf{T}}^{G(a,m,u;\,
b,k,v)}
\wedge
F^{(a)}_m(u).
\end{split}
\end{align}
These terms cancel if we have the relation
\begin{align}
\begin{split}
&[y^{(a)}_m(u\textstyle-\frac{1}{t_a})]_{\mathbf{T}}
[y^{(a)}_m(u\textstyle +\frac{1}{t_a})]_{\mathbf{T}}
=
\frac{
\displaystyle
\prod_{(b,k,v)\in \mathcal{I}'_{\ell+}}
[1+y^{(b)}_{k}(v)]_{
\mathbf{T}}^{{}^t\!G(b,k,v;a,m,u)}
}
{
\displaystyle
[1+y^{(a)}_{m-1}(u)^{-1}]_{\mathbf{T}}
[1+y^{(a)}_{m+1}(u)^{-1}]_{\mathbf{T}}
}.
\end{split}
\end{align}
This is nothing but the Y-system \eqref{eq:Yu},
therefore, satisfied by Lemma \ref{lem:y2}.

This completes the proof of Proposition \ref{prop:wedge}.

\section{Alternative proof of periodicities
of T and Y-systems of simply laced type}

Let ($X_r$,$X'_{r'}$)  be a pair of simply laced
Dynkin diagrams of finite type
with index sets $I$ and $I'$.

As an application of Theorem \ref{thm:XXX},
we give an alternative and simplified
proof of the periodicities
of the T and Y-systems associated with ($X_r$,$X'_{r'}$).
They were formerly proved by \cite{FZ1,FZ2} for
$X'_{r'}=A_1$  (`level 2 case')
and \cite{Ke1,IIKNS,Ke2} for general case.

For a family of variables
 $\{T_{ii'} (u)\mid i\in I, i'\in I',
u\in \mathbb{Z}\}$,
the {\em T-system $\mathbb{T}(X_r,X'_{r'})$ associated with
a pair $(X_r,X'_{r'})$}
is a system of the  relations
\begin{align}
\label{eq:T2}
T_{ii'}(u-1)T_{ii'}(u+1)= 
\prod_{j:j\sim i} T_{ji'}(u)
+
\prod_{j':j'\sim i'} T_{ij'}(u),
\end{align}
where $j\sim i$ means $j$ is adjacent to $i$ in $X_r$,
while $j'\sim i'$ means $j'$ is adjacent to $i'$ in 
$X'_{r'}$.

For a family of variables
 $\{Y_{ii'} (u)\mid i\in I, i'\in I',
u\in \mathbb{Z}\}$,
the {\em Y-system $\mathbb{Y}(X_r,X'_{r'})$ associated with
a pair $(X_r,X'_{r'})$}
is a system of the  relations
\begin{align}
\label{eq:Y2}
Y_{ii'}(u-1)Y_{ii'}(u+1)= 
\frac{\displaystyle
\prod_{j:j\sim i} (1+Y_{ji'}(u))}
{\displaystyle
\prod_{j':j'\sim i'} (1+Y_{ij'}(u)^{-1})}.
\end{align}

Let
$C=(C_{ij})_{i,j\in I}$ and $C'=(C_{i'j'})_{i',j'\in I}$
be a pair of Cartan matrices of types
$X_r$ and $X'_{r'}$  with fixed
bipartite decompositions
$I=I_+\sqcup I_-$ and $I'=I'_+\sqcup I'_-$.
Set $\mathbf{I}=I\times I'$.
For $\mathbf{i}=(i,i')\in \mathbf{I}$,
let us write $\mathbf{i}:(++)$ if $(i,i')\in I_+\times I'_+$, {\em etc}.
Define the matrix $B=B(X_r,X'_{r'})=
(B_{\mathbf{i}\mathbf{j}})_{\mathbf{i},\mathbf{j}
\in \mathbf{I}}$ by
\begin{align}
\label{eq:Bsq}
B_{\mathbf{i}\mathbf{j}}=
\begin{cases}
-C_{ij}\delta_{i'j'}
 &
 \mathbf{i}:(-+), \mathbf{j}:(++)
\ \mbox{or}\
 \mathbf{i}:(+-), \mathbf{j}:(--),
\\
C_{ij}\delta_{i'j'}
 &
 \mathbf{i}:(++), \mathbf{j}:(-+)
\ \mbox{or}\
 \mathbf{i}:(--), \mathbf{j}:(+-),
\\
-\delta_{ij}C'_{i'j'}
 &
 \mathbf{i}:(++), \mathbf{j}:(+-)
\ \mbox{or}\
 \mathbf{i}:(--), \mathbf{j}:(-+),
\\
\delta_{ij}C'_{i'j'}
 &
 \mathbf{i}:(+-), \mathbf{j}:(++)
\ \mbox{or}\
 \mathbf{i}:(-+), \mathbf{j}:(--),
\\
0 & \mbox{otherwise}.
\end{cases}
\end{align}
Then, as in Section 2, one can formulate the T and Y-systems
in terms of the cluster algebra
$\mathcal{A}(B,x,y)$ and its coefficient group
$\mathcal{G}(B,y)$
with $B=B(X_r,X'_{r'})$ (cf. \cite[Proposition 2.6]{Nkn}).
 
\begin{theorem}
\label{thm:tYperiod2}
The following relations hold
for the tropical Y-system
of $\mathcal{G}(B,y)$ with $B=B(X_r,X'_{r'})$.
\par
(i) Half periodicity: 
$[y_{\mathbf{i}}(u+h+h')]_{\mathbf{T}}
=[y_{\boldsymbol\omega(\mathbf{i})}(u)]_{\mathbf{T}}$.
\par
(ii) 
 Full periodicity: 
$[y_{\mathbf{i}}(u+2(h+h')]_{\mathbf{T}}
=[y_{\mathbf{i}}(u)]_{\mathbf{T}}$.
\par
Here, $h$ and $h'$ are the Coxeter numbers of $X_r$ and
$X'_{r'}$,
and $\boldsymbol{\omega}=\omega\times \omega'$,
where $\omega$ (resp.\ $\omega'$) is the Dynkin automorphism
of $X_r$ (resp. $X'_{r'}$) for types $A_r$, $D_{r}$ ($r:odd$), or $E_6$,
and the identity otherwise.
\end{theorem}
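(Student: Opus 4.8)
The plan is to follow the route developed for type $B_r$. First I would realize the tropical $Y$-system of the pair $(X_r,X'_{r'})$ as a periodic sequence of tropical mutations on $\mathcal{A}(B,x,y)$ with $B=B(X_r,X'_{r'})$: using the bipartite decompositions $I=I_+\sqcup I_-$ and $I'=I'_+\sqcup I'_-$ one forms composite mutations at the four classes $(\pm\pm)$ and a mutation sequence in which one unit step $u\mapsto u+1$ is a product of two such composite mutations, exactly analogous to \eqref{eq:mutseq}. The coefficients $y_{\mathbf{i}}(u)$ are then indexed by $\mathbf{I}\times\frac{1}{2}\mathbb{Z}$ with parity conditions $\mathbf{p}_{\pm}$ marking the forward/backward mutation points, and \eqref{eq:ymu} carries over verbatim; the $T$- and $Y$-systems $\mathbb{T}(X_r,X'_{r'})$, $\mathbb{Y}(X_r,X'_{r'})$ are identified with relations among cluster variables and coefficients as in Section 2.

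The heart of the argument is the \emph{factorization property} in the spirit of \cite{Nkn}, which I would establish as the two regional facts paralleling Propositions \ref{prop:lev2} and \ref{prop:levh}: (a) for $0\le u<h'$, every $[y_{\mathbf{i}}(u)]_{\mathbf{T}}$ with $(\mathbf{i},u):\mathbf{p}_+$ is a positive monomial; (b) for $-h\le u<0$, every such $[y_{\mathbf{i}}(u)]_{\mathbf{T}}$ is negative, with the single exception on one bipartite class behaving as in Proposition \ref{prop:lev2}(ii)(b). To prove (a) one argues, as in the proof of Proposition \ref{prop:levh}, that at each forward mutation point the arrows of the quiver coupling the two directions are incoming while the mutated coefficient is positive, so by Lemma \ref{lem:mutation} they may be discarded throughout the mutations; the tropical $Y$-system then splits into $|I|$ mutually non-interacting copies of the tropical $Y$-system of finite type $X'_{r'}$, one for each $i\in I$, and positivity on $0\le u<h'$ is \cite[Proposition 10.7]{FZ3} (equivalently \cite{FZ2}) for type $X'_{r'}$, whose Coxeter number is $h'$. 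Claim (b) is obtained symmetrically, interchanging the roles of $X_r$ and $X'_{r'}$ and replacing $h'$ by $h$.

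From the finite-type pieces one then reads off the half-period endpoint values. Using the piecewise-linear Coxeter action $\sigma_{\pm}$ on the almost positive roots $\Phi_{\geq-1}$ of \cite{FZ2} (the analogue of Lemma \ref{lem:orbit}), the orbit of an initial negative simple root returns, after the relevant number of $\sigma$-steps, to the negative simple root permuted by the Dynkin automorphism; this gives $[y_{\mathbf{i}}(h')]_{\mathbf{T}}=y_{(\mathrm{id}\times\omega')(\mathbf{i})}^{-1}$ from region (a) and $[y_{\mathbf{i}}(-h)]_{\mathbf{T}}=y_{(\omega\times\mathrm{id})(\mathbf{i})}^{-1}$ from region (b), in complete parallel with Proposition \ref{prop:lev2}(iii),(iv). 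Combining these as in the deduction of Theorem \ref{thm:tYperiod}(i) from its components — the two inversions cancel and the two reflections compose to $\boldsymbol\omega=\omega\times\omega'$ — yields the half periodicity $[y_{\mathbf{i}}(u+h+h')]_{\mathbf{T}}=[y_{\boldsymbol\omega(\mathbf{i})}(u)]_{\mathbf{T}}$, first on a fundamental domain for $u$ and then for all $u$ via the tropical mutation recursion; the full periodicity follows since $\boldsymbol\omega^2=\mathrm{id}$. Finally, with Theorem \ref{thm:tYperiod2} in hand, Theorem \ref{thm:XXX} together with the cluster-algebraic formulation reproduces the periodicities of $\mathbb{T}(X_r,X'_{r'})$ and $\mathbb{Y}(X_r,X'_{r'})$, originally obtained in \cite{Ke1,IIKNS,Ke2}.

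I expect the main obstacle to be the factorization property itself: rigorously showing that the $X_r$- and $X'_{r'}$-directions decouple requires an induction on $u$ that, at every mutation point of the sequence, matches the orientation of the coupling arrows against the sign of the corresponding tropical $Y$-variable so that Lemma \ref{lem:mutation} applies — the same delicate coordination of mutation order with positivity/negativity that appears in the $B_r$ case, now carried out for an arbitrary simply laced pair.
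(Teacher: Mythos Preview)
Your overall strategy is the right one and is exactly what the paper does, but the paper executes it in one line: the proof of Theorem~\ref{thm:tYperiod2} is simply a citation to \cite[Proposition~3.2]{Nkn}, where the factorization property of the tropical $Y$-system for simply laced pairs is already established. What you sketch is essentially a re-derivation of that proposition; so the route is the same, only you are reproving the cited input rather than invoking it.

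One correction to your sketch: your claim (b), that in the region $-h\le u<0$ there is ``the single exception on one bipartite class behaving as in Proposition~\ref{prop:lev2}(ii)(b)'', is not accurate for simply laced pairs. That alternating positive/negative behavior in the $B_r$ case comes from the filled vertices $\mathbf{I}^{\bullet}_+$, which are mutated at \emph{every} half-step in the sequence \eqref{eq:mutseq}; this is a feature of the nonsimply laced quiver $Q_\ell(B_r)$ and its asymmetric mutation schedule $\mu^{\bullet}_-\,\mu^{\bullet}_+\mu^{\circ}_\pm$. For the square-product quiver $B(X_r,X'_{r'})$ of \eqref{eq:Bsq} the mutation sequence is the plain bipartite one (mutate at all $(++),(--)$ vertices, then at all $(+-),(-+)$ vertices), so every vertex is mutated once per unit of $u$ and no class is hit twice. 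Consequently, in the negative region all $[y_{\mathbf{i}}(u)]_{\mathbf{T}}$ at forward mutation points are negative with no exception, and the factorization into $X_r$-pieces is clean. This actually simplifies your induction: the ``delicate coordination'' you anticipate as the main obstacle is considerably milder here than in type $B_r$, which is precisely why \cite{Nkn} could carry it out first.
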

\begin{proof}
This is an immediate consequence of the factorization property
of the tropical Y-system studied in \cite[Proposition 3.2]{Nkn}.
\end{proof}

As a corollary of Theorem \ref{thm:XXX} and Theorem \ref{thm:tYperiod2},
we obtain the periodicities of the T and Y-systems.

\begin{corollary}
\label{cor:tYperiod2}
 The following relations hold.
\par
(i) Half periodicity: 
$T_{\mathbf{i}}(u+h+h')=
T_{\boldsymbol\omega(\mathbf{i})}(u)$,
$Y_{\mathbf{i}}(u+h+h')=
Y_{\boldsymbol\omega(\mathbf{i})}(u)$.
\par
(ii) 
 Full periodicity: 
$T_{\mathbf{i}}(u+2(h+h'))=
T_{\mathbf{i}}(u)$,
$Y_{\mathbf{i}}(u+2(h+h'))=
Y_{\mathbf{i}}(u)$.
\end{corollary}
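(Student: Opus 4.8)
The plan is to run, for the pair $(X_r,X'_{r'})$, the same three-step argument that produced Theorems \ref{thm:Tperiod} and \ref{thm:Yperiod} via Corollaries \ref{cor:Xperiod1} and \ref{cor:Yperiod1}, now with $B=B(X_r,X'_{r'})$ from \eqref{eq:Bsq} in place of $B_{\ell}(B_r)$. First I would record the cluster-algebraic formulation of \eqref{eq:T2} and \eqref{eq:Y2}. Using the bipartite decompositions $I=I_+\sqcup I_-$, $I'=I'_+\sqcup I'_-$ one forms the two composite mutations $\mu_+=\prod_{\mathbf{i}:(++)\,\text{or}\,(--)}\mu_{\mathbf{i}}$ and $\mu_-=\prod_{\mathbf{i}:(+-)\,\text{or}\,(-+)}\mu_{\mathbf{i}}$, and the bipartite mutation sequence alternating $\mu_+$ and $\mu_-$ realizes the shift $u\mapsto u+1$, defining clusters $x_{\mathbf{i}}(u)$ and coefficient tuples $y_{\mathbf{i}}(u)$ of $\mathcal{A}(B,x,y)$ for $u\in\mathbb{Z}$. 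Then, exactly as in Theorems \ref{thm:Tiso} and \ref{thm:Yiso} and as recorded in \cite[Proposition 2.6]{Nkn}, the solutions of $\mathbb{T}(X_r,X'_{r'})$ and $\mathbb{Y}(X_r,X'_{r'})$ are identified with the $x_{\mathbf{i}}(u)$ (in the trivial evaluation) and the $y_{\mathbf{i}}(u)$; this is purely formal from the exchange relations \eqref{eq:clust}, \eqref{eq:coef} and the shape of $B$.

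Second is the core step. Theorem \ref{thm:tYperiod2}(i) gives $[y_{\mathbf{i}}(h+h')]_{\mathbf{T}}=[y_{\boldsymbol\omega(\mathbf{i})}]_{\mathbf{T}}$ for every $\mathbf{i}\in\mathbf{I}$, where $\boldsymbol\omega=\omega\times\omega'$ is the indicated product of Dynkin automorphisms; since $\boldsymbol\omega$ is a symmetry of the Dynkin pair it is an automorphism of $\mathbf{I}$, and it is compatible with the exchange matrix in the sense $B(u+h+h')=\boldsymbol\omega(B(u))$, the analogue of \eqref{eq:Bperiod}. I would then apply Theorem \ref{thm:XXX} with this $\omega=\boldsymbol\omega$ and $t$ the vertex of $\mathbb{T}_n$ reached after the mutations realizing $u=h+h'$: it yields at once $Q(t)=\boldsymbol\omega^{-1}(Q)$, $x_{\mathbf{i}}(h+h')=x_{\boldsymbol\omega(\mathbf{i})}$, and $y_{\mathbf{i}}(h+h')=y_{\boldsymbol\omega(\mathbf{i})}$ for all $\mathbf{i}$.

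Third, I would propagate and iterate. Because the bipartite mutation sequence is translation invariant and $\boldsymbol\omega$ either preserves or globally swaps the two diagonal classes $(\pm\pm)$ — in a way matching the parity of $h+h'$ built into the sequence — the single-seed identity at $u=h+h'$ promotes to $x_{\mathbf{i}}(u+h+h')=x_{\boldsymbol\omega(\mathbf{i})}(u)$ and $y_{\mathbf{i}}(u+h+h')=y_{\boldsymbol\omega(\mathbf{i})}(u)$ for all $u$; applying this twice and using $\boldsymbol\omega^2=\mathrm{id}$ gives full periodicity with period $2(h+h')$. Transporting both statements through the identifications of the first step yields Corollary \ref{cor:tYperiod2}.

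As for the main obstacle: all substantive input is already in hand — Theorem \ref{thm:XXX} (Observation 1, via Plamondon's categorification) and Theorem \ref{thm:tYperiod2} (the factorization property of the tropical Y-system, from \cite{Nkn}) — so the corollary is essentially a formal consequence, just as Corollaries \ref{cor:Xperiod1} and \ref{cor:Yperiod1} were in the $B_r$ case. The only point requiring genuine care is the bookkeeping around $\boldsymbol\omega$: verifying it is an automorphism of the quiver data compatible with the chosen mutation sequence, and in particular that when $\boldsymbol\omega$ swaps $I_+\leftrightarrow I_-$ (e.g.\ $X_r=A_r$ with $r$ even) the conjugation relation $\boldsymbol\omega\,\mu_+\,\boldsymbol\omega^{-1}=\mu_-$ is consistent with the half-period offset. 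This is routine, but it is the one place where the conventions of Section 2 must be re-examined for a general pair rather than for type $B_r$.
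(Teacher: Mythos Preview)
Your proposal is correct and follows exactly the approach the paper intends: the corollary is stated without proof as an immediate consequence of Theorem \ref{thm:XXX} and Theorem \ref{thm:tYperiod2}, together with the cluster-algebraic formulation of $\mathbb{T}(X_r,X'_{r'})$ and $\mathbb{Y}(X_r,X'_{r'})$ from \cite[Proposition 2.6]{Nkn}, and you have accurately expanded this into the three-step argument paralleling Corollaries \ref{cor:Xperiod1} and \ref{cor:Yperiod1}. The compatibility check for $\boldsymbol\omega$ you flag is indeed the only bookkeeping detail, and it is routine.
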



\begin{thebibliography}{99}

\bibitem[A]{A} C.\ Amiot,
Cluster categories for algebras of global dimension 2 and 
quivers with potential,
{\em Annales de l'Institut Fourier} {\bf 59} (2009),
2525--2590.



\bibitem[CGT]{CGT}
R.\ Caracciolo, F.\ Gliozzi, R.\ Tateo,
A topological invariant of RG flows
in 2D integrable quantum field theories,
{\it Int.\ J.\ Mod.\ Phys.\ } {\bf 13} (1999), 2927--2932. 


\bibitem[C]{C}
F. Chapoton,
Functional identities for the Rogers dilogarithm associated to
cluster Y-systems,
{\em Bull. London Math. Soc.} {\bf 37} (2005), 755--760.

\bibitem[DK]{DK}
R. Dehy, B. Keller,
On the combinatorics of rigid objects in 2-Calabi-Yau 
categories,
{\em Int. Math. Res. Notices}  {\bf 2008} (2008), rnn029, 17 pages.

\bibitem[DWZ1]{DWZ1}
H. Derksen, J. Weyman, A. Zelevinsky,
Quivers with potentials and their representations I:
Mutations,
{\em Selecta Math.} {\bf 14} (2008), 59--119.

\bibitem[DWZ2]{DWZ2}
H. Derksen, J. Weyman, A. Zelevinsky,
Quivers with potentials and their representations II:
Applications to cluster algebras,
J. Amer. Math. Soc. {\bf 23} (2010), 749--790.



\bibitem[DiK]{DiK}
P.\ Di Francesco, R.\ Kedem,
Q-systems as cluster algebras II: Cartan matrix of finite type
and the polynomial property,
{\em Lett. Math.\ Phys.} {\bf  89} (2009), 183--216.



\bibitem[FZ1]{FZ1}
S.\ Fomin, A.\ Zelevinsky,
Cluster algebras II. Finite type classification,
{\it Invent.\ Math.\ } {\bf 154} (2003), 63-121.

\bibitem[FZ2]{FZ2}
S.\ Fomin, A.\ Zelevinsky,
Y-systems and generalized associahedra,
{\it Ann.\ of Math.\ } {\bf 158} (2003), 977--1018.

\bibitem[FZ3]{FZ3}
 S.\ Fomin, A.\ Zelevinsky, Cluster algebras IV. Coefficients,
 {\it Compositio Mathematica} {\bf 143} (2007), 112--164.


\bibitem[FS]{FS}
E.\ Frenkel, A.\ Szenes,
Thermodynamic Bethe ansatz and dilogarithm identities. I,
{\it Math.\ Res.\ Lett.\ } {\bf 2} (1995), 677--693.

\bibitem[FK]{FK}
C. Fu, B. Keller,
On cluster algebras with coefficients and 2-Calabi-Yau categories,
{\em Trans. Amer. Math. Soc.} {\bf  362} (2010), 859--895.

\bibitem[GT]{GT}
F.\ Gliozzi, R.\ Tateo,
ADE functional dilogarithm identities and integrable models,
{\em Phys. Lett.} {\bf B348} (1995), 677-693.


\bibitem[Her1]{Her1}
D.\ Hernandez,
The Kirillov--Reshetikhin conjecture and solutions of
T-systems,
{\it J.\ Reine Angew.\ Math.} {\bf 596} (2006), 63--87.

\bibitem[Her2]{Her2}
D.\ Hernandez,
Drinfeld coproduct, quantum fusion tensor category and applications,
{\it Proc.\ London Math.\ Soc.} {\bf 95} (2007), 567--608.


\bibitem[HL]{HL}
D.\ Hernandez, B.\ Leclerc,
Cluster algebras and quantum affine algebras,
Duke Math. J. {\bf 154} (2010) 265--341.

\bibitem[HW]{HW}
H.\ C.\ Hutchins, H.\ J.\ Weinert,
Homomorphisms and kernels of semifields,
Periodica Math.\ Hung.\ {\bf 21} (1990), 113-152.

\bibitem[IIKKN]{IIKKN}
R.\ Inoue, O.\ Iyama, B.\ Keller, A.\ Kuniba,  T.\ Nakanishi,
Periodicities of T and Y-systems,
dilogarithm identities, and cluster algebras II:
Types $C_r$, $F_4$, and $G_2$,
arXiv:1001.1881.


\bibitem[IIKNS]{IIKNS}
R.\ Inoue, O.\ Iyama, A.\ Kuniba, T.\ Nakanishi, J.\ Suzuki,
Periodicities of T-systems and Y-systems,
{\it Nagoya Math.\ J.} {\bf 197} (2010), 59--174.

\bibitem[Ke1]{Ke1}
B.\ Keller,
Cluster algebras, quiver representations and
triangulated categories,
in {\em Triangulated categories},
T. Holm, P. J{\o}rgensen, and R. Rouquier, eds.,
London Mathematical Society,
Lecture Note Series vol.~375,
Cambridge University Press, 2010,
pp.~ 76--160.



\bibitem[Ke2]{Ke2}
B.\ Keller,
The periodicity conjecture for pairs of Dynkin diagrams,
arXiv:1001.1531.


\bibitem[KY]{KY} B. Keller, D. Yang,
Derived equivalences from mutations of quivers with potential,
arXiv:0906.0761, to appear in Advances in Math.

\bibitem[Ki1]{Ki1}
A.\ N.\ Kirillov,
Identities for the Rogers dilogarithm function connected with
simple Lie algebras,
{\it J.\ Sov.\ Math.} {\bf 47} (1989), 2450--2458.

\bibitem[Ki2]{Ki2}
A. N. Kirillov,
Dilogarithm identities,
{\em Prog.\ Theor.\ Phys.\ Suppl.} {\bf 118} (1995),
61--142.


\bibitem[Ku]{Ku}
A.\ Kuniba,
Thermodynamics of the $U_q(X^{(1)}_r)$ Bethe ansatz system
with $q$ a root of unity,
{\em Nucl.\ Phys.} {\bf B389} (1993), 209--244.


\bibitem[KN]{KN}
A.\ Kuniba, T.\ Nakanishi,
Spectra in conformal field theories from the Rogers
dilogarithm,
{\it Mod.\ Phys.\ Lett.} {\bf A7} (1992), 3487--3494.

\bibitem[KNS1]{KNS1}
A.\ Kuniba, T.\ Nakanishi, J. Suzuki,
Characters in conformal field theories from
thermodynamic Bethe ansatz,
{\it Mod.\ Phys.\ Lett.} {\bf A8} (1993), 1649--1659.

\bibitem[KNS2]{KNS2}
A.\ Kuniba, T.\ Nakanishi, J. Suzuki,
Functional relations in solvable lattice models: I.
Functional relations and representation theory,
{\it Int.\ J.\ Mod.\ Phys.\ A} {\bf 9} (1994), 5215--5266.

\bibitem[KNS3]{KNS3}
A.\ Kuniba, T.\ Nakanishi, J.\ Suzuki,
T-systems and Y-systems for quantum
affinizations of quantum Kac-Moody algebras,
SIGMA 5 (2009), 108, 23 pages.

\bibitem[L]{L}
L. Lewin,
Polylogarithms and associated functions,
North-Holland, Amsterdam, 1981.


\bibitem[Nah]{Nah}
W. Nahm,
Conformal field theory and torsion elements of the
Bloch group,
in {\em Frontiers in Number Theory, Physics, and Geometry II},
Springer, Berlin, Heidelberg, 2007, pp.3--65.


\bibitem[NK]{NK}
W. Nahm, S. Keegan,
Integrable deformations of CFTs and 
the discrete Hirota equations,
arXiv.0905.3776.


\bibitem[Nkj]{Nkj}
H.\ Nakajima,
$t$-analogs of $q$-characters of Kirillov--Reshetikhin modules of
 quantum affine algebras,
{\it Represent.\ Theory} {\bf 7} (2003), 259--274.

\bibitem[Nkn]{Nkn}
T. Nakanishi,
Dilogarithm identities
 for conformal field theories
and cluster algebras: simply laced case,
arXiv:math.0909.5480,
to appear in Nagoya Math. J.

\bibitem[Pa]{Pa}
Y. Palu,
Cluster characters for 2-Calabi-Yau triangulated categories,
{\em Annales de l'institut Fourier} {\bf 58} (2008), 2221--2248.

\bibitem[Pl1]{Pl1}
P.\ Plamondon,
Cluster characters for cluster categories with infinite-dimensional
morphism spaces,
arXiv:1002.4956.

\bibitem[Pl2]{Pl2}
P.\ Plamondon,
Cluster algebras via cluster categories with infinite-dimensional
morphism spaces,
arXiv:1004.0830.


\bibitem[Zag]{Zag}
D.\ Zagier,
The dilogarithm function,
in {\em Frontiers in Number Theory, Physics, and Geometry II},
Springer, Berlin, Heidelberg, 2007, pp.3--65.


\end{thebibliography}
\end{document}